\theoremstyle{plain}
\newtheorem{theorem}{Theorem}[subsection]
\newtheorem{prop}[theorem]{Proposition}
\newtheorem{lemma}[theorem]{Lemma}
\newtheorem{coro}[theorem]{Corollary}
\newtheorem{fact}[theorem]{Fact}
\theoremstyle{definition}
\newtheorem{definition}[theorem]{Definition}
\newtheorem{remark}[theorem]{Remark}
\newtheorem{example}[theorem]{Example}
\newcommand{\C}{{\mathbb C}}
\newcommand{\D}{{\mathbb D}}
\newcommand{\N}{{\mathbb N}}
\newcommand{\Q}{{\mathbb Q}}
\newcommand{\R}{{\mathbb R}}
\newcommand{\T}{{\mathbb T}}
\newcommand{\X}{{\mathbb X}}
\newcommand{\Y}{{\mathbb Y}}
\newcommand{\Z}{{\mathbb Z}}
\newcommand{\mc}{\mathcal}
\newcommand{\im}{{\mathrm{i}}}
\newcommand{\Refl}{\operatorname{R}}
\newcommand{\me}{\mathrm{e}}
\newcommand{\mi}{\mathrm{i}}
\newcommand{\tr}{\operatorname{Tr}}
\newcommand{\ind}{\operatorname{ind}}
\newcommand{\Leb}{\operatorname{Leb}}
\newcommand{\pf}{\operatorname{p}}
\newcommand{\per}{{\operatorname{per}}}
\newcommand{\halfZ}{{\tfrac{1}{2}\Z}}
\renewcommand{\Im}{{\mathrm{Im}}}
\newcommand{\trigpoly}{{\mathrm{TP}}}
\newcommand{\SL}{{\operatorname{SL}}}
\newcommand{\set}[1]{\left\{#1\right\}}
\DeclarePairedDelimiter\norm{\lVert}{\rVert}
\newcommand{\Sf}{\operatorname{s}}
\newcommand{\MSf}{\operatorname{s'}} %"measurable" S-function
\newcommand{\Star}{(\ast)}
\numberwithin{equation}{subsection}
\begin{document}

\title[Operators Generated by Product systems]{Spectral Characteristics of Schr\"odinger Operators Generated by Product Systems}

\author[D.\ Damanik]{David Damanik}
\address{Department of Mathematics, Rice University, Houston, TX~77005, USA}
\email{damanik@rice.edu}
\thanks{D.D.\ was supported in part by NSF grants DMS--1700131 and DMS--2054752, an Alexander von Humboldt Foundation research award, and Simons Fellowship $\# 669836$}

\author[J.\ Fillman]{Jake Fillman}
\address{Department of Mathematics, Texas State University, San Marcos, TX 78666, USA}
\email{fillman@txstate.edu}
\thanks{J.F.\ was supported in part by Simons Collaboration Grant \#711663.}

\author[P.\ Gohlke]{Philipp Gohlke}
\address{Fakult\"at f\"ur Mathematik, Universit\"at Bielefeld,  Postfach 100131, 33501 Bielefeld, Germany}
\email{pgohlke@math.uni-bielefeld.de}
\thanks{P.G. acknowledges support by the German Research Foundation (DFG) via
the Collaborative Research Centre (CRC 1283)}

\begin{abstract}
Motivated by the question of what spectral properties of dynamically defined Schr\"odinger operators may be preserved under periodic perturbations, we study ergodic Schr\"odinger operators defined over product dynamical systems in which one factor is periodic and the other factor is either a subshift over a finite alphabet or an irrational rotation of the circle. The scenario given by a periodic background potential corresponds to a separable structure in which the sampling function is the sum of two pieces, each of which depends only on a single factor of the product system. However, in each case that we study, our methods apply more generally to sampling functions that allow nontrivial dependencies between the product factors.

In the case in which one factor is a Boshernitzan subshift, we prove that either the resulting operators are  periodic or the resulting spectra must be Cantor sets. The main ingredient is a suitable stability result for Boshernitzan's criterion under taking products. We also discuss the stability of purely singular continuous spectrum, which, given the zero-measure spectrum result, amounts to stability results for eigenvalue exclusion. In particular, we examine situations in which the existing criteria for the exclusion of eigenvalues are stable under periodic perturbations. As a highlight of this, we show that any simple Toeplitz subshift over a binary alphabet exhibits uniform absence of eigenvalues on the hull for any periodic perturbation whose period is commensurate with the coding sequence. This is new, even in the case in which the periodic background vanishes entirely. In the case of a full shift, we give an effective criterion to compute exactly the spectrum of a random Anderson model perturbed by a potential of period two, and we further show that the naive generalization of this criterion does not hold for period three. Next, we consider quasi-periodic potentials with potentials generated by trigonometric polynomials with periodic background. We show that the quasiperiodic cocycle induced by passing to blocks of period length is subcritical when the coupling constant is small and supercritical when the coupling constant is large. Thus, the spectral type is absolutely continuous for small coupling and pure point (for a.e.\ frequency and phase) when the coupling is large.
\end{abstract}

\subjclass[2010]{35J10, 37B10, 47B36, 52C23, 58J51, 81Q10}

\maketitle

\setcounter{tocdepth}{2}
\tableofcontents

%%%%%%%
% CHANGE LINK COLORS AFTER TABLE OF CONTENTS
%%%%%%%

\hypersetup{colorlinks, linkcolor={black!30!blue}, citecolor={black!30!blue}, urlcolor={black!30!blue}}

\section{Introduction}
\subsection{Setting and Motivation}
We study Schr\"odinger operators in $\ell^2(\Z)$, that is, operators of the form
\begin{equation} \label{eq:HVdef}
H_V = \Delta + V,
\end{equation}
where the potential $V:\Z \to \R$ is bounded. There has been extensive work done for such operators; the reader may use \cite{CFKS, Damanik2017ETDS, DF1, DF2, Jitomirskaya2007ESO, MarxJito2017ETDS, Teschl2000MSS} and references therein as guides to the literature. In this paper we are interested in questions that lead one to the consideration of products of dynamical systems.

Let us explain how these product systems arise naturally. In many applications of interest, the potential $V$ is given by the sum of two terms,
\begin{equation} \label{eq:HVW}
H = \Delta+V =\Delta+V_1+V_2.
\end{equation}
For instance, one may consider the situation in which $V_1$ is random and $V_2$ is periodic, which supplies a model of a crystal with random impurities; compare \cite{AK19, CH94, GK13, KSS98b, KSS98, K99, K99Erratum, ST20, V02} for a partial list of papers studying this model. Another class of examples in the closely related continuum setting is given by sums of two periodic potentials with incommensurate frequencies, which provide the simplest examples of quasi-periodic potentials; compare \cite{FedotovKlopp2005TAMS, FroSpeWit1990CMP, SoretsSpencer1991CMP} for an incomplete list. This is but a partial list of potential settings; other recent papers consider more general additive perturbations of random \cite{CaiDuaKle2021preprint, DGpreprint} and quasiperiodic \cite{WXYZZpreprint} potentials.

In both examples mentioned in the previous paragraph the two summands have additional structure --- they are dynamically defined, in the sense that they are obtained by sampling along the orbit of a dynamical system (discrete-time in the first example and continuous-time in the second example). 

Thus, we will be interested in the case where $V_1, V_2$ take the following form,
\begin{equation}
V_j(n) = V_{j,x^{(j)}}(n) = f_j(S_j^n x^{(j)}), \quad j=1,2, \ n \in \Z,
\end{equation}
where $x^{(j)} \in \X_j$, a compact metric space, $S_j : \X_j \to \X_j$ is a homeomorphism, and $f_j:\X_j \to \R$ is continuous. Indeed, in the random case ($j=1$), one may take $\X_1$ to be a suitable sequence space, $S_1$ the left shift thereupon, and $f_1$ evaluation at the origin, and in the periodic case ($j=2$), we may take $\X_2 = \Z_p$ where $p$ is the period of $V_2$, $S_2 x^{(2)}=x^{(2)} +1 \, \mathrm{mod} \, p$, and $f_2(x^{(2)}) = V_2(\tilde x^{(2)})$, where $\tilde{x}^{(2)}$ denotes any representative of the residue class $x^{(2)}$.    
 
Clearly then, $V = V_1+V_2$ admits a description in terms of the product system $\Omega = \X_1 \times \X_2$, $T = S_1 \times S_2$. In particular,
\begin{equation} \label{eq:intro:VomegaDef}
V(n) = V_\omega(n) = f(T^n\omega),
\end{equation}
where $\omega = (x^{(1)},x^{(2)})$ and 
\begin{equation} \label{eq:intro:sepFct}
f(x^{(1)},x^{(2)}) = f_1(x^{(1)})+f_2(x^{(2)}).
\end{equation}

On one hand, the choice of sampling function as in \eqref{eq:intro:sepFct} is completely natural given the motivating scenario (a potential given by a sum of a dynamically defined potential and periodic background potential). On the other hand, as soon as one contextualizes the problem with product systems, it becomes natural to consider more general functions $f \in C(\Omega)$ that allow for more significant interactions between the factors. For instance, one may also consider sampling functions of the form 
\begin{equation} \label{eq:intro:sepFctmult}
f(x^{(1)},x^{(2)}) = f_1(x^{(1)})\cdot f_2(x^{(2)}),
\end{equation}
which corresponds to a periodic multiplicative modulation of a given potential. In general, it is more difficult to study periodic multiplicative perturbations than periodic additive perturbations. However, this is a natural outcome of our framework. One instance of such periodic multiplicative modifications comes from the \emph{trimmed Anderson model}, which corresponds to choosing $f$ as in \eqref{eq:intro:sepFctmult} with $f_2(k) = \delta_{k \, \mathrm{mod} \, p, 0}$; see \cite{ElgartKlein2014JST, ElgartSodin2017JST, KirschKrishna2020JST, Rojas-Molina2014OAM} and references therein.

To keep the length and complexity of the introduction in check, we formulate results in the two specific settings mentioned above, but emphasize that each of these results will be deduced as a consequence of a more general statement that allows one to consider quite general functions on the product space(s) that do not need any separable structure.

\subsection{Main Results}
We begin the general study of product systems by looking in detail at three particular instances of the general problem. In each instance, one of the factors will be chosen to be a finite shift on a cyclic space. We then consider results when the other factor is a minimal aperiodic subshift satisfying the Boshernitzan criterion, a Bernoulli shift, or an irrational rotation of the circle. Let us mention that the three settings we consider allow us to study periodic decorations of the Fibonacci Hamiltonian, the Bernoulli Anderson model, and the almost-Mathieu operator, which are the three of the most heavily-studied familes of ergodic one-dimensional Schr\"odinger operators. As a byproduct of this approach, we will obtain information about potentials generated by adding a periodic background to a model that is understood.   As noted above, this scenario corresponds to sampling functions that are separable as in \eqref{eq:intro:sepFct}; however, in each of the three instances, we are able to prove results that cover a larger class of continuous functions on the product space.

Although we investigate product systems in which one factor is a shift on a cyclic group, we emphasize that there are other natural choices for factors generating product systems of interest in mathematical physics that we hope will be addressed in future work.

In the first case, we consider subshifts satisfying the Boshernitzan condition and full shifts over finite alphabets. A subshift over a finite alphabet $\mathcal{A}$ is a compact, shift-invariant subset $\X \subseteq \mathcal{A}^\Z$. Here, $\mathcal{A}$ is given the discrete topology, and the shift $S:\X \to \X$ is given by $[Sx]_n = x_{n+1}$.  If $\X$ is minimal, it is said to satisfy \emph{Boshernitzan's criterion} if there exists an $S$-invariant probability measure $\nu$ on $\X$ with the property that 
\begin{equation}
\limsup_{n\to\infty} n \cdot \min\{\nu\{x \in \X : x_0 \cdots x_{n-1} = u \} : u \in \mathcal{L}_n(\X) \} >0,
\end{equation}
where $\mathcal{L}_n(\X)$ denotes the set of all words of length $n$ that occur in sequences in $\X$.

Motivated by our discussion above, we prove that zero-measure Cantor spectrum is stable under periodic perturbations for potentials of Boshernitzan type. Given a subshift $(\X,S)$ and a function $f_1:\X \to \R$, the potential $V_x$ is given by
\begin{equation} \label{eq:intro:Vxdef}
V_x(n) = f_1(S^nx),
\end{equation}
and the associated Schr\"odinger operator is denoted by $H_x$. We say that $f_1$ is \emph{locally constant} if 
\begin{equation} \label{eq:intro:locConstfdef}
f_1(x) = g(x_nx_{n+1}\ldots x_{n+k-1})
\end{equation}
for some $n \in \Z$, $k \in \N$, and $g:\mc A^k \to \R$.
\begin{theorem} \label{t:bosh:main}
Suppose $(\X,S)$ is a minimal subshift satisfying the Boshernitzan condition, $f_1:\X \to \R$ is locally constant, and $V_\per$ is periodic. One has the following dichotomy: Either $V_x$ defined in \eqref{eq:intro:Vxdef} is periodic for all $x \in \X$ or, for every $x \in \X$, $\sigma(H_x+V_\per)$ is a Cantor set of zero Lebesgue measure.
\end{theorem}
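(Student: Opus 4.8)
The plan is to realize $H_x+V_\per$ as a Schr\"odinger operator over a minimal subshift obtained from the product of $(\X,S)$ with a cyclic rotation, and then to invoke the zero-measure spectrum theorem for Boshernitzan subshifts. Let $p$ denote the period of $V_\per$, write $V_\per(n)=h(n\bmod p)$ for some $h\colon\Z_p\to\R$, and set $R\colon\Z_p\to\Z_p$, $Rk=k+1\bmod p$. On the product $\X\times\Z_p$ with $T=S\times R$ the combined potential satisfies $V_x(n)+V_\per(n)=\widehat f(T^n(x,0))$, where $\widehat f(y,k)=f_1(y)+h(k)$. Regarding $\X\times\Z_p$ as a subshift over the finite alphabet $\mathcal A\times\Z_p$, the function $\widehat f$ is locally constant, because $f_1$ is locally constant and $h$ depends only on the current phase. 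First I would let $\Y$ be the orbit closure of $\omega=(x,0)$. Since one factor is a finite periodic orbit, the fibered structure forces $\Y$ to be minimal: writing the $S^p$-minimal components of $\X$ as a single $S$-cycle $\X_0,\dots,\X_{d-1}$ with $d\mid p$, one checks $\Y=\{(y,k):y\in\X_{k\bmod d}\}$, on which $T$ is minimal. Thus $H_x+V_\per$ is the operator associated to the locally constant sampling function $\widehat f$ over the minimal subshift $\Y$.

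The heart of the argument is the promised stability result: $\Y$ again satisfies Boshernitzan's criterion. Here I would use the measure $\nu=\nu_\X\times(\text{uniform on }\Z_p)$, which is $T$-invariant because $\nu_\X$ is $S$-invariant and the uniform measure is $R$-invariant; after restricting to $\Y$ and normalizing, this furnishes a $T$-invariant probability measure $\nu_\Y$. A length-$n$ word of $\Y$ is a pair $(u,k)$ consisting of a word $u\in\mathcal L_n(\X)$ together with the phase $k$ of its first letter, so $\card\mathcal L_n(\Y)\le p\,\card\mathcal L_n(\X)$ and, more importantly, $\nu_\Y([u,k])$ equals the $\nu_\X$-frequency of occurrences of $u$ at positions congruent to $k$. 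Since the pushforward of $\nu_\Y$ under the phase map is the ($R$-invariant) uniform measure, these occurrences are equidistributed over the admissible phases, giving $\nu_\Y([u,k])\ge \tfrac1p\,\nu_\X([u])$ for every $(u,k)\in\mathcal L_n(\Y)$. Consequently $\limsup_{n}n\cdot\min_{w\in\mathcal L_n(\Y)}\nu_\Y(w)\ge \tfrac1p\limsup_{n}n\cdot\min_{u\in\mathcal L_n(\X)}\nu_\X(u)>0$, so $\Y$ inherits Boshernitzan's criterion. Making the equidistribution step rigorous in the non-product case (when $\Y\subsetneq\X\times\Z_p$) is the main obstacle, since one must control how occurrences of a fixed word distribute over residues within a single minimal component; I expect this to be precisely the technical content of the stability lemma.

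With $\Y$ minimal and Boshernitzan, the dichotomy is clean. If $\Y$ is finite then $V_x+V_\per$ is periodic, hence so is $V_x=(V_x+V_\per)-V_\per$; as $x$ was arbitrary this is the first alternative. Otherwise $\Y$ is infinite, hence aperiodic (an infinite minimal subshift has no periodic points), and the zero-measure spectrum theorem for locally constant potentials over Boshernitzan subshifts applies to $\widehat f$, giving that $\Sigma:=\sigma(H_x+V_\per)$ is independent of the base point on $\Y$ and has zero Lebesgue measure; in particular $\Sigma$ is closed and nowhere dense. To upgrade this to a Cantor set it remains to exclude isolated points: an isolated point $E_0\in\Sigma$ would be an eigenvalue for a.e.\ phase and would force a jump of the integrated density of states at $E_0$, contradicting the continuity of the IDS for one-dimensional ergodic Schr\"odinger operators. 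Hence $\Sigma$ is perfect, nowhere dense, and of measure zero, i.e.\ a zero-measure Cantor set, and since every $x\in\X$ produces such a minimal component this holds for all $x$.
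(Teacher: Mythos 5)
Your overall strategy --- pass to the product with $\Z_p$, identify the minimal component containing $(x,0)$, show it satisfies Boshernitzan's condition, and invoke the Damanik--Lenz theorem --- is the same as the paper's (the paper uses the periodic subshift of translates of $V_\per$ in place of $\Z_p$, a cosmetic difference), but two steps are genuinely broken. The more serious one is your dichotomy. You split on whether the orbit closure $\Y$ of $(x,0)$ is finite or infinite, and in the infinite case you conclude zero-measure spectrum ``since $\Y$ is aperiodic.'' That is not what the Damanik--Lenz theorem says, and the conclusion is false: aperiodicity of the \emph{subshift} does not imply aperiodicity of the sampled \emph{potentials}. Take $\X$ the Fibonacci subshift, $f_1 \equiv 0$ (a perfectly good locally constant function) and $V_\per \equiv 0$: your $\Y$ is infinite, minimal and Boshernitzan, yet $\sigma(H_x + V_\per) = \sigma(\Delta) = [-2,2]$, which has positive measure. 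The result you must cite is itself a dichotomy --- for a minimal Boshernitzan subshift and locally constant sampling function, \emph{either all sampled potentials are periodic or} the spectrum is a zero-measure Cantor set --- and you have to run that dichotomy on $(\Y,\widehat f\,)$, then convert its first alternative into periodicity of $V_x$ by subtracting $V_\per$. Relatedly, even after this repair your argument is pointwise in $x$, whereas the statement is a global dichotomy over $\X$: you still need that if $V_x$ is periodic for one $x$ then $V_{x'}$ is periodic for every $x' \in \X$. This follows from minimality of $\X$ and continuity of $f_1$ (a common period passes to all translates of $x$ and then to their limits), or, as in the paper, from the fact that each minimal component of the product projects onto $\X$, so the component-wise dichotomy propagates.

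The second problem is the one you flagged yourself: the inequality $\nu_\Y([u,k]) \geq \tfrac1p\, \nu_\X([u])$ does not follow from the pushforward of $\nu_\Y$ to $\Z_p$ being uniform, and for short words it can genuinely fail. If $[u]$ meets several $S^p$-minimal components of $\X$, the phase-$k$ fiber of $\Y$ only sees occurrences of $u$ inside the single component $\X_{j(k)}$ paired with $k$, and $\mu([u] \cap \X_{j(k)})$ may be an arbitrarily small fraction of $\mu([u])$. The missing ingredient, which is exactly how the paper closes this step, is that the $S^p$-minimal components are clopen and pairwise disjoint, hence separated by a positive distance; therefore every sufficiently long legal word of $\X$ has its cylinder contained in a single component. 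For such a word that is legal in $\Y$ at phase $k$ one gets $[u] \subseteq \X_{j(k)}$, hence $\nu_\Y([u,k]) = \tfrac{q}{p}\,\mu([u]) \geq \tfrac1p\,\mu([u])$, where $q = \Sf(p)$ is the number of components; since Boshernitzan's condition is a $\limsup$ condition, having this bound for all large word lengths suffices. Finally, your closing argument excluding isolated points via continuity of the integrated density of states is correct but unnecessary: the Damanik--Lenz theorem already asserts that the spectrum is a Cantor set, not merely a set of zero measure.
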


As discussed before, we will see later on that one can deduce Cantor spectrum in a more general setting than the one proposed in Theorem~\ref{t:bosh:main}. We refer the reader to Section~\ref{sec:persubshift} for details, but let us highlight one other outcome of the approach.  

\begin{theorem} \label{t:bosh:permult}
Suppose $(\X,S)$ is a minimal subshift satisfying the Boshernitzan condition, $f_1:\X \to \R$ is locally constant, and $\lambda_\per:\Z \to \R$ is periodic. One has the following dichotomy: For each $x \in \X$, either $\lambda_\per V_x$ with $V_x$ defined in \eqref{eq:intro:Vxdef} is periodic  or $\sigma(\Delta + \lambda_\per V_x)$ is a Cantor set of zero Lebesgue measure.
\end{theorem}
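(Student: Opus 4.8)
The plan is to recognize that $\lambda_\per V_x$ is generated by exactly the same product system that underlies Theorem~\ref{t:bosh:main}, only with a different locally constant sampling function, so that the dichotomy follows from the general product-system statement alluded to just after Theorem~\ref{t:bosh:main}. Concretely, let $p$ be a period of $\lambda_\per$, form the product $\Omega = \X \times \Z_p$ with $T = S \times R$, where $R$ denotes the cyclic shift $k \mapsto k+1 \bmod p$, and define $f : \Omega \to \R$ by $f(x,k) = \lambda_\per(k)\, f_1(x)$. Writing $\omega = (x,0)$, one has $f(T^n \omega) = \lambda_\per(n)\, f_1(S^n x) = \lambda_\per(n) V_x(n)$, so the potential in question is precisely the dynamically defined potential over $(\Omega, T)$ with sampling function $f$. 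The only structural difference from the additive setting $f(x,k) = f_1(x) + f_2(k)$ of Theorem~\ref{t:bosh:main} is that here $f$ is the product rather than the sum of a function of $x$ and a function of $k$; both are instances of a general locally constant $f$ on $\Omega$.

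First I would check that $f$ is locally constant on $\Omega$: since $f_1$ depends only on finitely many coordinates of $x$ and $\lambda_\per(k)$ depends only on the cyclic coordinate $k$, the product $\lambda_\per(k) f_1(x)$ depends only on finitely many coordinates of $(x,k)$, viewing $\Omega$ as a subshift over the product alphabet. Next, I would pass to the orbit closure $\Omega' = \overline{\{T^n \omega\}}$, which is a minimal subshift: because $(\X, S)$ is minimal, $\Omega'$ is the union, over residues $r \bmod p$, of an $S^p$-minimal component of $\X$ paired with $\{r\}$, and $T$ permutes these pieces cyclically, so $(\Omega', T)$ is minimal. The decisive input is the stability of Boshernitzan's criterion under products --- the main ingredient already invoked for Theorem~\ref{t:bosh:main} --- which guarantees that $(\Omega', T)$ again satisfies the Boshernitzan condition. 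At this point the spectral analysis is entirely insensitive to whether $f$ is additively or multiplicatively separable: with $(\Omega', T)$ minimal and Boshernitzan and $f$ locally constant, the Damanik--Lenz zero-measure criterion applies verbatim. If the potential $\lambda_\per V_x$ is periodic we are in the first horn of the dichotomy; otherwise the potential is aperiodic, whence the common spectrum $\sigma(\Delta + \lambda_\per V_x)$ has zero Lebesgue measure, and combining this with the absence of isolated points (perfectness) shows it is a Cantor set.

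The main obstacle is not any new spectral estimate but the conceptual point that the product-system machinery depends only on the pair (minimal aperiodic Boshernitzan subshift, locally constant sampling function), and not on the algebraic form of the sampling function; once this is isolated, Theorem~\ref{t:bosh:permult} is simply the specialization $f = \lambda_\per \cdot f_1$ of the general statement, exactly as Theorem~\ref{t:bosh:main} is the specialization $f = f_1 + V_\per$. The only genuinely nontrivial ingredient, namely that taking a product with a periodic factor preserves the Boshernitzan condition, is shared with the additive case and is established once and for all; the verification of local constancy and of the minimality of $\Omega'$ is routine. Care is needed only in confirming that the aperiodicity alternative in the zero-measure criterion matches the periodicity alternative in the stated dichotomy, i.e.\ that $\lambda_\per V_x$ fails to be periodic precisely when the hypotheses of the zero-measure Cantor conclusion are in force.
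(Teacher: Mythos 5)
Your proposal is correct and follows essentially the same route as the paper: the paper's proof of Theorem~\ref{t:bosh:permult} is exactly the reduction to the general product-system result (Theorem~\ref{THM:boshernitzan+periodic}) with the multiplicative sampling function $f(x,x') = x'(0)\cdot f_1(x)$, whose ingredients --- minimality of the orbit closure via the $S^p$-minimal components, stability of Boshernitzan's criterion under products with a periodic factor, and the Damanik--Lenz zero-measure criterion --- are precisely the ones you assemble. Your closing point about the per-$x$ (rather than global) dichotomy is also the right one, and is handled in the paper by the component-wise statement of Theorem~\ref{THM:boshernitzan+periodic}(c).
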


Notice that there is a slight difference between Theorems~\ref{t:bosh:main} and \ref{t:bosh:permult} with regard to their dependence on $x \in \X$. Namely, in Theorem~\ref{t:bosh:main}, the dichotomy is chosen globally, while the dichotomy in Theorem~\ref{t:bosh:permult} holds for each individual $x \in \X$. Moreover, one cannot avoid this distinction; there exist aperiodic subshifts $\X$ and periodic sequences $\lambda_\per$ such that $\lambda_\per V_x$ is periodic for some but not all $x \in \X$ (see Remark~\ref{rem:boshper:lambdaperXperiodDoubling}).

As soon as the spectrum is a Cantor set of zero Lebesgue measure, the spectral type of $H_\omega$ is necessarily purely singular. Based on known results, it is then natural to ask whether it is purely singular continuous, that is, whether the spectral measures lack point masses. In order to show that the spectral type is purely continuous, one must exclude eigenvalues for the operators $H_\omega$, which is in general a delicate endeavor. We discuss results related to the absence of point spectrum in Section~\ref{subsec:subshifteigs}.

Let us highlight one of the results from that section. In the setting of ergodic operators, one often looks for results excluding point spectrum for a.e.\ realization with respect to an ergodic measure. However, one is sometimes able to exclude eigenvalues \emph{uniformly} (that is, for every $\omega \in \Omega$, not just a.e.\ $\omega$). In general, it is somewhat rare to have a model in which one can prove uniform absence of eigenvalues.  We expand the list of known examples in Section~\ref{subsec:subshifteigs} to include periodic perturbations of simple Toeplitz subshifts for which the period is commensurate with the coding sequence and the sampling function only depends on a single entry of $x \in \X$ (see Section~\ref{subsec:subshifteigs} for definitions of Toeplitz subshifts and coding sequences).  To formulate the next result, we use the following definition: given a locally constant function $f_1:\X \to \R$ as in \eqref{eq:intro:locConstfdef}, we call $g$ the \emph{window function} and $k \in \N$ the \emph{window size}.  

\begin{theorem} \label{t:toeplitz:main}
Let $(\mathbb{X},S) \subseteq \{a,b \}^{\Z}$ be a simple Toeplitz subshift over a binary alphabet, let $f_1 \colon \mathbb{X} \to \R$ be locally constant with window size $1$, and suppose $V_\per$ is periodic with period $p$. If $(\Z_p,+1)$ is a factor of $(\mathbb{X},S)$, then, for every $x \in \mathbb{X}$, the operator $H_x + V_\per$ has no eigenvalues.
%In this case, $H_x + V_\per$ has purely singular continuous Cantor spectrum of Lebesgue measure $0$, provided $f$ is injective.
\end{theorem}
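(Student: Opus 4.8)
The plan is to exclude eigenvalues by a two-sided Gordon argument, with the periodic background rendered transparent by forcing every repetition period to be a multiple of the period $p$. I begin with two reductions. Since $f_1$ has window size $1$, after a harmless reindexing the potential is a pointwise recoding of the coding sequence, $V_x(n) = g(x_n)$ with $g\colon \{a,b\}\to\R$, so the operator in question is $\Delta + W_x$ with $W_x = V_x + V_\per$. Next, the hypothesis that $(\Z_p,+1)$ be a factor of $(\X,S)$ is equivalent to the divisibility $p \mid p_k$ for all large $k$, where $p_k$ is the $k$-th structural period: a cyclic rotation is equicontinuous and must therefore factor through the maximal equicontinuous factor, namely the odometer $\varprojlim \Z/p_k\Z$ attached to the simple Toeplitz structure. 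The payoff is that any repetition period $q_k$ which is a multiple of $p_k$ is automatically a period of $V_\per$, so it suffices to produce suitable repetitions of the unperturbed Toeplitz potential $V_x$ whose periods are multiples of $p_k$.

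The engine is the $\SL(2,\R)$ form of Gordon's lemma. Writing $A(n)$ for the one-step transfer matrices of $\Delta + W_x$ at energy $E$ and $M$ for the transfer matrix across one period $q$, the Cayley--Hamilton identities $M + M^{-1} = (\tr M)\,I$ and $M^2 = (\tr M) M - I$ yield $\max\{\|M^{-1}v\|,\|Mv\|,\|M^2 v\|\}\ge \tfrac12\|v\|$ for every vector $v$. Taking $v=\binom{u(0)}{u(-1)}$ for a formal solution $u$ of $(\Delta + W_x)u = Eu$, this lower bound transfers to $\binom{u(\pm q)}{\,\cdot\,}$ and $\binom{u(2q)}{\,\cdot\,}$, provided $W_x$ is exactly $q$-periodic on $[-q+1,2q]$ so that $M$ computes the transfer across each of the three blocks. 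Hence, if for a given $x$ there are periods $q_k\to\infty$ with $W_x$ exactly $q_k$-periodic on $[-q_k+1,2q_k]$, then no solution can decay at both ends, and $\Delta + W_x$ has no eigenvalues; applied uniformly in $x$ and in $E\in\R$, this gives the theorem.

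It therefore remains to produce, for every $x\in\X$, exact cubes $C_k^3$ in $W_x$ with $|C_k|=q_k$ a multiple of $p_k$ (so $q_k\to\infty$ and $p\mid q_k$) and with the origin in the first block. Here I would exploit the simple Toeplitz hierarchy: at level $k$ there is exactly one hole per period $p_k$, the block $B_k$ sitting between two consecutive holes is independent of which gap one inspects, and consequently a period-$p_k$ cube straddling the origin is equivalent to three consecutive agreeing fill symbols in the decimated sequence $(x_{h_k+p_k j})_j$, which is itself a shift of a simple Toeplitz sequence governed by the tail parameters. Genuine cubes of unbounded period then arise from the self-similarity of the construction (the image of a cube under the defining recoding is again a cube of larger period), and because $p\mid p_k$ the background $V_\per$ is $q_k$-periodic, so a cube of $V_x$ is also a cube of $W_x$.

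The main obstacle is precisely the uniform positioning: guaranteeing, for every $x$ at once---including the finitely many exceptional points in each singular odometer fiber---that infinitely many of these cubes can be centered so that the origin lies strictly inside a constant run of the level-$k$ decimated sequence. The existence of long cubes in the language is automatic from self-similarity, but anchoring one at the origin for each individual point is delicate; I expect to resolve it by choosing the scale $k$ adaptively to $x$ through the odometer coordinate of the origin together with the binary alphabet, which forces frequent constant runs in the decimated sequences. Finally, I would emphasize that this translation-type repetition, unlike the palindrome criterion, is robust under a \emph{commensurate} periodic perturbation exactly because commensurability lets the cube period absorb the period of $V_\per$; this robustness is the conceptual heart of the stability statement.
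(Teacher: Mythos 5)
Your two reductions (window size one, and the factor hypothesis being equivalent to commensurability of $p$ with the coding sequence) are sound, and your Cayley--Hamilton criterion is the correct two-sided three-block Gordon lemma. But the step you flag as ``the main obstacle'' is not merely delicate --- it is false as stated, and this is a genuine gap. Your argument needs, for \emph{every} $x \in \mathbb{X}$, exact cubes $u\,\hat{u}\,u$ of unbounded length with the origin in the middle block. Take a coding sequence with $n_k = 3$ for all large $k$ (allowed, since only $n_k \geq 2$ is required). Then $w_{k+1} = w_k^2 v_k$, where $v_k$ differs from $w_k$ in exactly its last letter: every cube in the hierarchy is defective in one position. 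There exist points whose structure around the origin is $v_k w_k \hat{w}_k v_k$ at \emph{every} level $k$, and for such points no exact cube of any hierarchical length can be anchored with the origin in its middle block --- the one-letter defects obstruct every candidate period, at every scale. Choosing the scale ``adaptively to $x$'' cannot repair this, because the obstruction is present at all scales simultaneously for these points. (A secondary issue: even when $n_k \neq 3$ infinitely often, configurations such as $w_k w_k \hat{v}_k$ admit no two-sided cube either; the paper handles them with the \emph{two-block} Gordon criterion, which requires the trace bound $|x_k(E)| \leq 2$ supplied by Proposition~\ref{PROP:spectrum-trace} --- trace information your purely combinatorial criterion discards.)

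The paper's proof of Theorem~\ref{THM:Toep-p-no-eigenvalues} shows what is actually needed in the hard case (Lemma~\ref{LEM:eventually-3-case}): for the bad points one only has a \emph{square} $w_k \hat{w}_k$ centered at the origin, so one uses a centered two-block Gordon lemma (Lemma~\ref{LEM:centered-Gordon}) whose hypothesis is the quantitative divergence $\sum_k |x_k(E)|^2 = \infty$. That divergence is then forced by trace dynamics: if $E$ were an eigenvalue, Proposition~\ref{PROP:trace-stability} gives $x_k(E) \neq y_k(E)$ for all large $k$; if in addition the traces were square-summable, then $x_k, y_k \to 0$, and the difference recursion $|x_{k+1} - y_{k+1}| = |x_k^2 - 1|\,|x_k - y_k|$ from Lemma~\ref{LEM:trace-relations} would force $\sum_k x_k^2 = \infty$, a contradiction. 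So the case your approach cannot reach requires genuinely different machinery --- combinatorics of squares plus an analysis of the trace map --- rather than a sharper positioning argument for cubes. Your observation that commensurability lets repetition periods absorb the period of $V_\per$ is correct and is indeed the reason the periodic background is harmless, but it addresses the easy half of the problem.
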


We emphasize that, in the generality formulated here, Theorem~\ref{t:toeplitz:main} is new even in the case $p=1$.
Since every simple Toeplitz subshift over a binary alphabet satisfies Boshernitzan's condition, the spectrum of $H_x + V_\per$ is in fact a Cantor set of Lebesgue measure zero in the situation of Theorem~\ref{t:toeplitz:main}, provided that the window function $g$ is nonconstant, and thus the spectral type is purely singular continuous in that scenario.
\medskip

Next, we consider periodic modifications of the random case. To model the random part, we choose $\X = {\mc A}^\Z$ the full shift on the alphabet $\mc A = \{1,2,\ldots,m\}$ with $m \in \N$ and $\mu = \mu_0^\Z$, where $\mu_0$ is a probability measure on $\mc A$.  Without loss of generality, we assume $\mu_0(\{a\})>0$ for all $a \in \mc A$. 

We further restrict to the case in which the periodic modification has period two. For each choice of $a,b \in \mc A$, there is a natural period-two element of $\X$ which we denote by $x_{ab} = (ab)^\Z$, and which is given by $x_{2n}= a$, $x_{2n+1}= b$.

\begin{theorem} \label{t:2per+rand}
Suppose $(\X,S)$ is a full shift on an alphabet with $m$ symbols, $V_\per$ has period two, and $f_1:\X \to \R$ is locally constant of window size $1$. For $\mu$-almost every $x \in \mathbb{X}$, the spectrum of the corresponding Schr\"{o}dinger operator is given by
\begin{equation} \label{eq:2-periodic-spectrum}
\sigma(H_{x} + V_\per) = \bigcup_{a,b \in \mc A} \sigma(H_{x_{ab}} +V_\per).
\end{equation}
\end{theorem}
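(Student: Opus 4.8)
The plan is to identify the correct ergodic structure, compute the almost sure spectrum $\Sigma$, and then match it with the right-hand side through the dictionary between the resolvent set and uniform hyperbolicity of the Schr\"odinger cocycle. Writing $\omega = (y,j) \in \X \times \Z$ (second factor mod $2$) and $T = S \times (+1)$, the potential is $W_\omega(n) = g(y_n) + V_\per(n+j)$, where $g \colon \mc A \to \R$ is the window function; one checks $W_{T\omega}(n) = W_\omega(n+1)$, so $\{H_\omega = \Delta + W_\omega\}$ is a genuine ergodic family over $(\X\times\Z_2,\, T,\, \mu\otimes\tau)$ with $\tau$ uniform on $\Z_2$. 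Since the Bernoulli shift $S$ is weakly mixing, its only eigenvalue is trivial, so the product with the finite rotation $(+1)$ is ergodic, and there is a set $\Sigma$ with $\sigma(H_\omega) = \Sigma$ for a.e.\ $\omega$. Because $\mu_0(\{a\})>0$ for every $a$, the measure $\mu\otimes\tau$ has full topological support; combined with lower semicontinuity of $\omega \mapsto \sigma(H_\omega)$ under strong resolvent convergence, this gives $\sigma(H_\omega) \subseteq \Sigma$ for \emph{every} $\omega$, hence $\Sigma = \bigcup_{\omega} \sigma(H_\omega)$. In particular the points $\omega = (x_{ab},0)$ yield the inclusion $\bigcup_{a,b} \sigma(H_{x_{ab}} + V_\per) \subseteq \Sigma$ for free.

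The substance is the reverse inclusion, for which I would use the characterization (valid over the full, not necessarily minimal, hull) that $E \notin \Sigma$ if and only if the Schr\"odinger cocycle $A^E$ over $(\X\times\Z_2,T)$ is uniformly hyperbolic \cite{DF1,DF2}. With $T_v(E) = \begin{pmatrix} E-v & -1 \\ 1 & 0 \end{pmatrix}$, the period-two background forces the transfer matrices at even sites to lie in $\{T_\alpha : \alpha \in U\}$, $U = \{g(a)+V_\per(0)\}$, and at odd sites in $\{T_\beta : \beta \in W\}$, $W = \{g(b)+V_\per(1)\}$. The configurations $x_{ab}$ correspond exactly to the two-step monodromies $M_{\alpha\beta} = T_\beta T_\alpha$, whose trace is $(E-\alpha)(E-\beta)-2$; thus $E \notin \sigma(H_{x_{ab}}+V_\per)$ for all $a,b$ is equivalent to $(E-\alpha)(E-\beta) \notin [0,4]$ for all $\alpha \in U$, $\beta \in W$, i.e.\ to hyperbolicity of every $M_{\alpha\beta}$. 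It then remains to upgrade this pairwise hyperbolicity to uniform hyperbolicity of the whole, non-commuting, alternating cocycle.

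This last step is the main obstacle, and it is where the period-two hypothesis is essential. I would recast uniform hyperbolicity projectively: passing to the Riccati maps $\phi_v(s) = (E-v) - 1/s$ on $\mathbb{P}^1(\R)$, the cocycle is uniformly hyperbolic precisely when there is a strictly invariant pair of arcs $(J_e, J_o)$ with $\phi_\alpha(J_e) \Subset J_o$ for all $\alpha \in U$ and $\phi_\beta(J_o) \Subset J_e$ for all $\beta \in W$. The crucial structural point is that all $\phi_v$ differ only by the additive constant $E-v$ and share the common inversion $s \mapsto -1/s$, so that the images of a fixed arc under the maps of one family are mutual translates. This produces a genuine cyclic order on $\mathbb{P}^1(\R)$ in which the attracting and repelling data of the hyperbolic monodromies $M_{\alpha\beta}$ are nested compatibly across the two families, and the arcs can then be realized as the shrinking ranges of forward iterates. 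I expect the hard part to be verifying the strict nesting uniformly in $\alpha,\beta$ and confirming that no ``frustration'' occurs; this is exactly the feature special to two alternating families, whereas for three families (period three) the analogous three-arc ping-pong need not close up even when all length-three monodromies are hyperbolic, which is precisely why the naive period-three generalization fails. Once the invariant arcs are in hand, uniform hyperbolicity gives $E \notin \Sigma$, establishing $\Sigma \subseteq \bigcup_{a,b}\sigma(H_{x_{ab}}+V_\per)$; combined with the first inclusion and $\sigma(H_x+V_\per)=\Sigma$ for a.e.\ $x$, this proves the theorem.
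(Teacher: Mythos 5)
Your outer scaffolding is exactly the paper's: the inclusion $\bigcup_{a,b}\sigma(H_{x_{ab}}+V_\per)\subseteq\Sigma$ via dense orbits and strong approximation, and the reduction of the reverse inclusion, via Johnson's theorem, to showing that hyperbolicity of every two-step monodromy $M_{\alpha\beta}$ forces uniform hyperbolicity of the whole alternating cocycle (this is the content of Theorem~\ref{THM:2-periodic-spectrum}, from which Theorem~\ref{t:2per+rand} follows). But precisely that upgrade---the only substantive step---is not proved in your proposal. You state the invariant-arc (ping-pong) criterion, observe that the Riccati maps $\phi_v(s)=(E-v)-1/s$ within each family are translates of one another, and then say you ``expect'' the strict nesting to hold without frustration. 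That expectation \emph{is} the theorem. Nothing in the translate structure by itself rules out frustration: the question is where the contracting and expanding eigendirections of the various $M_{\alpha\beta}$ sit relative to a single candidate arc, and this depends on the actual values $x_b=E-g(b,1)$ and $y_a=E-g(a,0)$, not merely on the additive form of the Riccati maps. As your own period-three remark shows, pairwise hyperbolicity of monodromies is in general far from sufficient, so the two-family case cannot be settled by a soft structural observation; it needs inequalities.

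The paper supplies them by direct computation: the eigendirections of $M_E(a,b)$ are $v_1^{\pm}=\frac{x}{2}\bigl(1\pm\sqrt{(xy-4)/(xy)}\bigr)$; using monotonicity of $v_1^-$ in $x$, the bounds $0<v_1^-<2/y$, and the extremal quantities $y_\pm$ (the smallest positive and largest negative values of $y_a$, with conventions for empty sets), it establishes the dichotomy \eqref{EQ:v-dichotomy}: every contracting direction lies in the open interval $\bigl(\tfrac{2}{y_-},\tfrac{2}{y_+}\bigr)$ while every expanding direction lies outside $\bigl[\tfrac{2}{y_-},\tfrac{2}{y_+}\bigr]$. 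This yields one open interval $I\subset\mathbb{P}^1$ contracted by all the matrices $M_E(a,b)$ simultaneously, whence uniform hyperbolicity of \emph{every} cocycle built from them by the Avila--Bochi--Yoccoz criterion \cite[Thm.~2.2]{ABY}, and then $E\in\varrho(H_\omega)$ for all $\omega$ by Johnson's theorem. Your write-up needs an argument of this kind---either the explicit eigendirection computation or an honest verification of your nesting claim, which, once unwound, amounts to the same inequalities. Until that is done, the proposal is a correct plan whose central lemma is asserted rather than proven.
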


 As in the case of Theorems~\ref{t:bosh:main} and \ref{t:bosh:permult}, our framework can incorporate other periodic decorations. The statement of Theorem~\ref{t:2per+rand} remains true if $H_x+V_\per$ is replaced by $\Delta+\lambda_\per V_x$ with $\lambda_\per$ of period two (and a similar replacement for $H_{x_{ab}}$). While we were completing this manuscript, we learned that William Wood had independently proved Theorem~\ref{t:2per+rand} (and the more general statement from which it is derived)  in the case $m=2$ in addition to obtaining finer results such as an explicit calculation of spectral gaps \cite{WoodPreprint}.

There is a natural na\"{i}ve generalization of the statement in Theorem ~\ref{t:2per+rand} when $V_\per$ has period larger than two. We give an example in Section~\ref{subsec:per+bern} to show that this generalization already fails for potentials of period three. 

Theorem~\ref{t:2per+rand} is related to an interesting open question: does the almost-sure spectrum of $H_x+V_\per$ always have finitely many connected components whenever $V_\per$ is periodic? This is well-known when $V_\per$ has period one (i.e., is constant) and Theorem~\ref{t:2per+rand} gives an affirmative answer to the question when the period is two.
\medskip

Finally, we consider periodic perturbations of quasi-periodic potentials. Let $\T = \R/\Z$ denote the circle, and suppose $\alpha \in \T$ is irrational. Here, one sometimes wants to observe phenomena that depend on variations of the frequency or sampling function. So, for $f_1 \in C(\T,\R)$ and $x \in \T$, the potential $V_{f_1,\alpha, x}$ is given by
\begin{equation} \label{eq:intro:qpVxdef}
V_{f_1,\alpha,x}(n) = f_1(n\alpha + x).
\end{equation}
The corresponding Schr\"odinger operator is denoted $H_{f_1, \alpha ,x}$. It is well known (and not hard to show with minimality and strong operator convergence) that there is a compact set $\Sigma = \Sigma_{f_1,\alpha}$ with $\Sigma = \sigma(H_{f_1,\alpha,x})$ for all $x \in \T$. Quasi-periodic operators have been heavily studied over the years; we direct the reader to the survey \cite{MarxJito2017ETDS} for a guide on the literature.

In this setting, one has the following result. See Section~\ref{sec:qpper} for definitions of the $p$-step cocycle and sub/super-criticality of a cocycle.

\begin{theorem} \label{t:qppermain}
Let $\alpha \in \T$ be irrational. If $f_1:\T \to \R$ is a non-constant real-valued trigonometric polynomial and $V_\per$ is periodic, then there exist $0< \lambda_1 < \lambda_2 < \infty$ such that the $p$-step transfer matrix cocycle associated with $H_{\lambda f_1, \alpha ,x} + V_\per$ is subcritical for every energy in $\Sigma_{\lambda f_1, \alpha}$ when $|\lambda|\leq \lambda_1$ and supercritical on $\Sigma_{\lambda f_1, \alpha}$ when $|\lambda| \geq \lambda_2$.
\end{theorem}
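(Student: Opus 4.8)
My plan is to place everything inside Avila's global theory of one-frequency analytic $\SL(2,\C)$ cocycles and to treat the two coupling regimes by genuinely different mechanisms. First I would set up the cocycle: since $V_\per$ has period $p$, grouping the transfer matrices into blocks of length $p$ produces a cocycle over the single rotation $x\mapsto x+p\alpha$ of $\T$ (note $p\alpha$ is again irrational), with matrix
\[
A^{(p)}_{E,\lambda}(x)=\prod_{j=p-1}^{0}\begin{pmatrix} E-V_\per(j)-\lambda f_1(x+j\alpha) & -1\\ 1 & 0\end{pmatrix}.
\]
As $f_1$ is a trigonometric polynomial, $A^{(p)}_{E,\lambda}$ is entire in $x$, so I may complexify $x\mapsto x+i\epsilon$ and consider the Lyapunov exponent $L(\epsilon)=L(E,\lambda,\epsilon)$ of the complexified cocycle. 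I will use the following standard facts: $L$ is jointly continuous in $(E,\lambda)$ (one-frequency continuity of the Lyapunov exponent); $\epsilon\mapsto L(\epsilon)$ is convex, even, and has quantized right-derivative (the acceleration lies in $2\pi\Z$); the acceleration is upper semicontinuous; and the cocycle is uniformly hyperbolic precisely when $L(0)>0$ and the acceleration vanishes at $\epsilon=0$. Finally, $E$ lies in the spectrum $\Sigma$ of $H_{\lambda f_1,\alpha,x}+V_\per$ exactly when the cocycle fails to be uniformly hyperbolic, in which case subcritical/critical/supercritical is decided by the pair $(L(0),\,\text{acceleration at }0)$.

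For $|\lambda|$ large I would run Herman's subharmonicity estimate on the blocked cocycle. Write $f_1(x)=\sum_{|k|\le d}\hat f_1(k)e^{2\pi ikx}$ with $d\ge1$ (this is where nonconstancy enters) and $\hat f_1(\pm d)\ne0$. Letting $\epsilon\to+\infty$, the $(1,1)$ entry of each factor is dominated by $-\lambda\hat f_1(-d)e^{2\pi d\epsilon}e^{-2\pi id(x+j\alpha)}$, while $E$ and the periodic constants $V_\per(j)$ are lower order; tracking this leading term through the $p$-fold product gives $\|A^{(p)}_{E,\lambda}(x+i\epsilon)\|\sim|\lambda\hat f_1(-d)|^p e^{2\pi pd\epsilon}$ uniformly in $x$ and locally uniformly in $E$. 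Hence $\epsilon\mapsto L(\epsilon)-2\pi pd\,\epsilon$ is nonincreasing with limit $p\log|\lambda\hat f_1(-d)|$, and convexity yields the Herman bound $L(0)\ge p\log|\lambda|-C$ with $C=C(f_1,V_\per,\sup_{E\in\Sigma}|E|)$ independent of $\lambda$. Thus there is $\lambda_2$ with $L(0)>0$ on the (bounded) spectrum once $|\lambda|\ge\lambda_2$; as the cocycle is not uniformly hyperbolic there, this is exactly supercriticality.

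For $|\lambda|$ small the decisive point is that at $\lambda=0$ the blocked cocycle is the constant monodromy matrix $M(E)$ of $\Delta+V_\per$, independent of $x$; a constant cocycle has acceleration identically zero, for every $E$ and every $\epsilon$, whether $M(E)$ is elliptic, parabolic, or hyperbolic. Using upper semicontinuity of the acceleration together with a compactness/covering argument over $E$ in a fixed compact neighborhood of $\Sigma_{\mathrm{per}}:=\sigma(\Delta+V_\per)$ and over $\epsilon\in[0,\delta_0]$, I would produce $\lambda_1,\delta>0$ such that the acceleration vanishes whenever $|\lambda|\le\lambda_1$, $E$ is near $\Sigma_{\mathrm{per}}$, and $|\epsilon|\le\delta$. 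Since the potentials converge uniformly as $\lambda\to0$, the spectra converge, so $\Sigma$ is contained in that neighborhood for small $\lambda$; hence every $E\in\Sigma$ has vanishing acceleration at $\epsilon=0$. As $E\in\Sigma$ forces non-uniform-hyperbolicity, the characterization of uniform hyperbolicity then forces $L(0)=0$ as well, so the cocycle is subcritical. The appeal of this route is that it never invokes ellipticity of $M(E)$, so the band edges of the periodic spectrum (where $M(E)$ is parabolic) are treated on exactly the same footing as interior energies.

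I expect the subcritical direction to be the main obstacle: one cannot perturb naively, since subcriticality is not manifestly an open condition and the parabolic band edges defeat elementary reducibility arguments. The crux is precisely the interplay of the quantization and upper semicontinuity of the acceleration with the global-theory characterization of uniform hyperbolicity; these are the deep inputs that upgrade the soft assertion ``the integer acceleration cannot jump upward under a small perturbation'' into the hard conclusion $L(0)=0$ on the perturbed spectrum. A secondary point requiring care is the non-degeneracy of the rank-one leading term in Herman's $p$-fold product, where one must use the off-diagonal entries of the transfer matrices to rule out cancellation in the presence of the constants $V_\per(j)$.
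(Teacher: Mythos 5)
Your proof is correct, and it splits relative to the paper: the supercritical half is essentially the paper's own argument, while the subcritical half takes a genuinely different route. For large $|\lambda|$, both you and the paper (Theorem~\ref{t:qpper:supercritical}) complexify the phase of the blocked $p$-step cocycle, read off the affine behavior with slope $2\pi$ times the total degree at large imaginary phase from the top Fourier mode (the rank-one leading term whose nondegeneracy you correctly flag), and pull the bound back to $\varepsilon=0$ by convexity and quantization, concluding supercriticality on the spectrum via Johnson's theorem. For small $|\lambda|$, the paper (Theorem~\ref{t:qpper:subcritical}) adapts the quantitative Marx--Shou--Wellens argument: it bounds the Herman radius by $\frac{1}{2\pi d_0}\log\bigl(\frac{5}{\delta|\lambda|}\bigr)$, uses that the Lyapunov exponent is affine with slope $2\pi d$ beyond that radius, and shows that acceleration at least $1$ at $\varepsilon=0$, together with $L\geq 0$ and convexity, forces a lower bound on $|\lambda|$ --- a contradiction at small coupling. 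You instead perturb from $\lambda=0$, where the blocked cocycle is the constant monodromy of $\Delta+V_\per$ with identically vanishing acceleration, and upgrade this via upper semicontinuity of the acceleration (which does follow, as you say, from convexity in $\varepsilon$ plus joint continuity of the Lyapunov exponent for one-frequency analytic $\SL(2,\C)$ cocycles) together with integrality, non-negativity, and a compactness covering; Johnson's theorem and Avila's ``regular with positive exponent implies uniformly hyperbolic'' then force $L(0)=0$ on the perturbed spectrum. Note that both routes rest on the same two deep pillars, quantization and the global-theory characterization of uniform hyperbolicity: the paper's step ``not subcritical $\Rightarrow$ acceleration $\geq 1$ on the spectrum'' secretly invokes the latter exactly as you do. The trade-off is that the paper's argument is effective (its remark notes the threshold $c(f)$ is computable) and does not need the continuity theorem for the Lyapunov exponent as an input, whereas yours yields no explicit $\lambda_1$ but dispenses with the Herman-radius computation and the equal-degree bookkeeping entirely; indeed, your subcritical argument would work verbatim for arbitrary real-analytic $f_1$, not just trigonometric polynomials. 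Two harmless slips: in the paper's normalization the acceleration is an integer (it is the slopes of $\varepsilon\mapsto L(\varepsilon)$ that lie in $2\pi\Z$), and your Herman constant can simply be taken to be $-p\log|\hat f_1(-d)|$, independent of $E$ and $V_\per$, since those terms disappear in the $\varepsilon\to\infty$ limit.
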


The analysis of cocycle dynamics pays dividends for the spectral analysis. Indeed, Avila's almost-reducibility theorem \cite{AvilaARAC1, AvilaARAC2} implies that the spectral type is purely absolutely continuous in the subcritical region. On the other hand, there is a well-established road map to proving localization in the regime of positive Lyapunov exponents. Relatively straightforward modifications of Bourgain--Goldstein's argument \cite{BourgainGoldstein2000Ann} allow one to prove Anderson localization (i.e., pure point spectrum with exponentially decaying eigenfunctions) for large $|\lambda|$ and a.e.\ frequency and phase.

The idea of reorganizing a periodic decoration of a quasi-periodic potential by passing to blocks of period length has been applied fruitfully in other recent works, such as \cite{WXYZZpreprint}, which proved specific results for the quasiperiodic \emph{mosaic model}, which corresponds to a specific choice of trigonometric polynomial on the product system.

The \emph{almost-Mathieu operator} is given by choosing the sampling function $f_1(x) = 2\lambda \cos(2\pi x)$, that is,
\begin{equation}
V_{\lambda,\alpha,x}^{\rm AMO}(n) = 2\lambda \cos(2\pi(n\alpha+x)).
\end{equation}
We write $H_{\lambda,\alpha,x}^{\rm AMO}$ for the corresponding operator. This operator family has been the subject of numerous investigations in recent decades; we point the reader to \cite{MarxJito2017ETDS} for a thorough account of the history.

\begin{theorem} \label{thm:AMOper}
Suppose $V_\per:\Z \to \R$ is periodic and $\alpha$ is irrational. The $p$-step cocycle associated with $H_{\lambda,\alpha,x}^{\rm AMO} + V_\per$ is subcritical on the spectrum when $|\lambda|$ is sufficiently small and supercritical on the spectrum when $|\lambda|$ is sufficiently large.
\end{theorem}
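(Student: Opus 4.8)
The plan is to recognize Theorem~\ref{thm:AMOper} as the special case of Theorem~\ref{t:qppermain} in which the sampling function is $f_1(x) = 2\cos(2\pi x)$. First I would check that this $f_1$ meets the hypotheses of Theorem~\ref{t:qppermain}: it is real-valued, it is a trigonometric polynomial (of degree one), and it is non-constant. Next I would match the coupling conventions. In the almost-Mathieu potential the coupling enters as an overall scalar, $V_{\lambda,\alpha,x}^{\rm AMO}(n) = 2\lambda\cos(2\pi(n\alpha+x)) = (\lambda f_1)(n\alpha+x)$, so that $H_{\lambda,\alpha,x}^{\rm AMO}+V_\per$ coincides with $H_{\lambda f_1,\alpha,x}+V_\per$ in the notation of Theorem~\ref{t:qppermain}. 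With this identification, the thresholds $0<\lambda_1<\lambda_2<\infty$ furnished by Theorem~\ref{t:qppermain} yield subcriticality of the $p$-step cocycle at every energy in the spectrum when $|\lambda|\le\lambda_1$ and supercriticality when $|\lambda|\ge\lambda_2$, which is exactly the assertion. Thus, modulo normalizing the coupling, there is nothing to prove beyond invoking the general theorem.

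It is nevertheless instructive to recall the two mechanisms at work, since they also suggest a self-contained route particular to the almost-Mathieu case. For large $|\lambda|$ one wants a positive Lyapunov exponent on the spectrum: the single-site transfer matrix is dominated by the $2\lambda\cos$ term, and Herman's subharmonicity estimate gives a lower bound on the Lyapunov exponent that grows like $\log|\lambda|$; passing to blocks of length $p$ multiplies this asymptotically by $p$, so the $p$-step Lyapunov exponent is positive once $|\lambda|$ is large. Since at energies in the spectrum the cocycle over the (minimal) rotation by $p\alpha$ is never uniformly hyperbolic, a positive Lyapunov exponent at the real phase means the cocycle is supercritical in the sense of Avila's global theory. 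For small $|\lambda|$ one instead anchors the argument at $\lambda=0$, where the potential is purely periodic: the $p$-step transfer matrix is then independent of the phase $x$, so the complexified Lyapunov exponent $L(\epsilon)$ is constant in the imaginary displacement $\epsilon$, and it vanishes on the (elliptic) spectrum of the periodic operator. Consequently the $\lambda=0$ cocycle is subcritical, indeed with $L\equiv 0$ on an entire strip.

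The main obstacle is the persistence of subcriticality for small $|\lambda|>0$. Positivity of the Lyapunov exponent at large coupling is comparatively soft and robust, whereas to conclude subcriticality one must show that $L(\epsilon)$ remains identically zero on a strip $|\Im z|<\delta$ after a small perturbation, not merely that $L(0)$ stays small. This is precisely where Avila's global theory is indispensable: using joint continuity of the Lyapunov exponent in the coupling and the complexified phase together with the integrality and upper semicontinuity of the acceleration, one argues that the acceleration stays zero near $\lambda=0$, so that a flat, vanishing $L$ at $\lambda=0$ forces $L\equiv 0$ on a strip for small $|\lambda|$. This is the substantive content of Theorem~\ref{t:qppermain}; once it is established, the almost-Mathieu statement follows at once by the specialization above.
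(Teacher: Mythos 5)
Your proposal is correct and follows the paper's own proof exactly: Theorem~\ref{thm:AMOper} is obtained by specializing Theorem~\ref{t:qppermain} to $f_1(x) = 2\cos(2\pi x)$, a non-constant real trigonometric polynomial, after absorbing the coupling constant as $2\lambda\cos(2\pi x) = \lambda f_1(x)$. The additional paragraphs sketching the mechanisms behind Theorem~\ref{t:qppermain} are not needed for this statement and can be omitted.
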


\begin{remark}
In fact, by following the proof of Theorem~\ref{t:qppermain} closely, one sees that the $p$-step cocycle associated with the periodic perturbation of the AMO is supercritical on the spectrum whenever $|\lambda|>1$.
\end{remark}

The structure of the paper follows. We recall some general facts about product systems and the transfer matrix cocycle in Section~\ref{sec:m&uesys}. We discuss the case of product systems in which one factor is a subshift in Section~\ref{sec:persubshift} in particular giving the proofs of Theorems~\ref{t:bosh:main}, \ref{t:bosh:permult}, \ref{t:toeplitz:main}, and \ref{t:2per+rand}. We discuss the quasiperiodic case in Section~\ref{sec:qpper}, proving Theorems~\ref{t:qppermain} and \ref{thm:AMOper}.

\section*{Acknowledgements}

The authors are very grateful to Jon Chaika for numerous helpful discussions and suggestions. We also want to thank the American Institute of Mathematics for hospitality and support during a January 2022 visit, during which part of this work was completed.
P.G.\ acknowledges support by the German Research Foundation (DFG) via the Collaborative
Research Centre (CRC 1283) and would like to thank the Department of Mathematics at Rice University for its kind hospitality when parts of this work were initiated.

\section{Preliminaries} \label{sec:m&uesys}

\subsection{Minimal Systems}

We start from a topological dynamical system $(\mathbb{X},S)$ with $\mathbb{X}$ a compact metric space and $S$ a homeomorphism $\X \to \X$.

\begin{definition}
Given a dynamical system $(\X,S)$ as above, we say that $K \subseteq \X$ is an $S$-\emph{minimal component} of $\X$ if it is closed, nonempty, $S$-invariant, and minimal with respect to those properties (i.e., no proper closed subset of $K$ satisfies those properties). We say that $(\mathbb{X},S)$ is \emph{minimal} if $\X$ is an $S$-minimal component. Equivalently, $(\X,S)$ is minimal if and only if the only $S$-invariant and closed subsets of $\mathbb{X}$ are $\mathbb{X}$ and $\varnothing$.
\end{definition}

One can check that two minimal components of $(\X,S)$ must either be identical or disjoint and hence one can speak of the number of minimal components.

\begin{definition}  \label{DEF:Sfmdef}
For $m \in \N$, we denote by $\Sf(m,\X,S)$ the number of minimal components of $(\X,S^m)$. Whenever $\X$ and $S$ are clear from context, we suppress the dependence and simply write $\Sf(m)$.
\end{definition}

For a given minimal system $(\X,S)$, we will consider the properties of the function $\Sf(m)$ defined in Definition~\ref{DEF:Sfmdef}.  The following result goes back to \cite[Thm.~2.24]{GH55}; compare also \cite[Thm.~3.1]{Ye}.

\begin{fact}
\label{FACT:minimal-decomposition}
For each $m \in \N$, denoting $q = \Sf(m)$, one has $q|m$ and a disjoint decomposition $\mathbb{X} = \mathbb{X}_1 \cup \ldots \cup \mathbb{X}_q$ such that each $\mathbb{X}_j$ is $S^m$-minimal and 
\begin{equation} \label{eq:minimalDecomp}
S(\mathbb{X}_j) = \mathbb{X}_{j+1 \, \mathrm{mod} \, q} \quad \forall1 \leq j \leq q.
\end{equation}
\end{fact}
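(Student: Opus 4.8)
The plan is to analyze the action of the homeomorphism $S$ on the collection of $S^m$-minimal components of $\X$. First I would fix an $S^m$-minimal component $\X_1 \subseteq \X$, which exists by the usual Zorn's lemma / compactness argument applied to the system $(\X, S^m)$. Since $S^m$ is a homeomorphism, any $S^m$-minimal set $K$ is fully invariant, i.e. $S^m K = K$: the inclusion $S^m K \subseteq K$ holds by invariance, and minimality forces equality. In particular $S^m \X_1 = \X_1$.

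The key observation is that $S$ permutes the $S^m$-minimal components. Because $S$ commutes with $S^m$ and is a homeomorphism, $S\X_1$ is closed, nonempty, $S^m$-invariant (as $S^m(S\X_1) = S(S^m\X_1) = S\X_1$), and again $S^m$-minimal: any closed nonempty $S^m$-invariant $L \subseteq S\X_1$ pulls back under $S^{-1}$ to a set of the same type inside $\X_1$, hence $S^{-1}L = \X_1$ and $L = S\X_1$. Iterating, the sets $\X_1, S\X_1, S^2\X_1, \ldots$ are all $S^m$-minimal components, and since $S^m \X_1 = \X_1$ the $S$-orbit of $\X_1$ is finite of some period $q$ with $q \mid m$. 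Writing $\X_j = S^{j-1}\X_1$ for $1 \le j \le q$, we obtain $S\X_j = \X_{j+1 \bmod q}$ by construction.

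Next I would invoke minimality of $(\X,S)$. The union $Y = \bigcup_{j=1}^{q}\X_j$ is a finite union of closed sets, hence closed, and it is $S$-invariant since $S\X_q = \X_1$. As $Y$ is nonempty and $(\X,S)$ is minimal, we conclude $Y = \X$. The components $\X_1,\ldots,\X_q$ are pairwise distinct by the choice of $q$, and applying the dichotomy for minimal components (identical or disjoint) to the system $(\X,S^m)$ shows they are in fact pairwise disjoint. This yields the disjoint decomposition $\X = \X_1 \cup \cdots \cup \X_q$ together with the asserted cyclic relation \eqref{eq:minimalDecomp}. It remains to check that these are \emph{all} the minimal components, so that $\Sf(m) = q$: if $K$ is any $S^m$-minimal component, then $K \subseteq Y = \X$, and for each $j$ the intersection $K \cap \X_j$ is a closed, $S^m$-invariant subset of the minimal set $K$ (and of $\X_j$), hence empty or equal to both; since $K$ is nonempty it meets some $\X_j$, forcing $K = \X_j$. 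Thus $\Sf(m) = q \mid m$.

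The only genuinely delicate point is the passage from ``$S$ permutes a finite family of components, with $S^m$ acting trivially'' to ``this single $S$-orbit exhausts $\X$.'' This is precisely where the minimality of the original system $(\X,S)$ is used, via the closedness and $S$-invariance of the orbit union $Y$; the divisibility $q \mid m$ and the disjointness are then immediate consequences of $S^m \X_1 = \X_1$ and the identical-or-disjoint dichotomy, respectively. Everything else is routine bookkeeping.
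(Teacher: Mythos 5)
Your proof is correct and complete. Note that the paper itself does not prove this Fact at all: it is quoted as a known result with citations to Gottschalk--Hedlund \cite{GH55} and Ye \cite{Ye}, so there is no in-paper argument to compare against. What you have written is the standard self-contained proof of that cited result, and every step checks out: existence of an $S^m$-minimal set $\X_1$ by Zorn's lemma, full invariance $S^m\X_1 = \X_1$, the observation that $S$ carries $S^m$-minimal sets to $S^m$-minimal sets (your pullback argument for minimality of $S\X_1$ is the right way to see this), finiteness of the $S$-orbit of $\X_1$ with period $q \mid m$, the use of minimality of $(\X,S)$ to show the closed $S$-invariant union $Y = \X_1 \cup \cdots \cup \X_q$ exhausts $\X$, disjointness via the identical-or-disjoint dichotomy, and the final exhaustion argument showing any $S^m$-minimal component equals some $\X_j$, so that $\Sf(m) = q$. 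You correctly identified the one genuinely non-routine point, namely that minimality of the ambient system $(\X,S)$ is what forces the single $S$-orbit of components to cover $\X$; without it the statement fails. In effect your proposal supplies the proof the paper outsources to the literature.
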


Some properties of this function are listed below, compare \cite[Rem.~3.6, Thm.~3.8]{Ye}.

\begin{fact}
\label{FACT:s(n)-properties}
The function $\Sf$ satisfies the following properties. If $m_1$ and $m_2$ are relatively prime, then $\Sf(m_1m_2) = \Sf(m_1) \Sf(m_2)$. Furthermore, for each prime $p$, there exists a number $\ell_p \in \N_0 \cup \{\infty \}$ such that $\Sf(p^{\ell}) = \min \{ p^{\ell}, p^{\ell_p} \}$ for all $\ell \in \N_0$.
\end{fact}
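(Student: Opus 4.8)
The plan is to reduce everything to understanding which cyclic rotations arise as factors of $(\X,S)$. Concretely, set $P = \{q \in \N : \Sf(q) = q\}$; the heart of the argument is to show that $q \in P$ if and only if the rotation $(\Z_q, +1)$ is a topological factor of $(\X,S)$, and then that $\Sf(m) = \max\{q \in P : q \mid m\}$. Granting this, the whole statement reduces to an analysis of the divisibility structure of $P$.

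First I would record the factor characterization. If $\Sf(q) = q$, then Fact~\ref{FACT:minimal-decomposition} supplies a decomposition $\X = \X_1 \cup \cdots \cup \X_q$ into clopen $S^q$-minimal pieces cyclically permuted by $S$, and the map sending $\X_j$ to $j \in \Z_q$ is a factor map onto $(\Z_q,+1)$. Conversely, given a factor map $\pi \colon \X \to \Z_q$ with $\pi \circ S = \pi + 1$, the fibers $\pi^{-1}(\{j\})$ are $q$ disjoint nonempty closed $S^q$-invariant sets, forcing $\Sf(q) \geq q$; since $\Sf(q) \mid q$ this gives $\Sf(q) = q$. The same two observations, applied with $q \mid m$ in place of $q$ (so that $\pi \circ S^m = \pi$ once $q \mid m$), yield $\Sf(m) \geq q$ for every $q \in P$ dividing $m$, while Fact~\ref{FACT:minimal-decomposition} shows $\Sf(m)$ itself lies in $P$ and divides $m$; together these prove $\Sf(m) = \max\{q \in P : q \mid m\}$.

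Next I would establish that $P$ is closed under passing to divisors and under taking least common multiples. Closure under divisors is immediate, since $(\Z_d,+1)$ is a factor of $(\Z_q,+1)$ whenever $d \mid q$, and factors compose. For closure under $\mathrm{lcm}$, given $q_1, q_2 \in P$ with factor maps $\pi_1,\pi_2$, I would form the product map $\Pi = (\pi_1,\pi_2) \colon \X \to \Z_{q_1} \times \Z_{q_2}$, intertwining $S$ with translation by $(1,1)$. Because $(\X,S)$ is minimal, $\Pi(\X)$ is a minimal subsystem; but the minimal subsystems of $(\Z_{q_1}\times\Z_{q_2}, +(1,1))$ are exactly the orbits of $(1,1)$, each of size $\mathrm{lcm}(q_1,q_2)$ and each isomorphic to $(\Z_{\mathrm{lcm}(q_1,q_2)},+1)$. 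Hence $\mathrm{lcm}(q_1,q_2) \in P$.

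Finally I would convert this into the arithmetic statement. Closure under divisors shows that, for each prime $p$, the powers $p^\ell$ lying in $P$ form an initial segment $\{p^0, \ldots, p^{\ell_p}\}$ for some $\ell_p \in \N_0 \cup \{\infty\}$; closure under $\mathrm{lcm}$ then gives $q \in P$ iff $v_p(q) \leq \ell_p$ for every prime $p$, where $v_p$ denotes the $p$-adic valuation. Feeding this into $\Sf(m) = \max\{q \in P : q \mid m\}$ yields the closed form
\begin{equation*}
\Sf(m) = \prod_p p^{\min(v_p(m),\, \ell_p)}.
\end{equation*}
Both assertions are then immediate: for coprime $m_1,m_2$ the exponents split prime-by-prime (one of $v_p(m_1), v_p(m_2)$ vanishes for each $p$), giving multiplicativity, and for a prime power only the $p$-factor survives, giving $\Sf(p^\ell) = p^{\min(\ell,\ell_p)} = \min\{p^\ell, p^{\ell_p}\}$. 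I expect the main obstacle to be the $\mathrm{lcm}$-closure step: one must correctly identify $\Pi(\X)$ as a single orbit rather than some larger invariant set, which is exactly where minimality of $(\X,S)$ is essential and where a careless argument could overcount components.
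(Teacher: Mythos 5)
Your proof is correct. Note that the paper itself does not prove this fact: it is quoted from \cite{GH55} and \cite{Ye}, and the closest the paper comes is its eigenvalue machinery (Lemma~\ref{LEM:top-eigenvalue-criterion}, Proposition~\ref{PROP:s(n)-eigenvalue}) together with the appendix, where the analogous statement for $\MSf$ is proved using eigenvalues of the Koopman operator and the fact that the set of eigenvalues forms a group. Your route is the purely topological mirror image of that argument: a factor map onto $(\Z_q,+1)$ is equivalent data to a continuous eigenfunction with eigenvalue $\me^{2\pi\mi/q}$ (compose with the character $j \mapsto \me^{2\pi\mi j/q}$, or pass to level sets in the other direction), so your set $P$ is precisely the set of $q$ for which $\me^{2\pi\mi/q}$ is a topological eigenvalue, and your lcm-closure step --- the product map into $\Z_{q_1}\times\Z_{q_2}$ plus minimality of its image --- is the factor-map counterpart of the group structure of the eigenvalue set invoked in the appendix. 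What your approach buys is self-containedness and elementarity: everything reduces to finite combinatorics of clopen decompositions and the soft fact that the continuous equivariant image of a minimal system is minimal, with no function theory; what the paper's eigenvalue formulation buys is uniform treatment of rational and irrational parts of the dynamical spectrum, which it needs elsewhere (e.g.\ Corollary~\ref{COR:top=meas-eigenvalues}). One small point you should make explicit: the assertion that $\Sf(m)$ itself lies in $P$ is not literally contained in Fact~\ref{FACT:minimal-decomposition}, since that fact produces $S^m$-minimal (not $S^{\Sf(m)}$-minimal) pieces; it follows by feeding the resulting decomposition --- closed, disjoint, covering, cyclically permuted --- into the forward half of your factor characterization and then applying the converse half, exactly the two observations you set up.
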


Note that fixing the assignment $p \to \ell_p$ for every prime number $p$ determines the function $\Sf$ uniquely.
Alternatively, the function $\Sf$ is completely characterized by the subgroup of topological eigenvalues of the Koopman operator. Let us briefly recall:

\begin{definition}
Given a topological dynamical system $(\X,S)$, we say that $z \in \C$ is a \emph{topological eigenvalue} of $(\X,S)$ if it is an eigenvalue of the induced operator $C(\X) \to C(\X)$ given by $f \mapsto f\circ T$. A function $0 \not\equiv f \in C(\X)$ for which $f\circ T \equiv z f$ is then called a \emph{continuous eigenfunction} of $(\X,S)$.
\end{definition}

It is well-known and not hard to show that every topological eigenvalue of $(\X,S)$ is unimodular and the set of all eigenvalues of $(\X,S)$ comprises a countable subgroup of the circle $\partial \D$; see, e.g., \cite[Chapter~5]{Walters}.

We will see shortly that the function $\Sf$ may be characterized by the subgroup of topological eigenvalues that are also roots of unity (compare Proposition~\ref{PROP:s(n)-eigenvalue}).

\begin{lemma}
\label{LEM:top-eigenvalue-criterion}
Let $(\X,S)$ be minimal.
For every $m \in \N$, $\me^{2 \pi \mi/m}$ is a topological eigenvalue of $(\mathbb{X},S)$ if and only if $\Sf(m) = m$.
\end{lemma}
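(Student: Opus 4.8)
The plan is to connect the candidate eigenvalue $z := \me^{2\pi\mi/m}$ directly to the cyclic decomposition supplied by Fact~\ref{FACT:minimal-decomposition}, exploiting the two guiding facts that $z^m = 1$ and that $z$ has exact order $m$.

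For the implication $\Sf(m) = m \Rightarrow z$ is a topological eigenvalue, I would set $q = \Sf(m) = m$ and invoke Fact~\ref{FACT:minimal-decomposition} to obtain the disjoint decomposition $\X = \X_1 \cup \cdots \cup \X_m$ with $S(\X_j) = \X_{j+1 \, \mathrm{mod}\, m}$. The first observation is that each $\X_j$ is clopen: the $\X_j$ are finitely many pairwise disjoint closed sets covering the compact space $\X$, so each is the complement of the (closed) union of the others. Hence the function $f$ defined by $f|_{\X_j} = z^{j}$ is continuous, and a direct check using $S(\X_j) = \X_{j+1 \, \mathrm{mod}\, m}$ together with $z^m = 1$ (for the wrap-around from $\X_m$ back to $\X_1$) shows $f \circ S = z f$. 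Thus $f$ is a continuous eigenfunction and $z$ is a topological eigenvalue.

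For the converse, suppose $f \in C(\X)$ with $f \not\equiv 0$ satisfies $f \circ S = z f$. Since $z^m = 1$, iterating gives $f \circ S^m = z^m f = f$, so $f$ is $S^m$-invariant. Writing $q = \Sf(m)$ and taking the decomposition $\X = \X_1 \cup \cdots \cup \X_q$ from Fact~\ref{FACT:minimal-decomposition}, the $S^m$-invariance and continuity of $f$, together with the minimality of each $(\X_j, S^m)$, force $f$ to be constant on each $\X_j$, say $f|_{\X_j} \equiv \zeta_j$. The relation $f \circ S = z f$ combined with $S(\X_j) = \X_{j+1 \, \mathrm{mod}\, q}$ yields $\zeta_{j+1 \, \mathrm{mod}\, q} = z \zeta_j$; since $f \not\equiv 0$ and $z \neq 0$, no $\zeta_j$ can vanish, and traversing the full cycle of length $q$ returns $\zeta_1 = z^{q}\zeta_1$, whence $z^q = 1$. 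As $z$ is a primitive $m$-th root of unity, this gives $m \mid q$, and since Fact~\ref{FACT:minimal-decomposition} already gives $q \mid m$, we conclude $q = \Sf(m) = m$.

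The argument is essentially forced once one notices these two principles, so I do not expect a serious obstacle. The only points requiring care are the clopen-ness used to guarantee continuity in the first direction, the standard fact that a continuous $S^m$-invariant function is constant on an $S^m$-minimal set, and the bookkeeping of the cyclic indices to ensure the eigenvalue relation survives the wrap-around from $\X_q$ back to $\X_1$.
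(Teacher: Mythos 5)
Your proof is correct, and while the forward direction coincides with the paper's (both build the eigenfunction $f = \sum_j z^j \chi^{}_{\X_j}$ from the decomposition of Fact~\ref{FACT:minimal-decomposition}, with your clopen-ness remark making the continuity explicit), your converse takes a genuinely different route. The paper argues in the opposite direction: from the eigenfunction $f_m$ it \emph{manufactures} $m$ disjoint, closed, $S^m$-invariant sets as the level sets $f_m^{-1}(\{\me^{2\pi\mi j/m}f_m(x)\})$, concluding $\Sf(m)\geq m$ directly and then using $\Sf(m)\mid m$; this requires knowing $f_m$ never vanishes, which the paper gets from minimality of $(\X,S)$ (the zero set is closed and $S$-invariant). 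You instead start from the existing decomposition into $q=\Sf(m)$ components, use the standard fact that a continuous $S^m$-invariant function is constant on each $S^m$-minimal component, and convert the wrap-around relation $\zeta_1 = z^q\zeta_1$ into the algebraic statement $z^q=1$, so that $m \mid q$ together with $q \mid m$ forces $q=m$. What each buys: the paper's level-set construction exhibits the $m$ components concretely and needs less of the structure in Fact~\ref{FACT:minimal-decomposition} (only the divisibility), whereas your argument replaces the geometric counting by a transparent two-sided divisibility and, pleasantly, derives nonvanishing of the eigenfunction from the cyclic relation alone (one vanishing $\zeta_j$ propagates around the cycle to give $f\equiv 0$) rather than from minimality of $(\X,S)$.
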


\begin{proof}
Suppose $\Sf(m) = m$, that is, $\mathbb{X}$ decomposes into $m$ disjoint $S^m$-minimal components $\mathbb{X}_1, \ldots, \mathbb{X}_m$ satisfying \eqref{eq:minimalDecomp}. Then
\[
f_m(x) = \sum_{j=1}^{m} \me^{2 \pi \mi j/m} \chi^{}_{\mathbb{X}_j}(x)
\]
is a continuous eigenfunction of $(\X,S)$ with eigenvalue $\me^{2 \pi \mi /m}$.

On the other hand, suppose that $\me^{2 \pi i /m}$ is a topological eigenvalue with continuous eigenfunction $f_m$. Let $x \in \mathbb{X}$ be given. By the choice of $f_m$, we have $f_m(S^j x) = \me^{2 \pi \mi j/m} f_m(x) =: k_j$ for all $1 \leq j \leq m$. Since $f_m$ is continuous and $f_m \circ S^m = f_m$, the spaces $\mathbb{X}_j := f_m^{-1}(\{k_j\})$ are disjoint, closed and $S^m$-invariant.\footnote{Note that disjointness follows from $f_m(x) \neq 0$, which in turn is a consequence of minimality of $(\X,S)$ and nontriviality of $f_m$.}  This implies $\Sf(m) \geq m$, which implies $\Sf(m)=m$, since $\Sf(m)$ divides $m$.
\end{proof}

\begin{prop}
\label{PROP:s(n)-eigenvalue}
For every $m \in \N$, $\Sf(m)$ is the largest divisor $q$ of $m$ such that $\me^{2 \pi \mi /q}$ is a topological eigenvalue.
\end{prop}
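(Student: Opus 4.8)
The plan is to deduce Proposition~\ref{PROP:s(n)-eigenvalue} from Lemma~\ref{LEM:top-eigenvalue-criterion} together with the multiplicative structure of $\Sf$ recorded in Fact~\ref{FACT:s(n)-properties}. The key elementary fact I want to isolate first is the divisor-monotonicity of $\Sf$: if $q \mid m$, then $\Sf(q) = q$ implies $\Sf(m) \geq q$, and in fact $\Sf(m) = m$ forces $\Sf(q) = q$ whenever $q \mid m$. Indeed, since $\Sf(q) \mid q$ always holds by Fact~\ref{FACT:minimal-decomposition}, the statement ``$\Sf(q)=q$'' is equivalent to ``$\me^{2\pi\mi/q}$ is a topological eigenvalue'' by Lemma~\ref{LEM:top-eigenvalue-criterion}. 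Because the topological eigenvalues form a group (the excerpt notes this, citing \cite[Chapter~5]{Walters}), and because $q \mid m$ means $\me^{2\pi\mi/q}$ is a power of $\me^{2\pi\mi/m}$, whenever $\me^{2\pi\mi/m}$ is an eigenvalue so is $\me^{2\pi\mi/q}$. This already gives one half of what I need: if $\Sf(m)=m$, then $\me^{2\pi\mi/m}$ is an eigenvalue, hence so is $\me^{2\pi\mi/q}$ for every $q \mid m$, hence $\Sf(q)=q$ for every divisor.

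Next I would reformulate the proposition's claim. Define $Q(m)$ to be the largest divisor $q$ of $m$ such that $\me^{2\pi\mi/q}$ is a topological eigenvalue; equivalently, by Lemma~\ref{LEM:top-eigenvalue-criterion}, the largest divisor $q$ of $m$ with $\Sf(q)=q$. I must show $\Sf(m) = Q(m)$. For the inequality $\Sf(m) \geq Q(m)$: writing $q_0 = Q(m)$, we have $\Sf(q_0)=q_0$, and I would like to upgrade this to $\Sf(m) \geq q_0$. This follows from the divisor-monotonicity of the quantity $\Sf(m)$ itself, namely that $q_0 \mid m$ and $\Sf(q_0)=q_0$ imply $\Sf(m) \geq q_0$ — geometrically, the $S^{q_0}$-minimal decomposition refines into at least $q_0$ pieces cyclically permuted by $S$, and passing from $S^{q_0}$ to $S^m$ can only refine this partition further while the cyclic $S$-action still distinguishes the $q_0$ residue classes. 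Concretely, the eigenfunction $f_{q_0}$ with $f_{q_0}\circ S = \me^{2\pi\mi/q_0} f_{q_0}$ is automatically $S^m$-invariant (since $q_0 \mid m$), so its level sets give $q_0$ disjoint closed $S^m$-invariant sets, forcing $\Sf(m) \geq q_0$.

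For the reverse inequality $\Sf(m) \leq Q(m)$: set $r = \Sf(m)$. By Fact~\ref{FACT:minimal-decomposition} we have $r \mid m$, and the decomposition of $\X$ into $r$ cyclically permuted $S^m$-minimal components produces, exactly as in the proof of Lemma~\ref{LEM:top-eigenvalue-criterion}, a continuous eigenfunction $g(x) = \sum_{j=1}^{r} \me^{2\pi\mi j/r}\chi_{\X_j}(x)$ with $g \circ S = \me^{2\pi\mi/r} g$. Hence $\me^{2\pi\mi/r}$ is a topological eigenvalue, so by Lemma~\ref{LEM:top-eigenvalue-criterion} again $\Sf(r)=r$, and since $r \mid m$, the divisor $r$ is a candidate in the maximization defining $Q(m)$; therefore $r \leq Q(m)$. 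Combining the two inequalities yields $\Sf(m)=Q(m)$.

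I expect the main obstacle to be purely bookkeeping: being careful that Lemma~\ref{LEM:top-eigenvalue-criterion} is stated as a biconditional ``$\me^{2\pi\mi/m}$ is a topological eigenvalue $\iff \Sf(m)=m$'', so that it can be invoked in both directions and at every divisor, and that the group property of topological eigenvalues is what lets me pass from $\me^{2\pi\mi/m}$ being an eigenvalue to $\me^{2\pi\mi/q}$ being an eigenvalue for $q \mid m$. No nontrivial new analytic or dynamical input is needed beyond what is already in the excerpt; the substance is organizing Lemma~\ref{LEM:top-eigenvalue-criterion} and the group structure into the maximal-divisor statement. I would not invoke Fact~\ref{FACT:s(n)-properties} directly for the core argument, though it provides a useful consistency check that $\Sf$ is determined by its values on prime powers and that $Q$ respects the same multiplicative structure.
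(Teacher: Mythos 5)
Your proof is correct, and it reaches the statement by a genuinely more self-contained route than the paper. The paper's proof is two lines: setting $q=\Sf(m)$, it invokes the general structure of $\Sf$ recorded in Fact~\ref{FACT:s(n)-properties} to assert $q\mid m$, $\Sf(q)=q$, and (in the contradiction step) divisor-monotonicity $\Sf(m)\ge\Sf(\ell)$ for $\ell\mid m$, using Lemma~\ref{LEM:top-eigenvalue-criterion} purely as a black box in both directions. You avoid Fact~\ref{FACT:s(n)-properties} altogether and instead rerun the two constructions from the lemma's proof at arbitrary divisors: the cyclic decomposition $S(\X_j)=\X_{j+1 \, \mathrm{mod}\, r}$ of Fact~\ref{FACT:minimal-decomposition} yields the continuous eigenfunction $\sum_{j=1}^{r}\me^{2\pi\mi j/r}\chi_{\X_j}$ with eigenvalue $\me^{2\pi\mi/r}$ for $r=\Sf(m)$ (giving $\Sf(m)\le Q(m)$), while the level sets of an eigenfunction for $\me^{2\pi\mi/q_0}$ with $q_0\mid m$ are disjoint, closed, nonempty and $S^m$-invariant, forcing $\Sf(m)\ge q_0$ (giving $\Sf(m)\ge Q(m)$). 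What your version buys is independence from the quoted structure theorem for $\Sf$: you need only Fact~\ref{FACT:minimal-decomposition}, minimality, and the standard fact that every nonempty closed $S^m$-invariant subset of a compact system contains an $S^m$-minimal component; what the paper's route buys is brevity. Two minor points: in the level-set step you should state explicitly, as the paper does in the footnote to Lemma~\ref{LEM:top-eigenvalue-criterion}, that minimality forces the eigenfunction to be nonvanishing — this is what makes the $q_0$ values along an orbit distinct, hence the level sets disjoint and $q_0$ in number; and your opening paragraph on the group structure of the eigenvalues is not actually needed, since the identification of ``largest divisor carrying an eigenvalue'' with ``largest divisor $q$ with $\Sf(q)=q$'' already follows from applying the biconditional of Lemma~\ref{LEM:top-eigenvalue-criterion} at each divisor separately.
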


\begin{proof}
Let $\Sf(m) = q$. By general properties of the function $\Sf$, it follows that $q$ divides $m$ and that $\Sf(q) = q$. The latter implies that $\me^{2 \pi \mi/q}$ is an eigenvalue because of Lemma~\ref{LEM:top-eigenvalue-criterion}. Suppose there exists a larger divisor $\ell > q$ of $m$ such that $\me^{2 \pi \mi/\ell}$ is an eigenvalue. Then, $\Sf(m) \geq \Sf(\ell) = \ell >q = \Sf(m)$, a contradiction.
\end{proof}

By definition, the system $(\mathbb{X},S^m)$ is minimal precisely if $\Sf(m) = 1$. With the help of the preceding result, we immediately obtain the following characterization.

\begin{coro}
The system $(\mathbb{X},S^m)$ is minimal for all $m\in \N$ if and only if there are no topological eigenvalues $\me^{2 \pi \mi \alpha}$ of $(\mathbb{X},S)$ with $\alpha \in \Q \setminus \Z$.
\end{coro}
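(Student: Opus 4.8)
The plan is to prove the corollary by combining Proposition~\ref{PROP:s(n)-eigenvalue} with the observation that topological eigenvalues of the form $\me^{2\pi\mi\alpha}$ with $\alpha \in \Q\setminus\Z$ are exactly the nontrivial roots of unity among the eigenvalues. Recall that $(\mathbb{X},S^m)$ is minimal precisely when $\Sf(m)=1$, so the statement ``$(\mathbb{X},S^m)$ is minimal for all $m\in\N$'' is equivalent to ``$\Sf(m)=1$ for all $m\in\N$''. Thus the task reduces to showing that $\Sf(m)=1$ for every $m$ if and only if $(\mathbb{X},S)$ has no topological eigenvalue that is a nontrivial root of unity.

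For the forward direction I would argue by contraposition: suppose $(\mathbb{X},S)$ has a topological eigenvalue $\me^{2\pi\mi\alpha}$ with $\alpha \in \Q\setminus\Z$. Writing $\alpha = j/q$ in lowest terms with $q\geq 2$ and $\gcd(j,q)=1$, the set of topological eigenvalues is a subgroup of $\partial\D$, so $\me^{2\pi\mi/q}=(\me^{2\pi\mi j/q})^{j'}$ (where $j'$ is the inverse of $j$ modulo $q$) is also a topological eigenvalue. By Lemma~\ref{LEM:top-eigenvalue-criterion} this forces $\Sf(q)=q>1$, so $(\mathbb{X},S^q)$ is not minimal, contradicting minimality for all $m$.

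For the reverse direction I would again use contraposition: suppose $\Sf(m)>1$ for some $m\in\N$, and set $q=\Sf(m)$. Since $q>1$ and, by the general properties of $\Sf$ recorded before Proposition~\ref{PROP:s(n)-eigenvalue}, $\Sf(q)=q$, Lemma~\ref{LEM:top-eigenvalue-criterion} yields that $\me^{2\pi\mi/q}$ is a topological eigenvalue. As $q\geq 2$, this is an eigenvalue of the desired form $\me^{2\pi\mi\alpha}$ with $\alpha = 1/q \in \Q\setminus\Z$, again contradicting the hypothesis.

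The argument is essentially a bookkeeping exercise once Lemma~\ref{LEM:top-eigenvalue-criterion} and Proposition~\ref{PROP:s(n)-eigenvalue} are in hand, so I do not expect a serious obstacle. The only point requiring a little care is the translation between ``$\alpha\in\Q\setminus\Z$'' and ``nontrivial root of unity'': one must verify that the subgroup structure of the topological eigenvalues lets one pass from an arbitrary primitive-denominator eigenvalue $\me^{2\pi\mi j/q}$ to the canonical generator $\me^{2\pi\mi/q}$ of the corresponding cyclic subgroup, which is exactly the form appearing in the two preceding results. This is where I would be most explicit, invoking the fact that the eigenvalues form a countable subgroup of $\partial\D$.
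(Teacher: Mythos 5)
Your proof is correct and takes essentially the approach the paper intends: the paper presents this corollary as an immediate consequence of Proposition~\ref{PROP:s(n)-eigenvalue}, and your argument simply unpacks that derivation via Lemma~\ref{LEM:top-eigenvalue-criterion}, the identity $\Sf(\Sf(m))=\Sf(m)$, and the subgroup structure of the topological eigenvalues. Your explicit reduction from $\me^{2\pi\mi j/q}$ to the generator $\me^{2\pi\mi/q}$ is exactly the bookkeeping step needed, and it is handled correctly.
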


A system $(\X,S)$ for which $(\X,S^m)$ is minimal for every $m \in \N$ is said to be \emph{totally minimal}.

\subsection{Uniquely Ergodic Systems}
We now turn to the case in which $(\mathbb{X},S)$ is uniquely ergodic. The discussion of $S^m$-ergodic probability measures is very similar to the discussion of $S^m$-minimal components for minimal systems $(\mathbb{X},S)$. %This supports the heuristic that uniquely ergodic systems may be regarded as the measure-theoretical analogues of minimal systems.

\begin{definition}  \label{DEF:MSfmdef}
Given a topological dynamical system $(\X,S)$ and $m \in \N$, we denote by $\MSf(m,\X,S)$ the number of $S^m$-ergodic Borel probability measures on $\mathbb{X}$; as before, we write $\MSf(m)$ whenever $(\X,S)$ is clear from context. Given an $S$-invariant measure $\mu$, we say that $z \in \C$ is an \emph{eigenvalue} of $(\X,S,\mu)$ if it is an eigenvalue of the Koopman operator on $L^2(\X,\mu)$, which is given by $f \mapsto f\circ T$ for $f \in L^2(\X,\mu)$. A measurable function $0 \not\equiv f \in L^2(\X,\mu)$ for which $f\circ T = z f$ (in $L^2(\X,\mu)$) is then called an \emph{eigenfunction} of $(\X,S,\mu)$.
\end{definition}

The properties of this function are precisely the same as for $\Sf(m)$; see the Appendix for details. We quote here only the result that is most important for the following discussion, compare Lemma~\ref{LEM:n-eigenvalues} for the proof.

\begin{prop}
\label{PROP:n-eigenvalues-pre}
Let $(\mathbb{X},S,\mu)$ be uniquely ergodic. For every $m \in \N$, $\me^{2 \pi \mi/m}$ is an  eigenvalue of $(\mathbb{X},S,\mu)$ if and only if $\MSf(m) = m$.
\end{prop}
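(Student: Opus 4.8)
The plan is to mirror the proof of Lemma~\ref{LEM:top-eigenvalue-criterion}, replacing topological (continuous) eigenfunctions with measurable $L^2$-eigenfunctions and replacing the decomposition into $S^m$-minimal components with the decomposition of $\mathbb{X}$ into the supports of the $S^m$-ergodic measures. The underlying structural input is the exact analogue of Fact~\ref{FACT:minimal-decomposition} and Fact~\ref{FACT:s(n)-properties} for the function $\MSf$, which the excerpt promises is established in the Appendix; in particular I would use that $\MSf(m)$ divides $m$, and that when $q = \MSf(m)$ there is a cyclic structure permuting the $S^m$-ergodic measures $\mu_1,\ldots,\mu_q$ under pushforward by $S$. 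Since $(\mathbb{X},S)$ is uniquely ergodic, the $S^m$-ergodic measures are precisely the ergodic components of $\mu$ under $S^m$, and they are mutually singular.

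For the direction $\MSf(m)=m \Rightarrow \me^{2\pi\mi/m}$ is an eigenvalue, I would write $\mu = \tfrac{1}{m}\sum_{j=1}^{m}\mu_j$ as the average of the $m$ distinct $S^m$-ergodic measures, indexed so that the pushforward $S_*\mu_j = \mu_{j+1 \bmod m}$. Because the $\mu_j$ are mutually singular, I can choose disjoint Borel sets $\mathbb{X}_j$ with $\mu_j(\mathbb{X}_j)=1$ and set
\[
f_m = \sum_{j=1}^{m} \me^{2\pi\mi j/m}\, \chi^{}_{\mathbb{X}_j}.
\]
The cyclic permutation of the $\mu_j$ under $S$ forces $f_m \circ S = \me^{2\pi\mi/m} f_m$ $\mu$-almost everywhere, so $f_m \in L^2(\mathbb{X},\mu)$ is an eigenfunction with eigenvalue $\me^{2\pi\mi/m}$. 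The care needed here is purely measure-theoretic: the identity $f_m\circ S = \me^{2\pi\mi/m} f_m$ holds only almost everywhere, not pointwise, which is exactly why the topological argument and the measurable argument must be kept separate.

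For the converse, suppose $\me^{2\pi\mi/m}$ is an eigenvalue with eigenfunction $f_m \in L^2(\mathbb{X},\mu)$. Since $|f_m\circ S| = |f_m|$ and the measure is ergodic under $S$, the modulus $|f_m|$ is $S$-invariant and hence $\mu$-a.e.\ constant, so I may normalize $|f_m|=1$ a.e. The relation $f_m\circ S^m = f_m$ shows $f_m$ is $S^m$-invariant, and thus $f_m$ is constant on each $S^m$-ergodic component; the values it takes on the distinct components, related by the factor $\me^{2\pi\mi/m}$ along the $S$-orbit of components, are $m$ distinct $m$-th roots of unity. Consequently the level sets $f_m^{-1}(\{k_j\})$ furnish at least $m$ mutually singular $S^m$-invariant sets of positive measure, giving $\MSf(m)\geq m$; combined with $\MSf(m)\mid m$ from the $\MSf$-analogue of the general properties, this yields $\MSf(m)=m$.

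The main obstacle, and the only place where more than a transcription of Lemma~\ref{LEM:top-eigenvalue-criterion} is required, is the measurable bookkeeping in the converse: one must pass from an a.e.\ eigenfunction to a genuine partition of $\mathbb{X}$ into positive-measure $S^m$-invariant pieces on which $f_m$ is constant, and argue that these pieces are cyclically permuted by $S$ so that exactly $m$ distinct values occur. I would handle this by invoking ergodicity of $\mu$ under $S$ to control $|f_m|$, then using that the $S^m$-ergodic decomposition is precisely the partition into sets of a.e.-constancy of bounded $S^m$-invariant functions. Everything else is a faithful measure-theoretic analogue of the minimal case, and the divisibility $\MSf(m)\mid m$ should be quoted from the Appendix rather than reproved here.
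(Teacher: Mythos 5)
Your proposal is correct and follows essentially the same route as the paper, which proves this proposition via Lemma~\ref{LEM:n-eigenvalues}: there too, the direction $\MSf(m)=m \Rightarrow \me^{2\pi\mi/m}$ is an eigenvalue is handled by building the step eigenfunction $\sum_{j} \me^{2\pi\mi j/m}\chi_{S^j(A)}$ from the cyclic decomposition supplied by Lemma~\ref{LEM:S^n-ergodic-structure}, exactly as you do with your carriers $\mathbb{X}_j$. The only difference is bookkeeping in the converse direction: the paper takes preimages of the $m$ arcs of the circle under a normalized eigenfunction and uses the at-most-$m$ bound from Lemma~\ref{LEM:S^n-ergodic-structure} to conclude these restrictions are precisely the $S^m$-ergodic measures, whereas you use constancy of $S^m$-invariant functions on ergodic components together with the divisibility $\MSf(m)\mid m$ --- both being measure-theoretic transcriptions of Lemma~\ref{LEM:top-eigenvalue-criterion} resting on the same Appendix structure lemma.
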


 Proposition~\ref{PROP:n-eigenvalues-pre} is likely well-known, but we could not find an exact reference.
 
 \subsection{Cocycles and Hyperbolicity}
 \label{subsec:cocycle} 
 Given a topological dynamical system $(\Omega,T)$, a sampling function $f \in C(\Omega,\R)$, a $T$-ergodic measure $\mu$, and Schr\"odinger operators as in \eqref{eq:intro:VomegaDef}, it is natural to study the spectral properties of $H_\omega$ via the eigenvalue equation $H_\omega \psi = z \psi$ with  $z \in \C$. One can readily see that $H_\omega \psi = z \psi$ for some $\psi \in \C^\Z$ if and only if
\begin{equation}
\begin{bmatrix} \psi_{n+1} \\ \psi_{n} \end{bmatrix} = \begin{bmatrix}
z-f(T^n\omega) & -1 \\ 1 & 0
\end{bmatrix} \begin{bmatrix} \psi_n \\ \psi_{n-1} \end{bmatrix} \quad \forall n \in \Z.
\end{equation}
Defining
\begin{equation}
A_z(\omega) = \begin{bmatrix} z - f(T\omega) & -1 \\ 1 & 0\end{bmatrix},
\end{equation}
the associated \emph{cocycle} $(T,A_z): \Omega\times \C^2 \to \Omega \times \C^2$ is given by $(T,A_z)(\omega,v) = (T\omega,A_z(\omega)v)$. The iterates $(T,A_z)^n = (T^n,A_z^n)$ of this map can then be computed for $n \in \Z$:
\begin{equation}
A_z^n(\omega) 
= \begin{cases} A_z(T^{n-1}\omega) \cdots A_z(\omega) & n \geq 1 \\
I & n=0 \\
[A_z^{-n}(T^n\omega)]^{-1} & n \leq -1. \end{cases}
\end{equation}
Naturally, one has $H_\omega \psi = z\psi$ if and only if
\begin{equation}
\begin{bmatrix} \psi_{n+1} \\ \psi_{n} \end{bmatrix} = A_z^n(\omega) \begin{bmatrix} \psi_1 \\ \psi_{0} \end{bmatrix} \quad \forall n \in \Z.
\end{equation}
The associated \emph{Lyapunov exponent} is given by 
\begin{equation} L(z) = \lim_{n \to \infty} \frac1n \int_\Omega \log\|A^n_z(\omega)\| \, d\mu(\omega). \end{equation}
We say that $(T,A_z)$ is \emph{uniformly hyperbolic} if for constants $c,\lambda >0$ one has
\begin{equation}
\|A_z^n(\omega)\| \geq c \me^{\lambda|n|} \text{ for all } n \in \Z.
\end{equation}
If $L(z)>0$ but $(T,A_z)$ is not uniformly hyperbolic, it is said to be \emph{non-uniformly hyperbolic.} In the event that $(\Omega,T)$ is a minimal dynamical system (and $f$ is continuous), there is a fixed set $\Sigma \subseteq \R$ for which $\Sigma = \sigma(H_\omega)$ for all $\omega \in \Omega$. This set is characterized dynamically by Johnson's theorem \cite{Johnson1986JDE}, which says that $\Sigma = \R \setminus U$, where $U$ denotes the set of $E \in \R$ for which $(T,A_E)$ is uniformly hyperbolic.

\section{Periodic and Subshift} \label{sec:persubshift}

The first class of product systems that we will consider will be products of subshifts and cyclic groups, which is motivated by the question of stability of results for operators defined by subshifts under periodic perturbations. On one hand, for subshifts satisfying the Boshernitzan condition, one often observes zero-measure Cantor spectrum and purely singular continuous spectral type for such operators. On the other hand, the spectra of random operators can be written as the union of the spectra of periodic realizations.  Thus, the section splits into three main subsections: in the first subsection, we  explore the stability of zero-measure spectrum under periodic perturbations;  in the second subsection, we discuss the stability of purely continuous spectrum; and in the third section, we discuss the spectra assoicated with products in which the subshift factor is a full shift.

\subsection{Zero-Measure Cantor Spectrum}

It is well-known that if $(\X,S)$ is a subshift satisfying Boshernitzan's criterion and $f_1:\X \to \R$ is locally constant, then the Schr\"odinger operators $H_x = \Delta+V_x$ with potential $V_x(n) = f_1(S^nx)$ exhibit a dichotomy. Either $V_x$ is periodic for all $x \in \X$ or $\sigma(H_x)$ is a Cantor set of zero Lebesgue measure for every $x \in \X$ \cite{DamanikLenz-Bosh}. One may naturally ask whether this holds under the addition of a periodic background, that is, given a periodic potential $V_\per$, is it true that $H_x + V_\per$  either is periodic or has zero-measure Cantor spectrum? Naturally, this leads to an investigation of $\Omega = \X \times \Z_p$ where $p$ denotes the period of $V_\per$.  As such, one is then interested in whether Boshernitzan's criterion is stable under such products. Of course, some care is needed, since it is clear that minimality and unique ergodicity need not be invariant under taking products. In that vein, our first result is a stability result for Boshernitzan's criterion under products with finite (hence periodic) subshifts.

\subsubsection{Stability of Boshernitzan's Criterion}
Let us begin with a few definitions and set up notation.
\begin{definition}
Let $\mc A$ be a finite alphabet and $\mc A^+ = \cup_{\ell>0} \mc A^{\ell}$. We denote the left shift on $\mc A^{\Z}$ by $S$.
 Given a finite word $w \in \mc A^\ell \subseteq \mc A^+$, we write $|w|=\ell$ for the length of $w$, and we fix the notation $w^{\Z}$ for the sequence
\begin{equation}
\ldots www.www \ldots,
\end{equation}
where the single dot separates the $-1$th and the $0$th position. For $u \in \mc A^+$ and $x \in \mc A^+$ or $x \in \mc A^{\Z}$, we write $u \triangleleft x$ if $u$ is a subword of $x$. Given $j,k \in \Z$ with $j \leq k$, we define $x_{[j,k]} = x_j \ldots x_k$.
If $\mathbb{X}$ is a closed, $S$-invariant subset of $\mc A^{\Z}$, we call $(\mathbb{X},S)$ a subshift. We denote by $\mc L(\mathbb{X})$ the set of legal words in $\mathbb{X}$, that is $\mc L(\mathbb{X}) = \{ u \in \mc A^{+} : u \triangleleft x \mbox{ for some } x \in \mathbb{X} \}$. The set of legal words of length $n$ is given by $\mc L_n(\mathbb{X}) = \mc L (\mathbb{X}) \cap \mc A^n$. For $u \in \mc L(\mathbb{X})$, we define the corresponding cylinder set as
\begin{equation}
[u] = \{ x \in \mathbb{X} : x_{[0,|u|-1]} = u \}.
\end{equation}
\end{definition}

In the following let $(\mathbb{X},S)$ and $(\mathbb{X}',S')$ denote subshifts over alphabets $\mc A$ and $\mc A'$, respectively.
The \emph{diagonal shift} $T = S \times S'$ on the direct product $\mathbb{X} \times \mathbb{X}'$ is defined via
$T(x,x') = (S x, S' x')$. The system $(\X\times \X',T)$ is a shift on ordered pairs of sequences and could just as readily be viewed as a shift on sequences of ordered pairs. More precisely, $(\mathbb{X}\times \mathbb{X}', T)$ is topologically conjugate to a subshift $(\mathbb{Y},T')$ over the alphabet $\mc B = \mc A \times \mc A'$ in a canonical fashion.

Let us explain this in more detail. Let $\pi_1 \colon \mc B \to \mc A$ and $\pi_2 \colon \mc B \to \mc A'$ denote the canonical projections, extended to $\mc B^{\Z}$ as morphisms. Define $\mathbb{Y} = \pi_1^{-1}(\mathbb{X}) \cap \pi_2^{-1}(\mathbb{X}') \subseteq \mc B^{\Z}$ and let $T'$ be the left shift restricted to $\Y$. The map $\varphi \colon \mathbb{Y} \to \mathbb{X} \times \mathbb{X}'$, given by $\varphi (y) = (\pi_1(y) , \pi_2(y))$ is a topological conjugation. We will move freely between these two representations. Abusing notation slightly, we identify $T$ with $T'$.

Given $\omega \in \mathbb{X}\times \mathbb{X}'$, our main object of interest is the Schr\"{o}dinger operator $H_{\omega} \colon \ell^2(\Z) \to \ell^2(\Z)$,
\begin{equation}
(H_{\omega} \psi)_n = \psi_{n+1} + \psi_{n-1} + f(T^n \omega ) \psi_n,
\end{equation}
where $f \colon \mathbb{X} \times \mathbb{X}' \to \R$ is a continuous function. 

\begin{definition} \label{def:boshper:locConstonProduct}
Suppose $(\X,S)$ is a minimal subshift and $(\X',S')$ is a periodic subshift, by which we mean a subshift that consists of all translates of a single periodic sequence. We say that $f$ is \emph{locally constant} on $\X \times \X'$ if, up to a finite shift, $f$ is of the form
\begin{equation} \label{eq:boshper:locconstonprodshiftdef}
f(x,x') = g(x_0 \cdots x_{k-1}, x')
\end{equation}
for some $k \in \N$ and a suitable function $g:\mc A^k \times \X' \to \R$.
\end{definition}

\begin{remark}
Let us point out that our definition of periodic subshift is not standard in the sense that we insist that periodic subshifts are minimal. The extension of our results to non-minimal periodic subshifts is trivial and left to the reader.
\end{remark}

 Changing from $\mathbb{X}$ to a $k$-block partition, we can assume that $g$ depends only on a single entry. Let us make this precise in the following paragraph.

For $x \in \mc A^{\Z}$, let $x^{[k]} \in (\mc A^k)^{\Z}$ denote the $k$-block partition, given by $x^{[k]}_m = x_{[m,m+k-1]}$ for all $m \in \Z$. We emphasize that $\mc A^{[k]} = \mc L_k(x)$ plays the role of the letters for this sequence. Analogously, we set $\mathbb{X}^{[k]} = \{ x^{[k]} : x \in \mathbb{X} \}$. The map 
\begin{equation} \label{eq:boshper:varphikdef}\varphi_k \colon x \mapsto x^{[k]}
\end{equation} is a topological conjugation from $(\mathbb{X},S)$ to $(\mathbb{X}^{[k]},S)$, where, abusing notation slightly, we use $S$ for both shifts. We also define by $\varphi_k$ the corresponding sliding block code on finite words $v$ with $|v| \geq k$, that is, $[\varphi_k(v)]_m = v_{[m,m+k-1]}$, for $1 \leq m \leq |v| -k +1$. In this notation,
\[
f(x,x') =  g(x^{[k]}_0,x') =: f^{\ast} (x^{[k]}, x'),
\]
for a suitable function $f^{\ast}$ on $\mathbb{X}^{[k]} \times \mathbb{X}'$.

\begin{definition}
For a shift-invariant measure $\mu$ on a subshift $(\mathbb{X},S)$, we set
\begin{equation}
\underline{\mu}(n) = \min \{ \mu[u] : u \in \mc L_{n}(\mathbb{X}) \}.
\end{equation}
for all $n \in \N$.
The system $(\mathbb{X},S)$ is said to satisfy $\Star$ if there exists a shift-invariant measure $\mu$ such that
\begin{equation} \label{eq:bosh:nmun}
\limsup_{n \to \infty} n \cdot \underline{\mu}(n) > 0.
\end{equation}
If in addition $(\mathbb{X},S)$ is minimal, we say that it fulfills the Boshernitzan condition, and we also refer to $(\mathbb{X},S)$ as a Boshernitzan subshift. The reference to $S$ may be dropped if it is clear from the context.
\end{definition}

By a classical result \cite{Bosh92}, every Boshernitzan subshift is uniquely (and hence strictly) ergodic. 

It is helpful to note that for a given Boshernitzan subshift and a given relatively dense sequence $(n_k)_{k \in \N}$ of positive integers, one can choose a subsequence of $(n_k)_{k \in \N}$ that validates \eqref{eq:bosh:nmun}.

\begin{lemma} \label{lem:Boshinterpolate}
Suppose $(\X,S)$ is a Boshernitzan subshift and $(n_k)_{k \in \N}$ is an increasing sequence of positive integers that is relatively dense in $\N$. Then,
\begin{equation}
\limsup_{k\to\infty} n_k \cdot \underline{\mu}(n_k) > 0.
\end{equation}
\end{lemma}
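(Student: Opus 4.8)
The plan is to reduce the statement to two ingredients: the monotonicity of the function $n \mapsto \underline{\mu}(n)$ and the uniform gap bound furnished by relative density. The point is that, although the sequence $(n_k)$ need not hit the specific lengths that witness Boshernitzan's condition, it comes within a bounded distance of each of them, and $\underline{\mu}$ changes in a controlled way over bounded windows.

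First I would record that $\underline{\mu}$ is non-increasing, i.e.\ $\underline{\mu}(n+1) \le \underline{\mu}(n)$ for every $n$. To see this, pick $v \in \mc L_n(\X)$ realizing the minimum, so that $\mu[v] = \underline{\mu}(n)$, and use the finite disjoint decomposition $[v] = \bigsqcup_{a} [va]$, the union taken over letters $a$ with $va \in \mc L_{n+1}(\X)$. Since $\mu[v] = \sum_a \mu[va]$ is a sum of nonnegative terms with at least one summand, some summand satisfies $\mu[va] \le \mu[v]$, whence $\underline{\mu}(n+1) \le \mu[va] \le \underline{\mu}(n)$.

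Next I would extract the Boshernitzan data. Writing $L = \limsup_{n} n \cdot \underline{\mu}(n) > 0$, there is an increasing sequence $m_j \to \infty$ with $m_j \cdot \underline{\mu}(m_j) > L/2$. By relative density of $(n_k)$, fix $C \in \N$ such that every interval of length $C$ meets the sequence; then for each large $j$ choose an index $k(j)$ with $m_j - C \le n_{k(j)} \le m_j$. Monotonicity gives $\underline{\mu}(n_{k(j)}) \ge \underline{\mu}(m_j)$, while $n_{k(j)} \ge m_j - C$, so
\begin{equation*}
n_{k(j)} \cdot \underline{\mu}(n_{k(j)}) \ge (m_j - C)\,\underline{\mu}(m_j) = \frac{m_j - C}{m_j}\, \bigl( m_j\, \underline{\mu}(m_j) \bigr) \ge \frac{m_j - C}{m_j}\cdot \frac{L}{2}.
\end{equation*}
Letting $j \to \infty$ the prefactor tends to $1$, so $\limsup_k n_k \cdot \underline{\mu}(n_k) \ge L/2 > 0$, as desired.

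I expect the only genuinely substantive step to be the monotonicity of $\underline{\mu}$; once that is in hand, the bounded-gap comparison is essentially bookkeeping. The one place to take care is the direction of the inequality: it is crucial to choose $n_{k(j)} \le m_j$ (so that the larger value $\underline{\mu}(n_{k(j)}) \ge \underline{\mu}(m_j)$ is available) rather than $n_{k(j)} \ge m_j$, and to absorb the length deficit $C$ into the ratio $(m_j - C)/m_j \to 1$ rather than attempting to control $\underline{\mu}(m_j)$ directly.
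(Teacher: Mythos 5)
Your proof is correct and follows essentially the same route as the paper's: both rely on the monotonicity of $\underline{\mu}$, extract lengths witnessing Boshernitzan's condition, use relative density to find $n_{k}$ just below those lengths, and absorb the bounded deficit $C$ into a ratio tending to $1$. The only difference is cosmetic — you prove the monotonicity (which the paper dismisses as clear) and work with a lower bound $L/2$ instead of a convergent subsequence.
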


\begin{proof}
Let $\mu$ be the unique invariant measure on $(\X,S)$. Clearly $\underline{\mu}(n)$ is decreasing. Let $\ell_j \to\infty$ be chosen so that
\begin{equation} \label{eq:boshinterp:elljchoice}
\lim_{j\to\infty} \ell_j \cdot \underline{\mu}(\ell_j) = c >0.
\end{equation}
Since $n_k$ is relatively dense, we may choose a $C>0$ such that for every $j\geq 1$, there exists $k_j \geq 1$ for which $\ell_j-C \leq n_{k_j} \leq \ell_j$. One then has
\begin{align*}
n_{k_j} \cdot \underline{\mu}(n_{k_j})
\geq (\ell_j-C) \cdot \underline{\mu}(\ell_j)
= \left( 1 - \frac{C}{\ell_j} \right)\ell_j \cdot \underline{\mu}(\ell_j)
\end{align*}
so the result follows immediately from \eqref{eq:boshinterp:elljchoice}.
\end{proof}

We will show next that, for many purposes, we can assume without loss of generality that the window function $g$ from \eqref{eq:boshper:locconstonprodshiftdef} depends only on the first coordinate of $x \in \mathbb{X}$.

\begin{lemma}
If $(\mathbb{X},S)$ satisfies the Boshernitzan condition, then so does $(\mathbb{X}^{[k]},S)$ for every $k \in \N$. Moreover, one has 
\begin{equation}
\Sf(n,\X,S) = \Sf(n,\X^{[k]},S)
\end{equation}
for all $n,k \in \N$ {\rm(}cf.\ Def.~\ref{DEF:Sfmdef}{\rm)}.
\end{lemma}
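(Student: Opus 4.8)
The plan is to prove both assertions by exploiting the fact that the $k$-block conjugation $\varphi_k$ is a topological conjugacy between $(\X,S)$ and $(\X^{[k]},S)$, so that all dynamical quantities that are conjugacy invariants transfer automatically, and only the metric/combinatorial quantity in the Boshernitzan condition requires a genuine argument. First I would dispose of the second claim: since $\varphi_k$ commutes with the shifts (we are using $S$ for both), it is a conjugacy between $(\X,S^n)$ and $(\X^{[k]},S^n)$ for every $n$, and a conjugacy carries closed $S^n$-invariant sets to closed $S^n$-invariant sets bijectively and preserves inclusions. Hence it induces a bijection between the $S^n$-minimal components of $\X$ and those of $\X^{[k]}$, giving $\Sf(n,\X,S)=\Sf(n,\X^{[k]},S)$ for all $n,k\in\N$ immediately from Definition~\ref{DEF:Sfmdef}.

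For the first claim, minimality of $(\X^{[k]},S)$ is again immediate from the conjugacy, so the work is to verify condition $\Star$ for $\X^{[k]}$. The key observation is the dictionary between words: a legal word of length $n$ in $\X^{[k]}$ corresponds, under $\varphi_k$, to a legal word of length $n+k-1$ in $\X$, since reading $n$ consecutive letters of $x^{[k]}$ amounts to reading the overlapping windows determined by $x_{[m,m+n+k-2]}$. More precisely, I would set up a length-preserving-up-to-shift correspondence $\mathcal{L}_n(\X^{[k]})\leftrightarrow \mathcal{L}_{n+k-1}(\X)$ that is measure-preserving in the sense that the cylinder $[u]$ in $\X^{[k]}$ determined by a block word $u$ of length $n$ has the same measure under the pushforward measure $\mu^{[k]}:=\varphi_{k,*}\mu$ as the cylinder in $\X$ cut out by the corresponding length-$(n+k-1)$ word carries under $\mu$. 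Consequently $\underline{\mu^{[k]}}(n)=\underline{\mu}(n+k-1)$.

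Once this identity is in place, the Boshernitzan estimate transfers cleanly. From $\limsup_{n\to\infty} n\cdot\underline{\mu}(n)>0$ I want to conclude $\limsup_{n\to\infty} n\cdot\underline{\mu^{[k]}}(n)>0$, i.e.\ $\limsup_{n\to\infty} n\cdot\underline{\mu}(n+k-1)>0$. Writing $m=n+k-1$, this is $\limsup_{m\to\infty}(m-k+1)\cdot\underline{\mu}(m)>0$, and since $k$ is fixed the factor $(m-k+1)/m\to 1$, so the $\limsup$ over $m$ of $(m-k+1)\underline{\mu}(m)$ equals the $\limsup$ of $m\,\underline{\mu}(m)$, which is positive by hypothesis. (Alternatively, one may invoke Lemma~\ref{lem:Boshinterpolate} with the relatively dense sequence $n_k=k$ after the reindexing, but the direct comparison is cleaner.) Thus $\mu^{[k]}$ witnesses $\Star$ for $\X^{[k]}$, and together with minimality this gives the Boshernitzan condition.

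The main obstacle I anticipate is purely bookkeeping: pinning down the exact word-to-word correspondence and the "up to a finite shift" and boundary effects so that the claimed identity $\underline{\mu^{[k]}}(n)=\underline{\mu}(n+k-1)$ is literally correct, rather than merely correct up to an additive constant in the length. There is a subtlety in that the map from block-words to ordinary words is a bijection only for $n\ge 1$ and one must check it preserves the property of being legal in both directions (an issue of injectivity of the $k$-block code on overlapping windows, which holds precisely because $\varphi_k$ is a conjugacy and not merely a factor map). Everything else is an invariance-under-conjugacy argument and should be routine.
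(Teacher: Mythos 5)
Your proposal is correct and follows essentially the same route as the paper: conjugacy via $\varphi_k$ handles minimality and the equality $\Sf(n,\X,S)=\Sf(n,\X^{[k]},S)$, and the pushforward measure together with the bijection $\mc L_{n+k-1}(\X)\leftrightarrow\mc L_n(\X^{[k]})$ yields $\underline{\mu^{\ast}}(n)=\underline{\mu}(n+k-1)$, from which $\Star$ transfers. If anything, you spell out the final asymptotic step (that $(m-k+1)/m\to 1$ preserves the positive $\limsup$), which the paper leaves implicit.
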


\begin{proof}
Let $\mu$ denote the unique ergodic measure on $(\mathbb{X},S)$. Since $\varphi_k$ is a topological conjugation, the minimality of $(\mathbb{X}^{[k]},S)$ is immediate. By the same argument, for every $m \in \N$, the number of $S^m$-minimal components is the same for both subshifts. Further, the measure $\mu^{\ast} = \mu \circ \varphi_k^{-1}$ is shift-invariant on $(\mathbb{X}^{[k]},S)$. Since $\varphi_k$ is a bijection from $\mc L_{n+k-1}(\mathbb{X})$ to $\mc L_{n}(\mathbb{X}^{[k]})$ for all $n \in \N$, we have
\begin{align*}
\underline{\mu^{\ast}}(n) & = \min \{ \mu^{\ast}[v] : v \in \mc L_{n} (\mathbb{X}^{[k]}) \} = \min \{ \mu [u] : u \in \mc L_{n+k-1} (\mathbb{X}) \}
\\ & = \underline{\mu}(n+k-1),
\end{align*}
which implies that $(\X^{[k]},S)$ satisfies $\Star$.
\end{proof}

Let $(\mathbb{X},S)$ be an aperiodic Boshernitzan subshift over the  alphabet $\mc A$ and $(\mathbb{X}',S')$ a periodic subshift over the finite alphabet $\mc A'$. Let $\mu$ be the unique invariant measure on $(\mathbb{X},S)$ and let $\mu'$ be the unique invariant measure on the periodic subshift $(\mathbb{X}',S')$. Then, $\nu = \mu \times \mu'$ is a $T$-invariant measure on $\mathbb{X} \times \mathbb{X}'$.

The main result of this section reads as follows.

\begin{theorem}
\label{THM:boshernitzan+periodic}
Let $(\mathbb{X},S)$ be a Boshernitzan subshift, let $(\mathbb{X}',S')$ be a $p$-periodic subshift, and define $(\Omega,T) = (\X \times \X',S\times S')$. 
\begin{enumerate}
\item[{\rm(a)}] The number of $T$-minimal components of $\Omega$ is precisely  $\Sf(p) = \Sf(p,\X,S)$, the number of $S^p$-minimal components of $\X$ {\rm(}cf.\ Def.~\ref{DEF:Sfmdef}{\rm)}. We denote the $T$-minimal components of $\Omega$ by $\Omega_1,\ldots,\Omega_{\Sf(p)}$.
\item[{\rm(b)}] Let $f$ be locally constant on $\X \times \X'$, and consider the associated family of Schr\"odinger operators $H_\omega = \Delta+V_\omega$ with $V_\omega$ as in \eqref{eq:intro:VomegaDef}. For each $1 \le j \le \Sf(p)$, there exists a compact set $\Sigma_j$ such that $\sigma(H_\omega) = \Sigma_j$ for every $\omega \in \Omega_j$. In particular, there are no more than $\Sf(p)$ distinct sets that may arise as spectra corresponding to operators $H_\omega$ with $\omega \in \Omega$.\footnote{Later on, we will abbreviate this observation by writing $\#\{\sigma(H_\omega) : \omega \in \Omega \} \leq \Sf(p)$.}
\item[{\rm(c)}] For every $1 \le j \le \Sf(p)$, one has the following dichotomy. Either $V_\omega$ is periodic for all $\omega \in \Omega_j$ or $\Sigma_j$ is a Cantor set of zero Lebesgue measure.
\end{enumerate}
\end{theorem}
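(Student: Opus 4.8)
The plan is to prove the three parts in order, putting the bulk of the work into (a) and into upgrading it to the Boshernitzan property for each minimal component; parts (b) and (c) then follow from results already available. For (a), I would apply Fact~\ref{FACT:minimal-decomposition} with $m=p$ to write $\X = \X_1 \cup \cdots \cup \X_q$ with $q = \Sf(p)$, each $\X_a$ being $S^p$-minimal and $S\X_a = \X_{a+1 \bmod q}$. Identifying $(\X',S')$ with the cyclic rotation $(\Z_p,+1)$ and writing a point of $\Omega$ as $(x,i)$ with $x \in \X_{a(x)}$, the residue $c := a(x) - i \bmod q$ is $T$-invariant (both $a$ and $i$ advance by $1$ under $T$, and $q \mid p$), so it partitions $\Omega$ into the $q$ closed $T$-invariant sets $\Omega_c = \bigcup_{i \in \Z_p} \X_{(c+i)\bmod q} \times \{i\}$. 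To see each $\Omega_c$ is minimal, observe $T^p = S^p \times \mathrm{id}$, so the $T$-orbit closure of any $(x,i)$ contains $\overline{\orb_{S^p}(x)} \times \{i\} = \X_{a(x)} \times \{i\}$ by $S^p$-minimality; applying $T$ sweeps this slice through all phases, and since $T^k(\X_{a(x)} \times \{i\}) = \X_{(c+i+k)\bmod q} \times \{i+k \bmod p\}$ this recovers all of $\Omega_c$. Hence there are exactly $q = \Sf(p)$ minimal components.

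Part (b) is then immediate: each $(\Omega_j, T)$ is minimal and $f$ is continuous, so the constancy of the spectrum over minimal hulls (the consequence of Johnson's theorem recorded at the end of Section~\ref{subsec:cocycle}) produces a fixed compact $\Sigma_j$ with $\sigma(H_\omega) = \Sigma_j$ on $\Omega_j$, whence at most $\Sf(p)$ distinct spectra. For (c), the crux is that each minimal component, regarded as a subshift over $\mc B = \mc A \times \mc A'$ through the conjugacy $\varphi$, again satisfies Boshernitzan's condition; the dichotomy then follows by applying \cite{DamanikLenz-Bosh} to $(\Omega_c, T)$ with the locally constant sampling function $f$. To verify $\Star$ for $\Omega_c$, I would use the $T$-invariant probability measure $\nu_c = q\,(\mu\times\mu')|_{\Omega_c}$; note that $\mu(\X_a) = 1/q$ by $S$-invariance and $\mu'$ is uniform, so $(\mu\times\mu')(\Omega_c) = 1/q$ and $\nu_c$ is indeed a probability measure.

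The heart of the matter is computing the cylinder weights $\underline{\nu_c}(n)$, and here lies the main obstacle: a one-sided cylinder $[u]$ in $\X$ can meet several components $\X_a$, because membership in $\X_a$ — equivalently the value of the locally constant eigenfunction furnished by Lemma~\ref{LEM:top-eigenvalue-criterion} — is determined by a \emph{two-sided} window $x_{[-M,M]}$. I would resolve this by computing $\underline{\nu_c}$ with offset cylinders centered so that $x_{[-M,M]}$ is fixed, which is legitimate since $\nu_c$ is $T$-invariant and cylinder measures are shift-independent. Each such offset cylinder lies in a single $\X_a$, a length-$n$ window of the periodic factor determines the phase $i$ once $n \ge p$, and legality of the word forces the compatibility $a(u) \equiv c+i \bmod q$. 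A short count (each $u \in \mc L_n(\X)$ being realized by a compatible phase, of which there are $p/q \ge 1$) then yields $\underline{\nu_c}(n) = \tfrac{q}{p}\,\underline{\mu}(n)$ for all $n \ge 2M+1$, where $\underline{\mu}$ is the Boshernitzan quantity of $\X$. Consequently $\limsup_n n\,\underline{\nu_c}(n) = \tfrac{q}{p}\limsup_n n\,\underline{\mu}(n) > 0$, so $(\Omega_c,T)$ satisfies $\Star$; combined with minimality from (a) it is a Boshernitzan subshift.

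Finally, since $f$ is locally constant on $\X \times \X'$ and $\X'$ is periodic, $f$ is locally constant as a function on each subshift $\Omega_c$ over $\mc B$ (its dependence on the periodic coordinate is absorbed into a window of length at most $p$); applying the Damanik--Lenz dichotomy \cite{DamanikLenz-Bosh} to $(\Omega_c, T)$ gives that either $V_\omega$ is periodic for every $\omega \in \Omega_c$ or $\Sigma_c$ is a Cantor set of zero Lebesgue measure, which is precisely (c). I expect the offset-cylinder bookkeeping that aligns words of $\Omega_c$ with words of $\X$ while tracking component–phase compatibility to be the most delicate step, and it is exactly there that the stability of Boshernitzan's criterion under the periodic product is established.
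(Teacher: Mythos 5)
Your proposal is correct and takes essentially the same route as the paper: decompose $\Omega$ into $\Sf(p)$ minimal components in bijection with the $S^p$-minimal components of $\X$ (Lemma~\ref{LEM:minimal-comp-of-product}), note that the spectrum is constant on each component by minimality and continuity, verify that each component inherits Boshernitzan's condition for the normalized restriction of $\mu\times\mu'$ (Corollary~\ref{coro:bosh+per:minCompsofYSatisfyBosh}), and then invoke the Damanik--Lenz dichotomy. Your offset-cylinder bookkeeping in part (c) merely makes explicit a point the paper treats tersely --- that sufficiently long cylinders lie in a single minimal component, which ultimately rests on the cyclic action of the shift on the components --- so the difference is one of detail, not of method.
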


\begin{remark}
One can have examples in which $V_\omega$ is periodic for $\omega$ in one minimal component but not every minimal component. Consequently, this provides examples for which $\Sigma_j$ is a Cantor set and $\Sigma_k$ is a finite-gap set for some $j$ and for some $k \neq j$; see Remark~\ref{REM:Toeplitz-different-spectra}.
\end{remark}

 As a consequence of our work in the present section, we deduce the following characterization, which may be of independent interest:

\begin{theorem} \label{t:bosh+per:prodminchar}
Let $(\mathbb{X},S)$ be a Boshernitzan subshift, let $(\mathbb{X}',S')$ be a $p$-periodic subshift, and let $\Sf(p) = \Sf(p,\X,S)$ be the number of $S^p$-minimal components of $\X$ {\rm(}cf.\ Def.~\ref{DEF:Sfmdef}{\rm)}. The following are equivalent:
\begin{enumerate}
\item[{\rm(a)}] The system $(\mathbb{X}\times \mathbb{X}',T)$ is {\rm(}topologically conjugate to{\rm)} a Boshernitzan subshift
\item[{\rm(b)}] $(\X \times \X',T)$ is minimal.
\item[{\rm(c)}] $\Sf(p)=1$.
\item[{\rm(d)}] $(\mathbb{X},S)$ has no eigenvalues of the form $\me^{2 \pi \mi k/p}$  aside from the trivial eigenvalue $1$.
\end{enumerate}
\end{theorem}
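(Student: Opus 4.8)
The plan is to establish the cycle of equivalences $(a) \Rightarrow (b) \Rightarrow (c) \Rightarrow (d) \Rightarrow (a)$, using Theorem~\ref{THM:boshernitzan+periodic}(a) as the central tool linking the number of minimal components of the product to the quantity $\Sf(p)$. The implication $(a) \Rightarrow (b)$ is immediate, since any Boshernitzan subshift is minimal by definition; and $(b) \Leftrightarrow (c)$ follows directly from part (a) of Theorem~\ref{THM:boshernitzan+periodic}, which identifies the number of $T$-minimal components of $\Omega = \X \times \X'$ with $\Sf(p)$, so that minimality of the product is equivalent to $\Sf(p) = 1$.

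For $(c) \Leftrightarrow (d)$, I would invoke the eigenvalue characterization of $\Sf$ developed in the preliminaries. By Proposition~\ref{PROP:s(n)-eigenvalue}, $\Sf(p)$ is the largest divisor $q$ of $p$ for which $\me^{2\pi \mi /q}$ is a topological eigenvalue of $(\X,S)$. Thus $\Sf(p) = 1$ precisely when the only divisor $q$ of $p$ with $\me^{2\pi \mi/q}$ a topological eigenvalue is $q = 1$. Since the topological eigenvalues that are roots of unity of order dividing $p$ are exactly the powers $\me^{2\pi \mi k/p}$ (and, as noted in the excerpt, the full set of topological eigenvalues forms a subgroup of $\partial \D$), the condition $\Sf(p) = 1$ is equivalent to the statement that $(\X,S)$ has no eigenvalue of the form $\me^{2\pi \mi k/p}$ other than the trivial one. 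I would spell out the group-theoretic bookkeeping here: if some $\me^{2\pi \mi k/p} \neq 1$ were an eigenvalue, its order $q = p/\gcd(k,p) > 1$ divides $p$ and $\me^{2\pi \mi/q}$ would be an eigenvalue (as a power of $\me^{2\pi \mi k/p}$), forcing $\Sf(p) \geq q > 1$; conversely if $\Sf(p) = q > 1$ then $\me^{2\pi \mi/q}$ is a nontrivial such eigenvalue.

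The remaining and genuinely substantive implication is $(c) \Rightarrow (a)$: knowing $\Sf(p) = 1$, I must upgrade minimality of the product to the full Boshernitzan property. Here I would exploit the concrete structure already assembled: with $\nu = \mu \times \mu'$ the natural $T$-invariant measure on $\Omega$, I would estimate $\underline{\nu}(n)$ for words in the product alphabet $\mc B = \mc A \times \mc A'$. A word of length $n$ in $\Y$ projects to a word of length $n$ in $\X$ together with a window of the $p$-periodic pattern, and since the periodic factor contributes only a fixed combinatorial factor of at most $p$ choices of phase, the product-measure of a cylinder in $\Omega$ is comparable to $\underline{\mu}(n)$ times a constant depending only on $p$. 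Concretely, $\underline{\nu}(n) \geq \underline{\mu}(n) \cdot \underline{\mu'}(n) \geq c_p \, \underline{\mu}(n)$ for a constant $c_p > 0$, so that $\limsup_n n \cdot \underline{\nu}(n) > 0$ follows from the same property for $\mu$. Combined with the minimality supplied by $(c)$, this yields that $(\Omega, T)$ is a Boshernitzan subshift (after passing through the canonical conjugacy $\varphi$ to the subshift $(\Y, T')$ over $\mc B$). I expect this measure-comparison step to be the main obstacle, as one must verify carefully that the minimum over legal words in the product is not degraded below the product of the individual minima — the subtlety being that not every pair of legal words need be jointly legal, but minimality under $(c)$ guarantees that every combination compatible with the periodic phase does occur, keeping the count controlled.
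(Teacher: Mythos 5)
Your proposal is correct and follows essentially the same route as the paper: the implications $(a)\Rightarrow(b)$, $(b)\Leftrightarrow(c)$ (via the count of $T$-minimal components), and $(c)\Leftrightarrow(d)$ (via Proposition~\ref{PROP:s(n)-eigenvalue} and the group structure of topological eigenvalues) are exactly the paper's, and your direct measure estimate $\nu[u]=\mu[u_1]\mu'[u_2]\geq \underline{\mu}(n)/p$ for $(c)\Rightarrow(a)$ is precisely the computation the paper packages as Corollary~\ref{coro:bosh+per:minCompsofYSatisfyBosh} and cites for $(b)\Rightarrow(a)$. The only cosmetic difference is that you unfold that corollary's proof instead of citing it; note also that your worry about joint legality is harmless, since the minimum over the (smaller) set of jointly legal words can only be larger than the product of the factor minima.
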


In the following, let $p$ denote the period of points in $\mathbb{X}'$. Then, $(\mathbb{X} \times \mathbb{X}',T)$ is topologically conjugate to a discrete suspension of $(\mathbb{X}, S^p)$ with constant height $p$, which explains why we focus on properties of $(\mathbb{X},S^n)$ for $n \in \N$. Let us expand a bit on this connection.

\begin{lemma}
\label{LEM:minimal-comp-of-product}
Let $(\X,S)$ be a Boshernitzan subshift and $\X'$ be a periodic subshift of period $p$. The action of $T$ on $\mathbb{X}\times \mathbb{X}'$ decomposes into minimal components. The number of $T$-minimal components in $\mathbb{X} \times \mathbb{X}'$ coincides with the number of $S^p$-minimal components in $\mathbb{X}$.
\end{lemma}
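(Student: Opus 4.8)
The plan is to reduce everything to the minimal decomposition of $(\X, S^p)$ furnished by Fact~\ref{FACT:minimal-decomposition} and to build the $T$-minimal components of $\X \times \X'$ explicitly from the $S^p$-minimal components of $\X$. Since a Boshernitzan subshift is minimal, Fact~\ref{FACT:minimal-decomposition} applies with $m = p$: writing $q = \Sf(p)$, one has $q \mid p$ and a disjoint decomposition $\X = \X_0 \cup \cdots \cup \X_{q-1}$ into $S^p$-minimal sets with $S(\X_i) = \X_{(i+1) \bmod q}$, whence $S^r(\X_i) = \X_{(i+r) \bmod q}$ for all $r$ and $S^p(\X_i) = \X_i$ (using $q \mid p$). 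As $\X'$ is $p$-periodic, I would identify $(\X', S')$ with $(\Z_p, +1)$ and accordingly view $T = S \times (+1)$ acting on $\X \times \Z_p$.

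For each $0 \le j \le q-1$, I would propose the candidate minimal set
\[
M_j = \bigcup_{r=0}^{p-1} \X_{(j+r) \bmod q} \times \{r\} \subseteq \X \times \Z_p.
\]
First I would check that $M_j$ is closed and $T$-invariant: applying $T$ sends the fiber over $r$ to the fiber over $(r+1) \bmod p$ and carries $\X_{(j+r)\bmod q}$ to $\X_{(j+r+1)\bmod q}$ via $S$. This is exactly the component prescribed by $M_j$ at level $(r+1)\bmod p$, since $q \mid p$ forces $(r+1)\bmod p \equiv r+1 \pmod q$, so the wraparound $r = p-1 \to 0$ is consistent.

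The heart of the argument is minimality of each $M_j$, and this is the step I expect to be the main obstacle. I would take an arbitrary point $(y, s) \in M_j$, so $y \in \X_{(j+s)\bmod q}$, and apply $T^{-s}$ to land at $(S^{-s} y, 0)$ with $S^{-s}y \in \X_j$. Since $\X_j$ is $S^p$-minimal, the $S^p$-orbit closure of $S^{-s}y$ is all of $\X_j$; tracking this through the fibers — the iterates $T^{mp+r}(S^{-s}y, 0) = (S^{mp+r}S^{-s}y, r)$ accumulate on $\X_{(j+r)\bmod q} \times \{r\}$ at each level $r$ — shows that the $T$-orbit closure of $(S^{-s}y,0)$ is precisely $M_j$. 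As $(y,s)$ lies on this orbit, its closure is $M_j$ as well, so $M_j$ is minimal. The delicate point here is verifying that the single $S^p$-minimal set $\X_j$ at the base simultaneously fills every fiber, which relies on combining $S^p$-minimality with the cyclic relation $S(\X_i) = \X_{(i+1)\bmod q}$.

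Finally I would count. The sets $M_0, \ldots, M_{q-1}$ are pairwise disjoint, since at fiber $r=0$ the component $\X_j$ appearing in $M_j$ determines $j$; and they cover $\X \times \Z_p$, since any $(x,s)$ satisfies $S^{-s}x \in \X_j$ for some $j$, placing $(x,s) = T^s(S^{-s}x, 0) \in M_j$. Hence $\X \times \X'$ decomposes into exactly $q = \Sf(p)$ minimal components, matching the number of $S^p$-minimal components of $\X$, as claimed.
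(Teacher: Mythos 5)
Your proof is correct, and it takes a more explicit, constructive route than the paper's. The paper's proof also rests on Fact~\ref{FACT:minimal-decomposition}, but it only uses the resulting partition of $\X$ into $S^p$-minimal components: fixing $x^{\ast} \in \X'$, it writes every point of $\X \times \X'$ as $T^j(x,x^{\ast})$ with $0 \le j \le p-1$, introduces the orbit-closure relation $(x,x') \leadsto (y,y')$, and shows via the chain $T^j(x,x^{\ast}) \leadsto T^k(y,x^{\ast}) \iff x \sim y \iff T^k(y,x^{\ast}) \leadsto T^j(x,x^{\ast})$ (where $x \sim y$ means ``same $S^p$-minimal component'') that $\leadsto$ is an equivalence relation whose classes are exactly the $T$-minimal components and are in bijection with the $\sim$-classes of $\X$. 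You instead exploit the full strength of the Fact, namely the cyclic relation $S(\X_i) = \X_{(i+1) \bmod q}$, to write the components down explicitly as $M_j = \bigcup_{r=0}^{p-1} \X_{(j+r) \bmod q} \times \{r\}$ and then verify invariance, minimality, disjointness, and covering by hand. The paper's formulation is shorter because the equivalence-relation formalism absorbs the bookkeeping; yours costs index manipulation modulo $q$ and $p$ but buys a concrete description of the components, which is precisely the form the paper itself uses later (the sets $\bigcup_{k=0}^{p-1} T^k(\X_j \times \{x'\})$ in the proof of Proposition~\ref{PROP:pn-limsup}). One small point you leave implicit: to get that there are \emph{exactly} $q$ minimal components, you should note that any $T$-minimal set must intersect one of the disjoint covering sets $M_j$, and then by minimality of both sets it must coincide with that $M_j$; this is the standard ``two minimal components are identical or disjoint'' fact recorded in the paper's preliminaries, so it is a one-line addition rather than a genuine gap.
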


\begin{proof}
Let $m = \Sf(p,\X,S)$ be the number of $S^p$-minimal components in $\mathbb{X}$. We define an equivalence relation on $\mathbb{X}$ by $x \sim y$ if and only if $x$ and $y$ are in the same $S^p$-minimal component. Equivalently, $x \sim y$ whenever $y$ is in the $S^p$-orbit closure of $x$.  
For $(x,x'),(y,y') \in \mathbb{X} \times \mathbb{X}'$, let us denote $(x,x') \leadsto (y,y')$ if $(y,y')$ is in the $T$-orbit closure of $(x,x')$. Fixing $x^{\ast} \in \mathbb{X}'$, every point in $\mathbb{X} \times \mathbb{X}'$ is of the form $T^j(x, x^{\ast})$ for some $0 \leq j \leq p-1$ and $x \in \mathbb{X}$. We obtain
\begin{align*}
T^j(x,x^{\ast}) \leadsto T^k(y,x^{\ast}) 
& \iff (x,x^{\ast}) \leadsto (y,x^{\ast}) \\ & \iff x \sim y 
\\ & \iff T^k(y,x^{\ast}) \leadsto T^j(x,x^{\ast}),
\end{align*}
showing that $\leadsto$ is an equivalence relation on $\mathbb{X}\times \mathbb{X}'$. Further, the calculation above reveals that the equivalence classes satisfy $[T^j(x,x^{\ast})] = [(x,x^{\ast})]$ for all $0 \leq j \leq p-1$ and that they are in one-to-one correspondence to the equivalence classes in $\mathbb{X}$. By definition of $\leadsto$, the equivalence classes $[(x,x^{\ast})]$ are $T$-minimal components.
\end{proof}

Our next goal is to show that $(\mathbb{X},S^m)$ satisfies the Boshernitzan condition on each minimal component. To this end, we interpret $(\mathbb{X},S^m)$ as a subshift over the alphabet $\widetilde{\mc A} = \mc L_m(\mathbb{X})$. The legal words of length $k$ in this alphabet are given by $\widetilde{\mc L}_k = \widetilde{\mc A}^k \cap \mc L(\mathbb{X})= \mc L_{mk}(\mathbb{X})$. That is, there is a topological conjugacy from $(\X,S^m)$ to a subshift $(\widetilde{\X},S)$ over the alphabet $\widetilde{\mc A}$ defined by 
\[\X \ni x \mapsto \widetilde{\varphi}(x) \in \widetilde{\mc A}^\Z,\]
where $(\widetilde{\varphi}(x))_k = x_{[km,(k+1)m-1]}$.

\begin{lemma} \label{lem:bosh+per:minCompsSatisfyBosh}
Let $(\X,S)$ be a Boshernitzan subshift and let $m \in \N$. Each $S^m$-minimal component of $\mathbb{X}$ satisfies the Boshernitzan condition.
\end{lemma}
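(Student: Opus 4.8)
The plan is to produce, on each $S^m$-minimal component, an $S^m$-invariant measure whose word measures can be bounded from below by the word measures $\underline{\mu}$ of the ambient Boshernitzan subshift, at the cost of enlarging the word length by a fixed additive constant. Let $q = \Sf(m)$ and write $\X = \X_1 \cup \cdots \cup \X_q$ for the decomposition into $S^m$-minimal components furnished by Fact~\ref{FACT:minimal-decomposition}; minimality of each $(\X_j,S^m)$ is then already in hand, so only the growth condition $\Star$ remains to be verified. Let $\mu$ be the unique invariant measure of $(\X,S)$. Since $S(\X_j) = \X_{j+1 \, \mathrm{mod}\, q}$ and $\mu$ is $S$-invariant, one has $\mu(\X_j) = 1/q$ for each $j$, so $\mu_j := q\,\mu|_{\X_j}$ is an $S^m$-invariant probability measure on $\X_j$; this will be the measure witnessing $\Star$.

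First I would record that each $\X_j$ is clopen: the components are finitely many, closed, pairwise disjoint, and cover $\X$, so each is also open. Being a clopen subset of $\mc A^\Z$, its indicator is locally constant; equivalently, writing $c(x)$ for the unique index with $x \in \X_{c(x)}$, the function $c$ depends only on a finite window $x_{[-N,N]}$ for some $N \in \N$. Next I would unwind the $m$-block description: via the conjugacy $\widetilde{\varphi}$, the system $(\X_j, S^m)$ is a subshift over the alphabet $\mc L_m(\X)$ whose legal words of length $k$ correspond to the length-$mk$ words $u = y_{[0,mk-1]}$ with $y \in \X_j$, and whose cylinder measures are $\mu_j([v]) = q\,\mu(\X_j \cap [u])$, where $[u]$ denotes the length-$mk$ cylinder in $\X$. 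Writing $p_j(u) = \mu(\X_j \cap [u])$, the quantity to control is therefore $\underline{\mu_j}(k) = \min\{q\,p_j(u) : p_j(u) > 0\}$, a minimum over the finitely many length-$mk$ words $u$ with $\X_j \cap [u] \neq \varnothing$ (equivalently, of positive measure, since minimality makes the support of $\mu$ all of $\X$).

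The heart of the argument is a lower bound $p_j(u) \ge \underline{\mu}(mk + N)$ valid for every such $u$ and all $k$ large enough that $mk - 1 \ge N$. For such $k$ the forward window $x_{[0,N]}$ is already fixed by $u$, so on $[u]$ the condition $c(x) = j$ depends only on the backward block $x_{[-N,-1]}$; partitioning $[u]$ according to this block yields a disjoint decomposition $\X_j \cap [u] = \bigsqcup_{w \in B_u}[wu]$ into length-$(N+mk)$ cylinders, where $B_u \subseteq \mc A^N$ is the set of admissible left extensions placing the point in $\X_j$. Since $p_j(u) > 0$, some $w_0 \in B_u$ makes $w_0 u$ a legal word of positive measure, whence $p_j(u) \ge \mu([w_0 u]) \ge \underline{\mu}(N + mk)$.

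Combining the last two steps gives $k\,\underline{\mu_j}(k) \ge q\,k\,\underline{\mu}(N+mk)$. Since $(N+mk)_{k\in\N}$ is increasing with constant gaps, hence relatively dense, Lemma~\ref{lem:Boshinterpolate} yields $\limsup_{k}(N+mk)\,\underline{\mu}(N+mk) =: c > 0$; as $\tfrac{mk}{N+mk} \to 1$, we conclude $\limsup_k k\,\underline{\mu_j}(k) \ge \tfrac{q}{m}\,c > 0$, so $(\X_j, S^m)$ satisfies $\Star$ and, being minimal, fulfills the Boshernitzan condition. The only delicate point is the lower bound on $p_j(u)$: the forward cylinder $[u]$ does not by itself pin down the component, since membership in $\X_j$ may depend on coordinates to the left of the cylinder. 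This is exactly what the clopen (finite-window) structure of $\X_j$ repairs, and the backward extension costs only the fixed additive constant $N$ in the word length, which is harmlessly absorbed by the interpolation Lemma~\ref{lem:Boshinterpolate}.
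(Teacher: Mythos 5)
Your proof is correct, and it follows the same skeleton as the paper's own argument: the decomposition $\X = \X_1 \cup \cdots \cup \X_q$ from Fact~\ref{FACT:minimal-decomposition}, the rescaled restrictions $\mu_j = q\,\mu|_{\X_j}$ as the witnessing $S^m$-invariant measures, and Lemma~\ref{lem:Boshinterpolate} to finish. The genuine difference is in the one combinatorial step where cylinder measures are bounded below. The paper asserts that for all large $k$, every forward cylinder $[u]$ with $u \in \mc L_{mk}(\X)$ lies entirely in a single $S^m$-minimal component, justifying this by the positive distance separating the clopen components, and thereby gets the cleaner bound $\underline{\mu_j}(k) \geq q\,\underline{\mu}(mk)$. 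As you note, that justification is delicate: a one-sided cylinder $[u] = \{x : x_{[0,|u|-1]} = u\}$ does not have small diameter, since its points may differ arbitrarily at negative coordinates, so clopen-ness alone (membership determined by a two-sided window $x_{[-N,N]}$) does not immediately yield the paper's claim. The claim is nevertheless true: using the equivariance of the component index $c(\cdot)$ (in your notation), namely $c(Sx) = c(x) + 1 \bmod q$, agreement of $x$ and $y$ on $[0,2N]$ forces $c(S^N x) = c(S^N y)$ and hence $c(x) = c(y)$, so legal words of length at least $2N+1$ do pin down the component. Your route avoids needing this observation altogether: you partition $\X_j \cap [u]$ over left-extensions of length $N$, lower-bound by the measure of a single legal word of length $N + mk$, and obtain $\underline{\mu_j}(k) \geq q\,\underline{\mu}(N+mk)$, with the additive constant $N$ absorbed by Lemma~\ref{lem:Boshinterpolate} exactly as you say. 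Both proofs work; the paper's yields a marginally cleaner inequality, while yours is self-contained precisely at the point where the paper's one-line justification is too quick.
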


\begin{proof}
Let $q = \Sf(m,\X,S)$ and write $\mathbb{X} = \mathbb{X}_1 \cup \ldots \cup \mathbb{X}_q$ as a disjoint decomposition of $\mathbb{X}$ into $S^m$-minimal components, as provided by Fact~\ref{FACT:minimal-decomposition}. For $1 \leq j \leq q$, the rescaled restriction $\mu_j = q \mu|_{\mathbb{X}_j}$ is an $S^m$-invariant probability measure on $\mathbb{X}_j$. Since the minimal components are clopen sets, there is a minimal distance separating them. Hence, there exists a $k_0 \in \N$ such that for all $k \geq k_0$, the set $\widetilde{\mc L}_k$ splits into the disjoint union $\widetilde{\mc L}_k = \widetilde{\mc L}_k(\mathbb{X}_1) \cup \ldots \cup \widetilde{\mc L}_k(\mathbb{X}_q)$. That is, for $u \in \widetilde{\mc L}_k$, the set $[u]$ is completely contained in one of the $S^m$-minimal components. For such $k$, we find
\begin{align}
\nonumber
\underline{\mu_j}(k) 
&= \min \{ \mu_j[u] : u \in \widetilde{\mc L}_k(\mathbb{X}_j) \} \\
\nonumber
& = q \min \{ \mu[u] : u \in \widetilde{\mc L}_k(\mathbb{X}_j) \}
\\ 
\nonumber
&\geq q \min \{ \mu[u] : u \in \mc L_{mk}(\mathbb{X}) \} \\
\label{eq:bosh:mujLBcomp}
& = q \, \underline{\mu}(mk).
\end{align}
Since $(\mathbb{X},S)$ is assumed to be a Boshernitzan subshift, Lemma~\ref{lem:Boshinterpolate} gives
\[
\limsup_{k\to\infty} k\cdot \underline{\mu}(mk) >0,
\]
Combining this with \eqref{eq:bosh:mujLBcomp} gives us
\[
\limsup_{k \to \infty} k \cdot \underline{\mu_j}(k) 
\geq \limsup_{k\to\infty} qk\cdot \underline{\mu}(mk) >0,
\]
which concludes the argument.\end{proof}

Interpreting $(\mathbb{Y},T)$ as a (discrete) suspension of $(\mathbb{X},S^m)$, we find that an analogous statement holds for this system. An alternative proof is given below.

\begin{coro} \label{coro:bosh+per:minCompsofYSatisfyBosh}
Each $T$-minimal component in $(\mathbb{Y},T)$ satisfies the Boshernitzan condition.
\end{coro}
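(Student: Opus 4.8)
The plan is to leverage the work already done in Lemma~\ref{lem:bosh+per:minCompsSatisfyBosh} together with the explicit suspension structure relating $(\mathbb{Y},T)$ to $(\mathbb{X},S^p)$. Recall that, by Lemma~\ref{LEM:minimal-comp-of-product}, the $T$-minimal components of $(\mathbb{X}\times\mathbb{X}',T)$ are in one-to-one correspondence with the $S^p$-minimal components $\mathbb{X}_1,\ldots,\mathbb{X}_q$ of $\mathbb{X}$, where $q=\Sf(p,\mathbb{X},S)$; fixing a reference point $x^\ast\in\mathbb{X}'$, the component corresponding to $\mathbb{X}_j$ is $\Omega_j = \{T^i(x,x^\ast) : x\in\mathbb{X}_j,\ 0\le i\le p-1\}$. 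Since $\mathbb{X}'$ is $p$-periodic and the first coordinate returns to $\mathbb{X}_j$ precisely after $p$ steps, each $\Omega_j$ decomposes as $p$ clopen ``floors'' permuted cyclically by $T$, and the return map of $T^p$ to the floor $\mathbb{X}_j\times\{x^\ast\}$ is topologically conjugate to $(\mathbb{X}_j,S^p)$.

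The key computation is then to transport a Boshernitzan measure from $\mathbb{X}_j$ up to $\Omega_j$. First I would take the $S^p$-invariant probability measure $\mu_j$ on $\mathbb{X}_j$ supplied by Lemma~\ref{lem:bosh+per:minCompsSatisfyBosh} and spread it evenly over the $p$ floors, obtaining the $T$-invariant probability measure $\nu_j = \tfrac{1}{p}\sum_{i=0}^{p-1}(T^i)_*(\mu_j\times\delta_{x^\ast})$ on $\Omega_j$. I would then relate the cylinder measures in $\Omega_j$ to those of $\mathbb{X}_j$: because $\mathbb{X}'$ is periodic, a legal word of length $n$ in $\Omega_j$ (viewed as a subshift over $\mathcal B=\mathcal A\times\mathcal A'$) is determined by a legal word of length $n$ in $\mathbb{X}$ together with the fixed periodic pattern on the second coordinate, so its $\nu_j$-measure is exactly $\tfrac{1}{p}$ times the $\mu_j$-measure of the corresponding $\mathbb{X}_j$-word. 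This gives the clean estimate
\begin{equation}
\underline{\nu_j}(n) = \tfrac{1}{p}\,\underline{\mu_j^{(p)}}(\lceil n/p\rceil)
\end{equation}
up to $O(1)$ index shifts, where $\underline{\mu_j^{(p)}}$ denotes the minimal cylinder measure for $\mathbb{X}_j$ regarded as a subshift over the block alphabet $\mathcal L_p(\mathbb{X})$. Feeding the Boshernitzan property of $(\mathbb{X}_j,S^p)$ from Lemma~\ref{lem:bosh+per:minCompsSatisfyBosh} into this identity, together with Lemma~\ref{lem:Boshinterpolate} to pass through the $\lceil n/p\rceil$ rescaling (the sequence $n\mapsto\lceil n/p\rceil$ is relatively dense in its range), yields $\limsup_{n\to\infty} n\cdot\underline{\nu_j}(n)>0$. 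Minimality of each $\Omega_j$ is already known from Lemma~\ref{LEM:minimal-comp-of-product}, so this establishes the Boshernitzan condition on each $T$-minimal component.

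The main obstacle I anticipate is bookkeeping rather than conceptual: one must carefully track how a legal word of length $n$ over the product alphabet $\mathcal B$ corresponds to data on $\mathbb{X}$, and in particular verify that the minimizing word over $\Omega_j$ arises from a minimizing word over $\mathbb{X}_j$ at the appropriately rescaled length, accounting for the fact that a length-$n$ product word ``sees'' roughly $n/p$ full blocks of the $S^p$-dynamics. The constant factor $1/p$ and the passage from the length $n$ to $\lceil n/p\rceil$ are exactly where one must be cautious, since a naive substitution would shift the scaling and could destroy the $\limsup$; invoking Lemma~\ref{lem:Boshinterpolate} on a relatively dense subsequence is what cleanly absorbs these index shifts. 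Everything else follows the template of Lemma~\ref{lem:bosh+per:minCompsSatisfyBosh}, which is why I expect the argument to be short once the correspondence of cylinder sets is set up precisely.
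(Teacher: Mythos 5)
Your argument is correct, but it is not the argument the paper actually writes out: you take the suspension/$p$-block route that the paper only alludes to in the sentence immediately preceding the corollary (``Interpreting $(\mathbb{Y},T)$ as a (discrete) suspension of $(\mathbb{X},S^m)$\dots''), whereas the paper's displayed proof is the advertised \emph{alternative}, direct argument. Concretely, the paper works with the product measure $\nu = \mu\times\mu'$ on $\mathbb{Y}$ viewed as a subshift over $\mathcal{B}=\mathcal{A}\times\mathcal{A}'$ and notes that any legal word $u$ of length $k \ge p$ satisfies $\nu[u]=\mu[u_1]\,\mu'[u_2]=\mu[u_1]/p$, where $u_1=\pi_1(u)$; since for large $k$ each cylinder lies in exactly one $T$-minimal component, the normalized restricted measures inherit $(\ast)$ \emph{directly} from $(\mathbb{X},S,\mu)$ at the same length scale --- no rescaling by $p$, no appeal to Lemma~\ref{lem:bosh+per:minCompsSatisfyBosh} or Lemma~\ref{lem:Boshinterpolate}. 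Your route instead transports the component measures $\mu_j$ of Lemma~\ref{lem:bosh+per:minCompsSatisfyBosh} through the suspension, which is sound and has the virtue of reusing work already done, but it incurs exactly the $n \leftrightarrow \lceil n/p\rceil$ bookkeeping you flag; the paper's choice of coordinates (keeping $\mathbb{Y}$ as a subshift over the product alphabet, with one $T$-step per letter) makes that bookkeeping disappear. One small imprecision in your write-up: the displayed relation $\underline{\nu_j}(n)=\tfrac1p\,\underline{\mu_j^{(p)}}(\lceil n/p\rceil)$ is not an identity even up to index shifts, because a length-$n$ word over $\mathcal{B}$ constrains its first and last $p$-blocks only partially, so the corresponding set in $\mathbb{X}_j$ is a \emph{union} of block cylinders; what one actually gets is the one-sided bound $\underline{\nu_j}(n)\ge \tfrac1p\,\underline{\mu_j^{(p)}}(\lceil n/p\rceil+1)$. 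Fortunately this is the direction your argument needs, so the $\limsup$ conclusion, and hence your proof, stands.
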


\begin{proof}
Let $u \in \mc L(\mathbb{Y})$ be a legal word of length $k \in \N$ and denote by $u_1 = \pi_1(u)$ and $u_2 = \pi_2(u)$ the projections to $\mc L(\mathbb{X})$ and $\mc L(\mathbb{X}')$ respectively. Hence, $[u] = \varphi^{-1}([u_1] \times [u_2])$ and therefore, since $\nu = \mu\times \mu'$, $(\nu \circ \varphi) [u] = \mu[u_1] \mu'[u_2] = \mu[u_1]/p$ for $k$ large enough that $[u_2]$ is a singleton in $\X'$.  Again for large enough $k$, the cylinder set $[u]$ is contained in precisely one $T$-minimal component and property $(\ast)$ is inherited from $(\mathbb{X},S,\mu)$.
\end{proof}

We are now in a position to prove some of our main results.

\begin{proof}[Proof of Theorem~\ref{THM:boshernitzan+periodic}]  By Lemma~\ref{LEM:minimal-comp-of-product}, $\X \times \X'$ has $\Sf(p)$ distinct $T$-minimal components, which proves (a). By minimality and continuity of $f$, $\sigma(H_\omega)$ is constant on each minimal component of $T$, and thus there are at most $\Sf(p)$ sets that can arise as $\sigma(H_\omega)$ for $\omega \in \X \times \X'$, proving (b). 

By Corollary~\ref{coro:bosh+per:minCompsofYSatisfyBosh}, each minimal component of $(\X \times \X',T)$ is topologically conjugate to a Boshernitzan subshift. By our assumptions on $f$, the restriction of $f$ to each minimal component is locally constant. In view of these observations, the conclusion of part (c) follows from \cite[Theorem~2]{DamanikLenz-Bosh}. \end{proof}

\begin{proof}[Proof of Theorem~\ref{t:bosh:main}] With notation as in the statement of the theorem, let $\X'$ denote the set of translates of $V_\per$, which is clearly a $p$-periodic subshift. The result follows from Theorem~\ref{THM:boshernitzan+periodic} by choosing the sampling function $f(x,x') = f_1(x)+x'(0)$. If $V_x+V_\per$ is periodic, so is $V_x$ and hence every translate of $V_x$ is also periodic. Thus, the dichotomy for the minimal components in Theorem~\ref{THM:boshernitzan+periodic} yields the claimed dichotomy for $x \in \X$.
\end{proof}

\begin{proof}[Proof of Theorem~\ref{t:bosh:permult}]
Similar to Theorem~\ref{t:bosh:main}, this follows from Theorem~\ref{THM:boshernitzan+periodic} by letting $\X'$ denote the set of translates of $\lambda_\per$ and choosing sampling functions of the form $f(x,x') = x'(0) \cdot f_1(x)$.
\end{proof}

\begin{remark} \label{rem:boshper:lambdaperXperiodDoubling}
Unlike in Theorem~\ref{t:bosh:main}, the (a)periodicity of $\lambda_\per V_x$ may depend on $x$. This is most easily seen for the case in which $\X$ is the period-doubling subshift, which is generated by the substitution $\vartheta \colon a \mapsto ab, b \mapsto aa$ over a binary alphabet $\mc A = \{a,b\} \subseteq \R$. Define $f_1$ to be evaluation at the origin and
\[\lambda_\per(n) = \begin{cases} 1 & n \text{ is even,} \\ 0 & n \text{ is odd.}
\end{cases} \]
The reader can check that the pointwise product $\lambda_\per V_x$ is periodic for some, but not all $x\in \X$.

Let us recall that the subshift $\X_\vartheta$ associated to a substitution $\vartheta:\mc A \to \mc A^+$ may be defined as 
\[ \X_\vartheta = \{ x \in \mc A^\Z : \forall n \in \Z, \, k \in \N, \ x_{[n,n+k-1]} \triangleleft \vartheta^m(a) \text{ for some } m \in \N, \, a \in \mc A\}, \]
that is, $\X_\vartheta$ is precisely the set of sequences whose finite subwords may be found in words of the form $\vartheta^m(a)$ with $a \in \mc A$.
\end{remark}

\begin{proof}[Proof of Theorem~\ref{t:bosh+per:prodminchar}] (a) $\implies$ (b) is trivial, and (b) $\implies$ (a) is a consequence of Corollary~\ref{coro:bosh+per:minCompsofYSatisfyBosh}. One has (b) $\iff$ (c) by Lemma~\ref{LEM:minimal-comp-of-product}.

Since topological eigenvalues comprise a subgroup of the unit circle, one can check that $\me^{2\pi \mi k/p}$ is an eigenvalue if and only if $\me^{2\pi \mi/d}$ is an eigenvalue, where $d = \gcd(k,p)$. Thus, (c) $\iff$ (d) follows from Proposition~\ref{PROP:s(n)-eigenvalue}. \end{proof}

The number of distinct spectra that can arise in the present setting is bounded above by the number of minimal components of $(\Y,T')$, which can be related to topological eigenvalues of $(\X,S)$ via Lemma~\ref{LEM:minimal-comp-of-product} and Proposition~\ref{PROP:s(n)-eigenvalue}. We conclude the present subsection by showing that the requirement for the eigenvalues to be topological is automatic for Boshernitzan subshifts.

\begin{prop}
\label{PROP:minimal-ergodic}
Let $(\X,S)$ be a Boshernitzan subshift with unique invariant measure $\mu$. Given $m \in \N$, let $\mathbb{X}_1, \ldots, \mathbb{X}_q$ denote the $S^m$-minimal components of $\mathbb{X}$. The $S^m$-ergodic measures on $\mathbb{X}$ are given by the set
\begin{equation}
\{ q \, \mu|_{\mathbb{X}_j} : 1 \leq j \leq q \}.
\end{equation}
In particular, $\Sf(m) = \MSf(m)$ and $(\mathbb{X}, S^m)$ is minimal if and only if it is uniquely ergodic.
\end{prop}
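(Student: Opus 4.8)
The plan is to establish both inclusions for the set of $S^m$-ergodic probability measures and then read off the two ``in particular'' consequences. Using Fact~\ref{FACT:minimal-decomposition}, I would write $\X = \X_1 \cup \cdots \cup \X_q$ with $q = \Sf(m)$, where each $\X_j$ is clopen and $S^m$-minimal and $S(\X_j) = \X_{j+1 \,\mathrm{mod}\, q}$. First I would record that $\mu(\X_j) = 1/q$ for every $j$: since $S$ is a $\mu$-preserving homeomorphism that cyclically permutes the $\X_j$, all the values $\mu(\X_j)$ coincide, and they sum to $\mu(\X) = 1$. Consequently $\mu_j := q\,\mu|_{\X_j}$ is an $S^m$-invariant Borel probability measure on $\X_j$. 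By Lemma~\ref{lem:bosh+per:minCompsSatisfyBosh}, each $(\X_j, S^m)$ satisfies the Boshernitzan condition, hence is uniquely ergodic by Boshernitzan's theorem \cite{Bosh92}; its unique invariant measure must therefore be $\mu_j$, which is thus $S^m$-ergodic on $\X_j$. As $\X_j$ is closed and $S^m$-invariant with $\mu_j(\X \setminus \X_j) = 0$, the measure $\mu_j$ is also $S^m$-ergodic when viewed on all of $\X$.

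Conversely, let $\rho$ be any $S^m$-ergodic probability measure on $\X$. Each $\X_j$ is clopen and $S^m$-invariant, so ergodicity forces $\rho(\X_j) \in \{0,1\}$; since the $\X_j$ partition $\X$, exactly one index $j$ satisfies $\rho(\X_j) = 1$, so $\rho$ is carried by the single component $\X_j$. On that component $(\X_j, S^m)$ is uniquely ergodic by the paragraph above, whence $\rho = \mu_j = q\,\mu|_{\X_j}$. This shows the two families of measures coincide, which is the displayed identity. Because the $q\,\mu|_{\X_j}$ have pairwise disjoint supports they are pairwise distinct, so there are exactly $q$ of them; by Definition~\ref{DEF:MSfmdef} this gives $\MSf(m) = q = \Sf(m)$.

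Finally, for the ``in particular'' clause I would simply note that $(\X, S^m)$ is minimal precisely when $\Sf(m) = 1$ and uniquely ergodic precisely when $\MSf(m) = 1$; since the two counts agree for every $m$, minimality and unique ergodicity of $S^m$ are equivalent. I expect the only genuine content to be the identification of $q\,\mu|_{\X_j}$ as \emph{the} invariant measure on the component $\X_j$: the observations that ergodic measures cannot spread across the disjoint clopen components, and that restricted and renormalized measures remain invariant, are routine, but pinning down $\rho$ exactly requires the unique ergodicity of each $(\X_j, S^m)$, which in turn rests on the inheritance of the Boshernitzan condition from Lemma~\ref{lem:bosh+per:minCompsSatisfyBosh} together with Boshernitzan's unique ergodicity theorem.
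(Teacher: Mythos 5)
Your proof is correct and follows essentially the same route as the paper's: both rest on the decomposition of Fact~\ref{FACT:minimal-decomposition}, the inheritance of the Boshernitzan condition on each $S^m$-minimal component (Lemma~\ref{lem:bosh+per:minCompsSatisfyBosh}), and Boshernitzan's unique ergodicity theorem to identify $q\,\mu|_{\X_j}$ as the only invariant measure on each component. The only cosmetic difference is in the converse direction, where you apply the zero--one law for ergodic measures on the invariant clopen sets $\X_j$, while the paper decomposes an arbitrary $S^m$-invariant measure over the components and concludes via extreme points; the two arguments are interchangeable.
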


\begin{proof}
By $S$-invariance of $\mu$, we have $\mu(\mathbb{X}_j) = 1/q$ for all $1 \leq j \leq q$. Further, $S^m(\mathbb{X}_j) = \mathbb{X}_j$ for all $1 \leq j \leq q$ because $q$ divides $m$. Hence, each of the measures
\[
\mu_j = q \, \mu |_{\mathbb{X}_j}
\]
is an $S^m$-invariant probability measure on $\mathbb{X}$.
Since $\X_j$ and $\X_k$ are disjoint for $j\neq k$, one has $\mu_j \perp \mu_k$ for $j \neq k$.
Let $\varrho$ be an arbitrary, $S^m$-invariant measure on $\mathbb{X}$. By Lemma~\ref{lem:bosh+per:minCompsSatisfyBosh}, the system $(\mathbb{X}_j, S^m|_{\mathbb{X}_j}, \mu_j)$ fulfills the Boshernitzan condition and is therefore uniquely ergodic for all $1 \leq j \leq q$. Hence, the restriction of $\varrho$ to $\mathbb{X}_j$ is a multiple of $\mu_j$ for all $1\leq j \leq q$. This implies that the set $\{ \mu_j : 1 \leq j \leq q \}$ coincides with the set of extremal points in the space of $S^m$-invariant probability measures on $\mathbb{X}$ and hence with the set of $S^m$-ergodic measures thereupon.
\end{proof}

The following consequence of Proposition~\ref{PROP:minimal-ergodic} is of interest in its own right.

\begin{coro}
\label{COR:top=meas-eigenvalues}
Let $(\mathbb{X},S)$ be a Boshernitzan subshift with unique invariant measure $\mu$. Every eigenvalue $\me^{2 \pi \mi \alpha}$ of $(\X,S,\mu)$ with $\alpha \in \Q$ is a topological eigenvalue of $(\X,S)$.
\end{coro}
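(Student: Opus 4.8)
The plan is to leverage the identity $\Sf(m) = \MSf(m)$ from Proposition~\ref{PROP:minimal-ergodic}, which is precisely the bridge between the measure-theoretic and topological worlds for Boshernitzan subshifts, together with the group structure of the two relevant eigenvalue sets. The corollary will then follow by chaining together the characterizations of eigenvalues in terms of $\Sf$ and $\MSf$ already recorded in the excerpt.

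First I would reduce to eigenvalues of the special form $\me^{2\pi\mi/m}$. Writing $\alpha = k/m$ in lowest terms, the hypothesis is that $\me^{2\pi\mi k/m}$ is a measure-theoretic eigenvalue of $(\X,S,\mu)$, hence a primitive $m$th root of unity. Since the set of measure-theoretic eigenvalues is a subgroup of the circle, and since $\gcd(k,m)=1$ permits the choice of an integer $j$ with $jk \equiv 1 \pmod m$, raising to the $j$th power shows that $\me^{2\pi\mi/m} = \bigl(\me^{2\pi\mi k/m}\bigr)^{j}$ is itself a measure-theoretic eigenvalue.

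Next I would run the following chain. By Proposition~\ref{PROP:n-eigenvalues-pre} (applicable because every Boshernitzan subshift is uniquely ergodic), the statement that $\me^{2\pi\mi/m}$ is a measure-theoretic eigenvalue is equivalent to $\MSf(m) = m$. Proposition~\ref{PROP:minimal-ergodic} then yields $\Sf(m) = \MSf(m) = m$, and Lemma~\ref{LEM:top-eigenvalue-criterion} (applicable by minimality) converts $\Sf(m)=m$ into the assertion that $\me^{2\pi\mi/m}$ is a topological eigenvalue of $(\X,S)$. Finally, since the topological eigenvalues likewise form a subgroup of the circle, the original value $\me^{2\pi\mi k/m} = \bigl(\me^{2\pi\mi/m}\bigr)^{k}$ is a topological eigenvalue, which completes the argument.

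The real content of this corollary is entirely front-loaded into Proposition~\ref{PROP:minimal-ergodic}, whose proof rests on each $S^m$-minimal component being uniquely ergodic — a consequence of Boshernitzan's condition being inherited by minimal components via Lemma~\ref{lem:bosh+per:minCompsSatisfyBosh}. Consequently I do not expect any genuine obstacle here; the only mild care needed is bookkeeping, namely ensuring the reduction to denominator $m$ through the subgroup structure is legitimate and that $\Sf$ and $\MSf$ are consistently evaluated at the same integer $m$ throughout. No estimates or new constructions are required.
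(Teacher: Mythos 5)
Your proposal is correct and follows essentially the same route as the paper's proof: reduce to $\alpha = 1/m$ using the subgroup structure of the eigenvalues, then chain Lemma~\ref{LEM:n-eigenvalues} (equivalently Proposition~\ref{PROP:n-eigenvalues-pre}) to get $\MSf(m)=m$, Proposition~\ref{PROP:minimal-ergodic} to get $\Sf(m)=\MSf(m)=m$, and Lemma~\ref{LEM:top-eigenvalue-criterion} to conclude. The only difference is that you spell out the modular-inverse bookkeeping in the reduction step, which the paper compresses into a single ``precisely if'' remark.
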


\begin{proof}
Without loss of generality, we assume $\alpha >0$ and write $\alpha = m/n$ with $m,n \in \N$ and $m,n$ relatively prime. Then, $\me^{2\pi \mi \alpha}$ is a (topological) eigenvalue precisely if $\me^{2 \pi \mi/n}$ is a (topological) eigenvalue. Hence, we may assume $\alpha = 1/n$. If $\me^{2 \pi \mi /n}$ is an eigenvalue, then $\MSf(n) = n$ due to Lemma~\ref{LEM:n-eigenvalues}. Since the number of $S^n$-minimal components and $S^n$-ergodic measures on $\mathbb{X}$ is the same by Proposition~\ref{PROP:minimal-ergodic}, we find $\Sf(n) = \MSf(n) = n$. With Lemma~\ref{LEM:top-eigenvalue-criterion} we conclude that $\me^{2 \pi \mi /n}$ is a topological eigenvalue.
\end{proof}

Let us emphasize that this result does \emph{not} extend to irrational eigenvalues. In \cite[Sec.~6]{BDM05} the authors construct an explicit example of a linearly recurrent subshift that has non-topological (irrational) eigenvalues.

\subsubsection{Reflection Symmetries}
In general, points in different minimal components $(\mathbb{X} \times \mathbb{X}',T)$ can give rise to the same spectrum. We illustrate this in the case that $\mathbb{X}$ and $\mathbb{Y}$ have some reflection symmetries.
We define the reflection operator $\Refl_k$ at position $k \in \halfZ$ via
\begin{equation}
\Refl_k(x)_j = x_{2k -j} \mbox{ for all } j \in \Z,
\end{equation}
with a similar definition for $\Refl_k$ on $\Omega =\X \times \X'$.
By a short calculation we obtain $S^{\ell} \circ \Refl_k = \Refl_{k - \ell/2} = \Refl_k \circ S^{-\ell}$ for all $\ell \in \Z$, $k \in \frac{1}{2}\Z$. Let us say that $f:\Omega \to \R$ is \emph{reflective} if it is locally constant and the window function $g$ can be chosen to satisfy $g(w^{\Refl}) = g(w)$ where $(w_1\cdots w_\ell)^{\Refl} = w_\ell w_{\ell-1} \cdots w_1$ denotes the reflection of the word $w$.

\begin{lemma}
\label{LEM:spectral-symmetry}
For every $\omega \in \mathbb{X} \times \mathbb{X}'$ and $k \in \halfZ$, we have $\sigma(H_{\Refl_k( \omega)}) = \sigma(H_{\omega})$ for any reflective sampling function. Indeed, $H_\omega$ and $H_{\Refl_k(\omega)}$ are unitarily equivalent.
\end{lemma}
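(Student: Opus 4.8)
The plan is to realize the asserted unitary equivalence concretely through a reflection operator on $\ell^2(\Z)$ and to match its action against the action of $\Refl_k$ on the potential. First I would record the elementary fact that, for any $m \in \halfZ$, the operator $(U_m\psi)_n = \psi_{2m-n}$ is a self-adjoint unitary involution on $\ell^2(\Z)$ which commutes with $\Delta$ and satisfies $U_m M_V U_m = M_{\widetilde V}$, where $M_V$ denotes multiplication by $V$ and $\widetilde V(n) = V(2m-n)$. Consequently $U_m H_V U_m$ is again a Schr\"odinger operator, namely the one whose potential is the reflection $n \mapsto V(2m-n)$ of $V$ about the point $m$. It therefore suffices to exhibit, for each $\omega$ and each $k \in \halfZ$, a center $m \in \halfZ$ for which $V_{\Refl_k(\omega)}$ is exactly the reflection of $V_\omega$ about $m$.

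The core computation is to evaluate the potential generated by the reflected point. Writing $\ell$ for the window size of the reflective function $f$ and using the intertwining relation $S^n \circ \Refl_k = \Refl_{k - n/2}$ recorded above (together with the identification of $T$ with the shift on the conjugate subshift), one obtains
\[
V_{\Refl_k(\omega)}(n) = f(T^n \Refl_k(\omega)) = f(\Refl_{k - n/2}(\omega)).
\]
The reflective hypothesis enters precisely here: evaluating $f$ at $\Refl_{k-n/2}(\omega)$ amounts to reading the relevant window of $\omega$ in reversed order, and since the window function $g$ satisfies $g(w^{\Refl}) = g(w)$, this reversed window may be replaced by the forward window of a suitably shifted sequence. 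Carrying out the index bookkeeping yields the identity $f \circ \Refl_m = f \circ S^{2m - \ell + 1}$, and hence $V_{\Refl_k(\omega)}(n) = f(S^{2k - \ell + 1 - n}\omega) = V_\omega(2k - \ell + 1 - n)$.

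To finish, I would set $m = k - \tfrac{\ell - 1}{2}$, which again lies in $\halfZ$ since $k \in \halfZ$. With this choice the previous identity reads $V_{\Refl_k(\omega)}(n) = V_\omega(2m - n)$, so that the potential of $H_{\Refl_k(\omega)}$ is exactly the reflection of the potential of $H_\omega$ about $m$. Combining this with the first step gives $H_{\Refl_k(\omega)} = U_m H_\omega U_m$, which establishes the claimed unitary equivalence and in particular $\sigma(H_{\Refl_k(\omega)}) = \sigma(H_\omega)$.

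The one place requiring genuine care is the bookkeeping in the second step: the reflection center $k$ governing the action on the hull does \emph{not} coincide with the reflection center $m$ of the unitary on $\ell^2(\Z)$, but is offset by half the window size. Tracking this offset correctly, and confirming that $m$ remains in $\halfZ$ so that $U_m$ is genuinely defined, is the only subtle point; the commutation of $U_m$ with $\Delta$ and its conjugation action on multiplication operators are entirely routine.
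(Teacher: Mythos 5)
Your proposal is correct and takes essentially the same route as the paper's proof: conjugation by the reflection unitary $[U_m\psi](n)=\psi(2m-n)$ on $\ell^2(\Z)$, combined with the bookkeeping identity $V_{\Refl_k(\omega)}(n)=V_\omega(2m-n)$ for $m=k-\tfrac{\ell-1}{2}$, which matches the paper's choice $k'=k-c$ with $c$ the center of the window. The only (harmless) difference is that you implicitly normalize the window of $f$ to start at position $0$, whereas the paper allows a window starting at an arbitrary position and hence takes $c = m+\tfrac{\ell-1}{2}$; the general case reduces to yours by conjugating with a translation, which preserves unitary equivalence.
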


\begin{proof}
Suppose $f$ is reflective, and write
\[ f(\omega)= g(\omega_m \cdots \omega_{m+\ell-1}) \]
for a window function $g:\mc A^\ell \to \R$ satisfying $g(w^{\Refl}) \equiv g(w)$. Define $k' = k-c$ where $c = m + \frac{\ell-1}{2} \in \halfZ$ denotes the center of the window. The reader can check that the operator $U_{k'}:\ell^2(\Z) \to \ell^2(\Z)$ given by $[U_{k'}\psi](n)= \psi(2k'-n)$ is a unitary involution satisfying $U_{k'} H_\omega U_{k'} = H_{\Refl_k(\omega)}$. Indeed, one has $U_{k'} \Delta U_{k'} = \Delta$ and $U_{k'} V_\omega U_{k'} = \Refl_{k'} V_\omega$. The assumption on $f$ and the choice of $k'$ yields $V_{\Refl_k(\omega)} = \Refl_{k'} V_\omega$. Indeed, by our choice of $k'$ and our assumption on $g$, we have
\begin{align*}
[\Refl_{k'} V_\omega](n)
= f(T^{2k'-n} \omega)
& = g(\omega|_{[2k'-n+m,2k'-n+m+\ell-1]}) \\
& = g(\omega|_{[2k-n-m-\ell+1,2k-n-m]}) \\
& = g(\omega|_{[2k-n-m-\ell+1,2k-n-m]}^{\Refl}) \\
%& = g(\omega_{2k-m-n}\omega_{2k-m-n-1} \cdots \omega_{2k-m-n-\ell+1}) \\
%& = g\left((T^n \Refl_k \omega)_m (T^n \Refl_k \omega)_{m+1}\cdots (T^n \Refl_k \omega)_{m+\ell-1} \right) \\
& = f(T^n \Refl_k \omega)\\
& = V_{\Refl_k(\omega)}(n) 
\end{align*}
 which concludes the proof.\end{proof}

\begin{example} \label{ex:boshzm:tmDiffCompsSameSpec}
Assume that $\mathbb{X}$ is the Thue--Morse subshift arising from the primitive substitution $\vartheta \colon a \mapsto ab, b \mapsto ba$ and that $\mathbb{X}'$ is $2$-periodic. Since $\Sf(2) = 2$ (compare Example~\ref{ex:boshzm:constLengthSubSfunction}), the system $(\mathbb{X} \times \mathbb{X}',T)$ has two minimal components. For an arbitrary $x' \in \mathbb{X}'$, we have $\Refl_0(x') = x'$. Consider
\begin{align}
\label{eq:boshzm:tmseqdef1}
x^{\ast} 
& = \lim_{n \to \infty} \ldots \vartheta^{2n}(a) \vartheta^{2n}(a). \vartheta^{2n}(a) \vartheta^{2n}(a) \ldots \\
\label{eq:boshzm:tmseqdef2}
& = \ldots abbabaabbaababba . abbabaabbaababba \ldots, 
\end{align}
which is a fixed point under $\vartheta^2$. Since both $\vartheta^2(a)$ and $\vartheta^2(b)$ are reflection symmetric, the same holds for $\vartheta^{2n}(a)$ for all $n \in \N$. Hence, $\Refl_{-1/2}(x^{\ast}) = x^{\ast}$ (recall that the dot in \eqref{eq:boshzm:tmseqdef1} separates positions $-1$ and $0$) and therefore $\Refl_0(x^{\ast}) = S^{-1} x^{\ast}$. Because $(x^{\ast},x')$ and $\Refl_0((x^{\ast},x')) = (S^{-1}x^{\ast},x')$ belong to different $T$-minimal components, Lemma~\ref{LEM:spectral-symmetry} implies that $\sigma(H_\omega)$ is independent of $\omega$ for any choice of reflective sampling function.
\end{example}

In the following, we investigate this phenomenon in a more systematic fashion. 
\begin{definition}
For a minimal subshift $(\mathbb{X},S)$, the following are equivalent
\begin{enumerate}
\item There are $k \in \halfZ$, $x \in \mathbb{X}$ such that $\Refl_k(x) \in \mathbb{X}$.
\item $\Refl_k(\mathbb{X}) = \mathbb{X}$ for all $k \in \halfZ$.
\end{enumerate} When these statements hold, we say that $\X$ is \emph{reflection symmetric}. 
\end{definition}
Suppose now that both $\mathbb{X}$ and $\mathbb{X}'$ are reflection symmetric. Because $\mathbb{X}'$ consists of a single finite orbit, this implies that $\Refl_k(x')$ coincides with a shift of $x'$ for every $k \in \halfZ$. Consequently, there exists a point $x' \in \mathbb{X}'$ and $k \in \{0,1/2\}$ such that $\Refl_k(x') = x'$. Let $p$ be the period of $\mathbb{X}'$.

\begin{lemma}
\label{LEM:symm-classes}
Suppose $(\X,S)$ is a minimal subshift, $p \in \N$, $m = \Sf(p)$, and let $\X_1,\ldots,\X_m$ denote the $S^p$-minimal components as in Fact~\ref{FACT:minimal-decomposition}. The reflection operator $\Refl_k$ acts as a reflection on the tuple $(\mathbb{X}_1, \ldots, \mathbb{X}_m)$ in the sense that there exists $\ell \in \halfZ \cap [1,\frac{m+1}{2}]$ such that
\begin{equation}
\Refl_k (\mathbb{X}_j) = \mathbb{X}_{2\ell -j \, \mathrm{mod} \, m} \quad \text{ for all } 1 \le j \le m.
\end{equation}
\end{lemma}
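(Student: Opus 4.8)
The plan is to combine two structural inputs already on the table. The first is the cyclic action of $S$ on the components recorded in Fact~\ref{FACT:minimal-decomposition}: applied with the power $p$, it gives $q=\Sf(p)=m$ pairwise disjoint $S^p$-minimal components $\X_1,\ldots,\X_m$ with $S(\X_j)=\X_{j+1\bmod m}$. The second is the commutation relation $S^{\ell}\circ\Refl_k=\Refl_k\circ S^{-\ell}$ displayed above, specialized to $\ell=1$ and rearranged to read $\Refl_k\circ S = S^{-1}\circ\Refl_k$. Since $\X$ is reflection symmetric, $\Refl_k(\X)=\X$ for every $k\in\halfZ$, and $\Refl_k$ is an involutive homeomorphism of $\X$.

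First I would show that $\Refl_k$ permutes the components $\X_1,\ldots,\X_m$. Iterating $\Refl_k\circ S=S^{-1}\circ\Refl_k$ yields $\Refl_k\circ S^{p}=S^{-p}\circ\Refl_k$, so $\Refl_k$ is a topological conjugacy from $(\X,S^p)$ to $(\X,S^{-p})$, carrying $S^p$-minimal sets to $S^{-p}$-minimal sets. Since a closed set is $S^p$-invariant if and only if it is $S^{-p}$-invariant, the two notions of minimality coincide, and therefore each $\Refl_k(\X_j)$ is again an $S^p$-minimal component of $\X$. As $\X_1,\ldots,\X_m$ exhaust these components and are disjoint, there is a permutation $\pi$ of $\{1,\ldots,m\}$ with $\Refl_k(\X_j)=\X_{\pi(j)}$. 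This is the one step requiring genuine care: one must keep track of the fact that $\Refl_k$ reverses the dynamics (conjugating $S$ to $S^{-1}$) and check that $S^p$-minimality is insensitive to this reversal before concluding that $\Refl_k$ merely permutes the $\X_j$.

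Next I would pin down $\pi$ using the recursion coming from the cyclic structure. Applying $\Refl_k$ to $\X_{j+1}=S(\X_j)$ and using $S^{-1}(\X_i)=\X_{i-1}$ gives
\[
\X_{\pi(j+1)} = \Refl_k\bigl(S(\X_j)\bigr) = S^{-1}\bigl(\Refl_k(\X_j)\bigr) = S^{-1}(\X_{\pi(j)}) = \X_{\pi(j)-1},
\]
so that $\pi(j+1)\equiv\pi(j)-1\pmod m$. Iterating from $j=1$ yields $\pi(j)\equiv c-j\pmod m$ with the constant $c=\pi(1)+1$; in other words $\Refl_k$ acts on the index set precisely as the affine reflection $j\mapsto c-j$ modulo $m$.

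It remains to produce $\ell\in\halfZ\cap[1,\tfrac{m+1}{2}]$ with $2\ell\equiv c\pmod m$, for then $\X_{\pi(j)}=\X_{2\ell-j\bmod m}$, as required. The elementary observation here is that the $m$ consecutive integers $\{2,3,\ldots,m+1\}$ form a complete residue system modulo $m$, so exactly one of them, say $n$, satisfies $n\equiv c\pmod m$; setting $\ell=n/2$ places $\ell$ in $\halfZ\cap[1,\tfrac{m+1}{2}]$ and completes the argument. This final modular bookkeeping is routine; the substantive content lies entirely in the permutation step of the second paragraph.
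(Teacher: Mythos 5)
Your proof is correct and follows essentially the same route as the paper's: both arguments rest on the commutation relation $\Refl_k \circ S = S^{-1}\circ \Refl_k$ together with the cyclic indexing $S(\X_j) = \X_{j+1 \,\mathrm{mod}\, m}$ from Fact~\ref{FACT:minimal-decomposition}, and both arrive at the same index map $j \mapsto \pi(1)+1-j$. The only differences are organizational: where you establish that $\Refl_k$ permutes the components by viewing it as a conjugacy of $(\X,S^p)$ onto $(\X,S^{-p})$ and then solve $2\ell \equiv \pi(1)+1 \pmod m$ via a complete-residue-system argument, the paper verifies the same containments pointwise with an orbit-closure argument (fixing $x \in \X_1$ with $\Refl_k(x)\in\X_r$) and simply sets $\ell = (1+r)/2$, which makes the required bound $\ell \in \halfZ \cap [1,\tfrac{m+1}{2}]$ immediate.
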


\begin{proof}
For notational convenience let us define $\mathbb{X}_i := \mathbb{X}_{i \, \mathrm{mod} \, m}$ for all $i \in \Z$.
Let $x \in \mathbb{X}_1$ and let $r \in \{1,\ldots,m \}$ be such that $\Refl_k(x) \in \mathbb{X}_r$. Define $\ell = (1+r)/2$. For $1 \leq j \leq m$ and $y \in \mathbb{X}_j$, there exists a sequence $(n_i)_{i \in \N}$ such that $S^{n_i m + j -1} x \to y$ as $i \to \infty$ (because $S^{j-1} x \in \mathbb{X}_j$ and $\mathbb{X}_j$ is $S^m$-minimal). Applying $\Refl_k$, we obtain
\[
S^{-n_i m -j +1} \Refl_k(x) \to \Refl_k(y),
\]
as $i \to \infty$. This implies that $\Refl_k(y)$ is in the same $S^m$-minimal component as $S^{-j+1} \Refl_k(x) \in S^{-j+1}(\mathbb{X}_r) = \mathbb{X}_{r-j+1}$. Since $r = 2\ell-1$, we have $\Refl_k(y) \in \mathbb{X}_{2\ell - j}$. Hence, $\Refl_k(\mathbb{X}_j) \subseteq \mathbb{X}_{2\ell -j}$. By the same argument $\Refl_k(\mathbb{X}_{2\ell -j}) \subseteq \mathbb{X}_j$ and the claim follows.
\end{proof}

This symmetry relation between the minimal components reduces the upper bound for the number of different spectra roughly by a factor $1/2$. A careful case distinction on the parity of $m$ and on whether $\ell \in \Z$ or $\ell \in \Z + 1/2$ yields the following as a corollary of Lemma~\ref{LEM:spectral-symmetry} and Lemma~\ref{LEM:symm-classes}.

\begin{prop}
In the setting of Theorem~\ref{THM:boshernitzan+periodic} assume that both $\mathbb{X}$ and $\mathbb{X}'$ are reflection symmetric. Let $k \in \{0,1/2\}$ be such that $\Refl_k(x') = x'$ for some $x' \in \mathbb{X}'$. Then,
\[
\# \{ \sigma(H_{\omega}) : \omega \in \mathbb{X} \times \mathbb{X}' \} \leq
\begin{cases}
(\Sf(p) + 1)/2 & \mbox{if } \Sf(p) \in 2 \N -1,
\\ \Sf(p) / 2 + \delta(k,p) & \mbox{if } \Sf(p) \in 2 \N,
\end{cases}
\]
where $\delta(k,p) = 1$ if there exists an $x \in \mathbb{X}$ such that $\Refl_k(x)$ is in the same $S^p$-minimal component as $x$ and $\delta(k,p) = 0$ otherwise.
\end{prop}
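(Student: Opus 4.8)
The plan is to combine the two structural facts at our disposal: the spectrum is constant on each of the $m := \Sf(p)$ minimal components $\Omega_1, \ldots, \Omega_m$ of $(\Omega,T)$ (Theorem~\ref{THM:boshernitzan+periodic}(b)), and for a reflective sampling function the reflection $\Refl_k$ equates the spectra of any two components it interchanges (Lemma~\ref{LEM:spectral-symmetry}). Consequently the number of distinct sets $\sigma(H_\omega)$ is bounded above by the number of orbits of the permutation of $\{\Omega_1,\ldots,\Omega_m\}$ induced by $\Refl_k$, and the entire argument reduces to identifying this permutation and counting its orbits.

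First I would fix, using reflection symmetry of $\mathbb{X}'$ together with $k \in \{0,1/2\}$, a point $x^\ast \in \mathbb{X}'$ with $\Refl_k(x^\ast) = x^\ast$, and recall from Lemma~\ref{LEM:minimal-comp-of-product} that the $T$-minimal components of $\Omega$ are in bijection with the $S^p$-minimal components $\mathbb{X}_1,\ldots,\mathbb{X}_m$ of $\mathbb{X}$, the component of $(x,x^\ast)$ corresponding to the unique $\mathbb{X}_j$ containing $x$. Since $S^\ell \circ \Refl_k = \Refl_k \circ S^{-\ell}$ on each factor, $\Refl_k$ conjugates $T$ to $T^{-1}$ and hence permutes the minimal components. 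For $x \in \mathbb{X}_j$ we have $\Refl_k(x,x^\ast) = (\Refl_k(x), x^\ast)$, and Lemma~\ref{LEM:symm-classes} gives $\Refl_k(x) \in \mathbb{X}_{2\ell - j \bmod m}$ for the distinguished $\ell \in \halfZ \cap [1,\tfrac{m+1}{2}]$. Because the second coordinate of $(\Refl_k(x),x^\ast)$ is still $x^\ast$ (so no shift intervenes in the identification), the induced permutation on indices is exactly the involution $\rho(j) = 2\ell - j \bmod m$, and the number of distinct spectra is at most the number of $\rho$-orbits.

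It then remains to count orbits of the reflection $\rho$ on $\Z/m\Z$, for which the fixed points, i.e.\ the solutions of $2j \equiv 2\ell \bmod m$, are decisive. When $m$ is odd, $2$ is invertible modulo $m$, so $\rho$ has a unique fixed point and $(m-1)/2$ two-element orbits, giving $(m+1)/2$ orbits and matching the odd case. When $m$ is even, the congruence has two solutions if $\ell \in \Z$ (two fixed points and $m/2 - 1$ two-element orbits, hence $m/2 + 1$ orbits) and no solution if $\ell \in \Z + 1/2$ (all points paired, hence $m/2$ orbits). I would close the argument by identifying $\delta(k,p)$: since $\Refl_k(x)$ lies in the same $S^p$-minimal component as $x \in \mathbb{X}_j$ precisely when $\rho(j) = j$, one has $\delta(k,p) = 1$ exactly when $\rho$ has a fixed point. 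For even $m$ this yields the stated bound $m/2 + \delta(k,p)$.

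The orbit count itself is routine; the step demanding genuine care is the second one, namely verifying that the geometric reflection $\Refl_k$ on the product system descends, under the identification of $T$-minimal components of $\Omega$ with the $S^p$-minimal components of $\mathbb{X}$, to exactly the combinatorial reflection $j \mapsto 2\ell - j$ supplied by Lemma~\ref{LEM:symm-classes}. In particular one must confirm that evaluating $\Refl_k$ on representatives of the special form $(x,x^\ast)$ with fixed second coordinate is compatible with the bijection and introduces no spurious cyclic shift of the indices, since it is precisely this compatibility that transfers the factor-$1/2$ reduction from the level of $\mathbb{X}$ to the level of the product and thereby produces the asserted count.
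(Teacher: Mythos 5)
Your proposal is correct and follows essentially the same route as the paper: both arguments use the constancy of the spectrum on minimal components, Lemma~\ref{LEM:symm-classes} to identify the action of $\Refl_k$ with the combinatorial reflection $j \mapsto 2\ell - j \bmod m$ (using the fixed point $x^\ast$ of $\Refl_k$ in $\mathbb{X}'$ so that no shift in the second coordinate intervenes), Lemma~\ref{LEM:spectral-symmetry} to equate spectra of paired components, and then a fixed-point count of this involution on $\Z_m$, split by the parity of $m$ and by whether $\ell$ is an integer. Your phrasing of the fixed-point condition as $2j \equiv 2\ell \bmod m$ and the identification of $\delta(k,p)$ with the existence of a fixed point match the paper's case analysis exactly.
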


\begin{proof}
The spectrum is constant on every $T$-minimal component of $\mathbb{X} \times \mathbb{X}'$. These are of the form $\mathbb{X}_j \times \{ x'\}$, with $1\leqslant j \leqslant m$ and $m = s(p)$. Since we have assumed $R_k(x') = x'$, it follows by Lemma~\ref{LEM:symm-classes} that there is $\ell \in \frac{1}{2} \Z \cap [1, \frac{m+1}{2} ]$ with
\[
R_k(\mathbb{X}_j \times \{x'\}) = \mathbb{X}_{2\ell -j \,\mathrm{mod}\, m} \times \{x'\},
\]
for all $1\leqslant j \leqslant m$. By Lemma~\ref{LEM:spectral-symmetry}, the spectrum is the same on $\mathbb{X}_j \times \{x'\}$ and $\mathbb{X}_{2 \ell - j \, \mathrm{mod} \, m} \times \{ x'\}$. Hence, the number of different spectra is bounded from above by the number of orbits of the map
\[
r_{\ell} \colon j \mapsto 2 \ell - j \, \mathrm{mod} \, m,
\]
on the cyclic group with $m$ elements. Each of these orbits has either one or two elements. The number $j$ is a fixed point of $r_{\ell}$ precisely if 
\[
j = \ell\; \mathrm{mod} \, m \quad \mbox{or} \quad 
j = \ell + \frac{m}{2} \; \mathrm{mod} \, m,
\]
If $m$ is odd, this condition has precisely one solution, that is, there is precisely one fixed point of $r_{\ell}$ and hence the number of $r_{\ell}$-orbits is given by $(m + 1)/2$. If $m$ is even, there are either two fixed points or no fixed point, depending on whether $\ell$ is an integer or not. The former is the case if and only if there is $x \in \mathbb{X}$ such that $R_k(x)$ and $x$ belong to the same $S^p$-minimal component. In this case, the number of distinct $r_{\ell}$-orbits is given by $(m+2)/2$. Otherwise, every orbit of $r_{\ell}$ has precisely $2$ elements, such that the number of orbits is given by $m/2$. 
\end{proof}

\subsubsection{Examples and Applications}

We consider the function $\Sf(p)$ for several prominent classes of Boshernitzan subshifts $(\mathbb{X},S)$.

\begin{example}
Assume that $(\mathbb{X},S)$ is totally ergodic. Then, $\Sf(p) = 1$ for all $p \in \N$, so $(\mathbb{X} \times \mathbb{X}',T)$ is a Boshernitzan subshift for all periods $p$. One prominent example of a totally ergodic system  is the Fibonacci subshift. This can be seen from the fact that it can be coded by an irrational rotation on the circle. More generally, every Sturmian subshift is totally ergodic by the same argument \cite{Kur03}. Sturmian subshifts will be discussed in more detail in Section~\ref{SUBSEC:Sturmian}. Finally, one also has $\Sf(p)=1$ for all $p \in \N$ if $(\X,S,\mu)$ is weak mixing.
\end{example}

\begin{example} \label{ex:boshzm:constLengthSubSfunction}
Assume that $(\mathbb{X},S)$ is the subshift associated to a primitive substitution $\vartheta$ of constant length $\ell \geq 2$ on the alphabet $\mc A$. That is, $|\vartheta(a)| = \ell$ for all $a \in \mc A$. Let $\ell_1,\cdots,\ell_r$ be the prime factors of $\ell$. Recall that we assume $(\mathbb{X},S)$ to be aperiodic. The discrete dynamical spectrum of $(\mathbb{X},S)$ was completely characterized in \cite{Dekking}. There is a number $1 \leq h \leq \# \mc A$, coprime to $\ell$, with the following property. For every $p \in \N$, we have
\[
\Sf(p) = \ell_1^{j_1} \cdots \ell_r^{j_r} h^{\min\{1,k\}},
\]
where $j_1, \ldots, j_r \in \N_0$ and $k \in \N_0$ are maximal with the property that $\ell_1^{j_1} \cdots \ell_r^{j_r} h^k | p$.
An algorithm to determine $h$ was given in \cite[Rem.~9]{Dekking}. If $\mc A$ is a binary alphabet, we have $h=1$. In this case, $(\mathbb{X} \times \mathbb{X}',T)$ is a Boshernitzan subshift precisely if $\ell$ and $p$ are coprime. In particular, this applies to the Thue--Morse substitution and the period-doubling substitution whenever $p$ is odd.
\end{example}

For general primitive substitution subshifts, characterizing the group of eigenvalues is more subtle. Following the seminal paper by Host \cite{Host86}, several characterizations of eigenvalues and criteria for special cases have been proposed. We present a small selection. A general algebraic characterization of rational eigenvalues was given in \cite[Prop.~2]{FMN96}.

Before we continue, let us introduce some notation.
\begin{definition}\label{def:boshper:submatdef}
 Given a substitution $\vartheta$, let $M$ denote the corresponding substitution matrix, that is, $M_{ab} = |\vartheta(b)|_a$ for all $a,b \in \mc A$, where $|w|_a$ denotes the number of occurences of $a$ in the word $w$.
 \end{definition}
 Given a primitive substitution, let $\lambda_1$ denote the Perron-Frobenius eigenvalue of $M$ (i.e., $\lambda_1 > 1$ is real and strictly the largest eigenvalue in absolute value) and $\lambda_2$ the second largest eigenvalue in absolute value. The following was shown in \cite[Thm.~1.2]{KSS05}.

\begin{prop}
Suppose that $|\lambda_2|>1$. Then, $(\mathbb{X},S)$ is topologically mixing if and only if $\gcd(\{|\vartheta^n(a)| : a \in \mc A\}) =1$ for all $n \in \N$.
\end{prop}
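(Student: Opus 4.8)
The plan is to prove the two implications separately, using the standard reformulation of topological mixing for subshifts: $(\mathbb{X},S)$ is topologically mixing if and only if for every pair of legal words $u,v\in\mc L(\mathbb{X})$ the gap set
\[
G(u,v)=\{\,n\ge |u| : \exists\, x\in\mathbb{X},\ x_{[0,|u|-1]}=u,\ x_{[n,n+|v|-1]}=v\,\}
\]
is cofinite in $\N$. The necessity direction will not use the hypothesis $|\lambda_2|>1$, whereas the sufficiency direction relies on it in an essential way. This is to be expected: the Fibonacci substitution has $|\lambda_2|<1$ and satisfies the gcd condition, yet it is a Sturmian (hence almost automorphic over a rotation) system and so fails to be topologically mixing. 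Thus some spectral-gap assumption on $M$ is unavoidable for the reverse implication.

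For necessity I would argue contrapositively. Suppose $d=\gcd\{|\vartheta^n(a)|:a\in\mc A\}>1$ for some $n$, and set $\psi=\vartheta^n$, which defines the same subshift. Since $d$ divides $|\psi(a)|$ for every letter $a$, consecutive $\psi$-cut points of any $x\in\mathbb{X}$ differ by multiples of $d$ and so share a common residue $r(x)\in\Z/d\Z$. Recognizability of the primitive aperiodic substitution $\psi$ (Moss\'e's theorem) makes the $\psi$-decomposition, and hence $r$, locally constant, so $r\colon\mathbb{X}\to\Z/d\Z$ is continuous; it intertwines $S$ with the rotation by $-1$ and is surjective by minimality. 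This exhibits $\me^{2\pi\mi/d}\ne 1$ as a topological eigenvalue. But a topologically mixing minimal system is topologically weakly mixing and therefore has no nontrivial topological eigenvalues, a contradiction. Hence mixing forces the gcd to equal $1$ for all $n$.

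For sufficiency I would fix legal words $u,v$ and show $G(u,v)$ is cofinite. By primitivity there is $k_0$ so that for all $k\ge k_0$ every image $\vartheta^k(a)$ contains both $u$ and $v$; this lets me manufacture, for each large $n$, a legal word that begins with $u$, ends with $v$, and is assembled from substitution images so that the distance between the chosen occurrences equals $n$. Flexibility in this distance comes from locating a branching in the language — a context admitting two legal continuations through distinct letters $a\ne b$ — and replacing a block $\vartheta^m(a)$ by $\vartheta^m(b)$ inside an otherwise fixed legal word, which changes the gap by exactly $|\vartheta^m(a)|-|\vartheta^m(b)|$. Writing $|\vartheta^m(a)|=c_a\lambda_1^m+O(|\lambda_2|^m)$, with the error term governed by the projection onto the $\lambda_2$-eigenspace of $M$, the assumption $|\lambda_2|>1$ guarantees that these controllable length differences are nonzero and grow without bound in both signs, while the gcd$=1$ condition guarantees that the full collection of realizable lengths and length-differences generates $\Z$. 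A numerical-semigroup (Chicken McNugget) argument then upgrades ``generates $\Z$'' to ``contains every sufficiently large integer,'' yielding cofiniteness of $G(u,v)$ and hence topological mixing.

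The main obstacle I anticipate is the sufficiency direction, and within it the combinatorial construction guaranteeing that the modified words remain legal while realizing a cofinite set of gaps. Two points require care: first, exhibiting admissible local swaps $\vartheta^m(a)\leftrightarrow\vartheta^m(b)$ that keep the surrounding word in $\mc L(\mathbb{X})$, which exploits the branching structure of the language and primitivity rather than the length data alone; and second, making quantitative the claim that $|\lambda_2|>1$ produces gap-differences that are simultaneously unbounded and, together with the gcd$=1$ hypothesis, fine enough to fill all large integers. Controlling the $O(|\lambda_2|^m)$ error terms uniformly and organizing the two-parameter family (the exponent $m$ and the branching letters) so that the generated numerical set is provably cofinite is the technical heart of the argument.
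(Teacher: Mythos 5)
Two framing remarks first. The paper offers no proof of this statement: it is quoted, with attribution, from \cite[Thm.~1.2]{KSS05} (Kenyon--Sadun--Solomyak), so your attempt is competing with a full research paper rather than a short argument in the text. Your necessity direction is essentially sound and standard: if $d=\gcd\{|\vartheta^n(a)| : a \in \mc A\}>1$, then recognizability (Moss\'e, available for primitive aperiodic substitutions) makes the common residue mod $d$ of the $\vartheta^n$-cut points a continuous function intertwining $S$ with the rotation on $\Z/d\Z$, producing a nontrivial topological eigenvalue, which is incompatible with topological mixing; no spectral hypothesis is needed there.

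The sufficiency direction has a genuine gap, located exactly at the step you flagged as the technical heart. Your plan is: legal swaps $\vartheta^m(a)\leftrightarrow\vartheta^m(b)$ change gaps by $|\vartheta^m(a)|-|\vartheta^m(b)|$; these differences are unbounded in both signs; by $\gcd=1$ the realizable lengths and differences generate $\Z$; and a Chicken McNugget argument upgrades this to cofiniteness of $G(u,v)$. That last implication is false: $G(u,v)$ is not a numerical semigroup. Realizable gaps cannot be composed freely --- each swap replaces the entire suffix $\vartheta^m(w')$ by a different word $\vartheta^m(w'')$, so successive modifications do not add, and ``the differences generate $\Z$'' does not yield ``every large integer is a gap.'' The decisive evidence is the example you yourself invoke: the Fibonacci substitution satisfies every hypothesis your argument actually uses --- aperiodicity (hence branching/right-special words exist), $\gcd(|\vartheta^n(a)|,|\vartheta^n(b)|)=\gcd(F_{n+2},F_{n+1})=1$ for all $n$, and swap differences $\pm F_m$ unbounded in both signs --- yet it is not topologically mixing. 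So the scheme as written would prove a false statement; nothing in it uses $|\lambda_2|>1$ in a way Fibonacci violates. Indeed the role you assign to $|\lambda_2|>1$ is misplaced: for letters $a,b$ with distinct left-Perron-eigenvector entries, $|\vartheta^m(a)|-|\vartheta^m(b)|=\mathbf{1}^{\top}M^m(e_a-e_b)$ grows like $\lambda_1^m$ no matter what $\lambda_2$ is (Fibonacci: $F_m\sim\lambda_1^m/\sqrt{5}$). Where $|\lambda_2|>1$ truly enters --- and what the argument of \cite{KSS05} is built on --- is swaps of legal words $w_1,w_2$ whose abelianization difference $\xi=\Phi(w_1)-\Phi(w_2)$ is transverse to the Perron direction: then the induced length change $\mathbf{1}^{\top}M^m\xi$ is of order $|\lambda_2|^m$, so such balanced swaps supply adjustments at all intermediate scales $|\lambda_2|^j$, $0\le j\le m$, and a multi-scale induction over substitution levels (not a one-shot semigroup argument) fills in all sufficiently large gaps; legality of such balanced swaps is itself a nontrivial matter. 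When $|\lambda_2|<1$ those balanced adjustments remain bounded --- precisely the rigidity that keeps Fibonacci non-mixing. Your proposal contains no substitute for this mechanism, so the sufficiency half cannot be repaired along the lines described.
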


Since topological mixing implies weak mixing, we find that in this case $(\mathbb{X} \times \mathbb{X}',T)$ is a Boshernitzan subshift, irrespective of $p$.
For a recent result, characterizing the rational eigenvalues in the case of proper primitive substitutions, compare also \cite[Lem.~10]{DG19}.

\begin{example}
An interval exchange transformation (IET) on $\X = [0,1)$ is defined by a choice of a permutation $\pi$ on $\{1,2,\ldots,n\}$ and $\lambda = (\lambda_1,\ldots,\lambda_n) \in \R_+^n$ such that $\lambda_1+\cdots + \lambda_n =1$. Given such a $\pi$ and $\lambda$, the associated IET, $S = S_{\pi,\lambda}$, acts on $\X$ by partitioning $\X$ into $n$ intervals where the $j$th interval has length $\lambda_j$ and then rearranging those intervals according to the permutation $\pi$. More precisely, defining
\begin{align*}
c_k & = \sum_{j=1}^{k-1} \lambda_j, \ 1 \le k \le n+1 \\
\hat{c}_k & = \sum_{j=1}^{\pi(k)-1} \lambda_{\pi^{-1}(j)}, \ 1 \le k \le n,
\end{align*}
(where empty sums vanish by convention), one puts
\begin{equation}
Sx = x-c_k+\hat{c}_k \quad \text{ for } c_k\leq x < c_{k+1}, \ 1 \le k \le n.
\end{equation}
One says that the permutation $\pi$ is \emph{irreducible} if there is no $1 \le k<n$ for which $\pi(\{1,2,\ldots,k\}) = \{1,2,\ldots,k\}$. Without loss of generality, one restricts attention to irreducible $\pi$. In this case, Veech showed that $S_{\pi,\lambda}$ is totally ergodic for (Lebesgue) a.e.\ $\lambda$ \cite{Veech1984AJM}. In particular, almost every IET satisfies $\Sf(n) = 1$ for all $n \in \N$. Furthermore, almost every IET satisfies Boshernitzan's condition (in the sense that the subshift associated to the natural coding of $(\X,S_{\pi,\lambda})$ satisfies (B)) \cite{Bosh85DMJ}. In particular, the results of the present paper apply to Schr\"odinger operators defined by almost every IET  with periodic background of any period.
\end{example}

An interesting class of subshifts for which the function $\Sf(p)$ can be made explicit is the class of \emph{Toeplitz subshifts}. These are particular symbolic extensions of \emph{odometers}. We give a brief sketch of this connection and refer to \cite{Downarowicz} for a comprehensive review on this topic. An odometer is defined via a \emph{scale} $t = (t_n)_{n \in \N}$ of natural numbers such that $t_n$ divides $t_{n+1}$ for all $n \in \N$. The corresponding odometer is given by the inverse limit
\begin{equation}
\Z(t) = \left\{ (m_n)_{n \in \N} \in \prod_{n = 1}^\infty \Z_{t_n} : m_n = m_{n+1} \, \mathrm{mod} \, t_n \mbox{ for all } n \in \N \right\}.
\end{equation}
Equipped with the normalized Haar measure $\nu$ on the topological group $\Z(t)$ and the map $\tau \colon \Z(t) \to \Z(t)$,
\[
(\tau m)_n = m_n + 1 \, \mathrm{mod} \, t_n,
\]
for all $n \in \N$, the system $(\Z(t),\tau,\nu)$ is strictly ergodic. The \emph{multiplicity function} of an odometer $\Z(t)$ assigns to each prime number $p$ a multiplicity $\kappa(p) \in \N \cup \{ \infty \}$ which is the supremum over all $k$ such that there is $n \in \N$ with $p^k$ dividing $t_n$. Two odometers are isomorphic precisely if they have the same multiplicity function.  
Another useful characterization of the multiplicity function is that $\kappa(p)$ is the supremum over all $k \in \N$ such that $\me^{2\pi\im/p^k}$ is an eigenvalue of $(\Z(t),\tau,\nu)$. Note that for odometers, all eigenvalues are topological eigenvalues; compare the discussion in Remark~\ref{REM:frequency-module}.
Hence, due to Proposition~\ref{PROP:s(n)-eigenvalue} the value of the multiplicity function $\kappa(p)$ coincides with the multiplicity $\ell_p$, alluded to in Fact~\ref{FACT:s(n)-properties}. That is,
\begin{equation} \label{eq:boshper:odometerSfct}
\Sf(p^\ell) = \min\{p^{\ell},p^{\kappa(p)} \},
\end{equation}
for every prime number $p$.
A subshift $(\mathbb{X},S)$ is called a (topological) \emph{extension} of the odometer $(\Z(t),\tau)$ if there exists a factor map $\pi \colon \mathbb{X} \to \Z(t)$, that is, a surjective and continuous map satisfying $\pi \circ S = \tau \circ \pi$ on $\mathbb{X}$. Such an extension is called \emph{almost $1$--$1$} if there is a dense set of points $z \in \Z(t)$ such that $\pi^{-1}(z)$ is a singleton.

\begin{definition}
\label{DEF:toeplitz}
We call a subshift $(\mathbb{X},S)$ a Toeplitz subshift if it is minimal and an almost $1$--$1$ extension of an odometer $(\Z(t),\tau)$.
\end{definition}

In this case, the odometer $(\Z(t),\tau)$ is the maximal equicontinuous factor of $(\mathbb{X},S)$; compare \cite[Ch.~1]{PF02} for details on this notion. In particular, both systems share the same group of topological eigenvalues and hence the same function $\Sf(n)$. 
Not every Toeplitz subshift is a Boshernitzan subshift \cite{LiuQu}. In the next section, we therefore restrict our attention to the more special class of \emph{simple Toeplitz subshifts} on a binary alphabet $\mc A$, as these are known to satisfy Boshernitzan's condition \cite[Prop.~4.1]{LiuQu}.

%Next, we can discuss Toeplitz subshifts $(\mathbb{X},S)$ which are symbolic, minimal, almost $1-1$ extensions of odometers. The odometer then is the maximal equicontinuous factor and the corresponding function $\Sf(p)$ can be given in terms of the multiplicity function of the odometer. Toeplitz subshifts do not always satisfy the Boshernitzan condition, maybe we can specify further in which cases they do. TK

\subsection{Exclusion of Eigenvalues} \label{subsec:subshifteigs}

At present, it is unclear to us whether the addition of a periodic potential can alter the property of permitting Schr\"odinger eigenvalues. We therefore check which of the known criteria for excluding eigenvalues are stable under periodic perturbations.

\subsubsection{Uniform Absence of Eigenvalues: Simple Toeplitz Subshifts}

We borrow some notation from \cite{LiuQu}.
Let $\mc A \subseteq \R$ be a binary alphabet and $s = (b_k,n_k)_{k \in \N}$ a coding sequence, with $b_k \in \mc A$ and $n_k \geq 2$ for all $k \in \N$. 
Recursively, we define $\widetilde{w}_1 = b_1$ and
\begin{equation}
\label{EQ:w-tilde-recursion}
\widetilde{w}_{k+1} = \widetilde{w}_k^{n_k} b_k^{-1} b_{k+1},
\end{equation}
for all $k \in \N$. Since $\widetilde{w}_k$ is a prefix of $\widetilde{w}_{k+1}$ for all $k \in \N$, there is a well-defined limit
\begin{equation}
x(s) := \lim_{k \to \infty} \widetilde{w}_k
\end{equation}
in $\mc A^\N$. The minimal subshift
\begin{equation}
\mathbb{X}(s) = \{ x \in \mc A^{\Z} : x_{[j,k]} \triangleleft x(s) \text{ for all } j\leq k \}
\end{equation}
is called the simple Toeplitz subshift corresponding to $s$. 
Every point in $\mathbb{X}(s)$ is called a simple Toeplitz sequence. 
We make the non-triviality assumption that $(b_k)_{k \in \N}$ is not eventually constant. This ensures that $\mathbb{X}(s)$ is non-periodic. In fact, up to a modification of the sequence $(n_k)_{k \in \N}$, there is no loss of generality in assuming that the sequence $(b_k)_{k \in \N}$ is alternating. That is, we assume $b_k \neq b_{k+1}$ for all $k \in \N$.

\begin{remark}
\label{REM:s(n)-toeplitz}
The subshift $(\mathbb{X}(s),S)$ is indeed a Toeplitz subshift as defined in Definition~\ref{DEF:toeplitz}.
In fact, it is an almost $1$--$1$ extension of the odometer $(\Z(t),\tau)$, with scale $t = (t_k)_{k \in \N}$, given by $t_k= \prod_{j=1}^k n_j$, for all $k \in \N$. Hence, for every prime $p$ and $\ell \in \N_0$,
\[
\Sf(p^\ell) = \min\{p^{\ell}, p^{\kappa(p)} \},
\]
where $\kappa(p)$ is the total number of times (counted with multiplicities) that $p$ appears as a factor of $n_k$, as we vary $k \in \N$.
\end{remark}

Let us write $\widetilde{v}_k$ for the word that emerges from $\widetilde{w}_k$ by exchanging the last letter $b_k$ with the unique letter $b_k' \in \mc A \setminus\{b_k\}$. By construction, every $\omega \in \mathbb{X}(s)$ can be written as a concatenation of the words $\widetilde{w}_k$ and $\widetilde{v}_k$, for every level $k$, where two occurrences of $\widetilde{v}_k$ are separated by at least $n_k -1$ occurrences of $\widetilde{w}_k$. In fact, by \eqref{EQ:w-tilde-recursion} and the assumption that $(b_k)_{k \in \N}$ is alternating, we have
\begin{equation}
\label{EQ:w-v-tilde-recursions}
\widetilde{w}_{k+1} = \widetilde{w}_k^{n_k - 1} \widetilde{v}_k
\quad \mbox{and} \quad
\widetilde{v}_{k+1} = \widetilde{w}_k^{n_k}.
\end{equation}

We combine this structure with a $p$-periodic background potential, where we assume that $p$ is in some sense commensurate with the Toeplitz structure.
\begin{definition}
We call a number $p \in \N$ \emph{commensurate} with a coding sequence $s = (a_k,n_k)_{k \in \N}$ if there is a number $k_0 \in \N$ such that $p$ divides $t_{k_0} = \prod_{k= 1}^{k_0} n_k$.
\end{definition}

This is the case precisely if $\Sf(p) = p$, compare Remark~\ref{REM:s(n)-toeplitz}.
In the following, let $\Omega(s,p) = \mathbb{X}(s) \times \Z_p$, equipped with the map $T \colon (x,m) \to (Sx, m+1)$. Naturally, arithmetic in the second coordinate is performed modulo $p$.
Within this section, we assume that the sampling function $f$ is locally constant of window size one. Shifting if necessary, we may  assume without loss of generality that $f$ is of the form
\begin{equation} \label{eq:boshper:toeplitzSfAssmp}
f(x,m) = g(x_0,m)
\end{equation}
for some function $g:\mc A \times \Z_p \to \R$. Thus, for all $(x,m) \in \Omega(s,p)$ and $n \in \N$,
\[
V_{(x,m)}(n) = g(x_n , m + n).
\]
In the following, it will be convenient to regard $\Omega(s,p)$ as a subshift over the alphabet $\mc A' = \mc A \times \Z_p$, where we define for $\omega = (x,m) \in \Omega(s,p)$, with some abuse of notation
\[
\omega_n = (x_n, m+n) \in \mc A',
\]
for all $n \in \Z$. With this convention, $V_{\omega}(n) = g(\omega_n)$.

The following theorem is the main result of this section.

\begin{theorem}
\label{THM:Toep-p-no-eigenvalues}
Let $\mathbb{X}(s)$ be a simple Toeplitz subshift over a binary alphabet $\mc A$, assume that $p \in \N$ is commensurate with $s$, and suppose that $f$ is of the form \eqref{eq:boshper:toeplitzSfAssmp}. Then, for all $\omega \in \Omega(s,p)$, the Schr\"odinger operator $H_\omega$ has no eigenvalues.
\end{theorem}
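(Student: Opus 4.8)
The plan is to exclude $\ell^2$ eigenfunctions by a Gordon-type argument built on the self-similar block structure of $\mathbb{X}(s)$, the point being that around the origin the generating words $\widetilde{w}_k$ repeat on both sides at all sufficiently fine scales.

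\emph{Reduction and the role of commensurability.} Fix $\omega=(x,m)\in\Omega(s,p)$ and a spectral parameter $z\in\R$; it suffices to show that no solution of $H_\omega\psi=z\psi$ lies in $\ell^2(\Z)$. Writing $\xi_n=(\psi_{n+1},\psi_n)^{\top}$, one has $\xi_n=A_z^n(\omega)\xi_0$ with $A_z^n(\omega)\in\SL(2,\R)$. Since $p$ is commensurate with $s$, fix $k_0$ with $p\mid t_{k_0}$; then for every $k>k_0$ the length $\abs{\widetilde{w}_k}=t_{k-1}$ is divisible by $p$. This is precisely the compatibility needed between the Toeplitz repetitions and the periodic background: as $V_\omega(n)=g(x_n,m+n)$, two occurrences of $\widetilde{w}_k$ in $x$ beginning at positions $a,a'$ with $a\equiv a'\pmod p$ produce identical values of $V_\omega$ along the two copies, since $x_{a+i}=x_{a'+i}$ and $m+a+i\equiv m+a'+i\pmod p$. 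In particular, consecutive occurrences of $\widetilde{w}_k$ (which begin at positions differing by $\abs{\widetilde{w}_k}$) yield genuine $\abs{\widetilde{w}_k}$-periodicity of $V_\omega$ once $k>k_0$, and the associated one-period transfer matrix $M_k=A_z^{\abs{\widetilde{w}_k}}(\,\cdot\,)$ is well defined independently of which copy we use.

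\emph{Gordon lemma.} From Cayley--Hamilton, $M^2=\tr(M)M-I$ and $M+M^{-1}=\tr(M)\,I$ for $M\in\SL(2,\R)$. Hence if $V_\omega(n)=V_\omega(n+L)$ for all $-L\le n\le 2L-1$ (four consecutive identical length-$L$ blocks with the origin in the second), then, with $M=A_z^L(\omega)$, the state vectors at $-L,L,2L$ equal $M^{-1}\xi_0,M\xi_0,M^2\xi_0$, and a short case analysis on whether $\abs{\tr M}\ge 1$ gives $\max\{\norm{M^{-1}\xi_0},\norm{M\xi_0},\norm{M^2\xi_0}\}\ge\tfrac12\norm{\xi_0}$. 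If only three identical blocks straddle the origin (origin in the middle block) one still obtains $\max\{\norm{M^{-1}\xi_0},\norm{M\xi_0}\}\ge\tfrac12\norm{\xi_0}$ provided $\abs{\tr M}\ge1$. As $\xi_0\neq0$ is fixed while the relevant positions tend to $\pm\infty$, producing such repetitions along a sequence $L=\abs{\widetilde{w}_{k_i}}\to\infty$ forces $\limsup_{\abs n\to\infty}\norm{\xi_n}>0$, so $\psi\notin\ell^2$.

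\emph{Locating the repetitions; the main obstacle.} It remains to show that for every $\omega$ there are infinitely many $k>k_0$ at which identical level-$k$ blocks straddle the origin in the pattern above. Reading \eqref{EQ:w-v-tilde-recursions} as a desubstitution $\widetilde{w}_{k+1}\mapsto \widetilde{w}_k^{\,n_k-1}\widetilde{v}_k$, $\widetilde{v}_{k+1}\mapsto\widetilde{w}_k^{\,n_k}$ on the sequence of block types, one tracks the type of the block containing the origin together with its neighbours; the position of the origin within the hierarchy is encoded by the odometer coordinate of $\omega$. For all but the exceptional hierarchical points the origin lies, at infinitely many scales, in the interior of a run of at least four equal blocks, and the four-block estimate applies directly. \textbf{The main difficulty is the exceptional points}, whose odometer coordinate pins the origin to the junction between $\widetilde{w}_k$-runs and the isolated $\widetilde{v}_k$: there one finds only three equal blocks $\widetilde{w}_k\widetilde{w}_k\widetilde{w}_k$ straddling the origin, flanked by $\widetilde{v}_k$ on either side. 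I expect to close this case with a refined estimate that upgrades the three-block bound, exploiting that $\widetilde{v}_k$ and $\widetilde{w}_k$ differ only in their last letter, so that their transfer matrices differ by a fixed factor, together with control of $\tr M_k$ across scales via the recursion; minimality of $\mathbb{X}(s)$ guarantees that the required configurations recur at arbitrarily large scales. Combining the regular and exceptional cases yields the absence of $\ell^2$ eigenfunctions for every $\omega\in\Omega(s,p)$.
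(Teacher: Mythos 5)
Your first two steps are sound and match the paper's setup: commensurability makes the $p$-decorated words $w_k$ well defined and genuinely repeated for $k \ge k_0$, and Cayley--Hamilton/Gordon estimates are the right tool. The gap is in your third step, and it sits exactly where the substance of the theorem lies. Your regular/exceptional dichotomy is not established and, as stated, is false: the run lengths of $w_k$-blocks between consecutive $v_k$'s are $n_k - 1$ or $2n_k - 1$, so for a coding sequence with $n_k \equiv 2$ (the period-doubling case) every run has length $1$ or $3$ and \emph{no} point of the subshift ever lies interior to a run of four equal blocks, at any scale; your "regular case" is empty there. Moreover you misidentify the worst configuration: $v_k w_k w_k w_k v_k$ with the origin at an interior junction is harmless, since the unconditional three-block Gordon bound (which uses $M^2\xi_0$ as well as $M^{\pm1}\xi_0$ and needs no trace hypothesis) applies. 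The genuinely bad configuration is the centered square $v_k w_k \hat{w}_k v_k$ --- only \emph{two} equal blocks, origin between them --- which, as the paper shows, can persist at every scale precisely when $n_k = 3$ eventually. There the only identity available is $\Psi(|w_k|) + \Psi(-|w_k|) = x_k(E)\,\Psi(0)$, where $x_k(E) = \tr M_E(w_k)$, and your fallback hypothesis $|\tr M_k| \ge 1$ fails exactly in the dangerous regime: on the spectrum one only knows $|x_k(E)| \le 2$ along a subsequence, and $x_k(E)$ may tend to $0$, so no pointwise lower bound survives at any single scale.

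What is missing is the mechanism for this persistent case, and it is not a single-scale "refined estimate" but a summability argument across scales coupled to a trace-map analysis; this occupies Lemmas~\ref{LEM:centered-Gordon} and \ref{LEM:eventually-3-case} and Proposition~\ref{PROP:trace-stability} of the paper. Concretely: (i) a \emph{centered} Gordon lemma shows $E$ cannot be an eigenvalue provided $\sum_k |x_k(E)|^2 = \infty$, since each scale contributes at least $\tfrac12 |x_k(E)|^2$ to $\|\psi\|^2$; (ii) a trace-stability statement shows that if $x_k(E) = y_k(E)$ (with $y_k = \tr M_E(v_k)$) for some $k$, then $M_E(w_k)$ and $M_E(v_k)$ are simultaneously triangular and $E$ is not an eigenvalue of any $H_\omega$; (iii) assuming $\sum_k |x_k(E)|^2 < \infty$, one has $x_k \to 0$, the recursion $x_{k+1} = (x_k^2 - 1)y_k - 2x_k$ forces $y_k \to 0$, and then the difference recursion $|x_{k+1} - y_{k+1}| = |x_k^2 - 1|\,|x_k - y_k|$, together with $x_k \neq y_k$ from (ii), forces $\sum_k \log|1 - x_k^2| = -\infty$, i.e.\ $\sum_k x_k^2 = \infty$ after all --- a contradiction. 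Your closing sentence points at the right raw materials ($w_k$ and $v_k$ differ in one letter; traces obey a recursion), but steps (i)--(iii) are the heart of the proof and are absent from your proposal.
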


To the best of our knowledge, this result is new in the stated generality, even if the periodic background is dropped. We therefore explicitly spell out the case $p = 1$ as a corollary.

\begin{coro}
\label{COR:no-eigenvalues-simple-T}
Let $\mathbb{X}(s)$ be a simple Toeplitz subshift over a binary alphabet $\mc A \subseteq \R$. Then, for all $x \in \mathbb{X}(s)$, the Schr\"odinger operator $H_x$ with potential $V_x(n) = x_n$ has no eigenvalues.
\end{coro}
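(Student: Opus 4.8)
The plan is to exclude $\ell^2(\Z)$ eigenfunctions by a Gordon-type argument: for every $\omega \in \Omega(s,p)$ and every energy $E$, I will produce arbitrarily long three-fold repetitions of the full potential $V_\omega$ straddling the origin, and then invoke the Cayley--Hamilton estimate for unimodular transfer matrices to forbid decay at both ends of $\Z$.

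First I would clarify the role of commensurability. Writing $t_k = \prod_{j=1}^k n_j = |\widetilde w_k| = |\widetilde v_k|$, the hypothesis that $p$ is commensurate with $s$ means $p \mid t_{k_0}$ for some $k_0$, hence $p \mid t_k$ for all $k \ge k_0$. Therefore translation by $t_k$ fixes the residue modulo $p$, so the periodic part of $V_\omega(n)=g(\omega_n)$ is invariant under shifts by $t_k$. It follows that whenever two length-$t_k$ windows of the underlying simple Toeplitz sequence coincide, the corresponding windows of the full potential $V_\omega$ coincide as well. This is the only point at which $\Sf(p)=p$ enters, and it reduces everything to locating repetitions in the symbolic sequence.

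Next I would use the hierarchy \eqref{EQ:w-v-tilde-recursions}, $\widetilde w_{k+1}=\widetilde w_k^{\,n_k-1}\widetilde v_k$ and $\widetilde v_{k+1}=\widetilde w_k^{\,n_k}$. Every $\omega$ is a concatenation of level-$k$ blocks $\widetilde w_k,\widetilde v_k$ of common length $t_k$, in which $\widetilde v_k$ occurs only in isolation (always immediately followed by $\widetilde w_k$, and separated from the next $\widetilde v_k$ by at least $n_k-1$ copies of $\widetilde w_k$); in particular, long runs of $\widetilde w_k$ occur, for instance at the junctions $\widetilde v_{k+1}\widetilde w_{k+1}$. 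The clean case of the argument is to show that for infinitely many $k$ the origin lies in the second of four consecutive identical blocks $\widetilde w_k\widetilde w_k\widetilde w_k\widetilde w_k$. A direct bookkeeping with the offset of the origin inside its block then shows that $V_\omega$ is $t_k$-periodic on the window $[-t_k,2t_k-1]$ about the origin. Denoting by $M=M(E)$ the transfer matrix over one such period and by $\Psi_n$ the solution vector of $H_\omega\psi=E\psi$ at site $n$, we have $\Psi_{t_k}=M\Psi_0$, $\Psi_{2t_k}=M^2\Psi_0$, and $\Psi_{-t_k}=M^{-1}\Psi_0$. Since $\det M=1$, the identities $M+M^{-1}=(\tr M)\,\Id$ and $M^2+\Id=(\tr M)M$ give $\max\{\|\Psi_{-t_k}\|,\|\Psi_{t_k}\|,\|\Psi_{2t_k}\|\}\ge\tfrac12\|\Psi_0\|$. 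As $k\to\infty$ this prevents $\Psi_n\to0$ as $n\to+\infty$ and as $n\to-\infty$ simultaneously, so no solution lies in $\ell^2(\Z)$; as this holds for every $E$ and every $\omega$, the theorem follows, and the case $p=1$ is Corollary~\ref{COR:no-eigenvalues-simple-T}.

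The main obstacle is the combinatorial input of the previous paragraph: the four-block configuration must be found for \emph{every} $\omega$, not merely for almost every $\omega$. Tracking the location of the origin through the odometer address of $\omega$, one finds that for typical $\omega$ the origin sits in an interior copy of a long $\widetilde w_k$-run at infinitely many levels, and the clean configuration is immediate. The delicate points are the (countably many) exceptional $\omega$ for which the origin is pinned, at every large scale, to the first copy of its level-$k$ block and is therefore adjacent to a $\widetilde v_k$ at bounded distance, so that $t_k$-periodicity fails on the required window. Here I would exploit the defining feature of simple Toeplitz sequences that $\widetilde w_k$ and $\widetilde v_k$ differ in a \emph{single} letter: the only obstruction to periodicity on the window is then one isolated site, which I would either absorb into a one-defect refinement of the Gordon estimate at a scale where the run on one side of the origin is clean and long, or remove by transporting $\omega$ to a better-aligned point via the reflection unitary of Lemma~\ref{LEM:spectral-symmetry} (which preserves the presence or absence of eigenvalues). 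Making this last case analysis uniform in $\omega$ is precisely what distinguishes the uniform statement from its almost-sure counterpart and is the heart of the proof.
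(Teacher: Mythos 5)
There is a genuine gap, in fact two. First, your ``clean case'' is far too thin to carry the typical $\omega$. You need, at infinitely many scales $k$, four consecutive copies of $\widetilde{w}_k$ with the origin in the second copy. But when $n_k = n_{k+1} = 2$ (the period-doubling coding $n_k \equiv 2$ is exactly this), the recursions $\widetilde{w}_{k+1} = \widetilde{w}_k \widetilde{v}_k$, $\widetilde{v}_{k+1} = \widetilde{w}_k^2$, together with the fact that two occurrences of $\widetilde{v}_{k+1}$ can never be adjacent, show that the longest run of consecutive $\widetilde{w}_k$'s in any $x \in \X(s)$ has length three (coming from $\widetilde{v}_{k+1}\widetilde{w}_{k+1} = \widetilde{w}_k^3\widetilde{v}_k$). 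So for period doubling your clean configuration occurs for \emph{no} $\omega$ and \emph{no} $k$; the dichotomy ``typical $\omega$ clean, countably many exceptional $\omega$'' is not the actual structure of the problem. This is why the paper's Lemma~\ref{LEM:not-eventually-3-case} must combine the three-block Gordon lemma with the \emph{one-sided two-block} Gordon lemma, and the latter requires the trace bound $|x_k(E)| \le 2$, supplied on the spectrum by Proposition~\ref{PROP:spectrum-trace} --- an ingredient entirely absent from your proposal.

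Second, neither of your two fixes for the pinned case works. When $n_k = 3$ eventually and $\omega$ has the form $\widetilde{v}_k \widetilde{w}_k \hat{\widetilde{w}}_k \widetilde{v}_k$ at every large scale, the only exact repetition straddling the origin is the centered two-block $\widetilde{w}_k\hat{\widetilde{w}}_k$; both flanking blocks carry a defect. A ``one-defect refinement'' of Gordon does not exist: the defect multiplies the period transfer matrix by the fixed unipotent shear $M_E(a_k')M_E(a_k)^{-1}$ (cf.\ Proposition~\ref{PROP:trace-stability}), which is not a small perturbation and destroys the Cayley--Hamilton identity on which every Gordon estimate rests; and a clean run lying on one side of the origin is useless, since one-sided periodicity is perfectly compatible with an exponentially decaying half-line solution whenever the trace is large. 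The reflection trick fails too: the bad configuration is, up to word reversal, symmetric about the origin, so Lemma~\ref{LEM:spectral-symmetry} transports it to a configuration of exactly the same type. What the pinned case actually requires is quantitative trace control: the centered two-block Gordon lemma (Lemma~\ref{LEM:centered-Gordon}) excludes eigenvalues only under $\sum_k |x_k(E)|^2 = \infty$, and the paper establishes this divergence dynamically, via the trace recursion of Lemma~\ref{LEM:trace-relations}, the difference recursion \eqref{EQ:difference-recursion}, and Proposition~\ref{PROP:trace-stability} (whose separate triangularization argument disposes of the degenerate case $x_k = y_k$). This trace-map machinery is the heart of the proof of Lemma~\ref{LEM:eventually-3-case}, hence of the corollary, and nothing in your proposal replaces it.
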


Corollary~\ref{COR:no-eigenvalues-simple-T} was known in the case $n_k \equiv 2$, which corresponds to the period-doubling subshift \cite{D01}. On the other hand, if $n_k \geq 4$ for all $k \in \N$, uniform absence of eigenvalues follows from \cite[Thm.~1.3]{LiuQu}. Hence, Corollary~\ref{COR:no-eigenvalues-simple-T} builds a bridge between those cases of simple Toeplitz subshifts on a binary alphabet where uniform absence of eigenvalues is already known to hold.

Another consequence of Theorem~\ref{THM:Toep-p-no-eigenvalues} is that periodic decorations of sequences in the period-doubling subshift cannot produce Schr\"odinger eigenvalues if the period is a power of $2$. Again, we assume that $f$ is of the form specified in \eqref{eq:boshper:toeplitzSfAssmp}. 

\begin{coro}
Let $\vartheta \colon a \mapsto ab, b \mapsto aa$ be the period-doubling substitution and $(\mathbb{X}_\vartheta,S)$ the corresponding subshift. Assume that $p = 2^n$ for some $n \in \N_0$ and $\Omega(\vartheta,p) = \mathbb{X}_{\vartheta} \times \Z_p$. Then, for every $\omega \in \Omega(\vartheta,p)$ the Schr\"odinger operator $H_{\omega}$ has no eigenvalues.
\end{coro}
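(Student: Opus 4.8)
The plan is to deduce this directly from Theorem~\ref{THM:Toep-p-no-eigenvalues} by recognizing $(\mathbb{X}_\vartheta,S)$ as a simple Toeplitz subshift over the binary alphabet $\{a,b\}$ and then checking that $p = 2^n$ is commensurate with the associated coding sequence. The whole argument is a verification that the period-doubling substitution fits the Toeplitz framework with all $n_k = 2$.

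First I would exhibit the coding sequence. The defining feature of $\vartheta$ is that $\vartheta(a) = ab$ and $\vartheta(b) = aa$ agree in every position except the last, i.e.\ they differ only in their terminal letter. Setting $n_k \equiv 2$ and letting $(b_k)_{k\in\N}$ alternate (say $b_k = a$ for odd $k$ and $b_k = b$ for even $k$), I claim that the words produced by the recursion \eqref{EQ:w-tilde-recursion} satisfy $\widetilde{w}_k = \vartheta^{k-1}(a)$ and $\widetilde{v}_k = \vartheta^{k-1}(b)$ for all $k \in \N$. This is checked by induction: using $n_k = 2$, the identities \eqref{EQ:w-v-tilde-recursions} specialize to $\widetilde{w}_{k+1} = \widetilde{w}_k \widetilde{v}_k$ and $\widetilde{v}_{k+1} = \widetilde{w}_k^2$, which correspond exactly to $\vartheta^k(a) = \vartheta^{k-1}(a)\,\vartheta^{k-1}(b)$ (from $\vartheta(a) = ab$) and $\vartheta^k(b) = \vartheta^{k-1}(a)^2$ (from $\vartheta(b) = aa$). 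Since the limit $x(s) = \lim_k \widetilde{w}_k$ then coincides with the one-sided period-doubling fixed point, the two minimal subshifts agree and $\mathbb{X}_\vartheta = \mathbb{X}(s)$.

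Next I would verify commensurability. By Remark~\ref{REM:s(n)-toeplitz}, the relevant scale is $t_k = \prod_{j=1}^k n_j = 2^k$, so $p = 2^n$ divides $t_n$; hence $p$ is commensurate with $s$ in the sense of the preceding definition (for $n = 0$ one has $p = 1 \mid t_1$). With the sampling function $f$ assumed to be of the form \eqref{eq:boshper:toeplitzSfAssmp}, all hypotheses of Theorem~\ref{THM:Toep-p-no-eigenvalues} are in force, and the desired conclusion---absence of eigenvalues of $H_\omega$ for every $\omega \in \Omega(\vartheta,p)$---follows immediately.

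There is no substantive obstacle here: the entire content lies in the identification of $\mathbb{X}_\vartheta$ as a simple Toeplitz subshift with $n_k \equiv 2$, after which the corollary is a pure specialization of Theorem~\ref{THM:Toep-p-no-eigenvalues}. The only points deserving a moment of care are that the terminal-letter agreement of $\vartheta(a)$ and $\vartheta(b)$ is precisely what makes the Toeplitz recursion reproduce the substitution, and that $(b_k)$ is alternating---hence not eventually constant---so that $\mathbb{X}(s)$ is genuinely aperiodic as required; both are immediate from the explicit form of $\vartheta$.
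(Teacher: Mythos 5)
Your proposal is correct and follows essentially the same route as the paper: identify $(\mathbb{X}_\vartheta,S)$ with the simple Toeplitz subshift $\mathbb{X}(s)$ for the coding sequence $s=(b_k,2)_{k\in\N}$ with alternating $b_k$, via the induction $\widetilde{w}_{k+1}=\vartheta^k(a)$, $\widetilde{v}_{k+1}=\vartheta^k(b)$ from \eqref{EQ:w-v-tilde-recursions}, and then invoke Theorem~\ref{THM:Toep-p-no-eigenvalues}. Your explicit verification of commensurability ($t_k=2^k$, so $p=2^n\mid t_n$) is a detail the paper leaves implicit, but the argument is the same.
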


\begin{proof}
It is straightforward to verify that $\Omega(\vartheta,p) = \Omega(s,p)$, with coding sequence $s = (b_k,2)_{k \in \N}$, where $b_{2k - 1} = a$ and $b_{2k} = b$ for all $k \in \N$. Indeed, an induction argument shows that the pair $\widetilde{w}_{k+1} = \vartheta^{k}(a)$ and $\widetilde{v}_{k+1} = \vartheta^k(b)$ satisfies the defining relation in \eqref{EQ:w-v-tilde-recursions} for all $k \in \N_0$.
\end{proof}

Recall that $\Sf(p)=p$ whenever $p$ is commensurate with $s$, and therefore $(\Omega(s,p),T)$ decomposes into precisely $p$ minimal components, each given by the $T$-orbit closure $\Omega(s,p)_m$ of $(x^{\ast},m)$, for $m \in \Z_p$ and some fixed $x^{\ast} \in \mathbb{X}(s)$. For the sake of definiteness, let us choose $x^{\ast}$ such that it coincides with $x(s)$ on $\N$. 
Without loss of generality, we restrict our attention to $(\Omega(s,p)_0,T)$.
The structure of points in $\Omega(s,p)_0$ is inherited from the original Toeplitz structure. More precisely, the point $(x^{\ast},m)$ starts with the word $w_k$, given by
\[
(w_k)_m = ((\widetilde{w}_k)_m,m),
\]
for all $k \in \N$ and $1 \leq m \leq |w_k|$. This corresponds to a $p$-periodic decoration of the word $w_k$. Since $p$ is assumed to be commensurate with the coding sequence $s$, there exists a number $k_0 \in \N$ such that $p$ divides $|w_k|$ for all $k \geq k_0$. For such $k$, the relation
\begin{equation}
\label{EQ:w-recursion}
w_{k+1} = w_k^{n_k} a_k^{-1} a_{k+1}
\end{equation}
is inherited from \eqref{EQ:w-tilde-recursion}, where $a_k = (b_k,p)$  for all $k \geq k_0$. Similarly, we obtain
\begin{equation}
\label{EQ:w-v-recursions}
w_{k+1} = w_k^{n_k - 1} v_k
\quad \mbox{and} \quad
v_{k+1} = w_k^{n_k},
\end{equation}
for all $k \geq k_0$ from \eqref{EQ:w-v-tilde-recursions}, where $v_k$ emerges from $w_k$ by exchanging the last letter $a_k = (b_k,p)$ with $a'_k = (b'_k,p)$.

\begin{remark}
\label{REM:Toeplitz-different-spectra}
Note that $V_{\omega}$ is periodic for all $\omega \in \Omega(s,p)_0$ precisely if $g(b_k,p) = g(b_k',p)$, in which case $\sigma(H_{\omega})$ is a union of intervals. More generally, $V_{\omega}$ is periodic for $\omega \in \Omega(s,p)_m$ precisely if $g(b_k,m) = g(b_k',m)$. Note that it is possible to choose $g$ such that this property holds for some, but not all $m \in \Z_p$. In this case, $\sigma(H_{\omega})$ is a Cantor spectrum of Lebesgue measure $0$ for some, but not all $\omega \in \Omega(s,p)$. However, this effect cannot occur if $g$ is of the form $g(b,m) = g_1(b) + g_2(m)$.
\end{remark}

A central tool in the study of spectral properties of $H_\omega$ is the \emph{trace map}. For $E \in \R$ and $a \in \mc A'$, we define
\begin{equation} \label{eq:boshper:MEadef}
M_E(a) = \begin{bmatrix}
E - g(a) & -1
\\ 1 & 0
\end{bmatrix}
\end{equation}
and for a word $w = a_1 \cdots a_n$, let
\begin{equation} \label{eq:boshper:MEwdef}
M_E(w) = M_E(a_n) \cdots M_E(a_1).
\end{equation}Let us define
\begin{equation}
x_k = x_k(E) = \tr M_E(w_k), \quad y_k = y_k(E) = \tr M_E(v_k).
\end{equation}
This is related to the cocycle notation introduced in Section~\ref{subsec:cocycle} via
\[M_E(w_k) = A_E^{|w_k|}(\omega), \quad k \in \N,\]
where $\omega = (x^*,0)$.

We denote by $(S_n)_{n \in \N_0}$ the sequence of Chebychev polynomials, given by
\[
S_0(x) \equiv 0, \quad S_1(x) \equiv 1, \quad S_{n+1}(x) = x S_n(x) - S_{n-1}(x). 
\] 

\begin{lemma}
\label{LEM:trace-relations}
For every $k \geq k_0$, we have
\begin{align}
\label{eq:LEM:trace-relations:xk}
x_{k+1} & = S_{n_k}(x_k) y_k - 2 S_{n_k - 1}(x_k),\\
\label{eq:LEM:trace-relations:yk}
y_{k+1} & = S_{n_k}(x_k) x_k - 2 S_{n_k - 1}(x_k).
\end{align}
In particular,
\begin{equation}
\label{EQ:difference-recursion}
|x_{k+1} - y_{k+1}| = |S_{n_k}(x_n)| |x_k - y_k|.
\end{equation}
\end{lemma}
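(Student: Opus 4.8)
The plan is to convert the word recursions \eqref{EQ:w-v-recursions} into matrix identities and then exploit the $\SL(2,\R)$ structure of the transfer matrices. Write $W_k = M_E(w_k)$ and $V_k = M_E(v_k)$, so that $x_k = \tr W_k$ and $y_k = \tr V_k$. Since the definition \eqref{eq:boshper:MEwdef} reverses the order of letters, concatenation of words corresponds to $M_E(uv) = M_E(v)M_E(u)$, and hence \eqref{EQ:w-v-recursions} becomes
\begin{equation}
W_{k+1} = V_k W_k^{\,n_k - 1}, \qquad V_{k+1} = W_k^{\,n_k}.
\end{equation}
Each factor $M_E(a)$ has determinant $1$, so $W_k, V_k \in \SL(2,\R)$ for every $k$.

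The first ingredient I would use is the Cayley--Hamilton power formula: for any $A \in \SL(2,\R)$ one has $A^n = S_n(\tr A)\,A - S_{n-1}(\tr A)\,I$, which follows by induction from $A^2 = (\tr A)A - I$ and the defining recursion for the $S_n$. Applied to $V_{k+1} = W_k^{\,n_k}$ and followed by taking traces, this gives at once
\[
y_{k+1} = S_{n_k}(x_k)\,\tr W_k - S_{n_k-1}(x_k)\,\tr I = S_{n_k}(x_k)\,x_k - 2 S_{n_k-1}(x_k),
\]
which is \eqref{eq:LEM:trace-relations:yk}.

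For \eqref{eq:LEM:trace-relations:xk} I would apply the same power formula to the factor $W_k^{\,n_k-1}$ appearing in $x_{k+1} = \tr\!\big(V_k W_k^{\,n_k-1}\big)$, which reduces the task to evaluating $\tr(V_k W_k)$. The second ingredient is the $\SL(2)$ trace identity $\tr(AB) + \tr(AB^{-1}) = (\tr A)(\tr B)$, a consequence of $B + B^{-1} = (\tr B)I$; it yields $\tr(V_k W_k) = x_k y_k - \tr(V_k W_k^{-1})$. The crucial structural point is that $w_k$ and $v_k$ differ only in their last letter, $a_k$ versus $a_k'$, so writing $w_k = u_k a_k$ and $v_k = u_k a_k'$ gives $V_k W_k^{-1} = M_E(a_k')\,M_E(a_k)^{-1}$, a unipotent upper-triangular matrix of trace $2$. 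Thus $\tr(V_k W_k) = x_k y_k - 2$, and substituting back and collapsing coefficients by means of $S_{n_k}(x_k) = x_k S_{n_k-1}(x_k) - S_{n_k-2}(x_k)$ delivers \eqref{eq:LEM:trace-relations:xk}. Finally, \eqref{EQ:difference-recursion} is immediate: subtracting \eqref{eq:LEM:trace-relations:yk} from \eqref{eq:LEM:trace-relations:xk} gives $x_{k+1} - y_{k+1} = S_{n_k}(x_k)(y_k - x_k)$, and taking absolute values concludes the argument.

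I expect the main obstacle to be purely bookkeeping rather than conceptual: one must track the order-reversal in \eqref{eq:boshper:MEwdef} carefully so that the recursions come out as $V_k W_k^{\,n_k-1}$ and $W_k^{\,n_k}$ (and not in the opposite order), and one must verify that $V_k W_k^{-1}$ is genuinely unipotent with trace exactly $2$. This last verification is precisely where the hypothesis that $v_k$ arises from $w_k$ by changing only the final letter enters, and it is the only place where the specific simple-Toeplitz structure is used.
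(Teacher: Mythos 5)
Your proof is correct and follows essentially the same route as the paper's: both arguments rest on the Cayley--Hamilton power formula $A^{n} = S_n(\tr A)\,A - S_{n-1}(\tr A)\,I$ together with the observation that $V_k W_k^{-1} = M_E(a_k')\,M_E(a_k)^{-1}$ is unipotent with trace $2$, which is exactly where the last-letter structure of $v_k$ enters. The only difference is cosmetic: you derive \eqref{eq:LEM:trace-relations:xk} at the level of traces, using the $\SL(2)$ identity $\tr(AB)+\tr(AB^{-1})=\tr(A)\tr(B)$ and the Chebyshev recursion to collapse the coefficients, whereas the paper regroups at the matrix level, writing $M_E(w_{k+1}) = S_{n_k}(x_k)\,M_E(v_k) - S_{n_k-1}(x_k)\,M_E(a_k')\,M_E(a_k)^{-1}$ and then taking traces.
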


\begin{proof}
For all $n \in \N$, and $A \in \SL(2,\R)$ we have the relation
\[
A^n = S_n(\tr A) A - S_{n-1}(\tr A) I
\]
by Cayley--Hamilton and induction, where $I$ is the identity matrix. Using \eqref{EQ:w-v-recursions}, this yields
\[
y_{k+1} = \tr M_E(v_{k+1}) = \tr M_E(w_k^{n_k}) = S_{n_k}(x_k) x_k - 2 S_{n_k - 1}(x_k),
\]
proving \eqref{eq:LEM:trace-relations:yk}.

Recall that $w_k$ ends in $a_k$. Let $a_k'$ be the last letter of $v_k$. Then,
\begin{align*}
M_E(w_{k+1}) 
& =M_E(w_k^{n_k - 1} v_k)\\ 
& = M_E(a_k') M_E(a_k)^{-1} M_E(w_k)^{n_k}\\ 
&= S_{n_k}(x_k) M_E(v_k) - S_{n_k - 1}(x_k) M_E(a_k') M_E(a_k)^{-1},
\end{align*}
By a direct calculation, $\tr M_E(a_k') M_E(a_k)^{-1} = 2$, and thus (using \eqref{EQ:w-v-recursions} again) we have
\[
x_{k+1} = \tr M_E(w_{k+1})
= S_{n_k}(x_k) y_k - 2 S_{n_k - 1}(x_k),
\]
proving \eqref{eq:LEM:trace-relations:xk}.
\end{proof}

%Before we continue, let us collect us some convenient properties of the Chebychev polynomials.
%
%\begin{lemma}
%\label{LEM:cheb-relations}
%Let $m \geq 2$. Then,
%\begin{enumerate}
%\item $S_m(x) = 0 \implies |S_{m-1}(x)| = 1$.
%\item $|x| = 2 \implies |x S_m(x) - 2 S_{m-1}(x)| = 2$.
%\end{enumerate}
%\end{lemma}
%
%\begin{proof}
%These seem to be standard properties. For example, we can verify that \[
%S_n(x) = U_{n-1}(x/2),
%\]
%for all $n \in \N$, where $(U_n)_{n \in \N_0}$ are the Chebychev polynomials of the second kind, as defined in Wikipedia. There, it was also shown that for all $n\in \N_0$,
%\[
%U_n(\cos \theta) = \frac{\sin ((n+1) \theta)}{\sin \theta},
%\]
%and the roots of $U_n$ are given by
%\[
%r_{n,j} = \cos \Bigl( \frac{j}{n+1} \pi \Bigr),
%\]
%for $1 \leq j \leq n$. Both of the stated properties can be derived from theses relations.
%\end{proof}

\begin{prop}
\label{PROP:trace-stability}
Assume that $x_k(E) = y_k(E)$ for some $k \geq k_0$. Then, for every sequence $\omega \in \Omega(s,p)_0$, the number $E$ is not an eigenvalue of $H_\omega$.
\end{prop}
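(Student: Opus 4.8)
The plan is to translate the hypothesis $x_k(E)=y_k(E)$ into an algebraic statement about the two block transfer matrices $M_E(w_k)$ and $M_E(v_k)$, and then to read off the behaviour of any putative eigenfunction along the block boundaries. The starting point is that $w_k$ and $v_k$ agree except in their last letter (namely $a_k=(b_k,p)$ versus $a_k'=(b_k',p)$), so by \eqref{eq:boshper:MEwdef} one has $M_E(w_k)=P_k\,M_E(v_k)$ with $P_k=M_E(a_k)M_E(a_k')^{-1}$. A direct computation (whose trace was already noted to be $2$ in the proof of Lemma~\ref{LEM:trace-relations}) gives the unipotent form $P_k=\begin{bmatrix}1 & g(a_k')-g(a_k)\\ 0 & 1\end{bmatrix}$. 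Writing $c_k=g(a_k')-g(a_k)$ and $M_E(v_k)=\begin{bmatrix}A&B\\ C&D\end{bmatrix}$, one finds $x_k-y_k=\tr\big((P_k-I)M_E(v_k)\big)=c_k\,C$, so that the hypothesis $x_k=y_k$ forces $c_k\,C=0$.

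This yields a clean dichotomy. If $c_k=0$, i.e.\ $g(a_k)=g(a_k')$, then the decorated words $w_k$ and $v_k$ produce identical length-$|w_k|$ potential segments (they differ only at the terminal site, where the two values now agree); since every $\omega\in\Omega(s,p)_0$ is a concatenation of such blocks, all carrying one decoration phase because $p\mid|w_k|$, the potential $V_\omega$ is periodic with period $|w_k|$ and $H_\omega$ has no eigenvalues. If instead $c_k\neq0$, then $C=[M_E(v_k)]_{21}=0$, so $M_E(v_k)$ is upper triangular; because $P_k$ is unipotent, $M_E(w_k)=P_kM_E(v_k)$ is also upper triangular with the \emph{same} top-left entry $\lambda$ as $M_E(v_k)$ (here $\lambda$ is real, as the matrices lie in $\SL(2,\R)$, and $\lambda+\lambda^{-1}=x_k$). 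Thus both block matrices share the invariant line $\R\begin{bmatrix}1\\0\end{bmatrix}$ and act on it by the common factor $\lambda$.

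To finish in the non-degenerate case I would use the block decomposition of points in $\Omega(s,p)_0$: every such $\omega$ is tiled by consecutive level-$k$ blocks, each equal to $w_k$ or $v_k$, of common length $N=|w_k|$, so there is a boundary $m_0$ with block boundaries at $m_i=m_0+iN$. The corresponding product of block transfer matrices $\mathcal T_i$ is then upper triangular with diagonal $(\lambda^{i},\lambda^{-i})$, whence for any solution of $H_\omega\psi=E\psi$ the second component of $\mathcal T_i\begin{bmatrix}\psi_{m_0+1}\\ \psi_{m_0}\end{bmatrix}$ gives $|\psi_{m_i}|=|\lambda|^{-i}|\psi_{m_0}|$. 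Since $i\mapsto|\lambda|^{-i}$ is monotone, this cannot tend to $0$ as $i\to+\infty$ and as $i\to-\infty$ simultaneously unless $\psi_{m_0}=0$; and if $\psi_{m_0}=0$ the first component gives $|\psi_{m_i+1}|=|\lambda|^{i}|\psi_{m_0+1}|$, which fails in the same way unless $\psi_{m_0+1}=0$ as well, forcing $\psi\equiv0$. Hence no nonzero $\ell^2$-eigenfunction exists, and $E$ is not an eigenvalue.

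The conceptual heart---and the step I expect to be the main obstacle---is recognizing that $x_k=y_k$ is equivalent (away from the degenerate periodic case) to simultaneous upper-triangularizability of $M_E(w_k)$ and $M_E(v_k)$ along a common eigendirection with a shared diagonal entry; once this is in hand, the argument is elementary and, notably, needs neither the three-block Gordon lemma nor any control of Birkhoff averages of $\log|\lambda|$, because the uniform triangular structure collapses the elliptic/parabolic case $|\lambda|=1$ and the hyperbolic case $|\lambda|\neq1$ into a single monotone-growth dichotomy. The only other point requiring care is the bookkeeping that, inside the single minimal component $\Omega(s,p)_0$, all block occurrences carry the canonical decoration---so that their transfer matrices are exactly $M_E(w_k)$ and $M_E(v_k)$---which is where the commensurability $p\mid|w_k|$ and the structure recorded in \eqref{EQ:w-recursion}--\eqref{EQ:w-v-recursions} enter.
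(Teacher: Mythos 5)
Your proof is correct, and its algebraic core is the same as the paper's: both arguments reduce the hypothesis $x_k(E)=y_k(E)$ to the dichotomy ``$g(a_k)=g(a_k')$ (so the potentials on $\Omega(s,p)_0$ are periodic and carry no eigenvalues) or $[M_E(v_k)]_{21}=0$,'' and in the latter case both observe that $M_E(w_k)$ and $M_E(v_k)$ are simultaneously upper triangular with the same diagonal $(\lambda,\lambda^{-1})$, so that any product of $n$ level-$k$ block matrices is upper triangular with diagonal $(\lambda^n,\lambda^{-n})$. Where you genuinely diverge is the finish. The paper splits into two cases: when $|\lambda|\neq 1$ it invokes uniform hyperbolicity of the cocycle $(T,A_E)$ (cf.\ Section~\ref{subsec:cocycle}) together with Johnson's theorem \cite{Johnson1986JDE} to conclude that $E$ is not even in the spectrum, and only when $|\lambda|=1$ does it argue directly that such transfer matrices admit no decaying solutions. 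You instead give one unified, purely solution-level argument: reading off the second (and then, after $\psi_{m_0}=0$ is forced, the first) component of $\mathcal{T}_i(\psi_{m_0+1},\psi_{m_0})^\top$ along the block boundaries $m_i=m_0+i|w_k|$ yields $|\psi_{m_i}|=|\lambda|^{-i}|\psi_{m_0}|$ and $|\psi_{m_i+1}|=|\lambda|^{i}|\psi_{m_0+1}|$, which is incompatible with decay in both directions unless $\psi\equiv 0$; the monotonicity of $i\mapsto|\lambda|^{\pm i}$ handles $|\lambda|=1$ and $|\lambda|\neq 1$ at once. This buys self-containedness (no Johnson's theorem, no spectral input), at the mild cost of giving slightly less information than the paper in the hyperbolic case (the paper concludes $E\notin\Sigma$, not merely that $E$ is not an eigenvalue). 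Your closing bookkeeping point is also exactly right and is shared by the paper: since $p$ divides $|w_k|$ and $|w_k|=|v_k|$, every level-$k$ block in any $\omega\in\Omega(s,p)_0$ carries the same decoration phase, so the block transfer matrices occurring in $\omega$ are precisely $M_E(w_k)$ and $M_E(v_k)$ as in \eqref{eq:boshper:MEwdef} and \eqref{EQ:w-v-recursions}.
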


\begin{proof}

Let $k \geq k_0$ and let $a = g(a_k), a' = g(a'_k)$, where $a_k$ denotes the final letter of $w_k$. On the level of transfer matrices, the variation in the right-most location between $w_k$ and $v_k$ is modeled via
$$
M_E(a_k') M_E(a_k)^{-1} = \begin{bmatrix} 1 & a - a' \\ 0 & 1 \end{bmatrix}.
$$

Thus, if 
$$
M_E(w_k) =
\begin{bmatrix} a_{1,1} & a_{1,2} \\ a_{2,1} & a_{2,2} \end{bmatrix},
$$
we obtain
\begin{align*}
M_E(v_k) & =
\begin{bmatrix} 1 & a - a' \\ 0 & 1 \end{bmatrix} \begin{bmatrix} a_{1,1} & a_{1,2} \\ a_{2,1} & a_{2,2} \end{bmatrix}
\\ & = \begin{bmatrix} a_{1,1} + (a - a') a_{2,1} & a_{1,2} + (a - a') a_{2,2} \\ a_{2,1} & a_{2,2} \end{bmatrix}.
\end{align*}
The two traces are therefore
$$
x_k(E) = a_{1,1} + a_{2,2}
$$
and
$$
y_k(E) = a_{1,1} + (a - a') a_{2,1} + a_{2,2}.
$$
If $a = a'$, the sequences $V_{\omega}$ with $\omega \in \Omega(s,p)_0$ are in fact periodic and hence do not permit Schr\"{o}dinger eigenvalues.
If $a \not= a'$, $x_k(E) = y_k(E)$ requires that $a_{2,1} = 0$.
In the case $a_{2,1} = 0$, the two matrices take the form
$$
M_E(w_k) = \begin{bmatrix} a_{1,1} & a_{1,2} \\ 0 & a_{2,2} \end{bmatrix}
$$
and
$$
M_E(v_k) = \begin{bmatrix} a_{1,1} & a_{1,2} + (a - a') a_{2,2} \\ 0 & a_{2,2} \end{bmatrix},
$$
and hence an arbitrary product of $n$ such matrices will have the form
$$
\begin{bmatrix} a_{1,1}^n & * \\ 0 & a_{2,2}^n \end{bmatrix}.
$$
It follows also in this case that the energy in question is not an eigenvalue for any element of the subshift: if $|a_{1,1}| = |a_{2,2}|^{-1} \not = 1$, then the cocycle $(T,A_E)$ (cf.\ Section~\ref{subsec:cocycle}) is uniformly hyperbolic and the energy is not in the spectrum by Johnson's theorem; and if $|a_{1,1}| = |a_{2,2}|^{-1} = 1$, then transfer matrices of this kind obviously do not admit any decaying solutions, and in particular no square-summable solutions.
\end{proof}

%\begin{prop}
%\label{PROP:trace-stability}
%Assume that $x_k = y_k$ for some $k \geq 2$. Then, $|x_m| = 2 $ for all $m \geq k$.
%\end{prop}
%
%\begin{proof}
%First, note that $x_1 \neq y_1$ by assumption. Let $r \geq 2$ be minimal with the property that $x_{r+1} = y_{r+1}$. By \eqref{EQ:difference-recursion}, this implies that $S_{n_r}(x_r) = 0$. We hence obtain that $|S_{n_r - 1}(x_r)| = 1$, due to Lemma~\ref{LEM:cheb-relations}, and thereby,
%\[
%x_{r+1} = y_{r+1} = - 2 S_{n_r - 1}(x_r)
%\] 
%has absolute value $2$. Combining the second property in Lemma~\ref{LEM:cheb-relations} with the trace relations in Lemma~\ref{LEM:trace-relations}, we obtain that this property is stable under iterations. The claim follows.
%\end{proof}

The following is a mild adaptation of a corresponding result on simple Toeplitz sequences in \cite{LiuQu}. 

\begin{prop}
\label{PROP:spectrum-trace}
If $E \in \sigma(H_\omega)$, it follows that $|x_k(E)| \leq 2$ for infinitely many $k \in \N$.
\end{prop}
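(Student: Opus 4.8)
The plan is to prove the contrapositive: we show that if there is $K \in \N$ with $|x_k(E)| > 2$ for all $k \ge K$, then $E \notin \sigma(H_\omega)$. Since the spectrum is constant on each minimal component (Theorem~\ref{THM:boshernitzan+periodic}) and it suffices, by the reduction made above, to treat $\Omega(s,p)_0$, we may take $\omega = (x^\ast,0)$ and use that the traces $x_k,y_k$ are tied to the hierarchical words $w_k,v_k$ beginning at the origin. Enlarging $K$ if necessary, assume $K \ge k_0$, so that the recursions \eqref{EQ:w-v-recursions} and the trace identities of Lemma~\ref{LEM:trace-relations} are available. This is precisely the point at which the only modification relative to \cite{LiuQu} enters: passing to the decorated alphabet $\mc A' = \mc A \times \Z_p$ replaces $\widetilde w_k, \widetilde v_k$ by $w_k, v_k$, but \eqref{EQ:w-v-recursions} has the same form as \eqref{EQ:w-v-tilde-recursions}, so the matrix recursions are identical and the undecorated argument of \cite{LiuQu} applies for $k \ge k_0$.

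The analytic input is trace escape. For $|x| \ge 2$ and $n \ge 2$ one has $|S_n(x)| \ge |x| \ge 2$ (e.g.\ from $S_n(2\cosh\theta) = \sinh(n\theta)/\sinh\theta$ and $S_n(\pm 2) = \pm n$). Feeding $|x_k| > 2$ into the difference recursion \eqref{EQ:difference-recursion} gives $d_{k+1} \ge 2\,d_k$ for all $k \ge K$, where $d_k := |x_k - y_k|$, so that $d_k \to \infty$ unless $d_K = 0$. If $d_K = 0$, then $x_K = y_K$ with $|x_K| > 2$, and the case analysis in the proof of Proposition~\ref{PROP:trace-stability} (the subcase $a_{2,1}=0$, or periodicity of $V_\omega$) already places $E$ in a spectral gap; we may thus assume $d_k \to \infty$. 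Moreover, since $M_E(w_k) \in \SL(2,\R)$ with $|\tr M_E(w_k)| = |x_k| > 2$, each $M_E(w_k)$ is hyperbolic, with larger eigenvalue of modulus at least $|x_k| - 1 > 1$.

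It remains to convert this escape into $E \notin \sigma(H_\omega)$, which we do by showing the cocycle $(T,A_E)$ is uniformly hyperbolic and invoking Johnson's theorem. Using the hierarchical factorizations $w_{k+1} = w_k^{n_k-1} v_k$ and $v_{k+1} = w_k^{n_k}$, any finite transfer matrix $A_E^n(\omega)$ decomposes, up to a bounded boundary block, into a product of the hyperbolic matrices $M_E(w_k)$ and $M_E(v_k)$ with $k \ge K$. One then constructs a cone field that is strictly invariant and uniformly expanded under these blocks; the divergence $d_k \to \infty$ guarantees that the stable and unstable directions of $M_E(w_k)$ and $M_E(v_k)$ remain uniformly transverse across levels, which is exactly what forces the cone field to persist through the hierarchy and yields a uniform exponential rate independent of $\omega$ and $n$. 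These are the estimates of \cite{LiuQu}, now carried out over $\mc A'$, and they establish uniform hyperbolicity.

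The main obstacle is this last step: upgrading the sequential trace growth along the lengths $|w_k|$ into genuine \emph{uniform} hyperbolicity of the cocycle over the entire minimal component, uniformly in $\omega$ and in the block length $n$. The delicate point is the construction and invariance of the cone field, namely controlling the relative position of the hyperbolic directions at consecutive hierarchy levels (where $d_k \to \infty$ is used) and bounding the boundary corrections uniformly. Once uniform hyperbolicity is in hand, Johnson's theorem gives $E \notin \sigma(H_\omega)$, completing the contrapositive and hence the proposition.
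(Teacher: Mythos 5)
Your contrapositive setup, the reduction to the decorated recursions, and the elementary bound $|S_n(x)|\ge |x|$ for $|x|\ge 2$, $n\ge 2$ (hence $d_{k+1}\ge 2d_k$ via \eqref{EQ:difference-recursion}) are fine. The genuine gap is exactly the step you yourself label ``the main obstacle'': converting eventual trace escape into $E\notin\sigma(H_\omega)$. You propose to do this by constructing an invariant cone field yielding uniform hyperbolicity of $(T,A_E)$ over the whole minimal component and then citing Johnson's theorem, but you never construct it: the only justification offered is that $d_k\to\infty$ ``guarantees'' uniform transversality of the hyperbolic directions of $M_E(w_k)$ and $M_E(v_k)$, and no relation between the scalar $d_k=|x_k-y_k|$ and those angles is established. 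Note also that $d_{k+1}\ge 2d_k$ gives growth that is exponential only in $k$, hence at best polynomial in the period $|w_k|\gtrsim 2^k$, and it does not even show $|x_k|\to\infty$ (only $\max(|x_k|,|y_k|)\to\infty$); this is far too weak to drive any uniform-expansion estimate. Deferring the missing estimates to \cite{LiuQu} does not close the gap, because that is not how \cite[Prop.~3.1]{LiuQu} argues, so there is nothing there to transfer along your route.

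For comparison, the paper proves this proposition by direct citation: the proof of \cite[Prop.~3.1]{LiuQu} uses only the recursion \eqref{EQ:w-recursion}, which persists for $k\ge k_0$ after the periodic decoration, so the undecorated argument applies verbatim. That argument is the classical periodic-approximation one: by Floquet theory, $\sigma_k=\{E' : |x_k(E')|\le 2\}$ is precisely the spectrum of the periodic operator obtained by repeating $w_k$; these operators converge strongly to $H_\omega$, so $E\in\sigma(H_\omega)$ forces $\mathrm{dist}(E,\sigma_k)\to 0$; and a quantitative (super-exponential) escape estimate for the trace map shows that an energy with $|x_k(E)|>2$ for all large $k$ keeps a definite distance from $\sigma_m$ for all large $m$ --- a contradiction. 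No cone fields, uniform hyperbolicity over the hull, or Johnson's theorem enter; if you wish to keep your framing, this is the mechanism you must supply, and it needs a stronger escape estimate than $d_{k+1}\ge 2d_k$. A smaller point: in the degenerate case $x_K=y_K$ you invoke Proposition~\ref{PROP:trace-stability} as ``placing $E$ in a spectral gap,'' but that proposition only excludes eigenvalues; to conclude a gap you must additionally use $|x_K|>2$ to rule out the subcase $|a_{1,1}|=1$, after which the periodic and triangular subcases do yield uniform hyperbolicity.
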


\begin{proof}[Sketch of proof]
This is essentially \cite[Prop.~3.1]{LiuQu}. The interested reader can verify that all the arguments leading to this result rely on the fact that the recursion relation \eqref{EQ:w-recursion} remains true for large enough $k \in \N$. 
\end{proof}

For most simple Toeplitz subshifts, the conclusion in Theorem~\ref{THM:Toep-p-no-eigenvalues} can be derived from a combination of the $3$-block and the $2$-block Gordon lemma, similar to the discussion in \cite{D01}.

\begin{lemma}
\label{LEM:not-eventually-3-case}
Let $s = (a_k,n_k)$ be such that $n_k \neq 3$ for infinitely many $k \in \N$. Then, for all $\omega \in \Omega(s,p)$, the Schr\"odinger operator $H_\omega$ has no eigenvalues.
\end{lemma}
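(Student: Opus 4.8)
\section*{Proof plan}

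The plan is to argue by contradiction via Gordon's lemma, using the trace information already recorded in Proposition~\ref{PROP:spectrum-trace}. Since the combinatorial structure and the common spectrum $\Sigma$ are the same on each of the $p$ minimal components $\Omega(s,p)_m$ (they differ only by replacing $g(\cdot,0)$ with $g(\cdot,m)$), it suffices to treat $\Omega(s,p)_0$. Fix $\omega \in \Omega(s,p)_0$ and $E \in \R$; if $E \notin \sigma(H_\omega)$ there is nothing to prove, so assume $E \in \Sigma$ and suppose, toward a contradiction, that $\psi \in \ell^2(\Z) \setminus \{0\}$ solves $H_\omega \psi = E\psi$. Writing $u = (\psi_1,\psi_0)^{\top} \neq 0$, one has $(\psi_{n+1},\psi_n)^{\top} = A_E^n(\omega)\,u$, so that $\psi \in \ell^2$ forces $\|A_E^n(\omega)\,u\| \to 0$ as $n \to \pm\infty$. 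By Proposition~\ref{PROP:spectrum-trace} we have $|x_k(E)| \leq 2$ for infinitely many $k$; moreover, energies with $x_k(E) = y_k(E)$ for some $k \geq k_0$ are already excluded by Proposition~\ref{PROP:trace-stability}, so we may assume $x_k(E) \neq y_k(E)$ for all $k \geq k_0$ (hence $S_{n_k}(x_k(E)) \neq 0$ throughout, by \eqref{EQ:difference-recursion}).

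The two inputs from Gordon's lemma both follow from Cayley--Hamilton for $M \in \SL(2,\R)$: with $t = \tr M$, the identities $M + M^{-1} = tI$ and $M^2 + I = tM$ yield, for every vector $v$, the unconditional \emph{three-block} bound $\max\{\|M^{-1}v\|,\|Mv\|,\|M^2 v\|\} \geq \tfrac12\|v\|$ and the \emph{two-block} bound $\max\{\|Mv\|,\|M^2 v\|\} \geq \|v\|/(|t|+1)$. I will apply these with $M = A_E^{q_k}(\omega)$ the one-period transfer over a copy of $w_k$, where $q_k = |w_k|$ (cf.\ Lemma~\ref{LEM:trace-relations}). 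Because traces are invariant under cyclic permutation of the factors, the transfer over \emph{any} cyclic shift of $w_k$ has trace $x_k(E)$; thus on the infinite set of levels with $|x_k(E)| \leq 2$ the two-block bound reads $\max\{\|Mv\|,\|M^2 v\|\} \geq \tfrac13\|v\|$.

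The combinatorial heart is to locate, at infinitely many scales $q_k \to \infty$, a repetition of $w_k$ anchored near a fixed site. From \eqref{EQ:w-v-recursions}, $w_{k+1} = w_k^{n_k-1}v_k$ and $v_{k+1} = w_k^{n_k}$; since $v_k$ differs from $w_k$ only in its last letter and the blocks $v_{k+1}$ occur in isolation (each followed by a $w_{k+1}$), the maximal runs of consecutive $w_k$'s in $\omega$ have length exactly $n_k-1$ or $2n_k-1$, the latter arising from $v_{k+1}w_{k+1} = w_k^{2n_k-1}v_k$. The hypothesis $n_k \neq 3$ infinitely often splits into two regimes. If $n_k \geq 4$ for infinitely many $k$, the runs are long ($2n_k - 1 \geq 7$), and whenever the anchor lies strictly interior to such a run the three-block bound produces a fixed nonzero lower bound on $\|A_E^m(\omega)u'\|$ at sites $m$ differing from the anchor by $\pm q_k, 2q_k$, contradicting decay; at the two extreme ends of a run one instead has two full copies of $w_k$ to one side and falls back on the two-block bound. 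If instead $n_k = 2$ for infinitely many $k$, the run $w_k^3$ (length $2n_k-1 = 3$) places two copies of $w_k$ to one side of the anchor, and the two-block bound applies at those levels $k$ for which also $|x_k(E)| \leq 2$. In every case $\psi$ fails to decay along a sequence of sites tending to $\pm\infty$, contradicting $\psi \in \ell^2$.

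The main obstacle is the uniform-in-$\omega$ control of the anchor's position, together with the dovetailing needed in the two-block case. The location of a fixed site within the level-$k$ block decomposition is governed by the odometer coordinate of $\omega$ and may place it deep inside a $v_k$, or at the unfavorable end of a short run, where neither an interior cube nor a one-sided square is available; one must therefore prove that, for \emph{every} $\omega$, favorable configurations recur at infinitely many levels, and in the $n_k = 2$ regime that these levels can be matched against the (a priori sparse) infinite set $\{k : |x_k(E)| \leq 2\}$ furnished by Proposition~\ref{PROP:spectrum-trace}. Making this recurrence precise amounts to analyzing the transition rules for the pair (run length, position within run) as $k$ increases, and it is exactly here that $n_k \neq 3$ is used: the run lengths $\{n_k-1,2n_k-1\}$ then avoid being trapped at the degenerate values $\{2,5\}$ produced by $n_k = 3$, for which an adversarial odometer coordinate can suppress favorable configurations and a separate argument (the eventual-$3$ case) is required. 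Establishing this recurrence is the technical crux; the Gordon estimates and the trace input are routine once it is in place.
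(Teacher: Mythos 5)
Your proposal assembles the right ingredients---the reduction to $\Omega(s,p)_0$, the trace input from Proposition~\ref{PROP:spectrum-trace}, the distinction between the trace-free three-block Gordon estimate and the trace-dependent two-block estimate, and the correct run-length statistics $n_k-1$ and $2n_k-1$---but it is not a proof: the step you defer as the ``technical crux'' is precisely the content of the lemma. For a fixed $\omega$, nothing in your outline produces infinitely many levels at which the origin is favorably placed, let alone levels at which a favorable placement coincides with the trace bound $|x_k(E)|\le 2$. When the origin's level-$k$ block is flanked by $v_k$'s (patterns $v_kw_k\hat{v}_k$, $v_k\hat{w}_kv_k$, $v_kw_k\hat{w}_kv_k$, which occur when $n_k\in\{2,3\}$), no cube and no one-sided square is available at level $k$; and your regime dichotomy ($n_k\ge 4$ infinitely often versus $n_k=2$ infinitely often) does not repair this, since even in the first regime an anchor at the end of a long run forces the two-block estimate, hence the trace bound, and the set $\{k : n_k\ge 4\}$ need not meet the set $\{k : |x_k(E)|\le 2\}$. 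The dovetailing problem you flag is therefore fatal to the argument as you have organized it.

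The paper resolves this with a level-escalation mechanism that is the missing idea. Fix an arbitrary $k_1$ and choose a \emph{single} level $k\ge k_1$ with $|x_k(E)|\le 2$; the entire case analysis is performed relative to this level. In the favorable configurations ($w_kw_k\hat{v}_k$, $w_kw_k\hat{w}_k$, $\hat{w}_kw_kw_k$, $\hat{w}_kw_kv_k$) the two-block Gordon estimate applies, and the needed trace bound holds by the choice of $k$. In the remaining configurations, \eqref{EQ:w-v-recursions} forces the origin to lie inside a pattern $w_{k+1}\hat{w}_{k+1}$, and at level $k+1$ one applies the \emph{unconditional} three-block estimate: because one neighboring block equals $w_{k+1}$ and $v_{k+1}$ agrees with $w_{k+1}$ except in its final letter, three consecutive windows of length $|w_{k+1}|$ adjacent to the origin carry the same transfer matrix $M$ (a cyclic shift of $M_E(w_{k+1})$), so $\max\{\|M^{-1}v\|,\|Mv\|,\|M^{2}v\|\}\ge\tfrac12\|v\|$ or its mirror applies with no trace information whatsoever. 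The only pattern where this fails is $v_{k+1}w_{k+1}\hat{w}_{k+1}v_{k+1}$, and that pattern forces $n_{k+1}=3$; iterating, the obstruction can persist only through levels with $n_r=3$, so the escalation terminates at the first level $r>k$ with $n_r\ne 3$. This is exactly where the hypothesis enters---not through a dichotomy on which values $n_k$ takes---and it is what makes your matching problem vanish: the trace bound is consumed once, at the base level, while every higher level uses a trace-free estimate. Since $k_1$ was arbitrary, one gets sites $n$ with $|n|\ge|w_{k_1}|$ and $|\psi_n|$ bounded below, contradicting $\psi\in\ell^2(\Z)$.
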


\begin{proof}
Without loss of generality, let $\omega \in \Omega(s,p)_0$. For the sake of establishing a contradiction, assume that $E \in \R$ is an eigenvalue of $H_\omega$ with eigenfunction $\psi \in \ell^2(\Z)$.
Let $k_1 \geq k_0$ be arbitrary. Due to Proposition~\ref{PROP:spectrum-trace}, there exists a $k \geq k_1$ such that $|x_k(E)| \leq 2$. Consider the decomposition of $\omega$ into words of the form $w_k$ and $v_k$. Let us use the symbol $\, \hat{} \,$ to note the location of the $0$th position. We proceed by cases. 

\emph{Case 1:} Assume that around the origin, $\omega$ is of one of the forms $w_k w_k \hat{v}_k$, $w_k w_k \hat{w}_k$, $\hat{w}_k w_k w_k$, or $\hat{w}_k w_k v_k$. In each of these cases, we can apply the $2$-block Gordon criterion to conclude that $\psi_n$ is of order $1$ for some $n \in \Z$ with $|n| \geq |w_{k_1}|$.

\emph{Case 2:} If Case~1 does not hold, then $\omega$ must have one of the following forms near the origin: $v_k w_k \hat{v}_k$, $v_k \hat{w}_k v_k$ or $v_k w_k \hat{w}_k v_k$ (recall that $v_k$'s cannot be adjacent and indeed must be separated by at least $n_k-1$ occurrences of $w_k$). In each of these cases, $\omega$ has the form $w_{k+1} \hat{w}_{k+1}$ near the origin. We can apply the $3$-block Gordon lemma and reach the same conclusion as in the last case, unless $\omega$ has the form $v_{k+1} w_{k+1} \hat{w}_{k+1} v_{k+1}$. Note that this requires $\omega$ to be of the form $w_{k+2}\hat{w}_{k+2}$ near the origin, which we can use to see $n_{k+1} = 3$. We can repeat the same reasoning until we reach a level $r > k$ with $n_r \neq 3$. For this level, the structure $v_r w_r \hat{w}_r v_r$ is not possible and we can apply the $3$-block Gordon lemma. In every case, $\psi_n$ is of order $1$ for some $n \in \Z$ with $|n| \geq |w_{k_1}|$.

Since $k_1$ was arbitrary, we reach a contradiction to the assumption that $\psi \in \ell^2(\Z)$. 
\end{proof}
In view of Lemma~\ref{LEM:not-eventually-3-case}, what remains is to handle the case in which $(n_k)_{k\in\N}$ is eventually identically 3.
We treat this remaining case by a centered version of the $2$-block Gordon lemma. The following lemma does not require the subshift setting. Thus, we work with an arbitrary sequence $\omega \in \R^\Z$, and the associated potential is simply $V_\omega(n) = \omega_n$. For this lemma, simply define $M_E$ by \eqref{eq:boshper:MEadef} and \eqref{eq:boshper:MEwdef} with $g(a) = a$ for $a \in \R$.
 
\begin{lemma}
\label{LEM:centered-Gordon}
Let $\omega \in \R^\Z$ and suppose there exists a strictly increasing sequence of natural numbers $(n_m)_{m \in \N}$ such that
\begin{equation} \label{eq:LEM:centered-Gordon:squareAroundOrigin}
\omega_{[-n_m,-1]} = \omega_{[0,n_m-1]},
\end{equation}
for all $m \in \N$.  For $E \in \R$, $m \in \N$, let $x_m(E) = \tr M_E(\omega_{[0,n_m-1]})$. If 
\begin{equation}
\sum_{m \in \N} |x_m(E)|^2 = \infty,
\end{equation}
then $E$ is not an eigenvalue of $H_{\omega}$.
\end{lemma}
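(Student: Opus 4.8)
The plan is to execute a two-sided (\emph{centered}) version of Gordon's argument. The hypothesis \eqref{eq:LEM:centered-Gordon:squareAroundOrigin} says that the length-$n_m$ block immediately to the left of the origin equals the one immediately to the right, and the idea is to use this coincidence to make the forward and backward transfer matrices over these blocks \emph{equal}, so that a single Cayley--Hamilton identity simultaneously controls any formal solution far to the right and far to the left of the origin.

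First I would record the transfer-matrix consequences of the repetition. Set $T_m = M_E(\omega_{[0,n_m-1]})$. Since $M_E$ depends only on the ordered list of letters of a word, \eqref{eq:LEM:centered-Gordon:squareAroundOrigin} gives $M_E(\omega_{[-n_m,-1]}) = T_m$ as well. Hence any solution $\psi$ of $H_\omega \psi = E\psi$ satisfies, with $\Phi = \bigl[\begin{smallmatrix}\psi_0 \\ \psi_{-1}\end{smallmatrix}\bigr]$,
\[
\begin{bmatrix}\psi_{n_m}\\\psi_{n_m-1}\end{bmatrix} = T_m\, \Phi,
\qquad
\begin{bmatrix}\psi_{-n_m}\\\psi_{-n_m-1}\end{bmatrix} = T_m^{-1}\, \Phi .
\]
Because $T_m \in \SL(2,\R)$, Cayley--Hamilton yields $T_m + T_m^{-1} = x_m(E)\, I$ with $x_m(E) = \tr T_m$, so adding the two displays produces the centered identity
\[
\begin{bmatrix}\psi_{n_m}+\psi_{-n_m}\\\psi_{n_m-1}+\psi_{-n_m-1}\end{bmatrix} = x_m(E)\,\Phi .
\]

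Next I would extract a uniform pointwise lower bound. Assuming $\psi$ is a nontrivial solution, the recursion forces $\Phi \neq 0$. Taking norms in the centered identity and applying the triangle inequality shows that among the four indices $n_m,\ n_m-1,\ -n_m,\ -n_m-1$ there is one, call it $N_m$, for which $|\psi_{N_m}| \ge c\,|x_m(E)|$, where $c = \|\Phi\|/4 > 0$ is independent of $m$; note $|N_m| \ge n_m - 1 \to \infty$. Finally I would close by summation, arguing by contradiction: suppose $\psi \in \ell^2(\Z)$. As $(n_m)$ is strictly increasing, each fixed index $n$ belongs to the candidate set $\{n_m, n_m-1, -n_m, -n_m-1\}$ for at most four values of $m$, so the map $m \mapsto N_m$ has multiplicity at most four. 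Therefore
\[
c^2 \sum_{m}|x_m(E)|^2 \le \sum_{m}|\psi_{N_m}|^2 \le 4\sum_{n\in\Z}|\psi_n|^2 = 4\|\psi\|^2 < \infty,
\]
contradicting $\sum_m |x_m(E)|^2 = \infty$. Hence no nontrivial $\ell^2$ solution exists and $E$ is not an eigenvalue of $H_\omega$.

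The conceptual crux is the observation that two-sided repetition around the origin lets the forward and backward block matrices coincide, enabling the clean combination $T_m + T_m^{-1} = x_m(E)I$. The step I expect to require the most care is the final summation: it is precisely the bounded-multiplicity bookkeeping that lets one dominate $\sum_m |x_m(E)|^2$ by $\|\psi\|^2$, and this is what makes the refined hypothesis $\sum_m|x_m(E)|^2 = \infty$ usable, as opposed to the coarser $\limsup_m |x_m(E)| > 0$ of the classical statement. I would double-check the indexing and orientation conventions for $M_E$ (recall $M_E(a_1\cdots a_n) = M_E(a_n)\cdots M_E(a_1)$) to be sure the forward and backward propagation formulas are stated correctly.
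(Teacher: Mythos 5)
Your proof is correct and follows the same core argument as the paper: the two-sided repetition \eqref{eq:LEM:centered-Gordon:squareAroundOrigin} forces the forward and backward block transfer matrices to coincide, and Cayley--Hamilton then gives the centered identity $\Psi(n_m)+\Psi(-n_m)=x_m(E)\,\Psi(0)$, exactly as in the paper's proof. The only difference is the endgame: the paper applies $2|a|^2+2|b|^2\ge |a+b|^2$ componentwise to bound the sum of the four relevant $|\psi_j|^2$ below by $\tfrac12|x_m(E)|^2$ and then restricts to a subsequence with $n_{m+1}>n_m+1$ so that these index sets are pairwise disjoint before summing, whereas you select a single index $N_m$ by pigeonhole and dominate $\sum_m|\psi_{N_m}|^2$ by $4\|\psi\|^2$ via a multiplicity-at-most-four counting argument. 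Your bookkeeping is marginally more robust, since it sidesteps the (easily justified but unstated) point that one can pass to such a gapped subsequence without destroying the divergence of $\sum_m |x_m(E)|^2$, e.g.\ by splitting into even- and odd-indexed terms.
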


\begin{proof}
For the sake of establishing a contradiction, assume that $\psi \in \ell^2(\Z)$ is an eigenfunction of $H_{\omega}$ for the eigenvalue $E$. Let $\Psi(n) = (\psi_n,\psi_{n-1})^\top$ for all $n \in \Z$, and $\mathcal{M}_m = M_E(\omega_{[0,n_m-1]})$ for all $m \in \N$. Note that \[\mathcal{M}_m^{-1} \Psi(n_m) = \Psi(0) = \mathcal{M}_m \Psi(-n_m) \text{ for all } m \in \N
\]
by \eqref{eq:LEM:centered-Gordon:squareAroundOrigin}. Then, due to the Caley--Hamilton theorem,
\[
\Psi(n_m) + \Psi(-n_m) = x_m(E) \Psi(0).
\]
Normalizing $\psi$ according to $\|\Psi(0)\|_2 = 1$, and using $2|a|^2 + 2|b|^2 \geq |a+b|^2$, this yields
\begin{align}
\label{EQ:trace-normalization}
|\psi_{-n_m}|^2 + |\psi_{-n_m+1}|^2 + |\psi_{n_m}|^2 + |\psi_{n_m+1}|^2 
& \geq   \frac{1}{2} |x_m(E)|^2. 
\end{align}
Without loss of generality, we can restrict $(n_m)_{m \in \N}$ to a subsequence such that $n_{m+1} > n_m + 1$ for all $m \in \N$. We then obtain by \eqref{EQ:trace-normalization},
\[
2 \|\psi\|^2 \geq \sum_{m \in \N} |x_m(E)|^2 = \infty,
\]
in contradiction to $\psi \in \ell^2(\Z)$.
\end{proof}

\begin{lemma}
\label{LEM:eventually-3-case}
Let $s = (a_k,n_k)$ be such that $n_k = 3$ for all $k$ larger than some $k_1 \geq k_0$. Then, for all $\omega \in \Omega(s,p)$, the Schr\"odinger operator $H_{\omega}$ has no eigenvalues.
\end{lemma}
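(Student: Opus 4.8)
The plan is to reduce, exactly as at the start of Lemma~\ref{LEM:not-eventually-3-case}, to the minimal component $(\Omega(s,p)_0,T)$, to fix $E \in \R$, and to show that $E$ is not an eigenvalue of $H_\omega$ for any $\omega \in \Omega(s,p)_0$. If $x_k(E) = y_k(E)$ for some $k \ge k_0$, then Proposition~\ref{PROP:trace-stability} already rules out $E$, so I would assume $x_k(E) \neq y_k(E)$ for every $k \ge k_0$. The difference from the previous lemma is that the $3$-block Gordon step can now fail at \emph{every} level: running the case analysis of Lemma~\ref{LEM:not-eventually-3-case} with $n_k \equiv 3$, the only configuration never resolved by the $2$- or $3$-block criterion is the perpetual obstruction in which, near the origin, $\omega$ has the form $v_{k}w_{k}\hat{w}_{k}v_{k}$ at all large levels. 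Using the relations \eqref{EQ:w-v-recursions}, this configuration nests and exhibits the origin as the center of squares of strictly increasing half-widths $n_m = |w_{k_m}|$, i.e.\ $\omega_{[-n_m,-1]} = \omega_{[0,n_m-1]}$ along an increasing sequence $k_m \to \infty$. This is precisely the input required by the centered Gordon lemma (Lemma~\ref{LEM:centered-Gordon}), provided the associated trace sum diverges.

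The heart of the matter is therefore to establish $\sum_m |x_{k_m}(E)|^2 = \infty$. First I would record that the trace attached to each centered square is indeed $x_{k_m}(E)$: since conjugating each factor $M_E(a)$ by $\mathrm{diag}(1,-1)$ turns it into its transpose, one has $\tr M_E(w) = \tr M_E(w^{\Refl})$ for every word $w$, so the orientation of the square word is irrelevant. It then suffices to prove the stronger statement $\sum_{k \ge k_0} |x_k(E)|^2 = \infty$. Specializing Lemma~\ref{LEM:trace-relations} to $n_k = 3$, where the Chebyshev values are $S_3(x) = x^2 - 1$ and $S_2(x) = x$, formula \eqref{eq:LEM:trace-relations:yk} becomes $y_{k+1} = x_k^3 - 3x_k$, while \eqref{EQ:difference-recursion} reads $|x_{k+1}-y_{k+1}| = |x_k^2 - 1|\,|x_k - y_k|$.

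Suppose, toward a contradiction, that $\sum_k |x_k(E)|^2 < \infty$, and abbreviate $x_k = x_k(E)$. Then $x_k \to 0$, so I may pick $k_* \ge k_0$ with $x_j^2 < 1$ for all $j \ge k_*$, whence $|x_j^2 - 1| = 1 - x_j^2$ and, for $k > k_*$,
\[
|x_k - y_k| = |x_{k_*} - y_{k_*}| \prod_{j=k_*}^{k-1}\bigl(1 - x_j^2\bigr).
\]
Because $\sum_j x_j^2 < \infty$, the infinite product converges to a strictly positive limit, and since $|x_{k_*}-y_{k_*}| > 0$ this forces $\lim_k |x_k - y_k| > 0$. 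On the other hand $y_{k+1} = x_k(x_k^2 - 3) \to 0$, so $y_k \to 0$ as well, giving $|x_k - y_k| \to 0$; this contradiction shows $\sum_k |x_k(E)|^2 = \infty$.

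With the nested square structure identified and the trace divergence in hand, Lemma~\ref{LEM:centered-Gordon} applies to the sequence $(n_m)$ and yields that $E$ is not an eigenvalue of $H_\omega$, completing the argument. I expect the main obstacle to be the trace estimate of the third paragraph: one must exclude $x_k(E) \to 0$ (equivalently, square-summability of the traces), and the mechanism is the tension between the contraction $|x_{k+1}-y_{k+1}| = |x_k^2-1|\,|x_k-y_k|$ and the collapse $y_{k+1} = x_k(x_k^2-3) \to 0$, which are jointly incompatible with the standing assumption $x_k(E) \neq y_k(E)$. A secondary point requiring care is the purely combinatorial verification, via \eqref{EQ:w-v-recursions}, that the unresolved configuration genuinely produces squares centered \emph{exactly} at the origin along a cofinal set of levels, so that the centered Gordon lemma is applicable rather than merely its shifted analogue.
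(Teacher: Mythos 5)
Your proposal is correct and takes essentially the same route as the paper's proof: the same reduction to the perpetually unresolved configuration $v_k w_k \hat{w}_k v_k$, the same use of Proposition~\ref{PROP:trace-stability} to secure $x_k(E) \neq y_k(E)$, and the same application of the centered Gordon lemma (Lemma~\ref{LEM:centered-Gordon}) fed by divergence of $\sum_k |x_k(E)|^2$. Your divergence step---assuming summability, so that $\prod_j (1-x_j^2)$ converges to a positive limit while $x_k, y_k \to 0$ forces $|x_k - y_k| \to 0$---is merely a contrapositive repackaging of the paper's computation via $\sum_m \log|1-x_m^2| = -\infty$ and the bound $\log|1-x_m^2| > -c\, x_m^2$.
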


\begin{proof}
Again, we restrict our attention to the case $\omega \in \Omega(s,p)_0$ without loss of generality. Assume $E \in \sigma(H_{\omega})$ is an eigenvalue with corresponding eigenfunction $\psi \in \ell^2(\Z)$. If $\omega$ is \emph{not} of the form $v_k w_k \hat{w}_k v_k$ for all $k$ larger than $k_1$, we can argue as in the proof of Lemma~\ref{LEM:not-eventually-3-case} and reach a contradiction.
Hence, the assumptions imply that $\omega$ is of the form $v_k w_k \hat{w}_k v_k$ around the origin for all $k \geq k_1$. This implies
\[
\omega_{[|w_k| - 1, -1]} = \omega_{[0,|w_k| -1]},
\]
and the trace of the corresponding transfer matrix is given by $x_k(E)$.  
By Proposition~\ref{PROP:trace-stability} and the assumption that $E$ is an eigenvalue, we have $x_k \neq y_k$ for all $k \geq k_1$. By Lemma~\ref{LEM:centered-Gordon}, it suffices to show that 
\begin{equation}
\label{EQ:trace-non-summable}
\sum_{k \in \N} |x_k(E)|^2 = \infty
\end{equation}
in order to obtain a contradiction.
For all $k \geq k_1$, the assumption $n_k = 3$ implies that the trace map is of the form
\begin{align*}
x_{k+1} & = (x_k^2 - 1) y_k - 2 x_k,
\\y_{k+1} & = (x_k^2 - 1) x_k - 2 x_k.
\end{align*}
Again, we directly obtain \eqref{EQ:trace-non-summable} unless $x_k \to 0$, which we assume in the following. This yields
\[
\lim_{k \to \infty} y_k = \lim_{k \to \infty} \frac{x_{k+1} + 2 x_k}{x_k^2 - 1} = 0.
\]
Let $k_2 \geq k_1$ be such that $|x_k| < 1/2$ for all $k \geq k_2$. For each such $k$, iterating \eqref{EQ:difference-recursion} yields
\[
|x_{k+1} - y_{k+1}| = |x_{k_2} - y_{k_2}| \prod_{m = k_2}^{k} |x_m^2 - 1|.
\]
By $x_k,y_k \to 0$ we find that $\log|x_{k+1} - y_{k+1}| \to - \infty$ as $k \to \infty$ and thereby,
\[
\lim_{k \to \infty} \sum_{m = k_2}^k \log |1 - x_m^2| = - \infty. 
\]
It is straightforward to verify that $\log|1 - x_m^2| > - c x_m^2$ for some $c>0$, due to the requirement $|x_m| < 1/2$. Therefore,
\[
\sum_{m = k_2}^{\infty} |x_m|^2 \geq - \frac{1}{c} \sum_{m = k_2}^{\infty} \log |1 - x_m^2| = \infty
\]
and the contradiction follows.
\end{proof}

\begin{proof}[Proof of Theorem~\ref{THM:Toep-p-no-eigenvalues}]
This follows from Lemma~\ref{LEM:not-eventually-3-case} and Lemma~\ref{LEM:eventually-3-case}.
\end{proof}

\begin{proof}[Proof of Theorem~\ref{t:toeplitz:main}]
This follows from Theorem~\ref{THM:Toep-p-no-eigenvalues} by choosing a sampling function of the form $f(x,m) = g(x_n,m)$ where $g:\mc A \times \Z_p \to \R$ is of the form $g(b,m) = g_1(b) + g_2(m)$.
\end{proof}
\subsubsection{Almost-Sure Exclusion of Eigenvalues}
In this section, we use Gordon's criterion in order to exclude Schr\"odinger eigenvalues almost surely for systems that exhibit a sufficient degree of local repetitions. As always, we work with locally constant sampling functions $f$.

\begin{definition}
Let $(\mathbb{X},S,\mu)$ be an ergodic subshift. For $n \in \N$, let
\[
\mathbb{X}(n) = \{ x \in \mathbb{X} : x_{[-n-1]} = x_{[0,n-1]} = x_{[n,2n-1]} \}.
\]
We say that $(\mathbb{X},S,\mu)$ satisfies the \emph{Gordon condition} if $\limsup_{n \to \infty} \mu(\mathbb{X}(n)) > 0$.
\end{definition}

By general results, every subshift that satisfies the Gordon condition exhibits almost sure absence of eigenvalues for the associated Schr\"{o}dinger operators.

\begin{prop}
If $(\X,S,\mu)$ satisfies the Gordon condition, then for any locally constant sampling function $f_1:\X \to \R$, the operator $H_x = \Delta+V_x$ has purely continuous spectrum for $\mu$-a.e.\ $x \in \X$ {\rm(}with $V_x$ defined in \eqref{eq:intro:Vxdef}{\rm)}.
\end{prop}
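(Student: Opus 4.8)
The plan is to reduce the almost-sure statement to an application of Gordon's two-block criterion, exactly as in the proof of Lemma~\ref{LEM:centered-Gordon}, but now harvesting repetitions from the measure-theoretic Gordon condition rather than from a deterministic combinatorial structure. First I would fix a locally constant sampling function $f_1$ with window size $k$, so that $V_x(n) = g(x_{[n,n+k-1]})$ for a suitable $g$; passing to the $k$-block partition $\X^{[k]}$ via $\varphi_k$ as in \eqref{eq:boshper:varphikdef}, there is no loss in assuming $f_1$ depends only on the single entry $x_0$, i.e.\ $V_x(n) = g(x_n)$. The key observation is that if $x \in \X(n)$, then the potential $V_x$ has two consecutive blocks of length $n$ immediately to the left and right of the origin that agree: $V_x|_{[-n,-1]} = V_x|_{[0,n-1]} = V_x|_{[n,2n-1]}$, which is precisely the hypothesis needed to run Gordon's lemma at scale $n$.

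The second step is to recall the standard Gordon trace dichotomy. For $x$ with such a repetition at scale $n$, let $\mathcal{M}_n = A_E^n(x)$ denote the transfer matrix over one block of length $n$; the three-block agreement forces a commutation relation that, via Cayley--Hamilton, yields the bound
\begin{equation}
\max\set{\norm{\Psi(2n)}, \norm{\Psi(n)}, \norm{\Psi(-n)}} \geq \tfrac{1}{2}\norm{\Psi(0)}
\end{equation}
for any formal solution $\Psi$ of $H_x\psi = E\psi$ with $\Psi(m) = (\psi_m,\psi_{m-1})^\top$. This lower bound is \emph{uniform} in $E$ and requires no control on the trace, so it holds for every energy simultaneously. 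Consequently, whenever $x$ lies in $\X(n)$ for infinitely many $n$, no solution of the eigenvalue equation can decay at both $\pm\infty$, and hence $H_x$ has no eigenvalues at all, giving purely continuous spectrum for that $x$.

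The third step is the Borel--Cantelli argument that upgrades ``infinitely many good scales'' from a deterministic to an almost-sure statement. By the Gordon condition there is $c>0$ and a sequence $n_j \to \infty$ with $\mu(\X(n_j)) \geq c$ for all $j$. I would then invoke ergodicity: the set $\set{x : x \in \X(n_j) \text{ for infinitely many } j}$ is shift-invariant (up to the usual care, one works with the event that scales recur along orbit translates) and has positive measure, hence full measure by ergodicity. More carefully, one uses that $\mu(\X(n))$ does not decay to zero along $(n_j)$ together with the Poincar\'e recurrence / ergodic-theoretic fact that positive-measure events recur along almost every orbit, so that $\mu$-almost every $x$ visits $\bigcup_j \X(n_j)$ at infinitely many scales. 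For each such $x$, the second step applies and excludes all eigenvalues.

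The main obstacle I anticipate is the third step rather than the trace estimate: one must argue that the limsup condition $\limsup_n \mu(\X(n)) > 0$ truly delivers, for $\mu$-a.e.\ $x$, infinitely many scales $n$ at which the \emph{centered} three-block repetition holds at the origin. The events $\X(n)$ are not independent, so a naive Borel--Cantelli fails; the correct route is to translate ``recurrence at the origin'' into ``recurrence somewhere along the orbit'' and use invariance of $\mu$ together with ergodicity, noting that an eigenfunction in $\ell^2$ must decay near $\pm\infty$ regardless of where the repetitions are centered. Once the repetitions are located anywhere in the orbit of $x$ at arbitrarily large scales, the uniform Gordon bound obstructs decay and the conclusion follows; making this localization argument precise (and confirming measurability of the relevant limsup event) is the only genuinely delicate point.
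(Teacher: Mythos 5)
Your step 2 is correct: the three-block Gordon estimate
\[
\max\set{\norm{\Psi(2n)}, \norm{\Psi(n)}, \norm{\Psi(-n)}} \geq \tfrac12 \norm{\Psi(0)}
\]
is indeed uniform in $E$ (Cayley--Hamilton handles both trace regimes), and this is the same engine the paper uses. Your step 1 is harmless only modulo a boundary issue you gloss over: if $x \in \X(n)$ and the window size is $k>1$, then $V_x(m) = V_x(m+n)$ is guaranteed only for $-n \le m \le n-k$, not up to $m = n-1$, and correspondingly the recoded point $\varphi_k(x)$ need not lie in $\X^{[k]}(n)$, so the Gordon condition does not automatically transfer to the recoded subshift. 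The paper instead keeps the original system and modifies the potential near the block boundaries, which costs only a fixed multiplicative constant in the transfer matrices; your reduction can be repaired the same way, so this is minor.

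The genuine gap is your step 3, and your proposed repair fails. The set $\X_{\rm g} = \limsup_j \X(n_j)$ is \emph{not} shift-invariant --- the three-block condition is anchored at the origin --- so ergodicity gives only $\mu(\X_{\rm g}) \geq \limsup_j \mu(\X(n_j)) > 0$ (reverse Fatou), not full measure. Your fallback, relocating the repetitions ``somewhere along the orbit,'' destroys the argument rather than saving it: a repetition of scale $n_j$ centered at position $m_j$ yields only $\max\set{\norm{\Psi(m_j-n_j)}, \norm{\Psi(m_j+n_j)}, \norm{\Psi(m_j+2n_j)}} \geq \tfrac12\norm{\Psi(m_j)}$, and if $|m_j| \to \infty$ this is perfectly compatible with $\psi \in \ell^2$, because the right-hand side itself tends to zero. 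Decay of the eigenfunction at $\pm\infty$ works \emph{against} you here, not for you: Gordon bounds produce a contradiction only where the solution is bounded below, which is exactly why the centered condition matters. The missing idea --- and the paper's actual proof --- is to apply ergodicity to the \emph{spectral} conclusion rather than to the dynamical event: the set $\X_{\rm c}(f_1)$ of $x$ for which $H_x$ has purely continuous spectrum is shift-invariant, since $H_{Sx}$ is unitarily equivalent to $H_x$ via the shift on $\ell^2(\Z)$. Your step 2 gives $\X_{\rm g} \subseteq \X_{\rm c}(f_1)$, and $\mu(\X_{\rm g}) > 0$ by the Gordon condition, so $\mu(\X_{\rm c}(f_1)) = 1$ by ergodicity --- no Borel--Cantelli and no relocation of repetitions is needed.
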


\begin{proof}
Given a locally constant function $f_1$, denote by $\mathbb{X}_c(f_1)$ the set of $x \in \mathbb{X}$ for which $H_x$ has purely continuous spectrum, and write
\[ \mathbb{X}_{\rm g} := \limsup_{n\to\infty} \mathbb{X}(n)  = \bigcap_{n \geq 1} \bigcup_{k \geq n} \mathbb{X}(k).\]
If $\mathbb{X}$ satisfies the Gordon condition, one has $\mu(\mathbb{X}_{\rm g}) > 0$. One also has $\mathbb{X}_{\rm g} \subseteq \mathbb{X}_{\rm c}(f_1)$. This is an immediate consequence of the three-block version of the Gordon lemma when $f_1$ has window size one \cite{D00Gord, Gordon76}. When the window size of $f$ is larger than $1$, the potentials $V_x$ may not exactly satisfy the three-block Gordon condition. However, modifying the potentials in a neighborhood of the boundary of the three-block structure can repair this at the cost of a fixed multiplicative constant on the size of the transfer matrices. Since $\X_c(f)$ is shift-invariant, it follows that $\mathbb{X}_{\rm c}(f_1)$ has full $\mu$-measure.
\end{proof}

Recall that we denote by $p$ the period of the subshift $(\mathbb{X}',S')$. In the following, let $\rho$ be a fixed but arbitrary ergodic measure on $(\mathbb{X} \times \mathbb{X}',T)$.

\begin{prop}
\label{PROP:pn-limsup}
Suppose $(\mathbb{X},S,\mu)$ satisfies $\limsup_{n \to \infty} \mu (\mathbb{X}(pn)) > 0$.
Then, the operator $H_{\omega}$ has no eigenvalues for $\rho$-almost every $\omega \in \mathbb{X} \times \mathbb{X}'$.
\end{prop}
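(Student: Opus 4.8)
The plan is to reduce the claim to the preceding proposition by showing that the product subshift $(\mathbb{Y},T,\rho)$—where we identify $\mathbb{Y} = \mathbb{X}\times\mathbb{X}'$ with a subshift over $\mc A \times \mc A'$ through the conjugacy $\varphi$—itself satisfies the Gordon condition. Writing a generic point of $\mathbb{Y}$ as $\omega$ with letters $\omega_k = (x_k,x_k')$, the defining condition $\omega_{[-N,-1]} = \omega_{[0,N-1]} = \omega_{[N,2N-1]}$ for membership in the corresponding repetition set $\mathbb{Y}(N)$ of $(\mathbb{Y},T)$ decouples into the same three-block condition on each coordinate separately. The first-coordinate condition is precisely $x \in \mathbb{X}(N)$, whereas the second-coordinate condition is automatic as soon as $p \mid N$: the $p$-periodic sequence $x'$ is fixed by $S^{N}$, so all three blocks of $x'$ coincide. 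Hence, for every $n \in \N$,
\[
\mathbb{Y}(pn) = \pi_1^{-1}(\mathbb{X}(pn)) \cong \mathbb{X}(pn)\times\mathbb{X}'.
\]

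For the measure transfer I would use that $\pi_1 \circ T = S \circ \pi_1$, so that the pushforward $\pi_1\rho$ is an $S$-invariant probability measure; since $(\mathbb{X},S)$ is a Boshernitzan subshift and hence uniquely ergodic, this forces $\pi_1\rho = \mu$. Combining this with the identity above yields $\rho(\mathbb{Y}(pn)) = (\pi_1\rho)(\mathbb{X}(pn)) = \mu(\mathbb{X}(pn))$, and therefore
\[
\limsup_{N\to\infty}\rho(\mathbb{Y}(N)) \;\geq\; \limsup_{n\to\infty}\rho(\mathbb{Y}(pn)) \;=\; \limsup_{n\to\infty}\mu(\mathbb{X}(pn)) \;>\; 0,
\]
where the first inequality holds because $\{pn : n \in \N\}$ is a subsequence of $\N$ and the final inequality is the hypothesis. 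Thus $(\mathbb{Y},T,\rho)$ satisfies the Gordon condition.

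It then remains to invoke the preceding proposition with the ergodic subshift $(\mathbb{Y},T,\rho)$ and the locally constant sampling function $f$, which directly gives purely continuous spectrum—hence absence of eigenvalues—for $\rho$-almost every $\omega$. This last step also absorbs the sole technical wrinkle: when the window size of $f$ exceeds one, the product potentials $V_\omega$ may fail to match in the $O(1)$ positions adjacent to each block boundary, but, exactly as recorded in the proof of that proposition, correcting the potential there alters the transfer matrices by a bounded factor and leaves the three-block Gordon estimate unaffected. I expect the only real care to be needed in the first paragraph, namely in checking that the combined three-block condition decouples across coordinates and that the periodic factor poses no obstruction precisely because $p \mid pn$; this divisibility is exactly why the hypothesis is formulated with $\mathbb{X}(pn)$ in place of $\mathbb{X}(n)$.
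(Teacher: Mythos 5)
Your proof is correct, but it reaches the key identity $\rho(\mathbb{Y}(pn)) = \mu(\mathbb{X}(pn))$ by a genuinely different route than the paper. The paper first invokes the classification of ergodic measures on the product: using Fact~\ref{FACT:minimal-decomposition} and Proposition~\ref{PROP:minimal-ergodic} (which rests on the Boshernitzan condition), it writes $\rho = m\,\nu|_{\mathbb{Y}_j}$ for one of the $T$-minimal components $\mathbb{Y}_j = \bigcup_{k=0}^{p-1}T^k(\mathbb{X}_j\times\{x'\})$, expresses $\mathbb{Y}_j(pn)$ as $\bigcup_{k=0}^{p-1}\mathbb{X}_{j+k}(pn)\times\{(S')^kx'\}$, and sums over the components to recover $\mu(\mathbb{X}(pn))$. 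You bypass this classification entirely: since $\pi_1$ intertwines $T$ with $S$, the pushforward $\pi_1\rho$ is $S$-invariant, hence equals $\mu$ by unique ergodicity, and the decoupling $\mathbb{Y}(pn) = \pi_1^{-1}(\mathbb{X}(pn))$ (valid exactly because $p \mid pn$, so the periodic coordinate imposes no constraint) gives the identity in one line. Both arguments then conclude with the same application of the preceding Gordon-criterion proposition to the ergodic subshift $(\mathbb{Y},T,\rho)$, and your handling of window sizes larger than one is legitimately delegated to that proposition's proof, where it is already addressed. What your route buys is economy and slightly greater generality: it needs only unique ergodicity of $(\mathbb{X},S)$, not the decomposition into $S^p$-minimal components nor the Boshernitzan-based description of the ergodic measures on the product. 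What the paper's route buys is explicit structural information---it identifies exactly which measures $\rho$ can occur and on which minimal component each lives---which fits the theme of the surrounding section but is not needed for this particular conclusion.
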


\begin{proof}
Let $m = \Sf(p)$ and let $\X_1,\ldots,\X_m$ denote the $S^p$-minimal components as described in Fact~\ref{FACT:minimal-decomposition}. Fix $x' \in \mathbb{X}'$ and define $\nu = \mu \times \mu'$. Every ergodic measure $\rho$ on $(\mathbb{X} \times \mathbb{X}',T)$ is of the form $\rho = m \, \nu|_{\mathbb{Y}_j}$, where
\[
\mathbb{Y}_j = \bigcup_{k=0}^{p -1} T^k (\mathbb{X}_j \times \{x'\}),
\]
for $1 \leq j \leq m$, is one of the $T$-minimal components of $\mathbb{X} \times \mathbb{X}'$. Viewing $\mathbb{Y}_j$ as a subshift over the alphabet $\mc A \times \mc A'$, and using that $S(\mathbb{X}_s) = \mathbb{X}_{s+1}$ (with indices modulo $m$), we find
\[
\mathbb{Y}_j(pn) = \bigcup_{k=0}^{p-1} \mathbb{X}_{j+k}(pn) \times \{(S')^k x'\}
\]
because the $pn$-periodic block structure of $(S')^k x'$ is automatic. Note that $\mathbb{X}(pn) = \cup_{k=1}^{m} \mathbb{X}_k(pn)$ as a disjoint union because $\mathbb{X}$ can be partitioned into $\mathbb{X}_1,\ldots, \mathbb{X}_m$.
Hence, $\mu(\mathbb{X}(pn)) = \sum_{k=1}^m \mu (\mathbb{X}_k(pn))$.
Using $\mu'(\{(S')^k x'\}) = 1/p$, we obtain
\[
\rho(\mathbb{Y}_j(pn)) 
= m \sum_{k=0}^{p-1} \frac{1}{p} \mu(\mathbb{X}_{j+k}(pn)) 
= \frac{m}{p} \frac{p}{m} \sum_{k=1}^{m}  \mu(\mathbb{X}_{j+k}(pn))
=\mu(\mathbb{X}(pn)),
\]
which yields 
$
\limsup_{n \to \infty} \rho (\mathbb{Y}_j(pn)) =  \limsup_{n \to \infty} \mu(\mathbb{X}(pn)) > 0
$
and hence for $\rho$-almost every $\omega \in \mathbb{X} \times \mathbb{X}'$ the associated Schr\"{o}dinger operator $H_{\omega}$ has no eigenvalues by the Gordon criterion.
\end{proof}

One can show that the Gordon criterion holds for substitution subshifts whenever words generated by the substitution have suitable repetitions. To make this precise, we define the \emph{index} of a word $u \in \mc L(\mathbb{X})$ by
\[
\ind(u) = \sup \left\{ r\in \tfrac{1}{|u|}\Z_+ : u^r \in \mc L(\mathbb{X}) \right\}.
\]
Recall that rational powers of a word $u$ are defined as follows. For $r = n+\frac{\ell}{|u|}$ with $n \in \Z_+$, $0 \le \ell < |u|$, one defines
\[ u^r = u^{n}u_1\ldots u_\ell. \]

For substitution subshifts we obtain the following criterion, which is an adaptation of \cite[Thm.~3]{DL06b}.

\begin{prop} \label{prop:perbosh:substitutionAScontspec}
Let $(\mathbb{X},S)$ be a substitution subshift generated by a primitive substitution $\vartheta$. Assume that there exists a word $u \in \mc L(\mathbb{X})$ with $\ind(u) > 3$ and such that $p$ divides $|\vartheta^n(u)|$ for infinitely many $n$. Then $H_{\omega}$ has no eigenvalues for $\rho$-almost every $\omega \in \mathbb{X} \times \mathbb{X}'$.
\end{prop}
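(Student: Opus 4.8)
The plan is to reduce the entire statement to a verification of the hypothesis of Proposition~\ref{PROP:pn-limsup}. That proposition asserts that if $\limsup_{n\to\infty}\mu(\mathbb{X}(pn)) > 0$, then $H_\omega$ has no eigenvalues for $\rho$-almost every $\omega \in \mathbb{X}\times\mathbb{X}'$. Hence the whole burden is to exhibit an increasing sequence of block lengths, each divisible by $p$, along which the three-block sets $\mathbb{X}(\cdot)$ have measure bounded below by a fixed positive constant; everything else is then automatic.

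To produce such blocks, set $w_n = \vartheta^n(u)$ and $\ell_n = |w_n|$, so that $\ell_n \to \infty$ by primitivity. First I would record that the repetition structure of $u$ is inherited by its images. Writing $\ind(u) = r > 3$, the word $u^{\lfloor r\rfloor}$ followed by the appropriate prefix of $u$ is legal, and applying $\vartheta^n$ turns this into $w_n^{\lfloor r\rfloor}$ followed by a prefix of $w_n$. Using primitivity (Perron--Frobenius) to bound the ratio $|\vartheta^n(\mathrm{prefix})|/\ell_n$ from below, one obtains a uniform gap: there is $\delta > 0$ with $\ind(w_n) \ge 3 + \delta$ for all large $n$. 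In particular the word $W_n := w_n^{3}\,P_n$, where $P_n$ is the prefix of $w_n$ of length $\lfloor \delta \ell_n\rfloor$, is legal for all large $n$.

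Next I would bound $\mu(\mathbb{X}(\ell_n))$ from below, uniformly in $n$. The key geometric observation is that inside an occurrence of $W_n$ the sequence is $\ell_n$-periodic over a stretch of length $(3+\delta)\ell_n$; consequently, if $W_n$ occupies the coordinate window $[-\ell_n,\,2\ell_n+\lfloor\delta\ell_n\rfloor-1]$ of a point $x$, then for each $0 \le i \le \lfloor\delta\ell_n\rfloor$ the shift $S^i x$ still displays three consecutive equal blocks of length $\ell_n$ straddling the origin, i.e.\ $S^i x \in \mathbb{X}(\ell_n)$. Since primitive substitution subshifts are linearly recurrent, word frequencies satisfy $\mu([W_n]) \ge c/|W_n| \ge c'/\ell_n$ for constants independent of $n$; combining this with the $\asymp \delta\ell_n$ essentially disjoint centering offsets yields $\mu(\mathbb{X}(\ell_n)) \ge c''\delta > 0$, again uniformly in $n$. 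This is precisely the mechanism behind \cite[Thm.~3]{DL06b}, the only adaptation being that one must keep track of which block lengths $\ell_n$ actually occur.

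Finally I would bring in the divisibility hypothesis. By assumption $p \mid |\vartheta^{n}(u)| = \ell_n$ for infinitely many $n$; along this subsequence write $\ell_n = p\,m_n$ with $m_n \to \infty$. The uniform bound from the previous step gives $\mu(\mathbb{X}(p m_n)) = \mu(\mathbb{X}(\ell_n)) \ge c''\delta$, whence $\limsup_{m\to\infty}\mu(\mathbb{X}(pm)) \ge c''\delta > 0$, and Proposition~\ref{PROP:pn-limsup} finishes the argument. I expect the main obstacle to be exactly the uniform-in-$n$ lower bound on $\mu(\mathbb{X}(\ell_n))$: this requires both the uniform index gap $\delta$ (for which one must exploit the Perron--Frobenius structure of $\vartheta$ to compare prefix lengths under $\vartheta^n$) and the near-disjointness of the $\lfloor\delta\ell_n\rfloor$ centering offsets, the latter being where the linear recurrence of $(\mathbb{X},S)$---equivalently, the frequency estimate $\mu([W_n]) \asymp 1/\ell_n$---is indispensable.
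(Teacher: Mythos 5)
Your proposal is correct in outline and shares the paper's skeleton: both arguments reduce the statement to the hypothesis of Proposition~\ref{PROP:pn-limsup}, both extract three-block structures from $\vartheta^n$-images of a legal word of the form $u^3\cdot(\text{prefix of }u)$ supplied by $\ind(u)>3$, and both count the $\asymp\ell_n$ admissible centering offsets inside each such image. Where you genuinely diverge is in the quantitative lemma that converts occurrences of $W_n=w_n^3P_n$ into a lower bound on $\mu(\mathbb{X}(\ell_n))$ that is uniform in $n$. The paper counts visits of an orbit to the set $\vartheta^{n_k}([uuuu_0])$ (with $u_0$ the first letter of $u$), evaluates their frequency exactly via Queff\'elec's identity $\mu(\vartheta^{n_k}([uuuu_0]))=\lambda^{-n_k}\mu([uuuu_0])$ for aperiodic primitive substitutions, and then uses the Perron--Frobenius asymptotics $\lambda^{-n_k}|\vartheta^{n_k}(u_0)|\to L_{u_0}>0$; because these visits are aligned with the $\vartheta^{n_k}$-block structure, consecutive visits are automatically separated by at least $|\vartheta^{n_k}(u_0)|$, so the offset intervals are essentially disjoint for free. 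You instead count \emph{all} occurrences of the word $W_n$ and invoke linear recurrence of primitive substitution subshifts. Your route is more elementary and more general: it needs no substitution-measure theory and would apply verbatim to any linearly recurrent subshift admitting legal words $w_n^{3+\delta}$ with $p$ dividing $|w_n|$. The paper's route, in exchange, yields exact constants and sidesteps the disjointness issue below.

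One step in your sketch needs sharpening. The frequency estimate $\mu([W_n])\geq c/|W_n|$ alone does \emph{not} give the ``near-disjointness'' of the offset intervals: occurrences of $W_n$ could a priori clump (two occurrences at distance $\leq\delta\ell_n$ just extend the $\ell_n$-periodic stretch), and a union of intervals of length $\asymp\delta\ell_n$ anchored at a set whose only known property is density $\geq c/\ell_n$ can still have density tending to $0$ if the anchors cluster. Even a uniform bound on indices of legal words does not repair this. What does repair it is the bounded-gap (syndetic) form of linear recurrence: there is $K$ such that every window of length $K|W_n|$ contains an occurrence of $W_n$, so consecutive anchors are at distance $\leq K(3+\delta)\ell_n$, and the union of the offset intervals then has density at least $(\delta\ell_n+1)/\bigl(K(3+\delta)\ell_n+\delta\ell_n\bigr)$, a constant independent of $n$. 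Since this is exactly what linear recurrence provides, the issue is one of formulation rather than substance, but in a complete write-up you should invoke the bounded-gap property rather than the frequency lower bound at this point.
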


\begin{proof}
Since the claim is obvious if $(\mathbb{X},S)$ is periodic, we can assume that $\vartheta$ is aperiodic.
Let $(n_k)_{k \in \N}$ be an increasing sequence of natural numbers such that $p$ divides $|\vartheta^{n_k}(u)|$ for all $k \in \N$. By Proposition~\ref{PROP:pn-limsup} it suffices to show that $\limsup_{k \to \infty} \mu (\mathbb{X}(|\vartheta^{n_k}(u)|)) > 0$. 
%The proof of this statement can be found in \cite{D00}. We reproduce it here for the sake of completeness. 
Let $u_0$ be the first letter of $u$ and $k \in \N$. Since $uuuu_0 \in \mc L(\mathbb{X})$, the same holds for $\vartheta^{n_k}(u) \vartheta^{n_k}(u) \vartheta^{n_k}(u) \vartheta^{n_k}(u_0)$. This word contains precisely $|\vartheta^{n_k}(u_0)| +1$ blocks of the form $www$, where $|w| = |\vartheta^{n_k}(u)|$.
By the unique ergodicity of $(\mathbb{X},S)$, it suffices to bound the frequency of (non-overlapping) appearances of such a pattern in an arbitrary element $x \in \mathbb{X}$. A lower bound for this is given by the frequency of times that $(S^j x)_{j \in \Z}$ enters the set $\vartheta^{n_k}([uuuu_0])$. Denoting by $\lambda>1$ the Perron--Frobenius eigenvalue of the substitution matrix, this is given by
\[
\mu(\vartheta^{n_k}([uuuu_0])) = \frac{1}{\lambda^{n_k}} \mu([u u u u_0]),
\] 
which follows because $\vartheta$ is aperiodic and primitive \cite[Theorem~5.10 and Corollary~5.11]{queffelec2010}. Since each $j \in \Z$ such that $S^j x \in \vartheta^{n_k}([uuu u_0])$ contributes $|\vartheta^{n_k}(u_0)| + 1$ occurrences of a three block structure with the required length, we get
\[
\mu (\mathbb{X}(|\vartheta^{n_k}(u)|)) \geq |\vartheta^{n_k}(u_0)| \, \mu (\vartheta^{n_k}([uuuu_0])) \geq \frac{|\vartheta^{n_k}(u_0)|}{\lambda^{n_k}} \mu([uuuu_0]).
\]
By primitivity, $\lambda^{-n_k} |\vartheta^{n_k}(u_0)|$ converges to the corresponding entry $L_{u_0} > 0$ of the left Perron--Frobenius eigenvector $L$ as $n_k \to \infty$ and the assertion follows.
% and hence
%\[
%\mu (\mathbb{X}(|\vartheta^{n_k}(u)|)) \geq |\vartheta^{n_k}(u_0)| \, \mu ([\vartheta^{n_k}(uuuu_0)]).
%\]
%Let $\lambda >1$ be the PF eigenvalue of the substitution matrix. 
%By primitivity, $|\vartheta^{n_k}(u_0)| \geq C_1 \lambda^{n_k}$ and $|\vartheta^{n_k}(uuuu_0)| \leq C_2 \lambda^{n_k}$, for some $C_1, C_2 >0$. The linear recurrence of elements in $\mathbb{X}$ hence implies for some $C>0$,
%\[
%\mu (\mathbb{X}(|\vartheta^{n_k}(u)|)) \geq C_1 \lambda^{n_k} \frac{C}{|\vartheta^{n_k}(uuuu_0)|} \geq
%\frac{C_1}{C_2} C > 0.
%\]
%Performing the $\limsup_{k \to \infty}$ yields the assertion.
\end{proof}

\begin{example}
Assume that $\vartheta$ is a substitution of constant length $\ell$ and that $p$ is a divisor of $\ell^m$ for some $m \in \N$. If the index of the associated subshift $(\mathbb{X},S)$, given by $\ind(\mathbb{X}) = \sup \{ \ind(u) : u \in \mc L(\mathbb{X} \}$, is greater than $3$, Proposition~\ref{prop:perbosh:substitutionAScontspec} yields almost sure absence of Schr\"{o}dinger eigenvalues. This applies in particular to the case that $\vartheta$ is given by the period-doubling substitution $\vartheta \colon a \mapsto ab, \, b \mapsto aa$ and $p = 2^n$ for some $n \in \N$.
\end{example}

If the substitution matrix $M$ associated to $\vartheta$ (cf.\ Def.~\ref{def:boshper:submatdef}) is invertible over the integers, we get the following consequence of Proposition~\ref{PROP:pn-limsup}.

\begin{coro}
\label{COR:index-3}
Let $(\mathbb{X},S)$ be a substitution subshift for some primitive substitution $\vartheta$ with a substitution matrix $M$ that is invertible over $\Z$. If there exists a word $u \in \mc L(\mathbb{X})$ such that $\ind(u) > 3$ and $p$ divides $|u|$ then $H_{\omega}$ has no eigenvalues for $\rho$-almost every $\omega \in \mathbb{X} \times \mathbb{X}'$.
\end{coro}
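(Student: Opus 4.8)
The plan is to reduce Corollary~\ref{COR:index-3} to Proposition~\ref{prop:perbosh:substitutionAScontspec} by showing that the integer invertibility of the substitution matrix $M$, together with the divisibility $p \mid |u|$, forces $p \mid |\vartheta^n(u)|$ for infinitely many $n$. Once this is in hand, the conclusion is immediate from Proposition~\ref{prop:perbosh:substitutionAScontspec} (which in turn rests on the Gordon-type estimate in Proposition~\ref{PROP:pn-limsup}), since the remaining hypothesis $\ind(u) > 3$ is assumed and $\vartheta$ is primitive.

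First I would record the abelianization identity. Let $d = \#\mc A$ and, for a word $w$, write $\ell(w) \in \Z^d$ for its abelianization, $\ell(w)_a = |w|_a$. By Definition~\ref{def:boshper:submatdef}, one has $\ell(\vartheta(w)) = M \ell(w)$, and hence $\ell(\vartheta^n(u)) = M^n \ell(u)$ for all $n$. Writing $\mathbf{1}$ for the all-ones vector, the length is recovered as $|\vartheta^n(u)| = \mathbf{1}^\top M^n \ell(u)$. All quantities here are integers, so the identity may be read modulo $p$, and the reductions commute with the length functional $\mathbf{1}^\top(\cdot)$ for this reason.

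The key step is the observation that the reduction $\bar M$ of $M$ modulo $p$ is invertible: since $M$ is invertible over $\Z$ we have $\det M = \pm 1$, which is a unit in $\Z/p\Z$, so $\bar M \in \mathrm{GL}(d,\Z/p\Z)$. As $\mathrm{GL}(d,\Z/p\Z)$ is a finite group, $\bar M$ has finite multiplicative order $\tau \geq 1$; that is, $M^{\tau} \equiv I \pmod p$, and consequently $M^{k\tau} \equiv I \pmod p$ for every $k \in \N$. Reducing the abelianization identity modulo $p$ along the subsequence $n_k = k\tau$ gives $\ell(\vartheta^{k\tau}(u)) \equiv \ell(u) \pmod p$ entrywise, whence $|\vartheta^{k\tau}(u)| \equiv |u| \equiv 0 \pmod p$ using $p \mid |u|$. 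Thus $p$ divides $|\vartheta^{n}(u)|$ for the infinitely many $n$ of the form $k\tau$, and Proposition~\ref{prop:perbosh:substitutionAScontspec} applies to yield absence of eigenvalues for $\rho$-almost every $\omega \in \mathbb{X}\times \mathbb{X}'$.

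I do not anticipate a genuine obstacle here; the only content beyond bookkeeping is the elementary fact that an integer matrix of determinant $\pm 1$ has finite order modulo $p$, which is exactly what propagates the single divisibility $p \mid |u|$ to infinitely many iterates. The one point deserving a little care is confirming that the mod-$p$ reductions interact correctly with both the iteration $\ell(\vartheta^n(u)) = M^n\ell(u)$ and the length functional, but this is immediate once all entries are integers.
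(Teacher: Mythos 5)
Your proof is correct and follows essentially the same route as the paper: both reduce to Proposition~\ref{prop:perbosh:substitutionAScontspec} via the abelianization identity $\Phi(\vartheta^n(u)) = M^n\Phi(u)$ and the fact that integer invertibility of $M$ makes $(M^n\Phi(u))_{n}$ genuinely periodic modulo $p$, so that $p \mid |u|$ propagates to $p \mid |\vartheta^n(u)|$ for infinitely many $n$. The only cosmetic difference is that you obtain periodicity from the finite order of $\bar{M}$ in $\mathrm{GL}(d,\Z/p\Z)$, whereas the paper argues via eventual periodicity (pigeonhole) upgraded to periodicity using the integrality of $M^{-1}$; these are the same fact in different packaging.
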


\begin{proof}
Let $\Phi(u)$ be the abelianization of $u$, that is $\Phi(u)_a = |u|_a$ for all $a \in \mc A$. The length of $u$ is then given by $|u| = \norm{\Phi(u)}_1$. By construction, $\Phi(\vartheta^n(u)) = M^n \Phi(u)$ and hence $|\vartheta^n(u)| = \norm{M^n \Phi(u)}_1$ for all $n \in \N$. Since $M$ has only integer entries, $M v$ coincides with $M v'$ modulo $p$ whenever the entries of $v$ and $v'$ coincide modulo $p$. This shows that $(M^n v)_{n \in \N_0}$ is eventually periodic modulo $p$ for every vector $v$. Since $M^{-1}$ also has integer entries by assumption, the sequence $(M^n v)_{n \in \N}$ is in fact periodic modulo $p$. Applying this to $v = \Phi(u)$ and taking the $1$-norm, we find that $(|\vartheta^n(u)|)_{n \in \N_0}$ is periodic modulo $p$. Since $|u| \equiv 0$ modulo $p$, the same holds for $|\vartheta^n(u)|$ for infinitely many $n \in \N$, and thus Proposition~\ref{prop:perbosh:substitutionAScontspec} yields the desired result.
\end{proof}

\begin{example}
Let $\vartheta$ be the Fibonacci substitution $\vartheta \colon a \mapsto ab, \, b \mapsto a$. The substitution matrix has determinant $-1$ and is hence invertible over $\Z$. The corresponding subshift $(\mathbb{X},S)$ is a Sturmian subshift. In the notation of Berstel \cite{Berst99} we have that $\vartheta^n(a) = s_n$, with directive sequence $(1,1,1,\ldots)$. By \cite[Prop.~4]{Berst99}, the index of $\vartheta^n(a)$ is larger than $3$ for large enough $n$. On the other hand, as detailed in the proof of Corollary~\ref{COR:index-3}, $(M^n \Phi(a))_{n \in \Z}$ is periodic modulo $p$ for all $p \in \N$. Since 
\[M^{-2} \Phi(a) = \begin{bmatrix}
-1 \\ 1
\end{bmatrix}, \]
 it follows that $|\vartheta^n(a)| = (1,1) M^n \Phi(a)$ is divisible by $p$ for a lattice of integers $n$. Hence $H_{\omega}$ has no eigenvalues for $\rho$-almost every $\omega \in \mathbb{X} \times \mathbb{X}'$.
\end{example}

\subsubsection{Sturmian Sequences}
\label{SUBSEC:Sturmian}

Let $\alpha \in (0,1)$ be an irrational number with continued fraction expansion
\[
\alpha = [a_1, a_2, a_3,\ldots ] := \frac{1}{a_1 + \frac{1}{a_2 +\frac{1}{a_3 + \ldots}}}.
\]
For $\theta \in [0,1)$, consider the Sturmian sequences $s(\alpha,\theta)$ and $s'(\alpha,\theta)$, defined by
\begin{align*}
s_n(\alpha,\theta) & = \chi_{[1-\alpha, 1)} (n \alpha + \theta \, \mathrm{mod} \, 1),
\\ s'_n(\alpha,\theta) & = \chi_{(1-\alpha, 1]} (n \alpha + \theta \, \mathrm{mod} \, 1),
\end{align*}
for $n \in \Z$, where $\chi_A$ denotes the characteristic function of the set $A$. The Sturmian subshift $(\mathbb{X}_{\alpha},S)$ of slope $\alpha$, with
\[
\mathbb{X}_{\alpha} =
\{ s(\alpha,\theta) : \theta \in [0,1) \}
\cup
\{ s'(\alpha,\theta) : \theta \in [0,1) \}
\]
is a strictly ergodic subshift of $(\{0,1\}^{\Z},S)$, satisfying the Boshernitzan condition \cite{DF2}. Let $\mu$ denote the unique ergodic measure.

It is worth mentioning that $\mathbb{X}_{\alpha} \setminus \{ s(\alpha,\theta) : \theta \in [0,1) \}$ is a countable set and has therefore $\mu$-measure $0$. For measure-theoretic purposes we therefore restrict our attention to the measurable subset $\mathbb{X}_{\alpha}' = \{ s(\alpha, \theta) : \theta \in [0,1) \}$ and identify $\mu$ with the restriction of $\mu$ to $\mathbb{X}_{\alpha}'$. The bijective map $f \colon [0,1) \to \mathbb{X}_{\alpha}', \, \theta \mapsto s(\alpha,\theta)$ gives an explicit parametrization and $\mu$ coincides with the pushforward of the Lebesgue measure on $[0,1)$ under this map.
In particular, every property that holds $\mu$-almost surely is fulfilled for Lebesgue almost every $\theta \in [0,1)$.

The restriction of $s(\alpha,\theta)$ to $\N$ coincides with
$
w = \lim_{n \to \infty} w_n,
$
where
\begin{equation}
\label{EQ:w_n}
w_{-1} = 1, \quad w_0 = 0, \quad w_1 = w_0^{a_1 -1} w_{-1},
\quad w_n = w_{n-1}^{a_n} w_{n-2} \mbox{ for } n \geq 2.
\end{equation}
It was shown in \cite{DL99} that for every $n \in \N$, every bi-infinite sequence $x \in \mathbb{X}_{\alpha}$ has a unique partition into words words of the form $w_{n-1}$ and $w_n$. More precisely,
\begin{equation}
\label{EQ:n-decomposition}
x = \ldots w_{n-1} w_n^{\ell_{-1}} w_{n-1} w_n^{\ell_0} w_{n-1} w_n^{\ell_1} w_{n-1} \ldots,
\end{equation}
where the origin is somewhere in the block $w_{n-1} w_n^{\ell_0}$ and $\ell_i \in \{a_{n+1}, a_{n+1} + 1\}$ for all $i \in \Z$.
%Hence, for $1 \leq k \leq a_{n+1}$, two occurences of $w_n^{k}$ within $x$ are separated by at most $|w_n^k| + |w_{n-1}| \leq 2 |w_n^k|$, which implies
%\[
%\mu([w_n^{k}]) \geq \frac{1}{2 |w_n^{k}|},
%\]
%for all $1 \leq k \leq a_{n+1}$.

\begin{prop}
\label{PROP:unbounded-continued-fraction}
Assume that $\mathbb{X}'$ has period $p$, that $\alpha = [a_1, a_2, a_3, \ldots]$ satisfies 
\begin{equation}
\limsup_{n \to \infty} a_n \geq 4p,
\end{equation}
and that $\rho$ is a $T$-ergodic measure on $\X\times \X'$. Then, $H_{\omega}$ has no eigenvalues for $\rho$-almost every $\omega \in \mathbb{X} \times \mathbb{X}'$.
\end{prop}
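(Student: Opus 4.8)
The plan is to reduce the statement to a single frequency estimate and then invoke Proposition~\ref{PROP:pn-limsup}. Concretely, that proposition tells us that if the unique ergodic measure $\mu$ on the Sturmian subshift $(\mathbb{X}_\alpha,S)$ satisfies $\limsup_{n\to\infty}\mu(\mathbb{X}(pn)) > 0$, then $H_\omega$ has no eigenvalues for $\rho$-almost every $\omega\in\mathbb{X}\times\mathbb{X}'$, for any $T$-ergodic $\rho$. So the entire task is to verify this $\limsup$ condition. Since $p\in\N$ and the partial quotients $a_n$ are integers, the hypothesis $\limsup a_n\ge 4p$ yields infinitely many indices $n$ with $a_{n+1}\ge 4p$. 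For each such $n$ I would fix the scale $m = p\,|w_n|$, which is a multiple of $p$, and prove the uniform bound $\mu(\mathbb{X}(m))\ge 1/6$; letting $n\to\infty$ along this subsequence then delivers the required $\limsup$.

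The key structural input is the partition~\eqref{EQ:n-decomposition}: every $x\in\mathbb{X}_\alpha$ decomposes uniquely into blocks $w_{n-1}$ and $w_n$, organized into maximal runs $w_n^{\ell_i}$ with $\ell_i\in\{a_{n+1},a_{n+1}+1\}$, any two consecutive runs being separated by a single $w_{n-1}$. The central observation is that inside such a run the sequence is periodic with period $|w_n|$, and since $|w_n|$ divides $m=p|w_n|$, any window of length $3m$ lying entirely within a run is automatically of the form $uuu$ with $|u|=m$. In other words, whenever the block $x_{[j-m,\,j+2m-1]}$ sits inside a single run, we have $S^j x\in\mathbb{X}(m)$. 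This is exactly the point that dictates choosing the scale to be a common multiple of $p$ and $|w_n|$: it reconciles the $|w_n|$-periodicity coming from the Sturmian structure with the divisibility by $p$ demanded by Proposition~\ref{PROP:pn-limsup}.

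It then remains to bound the density of these ``good'' positions $j$ from below. Within a run $w_n^{\ell_i}$, the window $[j-m,j+2m-1]$ spans $3p$ copies of $w_n$ (with $p$ copies to the left of $j$ and $2p$ at or to its right), so the good positions form an interval of at least $(\ell_i-3p)|w_n|$ consecutive integers, while the corresponding unit $w_{n-1}w_n^{\ell_i}$ has length $|w_{n-1}|+\ell_i|w_n|$. Using $|w_{n-1}|\le|w_n|$, which follows directly from the recursion~\eqref{EQ:w_n}, together with unique ergodicity of the Sturmian subshift to identify $\mu(\mathbb{X}(m))$ with the asymptotic density of good positions, I would obtain
\[
\mu(\mathbb{X}(m)) \;\ge\; \frac{(a_{n+1}-3p)\,|w_n|}{|w_{n-1}|+(a_{n+1}+1)|w_n|} \;\ge\; \frac{a_{n+1}-3p}{a_{n+1}+2} \;\ge\; \frac{p}{4p+2} \;\ge\; \frac{1}{6},
\]
the last inequalities using $a_{n+1}\ge 4p$ and $p\ge 1$. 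This is precisely where the constant $4p$ is needed: it keeps the good-position interval a fixed positive fraction of the run, whereas a weaker bound such as $a_{n+1}\ge 3p$ would only guarantee room for a single three-block window and would let the density degenerate to $0$.

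The hard part, and the step requiring the most care, will be the second one: arranging the cube (three-block) structure at a scale divisible by $p$ while keeping a positive density of realizations. The naive strategy of pinning the origin to an isolated $w_n$-boundary produces only sparse good positions and a vanishing density; the resolution is the observation that inside a pure $w_n$-run \emph{every} window of length $3m$ is a genuine cube, so good positions fill whole intervals. The remaining work—turning the per-run interval count into an honest density via unique ergodicity and verifying the elementary length inequalities—is routine bookkeeping.
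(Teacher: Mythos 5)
Your proposal is correct and follows essentially the same route as the paper's proof: both reduce to verifying $\limsup_{n\to\infty}\mu(\mathbb{X}_{\alpha}(pn))>0$ via Proposition~\ref{PROP:pn-limsup}, pass to indices with $a_{n+1}\geq 4p$, and exploit the $|w_n|$-periodicity of the runs in the decomposition \eqref{EQ:n-decomposition} to produce three-block structures $uuu$ at the scale $|u| = p|w_n|$ with uniformly positive density. The only difference is bookkeeping: you count good positions per unit $w_{n-1}w_n^{\ell_i}$ and obtain the constant $1/6$, whereas the paper counts suitably separated occurrences of $w_{n_j}^{4p}$, each contributing $p|w_{n_j}|+1$ good positions, and obtains $1/8$; both bounds suffice.
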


\begin{proof}
By Proposition~\ref{PROP:pn-limsup}, it suffices to show that 
\begin{equation} \label{eq:PROP:unbounded-continued-fraction:goal}
\limsup_{n \to \infty} \mu(\mathbb{X}_{\alpha}(pn)) > 0.
\end{equation}Let $(n_j)_{j \in \N}$ be an increasing sequence of integers such that $n_j \to \infty$ as $j \to \infty$ and $a_{n_j +1} \geq 4p$ for all $j \in \N$. The assumption on $a_{n_j+1}$ implies that the word $w_{n_j}^{4p}$ is legal and contains blocks of the form $www$ with $|w| = |w^p_{n_j}|$ at precisely $|w^p_{n_j}| +1$ positions. Given $x \in \mathbb{X}_{\alpha}$, whenever two occurences of $w_{n_j}^{4p}$ are separated by at least $|w^p_{n_j}|$, all of these blocks $www$ indeed appear at different positions within $x$. Using the structure in \eqref{EQ:n-decomposition}, we can find an increasing sequence of positions $(q_i)_{i \in \Z}$ such that for all $i \in \Z$,
\begin{enumerate}
\item $x_{[q_i +1, q_i + m]} = w^{4p}_{n_j}$, where $m = |w^{4p}_{n_j}|$,
\item $|w_{n_j}^p| \leq q_{i+1} - q_i \leq 2 |w^{4p}_{n_j}|$. \end{enumerate}
Hence, the sequence $(q_i)_{i \in \Z}$ is relatively dense in $\Z$ with frequency at least $1/(8p|w_{n_j}|)$. The separation by $|w_{n_j}^p|$ ensures that each such occurence of $w_{n_j}^{4p}$ contributes $(|w_{n_j}^p| + 1)$ occurences of $3$-blocks $www$, satisfying $|w|= p |w_{n_j}|$ within $x$.
This yields
\[
\mu(\mathbb{X}_{\alpha}(p |w_{n_j}|) \geq \frac{1}{8p|w_{n_j}|} p |w_{n_j}| = \frac{1}{8}.
\]
Taking the $\limsup$ as $j \to \infty$ yields \eqref{eq:PROP:unbounded-continued-fraction:goal}.
\end{proof}

\begin{coro} \label{CORO:unbounded-continued-fraction}
Assume that $\X'$ has period $p$ and that $\alpha = [a_1, a_2, a_3, \ldots]$ has unbounded continued fraction expansion, that is, \begin{equation}
\limsup_{n \to \infty} a_n = \infty.
\end{equation}
 Then, $H_{\omega}$ has no eigenvalues for $\rho$-almost every $\omega \in \mathbb{X} \times \mathbb{X}'$.
\end{coro}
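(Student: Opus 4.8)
The plan is to deduce this immediately from Proposition~\ref{PROP:unbounded-continued-fraction}. The key observation is that the period $p$ of $\X'$ is a fixed finite natural number, so the hypothesis $\limsup_{n\to\infty} a_n = \infty$ is strictly stronger than the hypothesis $\limsup_{n\to\infty} a_n \geq 4p$ needed to invoke that proposition. First I would fix the period $p$ and note that $4p < \infty$; then the assumption $\limsup_{n\to\infty} a_n = \infty$ guarantees in particular that $a_n \geq 4p$ for infinitely many $n$, which is precisely the condition $\limsup_{n\to\infty} a_n \geq 4p$.

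With that threshold condition verified, the entire mechanism of Proposition~\ref{PROP:unbounded-continued-fraction} applies verbatim: one extracts a subsequence $(n_j)$ with $a_{n_j+1}\geq 4p$, uses the $w_{n}$-decomposition \eqref{EQ:n-decomposition} of Sturmian sequences to locate a relatively dense set of positions carrying three-block structures of length $p\lvert w_{n_j}\rvert$, and thereby obtains $\limsup_{n\to\infty}\mu(\mathbb{X}_\alpha(pn)) > 0$, so that the Gordon-type criterion of Proposition~\ref{PROP:pn-limsup} yields $\rho$-almost sure absence of eigenvalues. Since all of this is already carried out in the proof of Proposition~\ref{PROP:unbounded-continued-fraction}, there is nothing further to check.

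I do not anticipate any genuine obstacle here, as the corollary is a pure special case of the preceding proposition; the only thing to be careful about is recording that the implication goes in the correct direction, namely that an unbounded continued fraction expansion automatically satisfies the fixed finite threshold $4p$ for \emph{every} admissible period $p$. In short, the proof is a one-line reduction: apply Proposition~\ref{PROP:unbounded-continued-fraction} with the threshold $4p$, whose hypothesis is implied by $\limsup_{n\to\infty} a_n = \infty$.

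\begin{proof}
Since $p \in \N$ is fixed, the assumption $\limsup_{n\to\infty} a_n = \infty$ implies $\limsup_{n\to\infty} a_n \geq 4p$. The claim therefore follows from Proposition~\ref{PROP:unbounded-continued-fraction}.
\end{proof}
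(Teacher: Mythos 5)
Your proposal is correct and matches the paper, which states this corollary without a separate proof precisely because it is the immediate specialization you describe: for fixed $p \in \N$, $\limsup_{n\to\infty} a_n = \infty$ trivially implies $\limsup_{n\to\infty} a_n \geq 4p$, so Proposition~\ref{PROP:unbounded-continued-fraction} applies directly.
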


\begin{coro}
Let $x'$ be a periodic sequence and $t(\alpha,\theta) = s(\alpha,\theta) \times x'$. Then, $H_{t(\alpha,\theta)}$ has no eigenvalues for Lebesgue almost every $\alpha$ and almost every $\theta$.
\end{coro}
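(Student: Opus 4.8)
The plan is to reduce the statement to Corollary~\ref{CORO:unbounded-continued-fraction} by showing that (i) Lebesgue-almost every slope $\alpha$ has unbounded partial quotients, and (ii) for such $\alpha$ the $\rho$-almost-everywhere conclusion specializes to the one-parameter family $\theta \mapsto t(\alpha,\theta)$ whose second coordinate is pinned at $x'$. Throughout, let $p$ denote the period of $x'$, let $\X'$ be the $p$-periodic subshift generated by $x'$, and write $\Omega = \mathbb{X}_\alpha \times \X'$.

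First I would establish that $\limsup_{n\to\infty} a_n = \infty$ for Lebesgue-almost every $\alpha$. The cleanest route is ergodic-theoretic: the Gauss map is ergodic with respect to the Gauss measure, which is equivalent to Lebesgue measure on $(0,1)$, and the observable $\alpha \mapsto a_1$ is nonnegative but not integrable against the Gauss measure. By Birkhoff's ergodic theorem applied to this non-integrable nonnegative observable, the averages $\frac1n\sum_{k=1}^n a_k$ diverge to $+\infty$ for almost every $\alpha$; in particular $\limsup_n a_n = \infty$ almost surely. (Alternatively, one may invoke the Borel--Bernstein theorem.) Fix any such $\alpha$ for the remainder of the argument.

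Next I would pin down the relevant ergodic measure. Because $(\mathbb{X}_\alpha,S)$ is a Sturmian subshift it is totally ergodic, so $\Sf(p) = 1$. By Theorem~\ref{t:bosh+per:prodminchar} the product $(\Omega,T)$ is then minimal, and being conjugate to a Boshernitzan subshift it is uniquely ergodic, with unique $T$-ergodic measure $\rho = \mu \times \mu'$, where $\mu'$ assigns mass $1/p$ to each of the $p$ points in the orbit of $x'$. Since $\alpha$ has unbounded continued fraction expansion, Corollary~\ref{CORO:unbounded-continued-fraction} provides a set $N \subseteq \Omega$ of full $\rho$-measure on which $H_\omega$ has no eigenvalues, i.e.\ $\rho(\Omega\setminus N) = 0$.

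Finally I would specialize to the slice. Applying Fubini's theorem to $\rho = \mu \times \mu'$,
\[
0 = \rho(\Omega\setminus N) = \frac1p \sum_{k=0}^{p-1} \mu\big(\{x \in \mathbb{X}_\alpha : (x,(S')^k x') \in \Omega\setminus N\}\big),
\]
so every summand vanishes; taking the $k=0$ term shows that $H_{(x,x')}$ has no eigenvalues for $\mu$-almost every $x$. Since the parametrization $\theta \mapsto s(\alpha,\theta)$ pushes Lebesgue measure on $[0,1)$ forward to $\mu$ on the full-measure subset $\mathbb{X}_\alpha'$, this says precisely that $H_{t(\alpha,\theta)}$ has no eigenvalues for Lebesgue-almost every $\theta$. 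Combining this with the full-measure set of admissible $\alpha$ from the first step completes the argument. I do not anticipate a serious obstacle; the only point requiring care is the Fubini bookkeeping, where pinning the second coordinate at the single point $x'$ is legitimate precisely because $\mu'(\{x'\}) = 1/p > 0$, so that the slice $\mathbb{X}_\alpha' \times \{x'\}$ carries positive $\rho$-mass.
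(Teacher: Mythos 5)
Your proof is correct, and its skeleton --- almost every $\alpha$ has unbounded partial quotients, then Corollary~\ref{CORO:unbounded-continued-fraction}, then slicing the product measure to pin the second coordinate at $x'$ --- is the same as the paper's; the Fubini computation at the end is equivalent to the paper's three-line measure identity. The one step where you genuinely diverge is the passage from ``no eigenvalues $\rho$-a.e.\ for every $T$-ergodic $\rho$'' to the product measure $\nu = \mu \times \mu'$. The paper does this softly: by Choquet's theorem $\nu$ is a finite convex combination of $T$-ergodic measures, and since the conclusion holds for each ergodic component it holds $\nu$-a.e.; no structural information about the product system is required. You instead show that $\nu$ is itself the unique ergodic measure: Sturmian subshifts are totally ergodic (a fact the paper records, with citation, in its examples), hence $\MSf(p)=1$, hence $\Sf(p)=1$ via Proposition~\ref{PROP:minimal-ergodic} (using that Sturmian subshifts satisfy Boshernitzan), and Theorem~\ref{t:bosh+per:prodminchar} then makes $(\X_\alpha\times\X',T)$ minimal and conjugate to a Boshernitzan subshift, hence uniquely ergodic. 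Both routes are valid. Yours buys a stronger intermediate statement (the product is strictly ergodic, so the almost-sure set is with respect to the only invariant measure there is), at the price of invoking total ergodicity of Sturmian systems and the structural equivalences of Theorem~\ref{t:bosh+per:prodminchar}; the paper's Choquet argument is more robust, since it would go through unchanged even if the product had several ergodic components --- which is exactly the situation for general Boshernitzan subshifts elsewhere in the paper. Your replacement of the paper's citation of Khinchin by the Gauss-map/Borel--Bernstein argument for unboundedness of the partial quotients is also fine; it proves the same input fact.
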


\begin{proof}
Let $(\mathbb{X}',S')$ be the periodic subshift generated by $x'$. By \cite[Thm.~29]{K64}, Lebesgue almost every $\alpha \in (0,1)$ has an unbounded continued fraction expansion and hence Corollary~\ref{CORO:unbounded-continued-fraction} applies to $\mathbb{X}_{\alpha} \times \mathbb{X}'$ for almost every $\alpha$. Since it holds for every ergodic measure $\rho$ and $\nu = \mu \times \mu'$ is a finite linear combination of those by Choquet's theorem (see, e.g., \cite{Phelps2001ChoqBook}), it also follows that $H_{\omega}$ has no eigenvalues for $\nu$-almost every $\omega \in \mathbb{X}_{\alpha} \times \mathbb{X}'$. Hence,
\begin{align*}
0 &= \nu ( \{ t(\alpha,\theta) \in \mathbb{X}_{\alpha} \times \mathbb{X}' : H_{t(\alpha,\theta)} \mbox{ has eigenvalues} \} )
\\ &= \frac{1}{p} \mu(\{ s(\alpha,\theta) \in \mathbb{X}_{\alpha} :  H_{t(\alpha,\theta)} \mbox{ has eigenvalues}\})
\\ &= \frac{1}{p} \Leb (\{ \theta \in [0,1) : H_{t(\alpha,\theta)} \mbox{ has eigenvalues} \}).
\end{align*}
Thus, for almost every $\alpha$ and almost every $\theta$ it holds that $H_{t(\alpha, \theta)}$ has no eigenvalues.
\end{proof}

\subsubsection{Quasi-Sturmian Subshifts}
Suppose the subshift $(\mathbb{X},S)$ is \hbox{(quasi-)}Sturmian, that is, $\mathbb{X}$ is minimal and there exist $m,n_0 \in \N$ such that $\pf(n) = n+m$ for all $n \geq n_0$, where $\pf(n) = \# \mc L_n(\X)$ denotes the complexity function of $\mathbb{X}$. 
Every quasi-Sturmian subshift $(\mathbb{X},S)$ is a Boshernitzan subshift \cite[Cor.~1]{DL06c}. Compare \cite{Cass97} and \cite{DL03} for the following result.
%Recall that $x^{[k]}$ denotes the $k$-block partition of $x \in \mathbb{X}$.

\begin{prop}
A subshift $(\mathbb{X},S)$ is quasi-Sturmian if and only if there exists a Sturmian subshift $(\mathbb{X}_{\alpha},S)$ and an aperiodic substitution $\phi$ on $\{0,1\}$ such that every $x \in \mathbb{X}$ can be written as
\[
x = S^j \phi(y),
\]
for some $y \in \mathbb{X}_{\alpha}$ and $j \in \Z$.
\end{prop}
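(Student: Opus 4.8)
The plan is to prove both implications using, as the central bookkeeping device, the first difference of the complexity function together with the combinatorics of return words. Recall that for a minimal subshift the first difference $\pf(n+1)-\pf(n)$ equals $\sum_{w}(d^{+}(w)-1)$, the sum ranging over right-special words $w\in\mc L_n(\X)$, where $d^{+}(w)$ is the number of right extensions of $w$. Hence the hypothesis $\pf(n)=n+m$ for $n\ge n_0$ is equivalent to the statement that, for each such $n$, there is exactly one right-special factor $r_n\in\mc L_n(\X)$ and it admits exactly two right extensions. Since a proper suffix of a right-special word is again right-special, uniqueness forces the words $(r_n)_{n\ge n_0}$ to be nested, with $r_n$ a suffix of $r_{n+1}$; thus the right-special structure of a quasi-Sturmian subshift is exactly as rigid as that of a genuine Sturmian subshift, differing only through finitely many ``exceptional'' bispecial factors (words that are simultaneously left- and right-special) that account for the constant $m$.

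For the implication ($\Leftarrow$) I would start from a Sturmian $\X_\alpha$ and a nonerasing morphism $\phi$ on $\{0,1\}$ and show that the subshift generated by $\{S^j\phi(y):y\in\X_\alpha,\ j\in\Z\}$ has eventually affine complexity. The key point is that for $n$ larger than $2\max_a|\phi(a)|$, every right-special factor of $\phi(y)$ is, after deleting a bounded prefix, the $\phi$-image of a right-special factor of $y$; the finitely many ambiguities arising from distinct blocks $\phi(a)$ sharing a common prefix are of bounded length and therefore do not contribute at large $n$. Since $\X_\alpha$ has a unique right-special factor of each length, this forces a unique right-special factor of $\phi(y)$ at every sufficiently large length, whence $\pf(n+1)-\pf(n)=1$ and $\pf(n)=n+m$ for some $m$ and all large $n$. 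Minimality of the generated subshift follows from minimality of $\X_\alpha$ and the nonerasing property of $\phi$.

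The substantive direction is ($\Rightarrow$), which I would handle via return words. Using the nested sequence $(r_n)$ and the mirror-symmetric statement for left-special factors, one sees that only finitely many bispecial factors are non-ordinary; fix a factor $w\in\mc L(\X)$ longer than all of them and than $n_0$. The claim is that $w$ has exactly two first-return words, i.e. $\#\mc R_w=2$, say $\mc R_w=\{\rho_0,\rho_1\}$. Granting this, code a fixed $x\in\X$ by its bi-infinite decomposition into return words (available because $w$ recurs with bounded gaps, by minimality) to obtain a derived sequence $y\in\{0,1\}^{\Z}$. One then checks that $y$ lies in a Sturmian subshift $\X_\alpha$, its complexity being exactly $n+1$ because past $w$ the special-factor combinatorics of $\X$ and of $y$ are in length-preserving bijection, and that $\phi\colon 0\mapsto\rho_0,\ 1\mapsto\rho_1$ is aperiodic since $\rho_0,\rho_1$ are distinct return words to an aperiodic pattern. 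Finally $x=S^{j}\phi(y)$ for the offset $j$ locating the first complete return word, and since by minimality every element of $\X$ is shift-equivalent to such a $\phi(y)$, this yields the stated representation.

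The main obstacle is precisely the two-return-word claim together with the identification of the derived sequence as Sturmian: one must show that for $w$ chosen beyond the last exceptional bispecial factor the number of return words drops to exactly two, and that coding by these return words neither creates nor destroys complexity, so that $y$ has complexity $n+1$ rather than $n+m'$ for some residual $m'>0$. This is where the analysis of how bispecial factors contribute to the second difference $\pf(n+1)-2\pf(n)+\pf(n-1)$ must be carried out carefully, and it is also the step that pins down the aperiodicity of $\phi$, since a periodic $\phi$ would force $\X$ itself to be periodic, contradicting the aperiodicity of the Sturmian base.
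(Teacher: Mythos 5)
The paper does not actually prove this proposition: it is quoted from the literature, with pointers to Cassaigne's theorem on sequences of complexity $n+m$ \cite{Cass97} and to \cite{DL03}, so your attempt has to be measured against those proofs rather than against anything in the text. Your overall strategy (right-special factors for one direction, return words and derived sequences for the other) is a legitimate route, but as written it has two genuine gaps. In the direction starting from a Sturmian subshift and a morphism $\phi$, your sketch only ever uses that $\phi$ is nonerasing, and without aperiodicity the conclusion is false: if $\phi(0)$ and $\phi(1)$ are powers of a common word (equivalently $\phi(01)=\phi(10)$), then $\phi(y)$ is periodic for every $y$, the image subshift has bounded complexity, and there is no right-special factor at large lengths, so your claimed conclusion $\pf(n+1)-\pf(n)=1$ fails. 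Aperiodicity enters in two distinct places: it gives the lower bound $\pf(n+1)-\pf(n)\geq 1$ (via Morse--Hedlund, since the image must be aperiodic), and, more seriously, it is what justifies your ``bounded ambiguity'' step, i.e., that a long factor of $\phi(y)$ has an essentially unique parsing into blocks $\phi(a)$. That is a recognizability statement (of the type the paper invokes elsewhere via \cite{BSTY}), not a triviality about blocks sharing prefixes.

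The more fundamental gap is in the hard direction, which rests entirely on two claims you state but do not prove: that every sufficiently long factor $w$ has exactly two return words, and that the derived sequence obtained by coding along them is genuinely Sturmian, i.e., has complexity $n+1$ at \emph{every} length, not merely eventually. The second point is subtler than your phrase ``length-preserving bijection'' suggests: the return-word coding is not length-preserving, and since every complexity increment of an aperiodic sequence is at least $1$, a single length at which the derived sequence has two right-special factors already forces complexity $n+m'$ with $m'\geq 2$, i.e., only quasi-Sturmian again. So one must prove that the chosen $w$ absorbs all exceptional bispecial behavior of $\X$ --- and that is precisely the core of Cassaigne's theorem, not a technical afterthought. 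You candidly identify this as ``the main obstacle,'' but flagging the obstacle is not the same as overcoming it; as it stands, the proposal is an outline whose central step is missing.
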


In that situation, we call $(\alpha, \phi)$ a \emph{production pair} of $(\mathbb{X},S)$. 
Recall that we denote by $\rho$ an ergodic measure on $(\mathbb{X}\times \mathbb{X}',T)$.

\begin{prop}
Let $(\mathbb{X},S)$ be a quasi-Sturmian subshift with production pair $(\alpha,\phi)$. Assume that $(\mathbb{X}',S')$ has period $p$ and that $\alpha = [a_1,a_2,a_3,\ldots]$ satisfies $\limsup_{n \to \infty} a_n \geq 4p$. Then, for $\rho$-almost every $\omega \in \mathbb{X} \times \mathbb{X}'$, $H_\omega$ has no eigenvalues.
\end{prop}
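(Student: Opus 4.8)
The plan is to verify the hypothesis of Proposition~\ref{PROP:pn-limsup}, namely $\limsup_{n\to\infty}\mu(\mathbb{X}(pn))>0$, by transporting the square-word estimate behind Proposition~\ref{PROP:unbounded-continued-fraction} through the substitution $\phi$. First I would record the relevant structure: by the production-pair description, every $x\in\mathbb{X}$ equals $S^j\phi(y)$ for some $y\in\mathbb{X}_\alpha$ and $j\in\Z$, where $(\mathbb{X}_\alpha,S)$ is the Sturmian subshift of slope $\alpha$ with the words $w_n$ from \eqref{EQ:w_n}. Since a quasi-Sturmian subshift is Boshernitzan and therefore uniquely ergodic, $\mu(\mathbb{X}(N))$ is the orbit frequency of the event that position $0$ begins the middle block of a $www$-pattern with $|w|=N$; computing this frequency along the single orbit of $\phi(y)$ (a shift of which recovers any $x$) suffices, since frequencies are shift-invariant and independent of the chosen point.

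Next I would fix the scale. Using $\limsup_n a_n\ge 4p$, choose an increasing sequence $(n_j)$ with $a_{n_j+1}\ge 4p$, and set $W_j=\phi(w_{n_j})$ and $L_j=|W_j|$. Note $L_j\ge |w_{n_j}|\to\infty$, since each letter has nonempty image. The key point is that the level-$n_j$ decomposition \eqref{EQ:n-decomposition} writes $y$ as a concatenation of blocks $B_i=w_{n_j-1}w_{n_j}^{\ell_i}$ with $\ell_i\in\{a_{n_j+1},a_{n_j+1}+1\}$, so $\ell_i\ge 4p$; applying the morphism $\phi$ shows that $\phi(y)$ is the concatenation of the blocks $\phi(B_i)=\phi(w_{n_j-1})\,W_j^{\ell_i}$. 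Each segment $W_j^{\ell_i}$ is periodic with period $L_j$, hence with period $pL_j$, so it contains a $www$-pattern with $|w|=pL_j$ centered at every position for which the three consecutive length-$pL_j$ windows fit inside the segment; this yields $(\ell_i-3p)L_j+1$ such centers, all interior to the segment.

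I would then turn this into a uniform density bound. The block $\phi(B_i)$ has length $|\phi(w_{n_j-1})|+\ell_i L_j\le (c_++\ell_i)L_j$, where $c_+=\max\{|\phi(0)|,|\phi(1)|\}$ and we used $|\phi(w_{n_j-1})|\le c_+|w_{n_j-1}|< c_+|w_{n_j}|\le c_+L_j$. Since $\ell_i\ge 4p\ge 4$ gives $\ell_i-3p\ge \ell_i/4$, the proportion of $\phi(B_i)$ occupied by these centers is at least
\[
\frac{(\ell_i-3p)L_j}{(c_++\ell_i)L_j}\;\ge\;\frac{\ell_i/4}{c_++\ell_i}\;\ge\;\frac{1}{c_++4}.
\]
Because the blocks $\phi(B_i)$ tile $\phi(y)$ and the counted centers lie in the interiors of distinct segments $W_j^{\ell_i}$, the overall frequency of $\mathbb{X}(pL_j)$-centers along the orbit of $\phi(y)$ is at least $1/(c_++4)$, and by unique ergodicity $\mu(\mathbb{X}(pL_j))\ge 1/(c_++4)$. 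As $L_j\to\infty$ and each length $pL_j$ is a multiple of $p$, this gives $\limsup_{n\to\infty}\mu(\mathbb{X}(pn))\ge 1/(c_++4)>0$, and Proposition~\ref{PROP:pn-limsup} finishes the argument.

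The main obstacle is precisely the non-constant length of $\phi$: it distorts positions and word lengths, so the Sturmian frequency estimate cannot simply be carried over. The two observations that resolve this are that pushing a square word through $\phi$ again produces a square word, so the surviving $www$-patterns automatically have length $pL_j$ divisible by $p$ (exactly the arithmetic constraint required by Proposition~\ref{PROP:pn-limsup}), and that the proportion of each image block covered by these patterns admits a lower bound depending only on $c_+$ rather than on $j$. A minor point to check is that within a fixed level $n_j$ the exponents $\ell_i$ take only the two values $a_{n_j+1}$ and $a_{n_j+1}+1$, so the image blocks have comparable, bounded lengths and the tiling bookkeeping in the frequency computation is routine.
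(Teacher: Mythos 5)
Your proposal is correct and is essentially the paper's own (omitted) argument: the paper states that the proof is ``completely analogous'' to Proposition~\ref{PROP:unbounded-continued-fraction} because the structure used there is preserved under $\phi$, and what you have written is precisely that analogy carried out — push the level-$n_j$ decomposition \eqref{EQ:n-decomposition} through $\phi$, observe that the resulting segments $\phi(w_{n_j})^{\ell_i}$ with $\ell_i \ge 4p$ contain $www$-blocks of length $p\,|\phi(w_{n_j})|$ (automatically divisible by $p$), bound their frequency, and invoke Proposition~\ref{PROP:pn-limsup}. Your bookkeeping differs only cosmetically (a tile-by-tile density bound of $1/(c_++4)$ in place of the paper's relatively dense occurrences of $4p$-th powers yielding $1/8$), and your estimates $\ell_i - 3p \ge \ell_i/4$ and $(\ell_i/4)/(c_++\ell_i) \ge 1/(c_++4)$ are verified by the assumptions $\ell_i \ge 4p \ge 4$.
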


We omit the proof as it is completely analogous to the proof of Proposition~\ref{PROP:unbounded-continued-fraction}. This is because the structure used in the proof of Proposition~\ref{PROP:unbounded-continued-fraction} is preserved under the substitution $\phi$.

\begin{remark}
Let $(\alpha,\phi)$ be a production pair of the quasi-Sturmian subshift $(\mathbb{X},S)$ and let $p \in \N$ be the period of $(\mathbb{X}',S')$. In the special case that $p$ divides both $|\phi(0)|$ and $|\phi(1)|$, the system $(\mathbb{X}\times \mathbb{X}',T)$ decomposes into $p$ minimal components. This follows from the fact that every aperiodic substitution acting on a subshift over a binary alphabet is recognizable \cite[Thm.~3.1]{BSTY}. Given $x' \in \mathbb{X}'$ and $u \in \mc L_n$, with $n$ large enough, each of the sets $[u] \times \{(S')^j x'\}$, for $1\leq j \leq p$, is contained in precisely one of the $T$-minimal components. Hence, each of the $T$-minimal components has a complexity function that eventually coincides with the complexity function of $\mathbb{X}$, and thereby comprises a quasi-Sturmian subshift for which uniform absence of Schr\"odinger eigenvalues and fractional Hausdorff continuity of the spectrum is known \cite{DL03}.
\end{remark}

\subsection{Periodic and Bernoulli} \label{subsec:per+bern}

The discussion presented in this subsection is motivated by the following problem. In the study of the Anderson model (i.e., the potential is given by i.i.d.\ random variables), a fundamental result states that the spectrum of the random operator is almost surely equal to an explicit set. This set is given by the Minkowski sum of the spectrum of the Laplacian and the topological support of the single-site distribution. Thus, we have the pleasant feature that the almost sure spectrum of the sum of the Laplacian and a random potential is given by the sum of the spectrum of the Laplacian and the almost sure spectrum of the random potential. As we are generally interested in this paper in retaining crucial spectral features after the addition of a periodic background potential, the specific question we are facing here is whether there is an explicit description\footnote{By ``explicit'' we mean effective. For example, for any given $E$, can we decide in finite time whether it belongs to the almost sure spectrum? Also, can we answer questions of the following type: does the almost sure spectrum have only finitely many gaps?} of the almost sure spectrum of a Schr\"odinger operator whose potential is given by a sum of a periodic term and a term of Anderson type. This question appears to be open and surprisingly difficult for periods greater than one. What we accomplish in this section is to provide an answer in the case of period two (and finitely supported single-site distribution) and to explain why the natural extension of the period-two result fails in the case of period three.  In fact, the relevant question is purely topological in nature, that is, the almost-sure spectrum only depends on the support of the single-site measure, not on the particular choice of probabilities.

In the following we consider the product space $(\mathbb{X} \times \Z_p, T)$, where $\mathbb{X} = \mc A^\Z$ is the full shift on $m \in \N$ symbols and $T \colon (x,j) \mapsto (Sx, j + 1)$, with addition modulo $p$ in the second coordinate. In this case, $(\mathbb{X} \times \Z_p,T)$ is (topologically conjugate to) a subshift of finite type. We equip the full shift with the Bernoulli measure $\mu = \mu_0^\Z$ for a measure $\mu_0$ on $\mc A$ which can be chosen to satisfy $\mu_0(\{a\})>0$ for all $a \in \mc A$ without loss of generality. One can specify a family of random potentials by fixing a locally constant sampling function $f$ of window size one. Up to shifting, we may assume  that $f$ is of the form 
\begin{equation} \label{eq:randper:sfwindowlength1}
f(x,j) = g(x_0,j)
\end{equation}
for some $g:\mc A \times \Z_p \to \R$.

In what follows, we further make the simplifying assumption that $p = 2$, such that $\Z_p = \Z_2$ is two-periodic. 
%That is, we let $\mc  A' = \{a,b\}$ and $\mathbb{X}' = \{ (ab)^{\Z}, (ba)^{\Z} \}$.
%For now, assume that there are values $V_0 < V_1$ and $V_a' < V_b'$ in $\R$ such that $V(i,c) =V_i + V_c'$ for $(i,c) \in \mc A \times \mc A'$.
In this case it turns out that it suffices to consider the $m^2$ two-periodic sequences
\[
\omega_{ab} = \left( (ab)^{\Z},0 \right)
\]
for $a,b \in \mc A$. 
%\begin{itemize}
%\item $\omega_1 = (00)^{\Z} \times (ab)^{\Z}$
%\item $\omega_2 = (01)^{\Z} \times (ab)^{\Z}$
%\item $\omega_3 = (10)^{\Z} \times (ab)^{\Z}$
%\item $\omega_4 = (11)^{\Z} \times (ab)^{\Z}$
%\end{itemize}
%The spectra associated to these sequences are denoted by $\sigma_i$, where $\sigma_i = \sigma (H_{\omega_i})$ for $i \in \{1,2,3,4\}$.
The individual spectra can be determined using the trace of the corresponding monodromy matrix. 
%For example, $\sigma (H_{\omega_{11}}) = \{ E \in \R : \tr (M_E[b_1] M_E[a_1]) \in [-2,2] \}$. 
More precisely, let
\[
M_E(a,b) = M_E[(b,1)] M_E[(a,0)] = \begin{bmatrix}
(E - g(b,1))(E-g(a,0)) - 1 & g(b,1) - E
\\ E - g(a,0) & -1
\end{bmatrix},
\]
the trace of which is given by $P_{a,b}(E) := (E - g(b,1))(E-g(a,0)) - 2$.
The spectrum
\[
\sigma(H_{\omega_{ab}}) = \{ E \in \R : P_{a,b}(E) \in [-2,2] \}
\]
is given by the union of two intervals that can be calculated explicitly.

Denoting by $\mu'$ the normalized counting measure on $\Z_2$ it is straightforward to verify that $\nu = \mu \times \mu'$ is an $T$-ergodic measure. Indeed, this follows easily from the fact that $\mu$ is $S^2$-ergodic.

\begin{theorem}
\label{THM:2-periodic-spectrum}
%There exists an $\epsilon>0$ such that if $|V_0 - V_1| < \epsilon$, the following holds.
For $\nu$-almost every $\omega \in \mathbb{X} \times \Z_2$, the spectrum of the corresponding Schr\"{o}dinger operator is given by
\begin{equation} \label{eq:2-periodic-spectrum}
\sigma(H_{\omega}) = \bigcup_{a,b \in \mc A} \sigma(H_{\omega_{ab}}).
\end{equation}
Moreover, one has
\begin{equation} \label{eq:2-periodic-spectrumUniversalIncl}
\sigma(H_{\omega}) \subseteq  \bigcup_{a,b \in \mc A} \sigma(H_{\omega_{ab}}).
\end{equation}
for all $\omega \in \X \times \Z_2$.
\end{theorem}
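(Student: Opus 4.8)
The plan is to exploit the ergodicity of $\nu$ together with the period-two structure, which lets us pass to the induced two-step cocycle. By ergodicity there is a fixed set $\Sigma$ (the $\nu$-almost sure spectrum) with $\sigma(H_\omega)=\Sigma$ for $\nu$-a.e.\ $\omega$, so \eqref{eq:2-periodic-spectrum} will follow once we establish the two inclusions $\bigcup_{a,b}\sigma(H_{\omega_{ab}})\subseteq\Sigma$ and $\sigma(H_\omega)\subseteq\bigcup_{a,b}\sigma(H_{\omega_{ab}})$ for \emph{every} $\omega$. Note that the latter is precisely \eqref{eq:2-periodic-spectrumUniversalIncl} and, applied on the full-measure set where $\sigma(H_\omega)=\Sigma$, also yields $\Sigma\subseteq\bigcup_{a,b}\sigma(H_{\omega_{ab}})$. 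Throughout we use that $\omega_{ab}$ corresponds to the constant two-step sequence with block matrix $M_E(a,b)$, so that $E\in\sigma(H_{\omega_{ab}})$ if and only if $|P_{a,b}(E)|\le 2$.

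For the inclusion $\bigcup_{a,b}\sigma(H_{\omega_{ab}})\subseteq\Sigma$, I would argue by approximation. Since $\mu_0(\{a\})>0$ for every $a\in\mc A$, each point $\omega_{ab}$ lies in $\operatorname{supp}\nu$, so every neighborhood of $\omega_{ab}$ has positive $\nu$-measure. By the Birkhoff ergodic theorem, $\nu$-a.e.\ $\omega$ returns to each such neighborhood infinitely often; fixing a full-measure set on which additionally $\sigma(H_\omega)=\Sigma$ and diagonalizing over a countable neighborhood basis of each $\omega_{ab}$, we obtain times $n_k$ with $T^{n_k}\omega\to\omega_{ab}$. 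Because the shift acts as a unitary conjugacy, $\sigma(H_{T^{n_k}\omega})=\sigma(H_\omega)=\Sigma$, while $T^{n_k}\omega\to\omega_{ab}$ forces $V_{T^{n_k}\omega}\to V_{\omega_{ab}}$ pointwise and hence $H_{T^{n_k}\omega}\to H_{\omega_{ab}}$ in the strong resolvent sense. As no spectrum is lost under strong resolvent convergence, $\sigma(H_{\omega_{ab}})\subseteq\Sigma$, using that $\Sigma$ is closed.

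For the universal inclusion \eqref{eq:2-periodic-spectrumUniversalIncl}, I would prove the contrapositive via uniform hyperbolicity. Suppose $E\notin\bigcup_{a,b}\sigma(H_{\omega_{ab}})$, i.e.\ $|P_{a,b}(E)|>2$ for all $a,b$, so that every block $M_E(a,b)\in\SL(2,\R)$ is hyperbolic. Writing the single-step action on the projective line $\R\cup\{\infty\}$ as the M\"obius map $G_v\colon m\mapsto (E-v)-1/m$, the key structural point is that this family is orientation preserving and \emph{totally ordered} by the parameter $v$ (one has $\partial G_v/\partial v<0$). I expect hyperbolicity of all two-step compositions $G_{\beta_b}\circ G_{\alpha_a}$ (with $\alpha_a=g(a,0)$, $\beta_b=g(b,1)$) to force a pair of arcs $I_0,I_1\subseteq\R\cup\{\infty\}$ with $G_{\alpha_a}(I_0)\subseteq I_1$ and $G_{\beta_b}(I_1)\subseteq I_0$, strictly, for all $a,b$ simultaneously; equivalently, a continuous invariant cone field over $\mathbb{X}\times\Z_2$. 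Such a cone field gives uniform hyperbolicity of the two-step (hence of the alternating single-step) cocycle over the entire compact system, and the easy direction of Johnson's theorem (Section~\ref{subsec:cocycle}) then places $E$ in the resolvent set of $H_\omega$ for \emph{every} $\omega$, covering both fibers $\mathbb{X}\times\{0\}$ and $\mathbb{X}\times\{1\}$ at once since $\tr\big(A_E(\alpha_a)A_E(\beta_b)\big)=\tr\big(A_E(\beta_b)A_E(\alpha_a)\big)=P_{a,b}(E)$. Combining the three inclusions yields both displayed identities.

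The main obstacle is the construction of this common invariant cone field, i.e.\ promoting hyperbolicity of the individual blocks to uniform hyperbolicity of all of their products. This implication is false for general finite collections of hyperbolic $\SL(2,\R)$ matrices, so it must rely essentially on the monotone one-parameter structure of the maps $G_v$ and on the fact that the period-two discriminant $P_{a,b}(E)=(E-g(a,0))(E-g(b,1))-2$ is bilinear in the two sampled values. This is exactly the feature with no analogue once the period is three or larger: constant period-three words only control period-three traces and cannot detect the additional matrix products that govern uniform hyperbolicity, which is why the naive generalization fails (cf.\ the period-three example below).
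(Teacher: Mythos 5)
Your overall architecture is the same as the paper's: the almost-sure inclusion $\bigcup_{a,b}\sigma(H_{\omega_{ab}})\subseteq\sigma(H_\omega)$ via strong resolvent approximation along orbit returns (the paper calls this strong approximation plus density of the orbit), and the universal inclusion \eqref{eq:2-periodic-spectrumUniversalIncl} via the contrapositive: if $|P_{a,b}(E)|>2$ for all $a,b\in\mc A$, produce a common invariant cone, conclude uniform hyperbolicity of every cocycle built from the blocks $M_E(a,b)$, and invoke Johnson's theorem. The first half of your argument is complete and correct.

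The genuine gap is that the one step carrying all the content --- the existence of a single open interval $I$ in projective space that every $M_E(a,b)$ maps strictly into itself --- is only announced (``I expect hyperbolicity of all two-step compositions \ldots to force a pair of arcs''), not proved. As you yourself note, hyperbolicity of each member of a finite family of $\SL(2,\R)$ matrices does \emph{not} in general imply uniform hyperbolicity of all their products, so this is precisely where a real argument is required, and gesturing at ``the monotone one-parameter structure of $G_v$'' does not supply it. The paper fills this gap by brute-force linear algebra: writing $x=x_b=E-g(b,1)$, $y=y_a=E-g(a,0)$, the condition $E\in\varrho(H_{\omega_{ab}})$ is equivalent to $x_by_a\in\R\setminus[0,4]$, and the contracting/expanding directions are computed explicitly as
\[
v_1^{\pm}(a,b)=\frac{x}{2}\left(1\pm\sqrt{\frac{xy-4}{xy}}\right).
\]
A monotonicity analysis in $x$ shows $v_1^-$ lies strictly between $0$ and $2/y$; then, introducing the thresholds $y_+=\inf\{y_a>0\}$ and $y_-=\sup\{y_a<0\}$, the hypothesis that \emph{every} pair $(a,b)$ is hyperbolic forces $x_b<4/y_-$ or $x_b>4/y_+$ for all $b$, and a case check yields the dichotomy \eqref{EQ:v-dichotomy}: all contracting directions lie in $\bigl(2/y_-,2/y_+\bigr)$ while all expanding directions lie outside $\bigl[2/y_-,2/y_+\bigr]$. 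That interlacing is the common cone; uniform hyperbolicity of all products then follows from \cite[Thm.~2.2]{ABY}. Note that the mechanism is the joint constraint the hypothesis places on the $x$'s relative to the $y$'s --- not bilinearity of the trace per se --- and it is this constraint that has no analogue for period three. A small secondary point: your justification that both fibers are covered (equality of the traces of the two orderings) is not by itself sufficient; what one actually uses is that uniform exponential growth of products in one grouping implies it in the other since the two differ by uniformly bounded boundary factors (or, as in the paper, that the ABY criterion applies to \emph{all} cocycles built from the given matrix family).
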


\begin{remark}
As one can readily see from the proof below, the full-measure set on which \eqref{eq:2-periodic-spectrum} holds contains every $\omega \in \X \times \Z_2$ that has a dense $T$-orbit. In particular, the almost-sure spectrum of $H_\omega$ depends only on the support of the single-site distribution $\mu_0$.
\end{remark}

\begin{proof}[Proof of Theorem~\ref{THM:2-periodic-spectrum}]
The inclusion $\sigma(H_{\omega_{ab}}) \subseteq \sigma(H_\omega)$ for all $a,b \in \mc A$ and almost every $\omega$ follows from strong approximation and the fact that almost every $\omega$ has a dense orbit. It therefore suffices to prove \eqref{eq:2-periodic-spectrumUniversalIncl} for all $\omega$. We will show that
\[
\bigcap_{a,b \in \mc A} \varrho(H_{\omega_{ab}}) \subseteq \varrho(H_{\omega})
\]
for all $\omega \in \mathbb{X} \times \Z_2$. To that end, assume that $E \in \varrho(H_{\omega_{ab}})$ for all $a,b \in \mathcal{A}$. Then $M_E(a,b)$ is hyperbolic for all $a,b \in \mathcal{A}$. For a moment, let us fix $a,b \in \mc A$ and set $x = x_b = E - g(b,1)$ and $y = y_a = E - g(a,0)$. The expanding eigendirection of $M_E(a,b)$ is given by $v^+ = (v^+_1, 1)^\top$ and the contracting eigendirection by $v^- = (v^-_1, 1)^\top$ where
\[
v_1^{\pm} = v^{\pm}_1(a,b) = \frac{x}{2} \left( 1 \pm \sqrt{\frac{xy - 4}{xy}} \right) .
\]
The statement $E \in \varrho(H_{\omega_{ab}})$ is equivalent to $xy \in \R \setminus [0,4]$. For a moment let us fix $y \neq 0$.
A direct calculation yields that $v_1^-$ is monotonically decreasing in $x$ and that $\lim_{x \to \pm \infty} v_1^- = 1/y$. From the boundary cases $v_1^- = x/2 = 2/y$ for $xy = 4$ and $v_1^- = 0$ for $x = 0$, we infer that $v_1^-$ lies strictly between $0$ and $2/y$ for all $x$ with $xy \in \R \setminus [0,4]$.
Let
\begin{align*}
y_+ & = \inf \{ y_a : a \in \mc A, y_a > 0\}
\\y_- & = \sup \{ y_a : a \in \mc A, y_a < 0 \}
\end{align*}
be the smallest positive and the largest negative value of $y$, respectively where we adpot the conventions $\inf\emptyset = \infty$ $\sup\emptyset = -\infty$ to deal with cases in which one of the sets of $y$'s is empty. If $y_a = y_+$, then $E \in \varrho(H_{\omega_{ab}})$ implies that $x_b < 0$ or $x_b > 4/y_+$ for all $b \in \mc A$. Similarly we obtain that $x_b > 0$ or $x_b < 4/y_-$. Note that this is still valid for the degenerate cases $y_+ = \infty$ and $y_- = - \infty$. In summary, we have
\[
x_b < \frac{4}{y_-} \quad \mbox{ or } \quad
x_b > \frac{4}{y_+}
\]
for all $b \in \mc A$. For a moment, assume that $x_b,y_a > 0$. Then,
\[
v_1^+ > \frac{x_b}{2} > \frac{2}{y_+}
\quad
\mbox{and} \quad
0 < v_1^- < \frac{2}{y_a} \leq \frac{2}{y_+} .
\]
Exhausting all possible cases in a similar manner we obtain that
\begin{equation}\label{EQ:v-dichotomy}
v_1^-(a,b) \in \left( \frac{2}{y_-}, \frac{2}{y_+} \right) \quad
\mbox{and}
\quad v_1^+(a,b) \in \R \setminus \left[  \frac{2}{y_-}, \frac{2}{y_+} \right]
\end{equation}
for all $a,b \in \mc A$.
Identify in the following all vectors with their representatives on the real projective space $\mathbb{P}^1$. The dichotomy in \eqref{EQ:v-dichotomy} amounts to the observation that there exists an open interval $I \subseteq \mathbb{P}^1$ such that $M_E(a,b)$ acts as a contraction on $I$ for all $a,b \in \mc A$. By \cite[Thm.~2.2]{ABY} this implies that every cocycle that is built from the matrices $\{ M_E(a,b) \}$ is uniformly hyperbolic. Hence, $E \in \varrho(H_{\omega})$ for all $\omega \in \mathbb{X} \times \mathbb{X}'$ by Johnson's theorem \cite{DF1, Johnson1986JDE, ZhangZ2020JST}.
\end{proof}

\begin{proof}[Proof of Theorem~\ref{t:2per+rand}]
This follows immediately from Theorem~\ref{THM:2-periodic-spectrum} and a suitable choice of sampling function.
\end{proof}

\begin{remark}
At first, we might expect that we could get a similar result for larger periods as well. However, if $\Z_p = \Z_3$ is $3$-periodic, the situation is already different. For a concrete example, consider $\mathbb{X} = \{0,3\}^\Z$, 
let
\[
\omega_{abc} = ((abc)^{\Z},0),
\]
for $a,b,c \in \{0,3\}$, 
and let the corresponding sampling function be given by $g(a,j) = g_1(a) + g_2(j)$, with $g_1(a) = a$ for $a \in \{0,3\}$, and 
\[
g_2 \colon \begin{cases}
0 \mapsto 0,
\\1 \mapsto 2,
\\2 \mapsto 3.
\end{cases}
\]
%\[
%V(k,d) = 3k + \begin{cases}
%0 & \mbox{ if } d = a,
%\\2 & \mbox{ if } d = b,
%\\3 & \mbox{ if } d = c.
%\end{cases}
%\]
By explicit calculations we obtain that the spectrum of $H_{\omega}$ with the sequence $\omega = ((000333)^\Z,0 )$ contains the interval $[1.385,1.423]$ which is disjoint from $\sigma(H_{\omega_{abc}})$, for all $a,b,c \in \{0,3\}$. Hence,
\[
\bigcup_{a,b,c \in \mc A} \sigma(H_{\omega_{abc}}) \subsetneq \sigma(H_{\omega^\ast}),
\]
where $\omega^\ast$ is any point with a dense orbit in $\mathbb{X} \times \Z_3$. The natural analogue of Theorem~\ref{THM:2-periodic-spectrum} therefore fails in the $3$-periodic case.
\end{remark}

Let us return to the case of general periods. We consider again $f$ of the form \eqref{eq:randper:sfwindowlength1} where the periodic factor is given by $\Z_p$, with $p \in \N$.
As before, let $\nu = \mu \times \mu'$, with $\mu'$ the normalized counting measure on $\Z_p$.

The Lyapunov exponent is given by
\[L(E) = L(E;\nu) = \lim_{n\to\infty} \frac1n \int_\Omega \log\|A_E^n(\omega)\| \, d\nu(\omega).\]
One is naturally interested in proving positivity of $L$ on a rich set of energies as a starting point towards a proof of Anderson localization. Indeed, we will show this happens as soon as the function $g$ assumes different values.

\begin{theorem} \label{t:randper:posle}
If $g$ is nonconstant, then $L(E)>0$ for all but finitely many $E \in \R$.
\end{theorem}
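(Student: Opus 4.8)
The plan is to pass to the $p$-step transfer matrix cocycle, for which the fibre matrices become independent and identically distributed, and then to invoke Furstenberg's theorem on the positivity of the Lyapunov exponent. Grouping the i.i.d.\ coordinates $(x_{np},\ldots,x_{np+p-1})$ into blocks and noting that averaging over the phase in $\Z_p$ does not affect the exponent, one has $L(E)=\tfrac1p L_p(E)$, where $L_p(E)$ is the Lyapunov exponent of the i.i.d.\ products of the matrices $B_E(\mathbf a)=M_E(a_{p-1},p-1)\cdots M_E(a_0,0)\in\SL(2,\R)$, $\mathbf a=(a_0,\ldots,a_{p-1})\in\mc A^p$, each occurring with positive probability. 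Thus it suffices to prove $L_p(E)>0$ for all but finitely many $E$, for which I would verify the two hypotheses of Furstenberg's theorem for the closed subgroup $G_E\le\SL(2,\R)$ generated by $\{B_E(\mathbf a):\mathbf a\in\mc A^p\}$: that $G_E$ is non-compact and strongly irreducible (no finite union of lines in $\R^2$ is $G_E$-invariant).

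Since $g$ is nonconstant, fix a residue $j_0\in\Z_p$ with $g(\cdot,j_0)$ nonconstant on $\mc A$ and letters $a,a'\in\mc A$ with $\delta:=g(a,j_0)-g(a',j_0)\neq0$ (if instead $g$ depended only on its second argument the operator would be $p$-periodic and deterministic, and the genuinely random case is the one at issue). Fix any base block $\mathbf a^0$ with $a^0_{j_0}=a$, and let $\mathbf a'$ be the block obtained by replacing this single letter by $a'$. Writing $B_E(\mathbf a^0)=R\,M_E(a,j_0)\,L$, where $L$ and $R$ collect the transfer matrices strictly before and strictly after position $j_0$, a direct computation gives $M_E(a',j_0)M_E(a,j_0)^{-1}=\begin{bmatrix}1&\delta\\0&1\end{bmatrix}=:N$, and hence $B_E(\mathbf a')B_E(\mathbf a^0)^{-1}=RNR^{-1}$. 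This is a nontrivial parabolic element of $\SL(2,\R)$; its powers grow linearly in norm, so $G_E$ is non-compact for every $E$.

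For strong irreducibility I would use that the parabolic $P:=RNR^{-1}$ fixes a unique line $\ell_E=R[e_1]\in\mathbb{P}^1$ and sends every other line to an infinite orbit; consequently any $G_E$-invariant finite union of lines, being $P$-invariant, must equal $\{\ell_E\}$. It therefore suffices to exhibit a single element of $G_E$ that does not fix $\ell_E$. I would test $B_E(\mathbf a^0)$ itself: since $B_E(\mathbf a^0)\ell_E=R\,M_E(a,j_0)\,(LR)[e_1]$ and $M_E(c,j)v$ is proportional to $e_1$ exactly when the first component of $v$ vanishes, one finds that $B_E(\mathbf a^0)$ fixes $\ell_E$ if and only if $(LR)_{11}(E)=0$. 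Now $LR$ is a product of exactly $p-1$ Schr\"odinger transfer matrices, so its $(1,1)$ entry is a monic polynomial in $E$ of degree $p-1$; for $p\geq2$ this polynomial is not identically zero and has at most $p-1$ real roots (and for $p=1$ the entry equals $1$). Hence, for all but at most $p-1$ values of $E$, the line $\ell_E$ is not $G_E$-invariant, so $G_E$ is strongly irreducible, and Furstenberg's theorem yields $L_p(E)>0$, whence $L(E)=\tfrac1p L_p(E)>0$.

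The main obstacle is the verification of strong irreducibility, specifically ruling out the possibility that the parabolic's fixed line is preserved by the entire group for infinitely many energies; this is exactly what the nonvanishing of the monic polynomial $(LR)_{11}$ achieves, confining the exceptional set to the (at most $p-1$) roots of one explicit polynomial. By contrast, the reduction to the $p$-step cocycle and the non-compactness step are routine once the parabolic $RNR^{-1}$ is produced, so the only genuine input beyond Furstenberg's theorem is the elementary degree count for products of transfer matrices.
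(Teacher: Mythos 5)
Your proposal is correct, and it reaches the conclusion by a genuinely different route than the paper. Both arguments start the same way: regroup to the $p$-step i.i.d.\ cocycle over the block alphabet $\mc A^p$ and use $\hat L = pL$, and both need the same reading of the hypothesis that you flag parenthetically (namely, $g(\cdot,j_0)$ nonconstant on $\mc A$ for some residue $j_0$; otherwise the potential is deterministic and periodic and the statement fails). The divergence is in how positivity of the i.i.d.\ exponent is extracted. The paper checks a \emph{noncommutativity} condition: by Lemma~\ref{lem:poorMansBorgMarchenko}, two block transfer matrices have identically vanishing commutator in $E$ only if their potential words coincide, so nonconstancy of $g$ yields two blocks whose commutator is a not-identically-zero polynomial matrix; it then invokes the abstract Furstenberg criterion of \cite{BDFGVWZ2019JFA} as a black box to conclude $\hat L>0$ off a discrete set, and gets finiteness from the fact that $L>0$ away from the compact almost-sure spectrum. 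You instead verify Furstenberg's hypotheses directly: the single-letter swap produces the explicit parabolic $B_E(\mathbf a')B_E(\mathbf a^0)^{-1}=RNR^{-1}$ with $N=\left[\begin{smallmatrix}1&\delta\\0&1\end{smallmatrix}\right]$, giving noncompactness at \emph{every} energy, and strong irreducibility can only fail at zeros of the monic degree-$(p-1)$ polynomial $(LR)_{11}$, since the parabolic's unique fixed line $R[e_1]$ is the only candidate for a finite invariant family and $B_E(\mathbf a^0)$ moves it unless $(LR)_{11}(E)=0$. Your computations (the unipotent conjugate, the criterion $M_E(c,j)v\parallel e_1\iff v_1=0$, and the monic degree count for products of $p-1$ transfer matrices, including the $p=1$ edge case) are all correct. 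What each approach buys: the paper's proof is shorter given the cited machinery and runs entirely through a clean algebraic lemma; yours is self-contained modulo classical Furstenberg, yields the stronger structural statement (noncompactness everywhere, strong irreducibility off an explicit set), and bounds the exceptional set quantitatively by $p-1$ — the roots of one concrete polynomial — rather than deducing finiteness indirectly from discreteness plus compactness of the spectrum.
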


The following lemma is helpful. This does not need a random or ergodic setting. For $a \in \R$ and $w = a_1\ldots a_n$, define $M_E$ and $M_E$ by \eqref{eq:boshper:MEadef} and \eqref{eq:boshper:MEwdef} with $g(a) =a$.

\begin{lemma} \label{lem:poorMansBorgMarchenko}
Let $u$ and $v$ denote words. If $M_z(u) = M_z(v)$ for all $z \in \C$, then $u=v$. That is $|u|=|v|=:\ell$ and $u_j = v_j$ for all $1\le j \le \ell$.
\end{lemma}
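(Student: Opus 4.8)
The plan is to show that the \emph{first column} of $M_z(w)$ already determines the word $w$; since the hypothesis gives equality of the full matrices (in particular of their first columns), this is more than enough. Write $w = a_1\cdots a_n$ and set $T_k(z) = M_z(a_k\cdots a_1)$, the transfer matrix over the first $k$ letters, so that $T_k = M_z(a_k)\,T_{k-1}$ with $M_z(a) = \bigl[\begin{smallmatrix} z-a & -1 \\ 1 & 0\end{smallmatrix}\bigr]$. A one-line computation from this relation shows that the first column of $T_k$ is $(P_k(z),P_{k-1}(z))^\top$, where the polynomials $P_k$ obey the three-term recursion $P_{-1}=0$, $P_0 = 1$, and $P_k = (z-a_k)P_{k-1} - P_{k-2}$. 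An immediate induction then shows that each $P_k$ is monic of degree exactly $k$.

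First I would read off the common length. The $(1,1)$ entry of $M_z(w) = T_n$ is the monic degree-$|w|$ polynomial $P_{|w|}$, so equality of the $(1,1)$ entries of $M_z(u)$ and $M_z(v)$ forces $|u| = |v| =: \ell$, because equal polynomials have equal degree.

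The core step is a reconstruction argument via Euclidean division. Given the first column $(P_k,P_{k-1})$ of a transfer matrix, I would divide $P_k$ by $P_{k-1}$ in $\C[z]$: since $(z-a_k)P_{k-1}$ is monic of degree $k$ while $\deg P_{k-2} = k-2 < k-1$, the identity $P_k = (z-a_k)P_{k-1} - P_{k-2}$ \emph{is} the division of $P_k$ by $P_{k-1}$, with quotient $z-a_k$ and remainder $-P_{k-2}$. By uniqueness of quotient and remainder, the pair $(P_k,P_{k-1})$ determines both the letter $a_k$ (as minus the constant term of the quotient) and the next pair $(P_{k-1},P_{k-2})$. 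Iterating down to $(P_1,P_0) = (z-a_1,\,1)$ recovers $a_\ell,a_{\ell-1},\dots,a_1$, i.e.\ the entire word. Applying this to $u$ and $v$, whose transfer matrices share the same first column, yields $u=v$.

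The only point requiring care is the degree bookkeeping that makes the three-term recursion coincide exactly with Euclidean division, so that the quotient is precisely $z-a_k$ with nothing leaking into the remainder; once the ``monic of degree $k$'' claim for $P_k$ is established, uniqueness of polynomial division does all the work. I expect the argument to be entirely elementary and algebraic, with no analytic input beyond the fact that a polynomial vanishing on all of $\C$ is the zero polynomial.
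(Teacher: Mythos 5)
Your proof is correct. It rests on the same algebraic skeleton as the paper's --- the three-term recursion $P_k = (z-a_k)P_{k-1} - P_{k-2}$ satisfied by the transfer-matrix entries, plus degree bookkeeping --- but you organize it genuinely differently. The paper argues by induction on the length $\ell$: it compares the $z^{\ell-1}$ coefficients of the $(1,1)$ entries (using that $[M_z(u)]_{21} = [M_z(u_1\cdots u_{\ell-1})]_{11}$ is monic of degree $\ell-1$) to conclude $u_\ell = v_\ell$, then strips the last letter by multiplying by $M_z(u_\ell)^{-1}$ and invokes the inductive hypothesis on the full matrices, with separate base cases $\ell = 1,2$. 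You instead show that the first column $(P_\ell, P_{\ell-1})$ alone determines the word, by observing that the recursion \emph{is} the Euclidean division of $P_\ell$ by $P_{\ell-1}$ and appealing to uniqueness of quotient and remainder; iterating the division recovers all the letters with no induction on matrices and no base-case analysis. The two mechanisms are the same step in disguise --- comparing leading coefficients is how one computes the linear quotient, and multiplying the first column by $M_z(a_k)^{-1}$ is exactly your passage from $(P_k,P_{k-1})$ to $(P_{k-1},P_{k-2})$ --- but your packaging buys a slightly stronger and cleaner statement: injectivity of the map from words to first columns, a discrete Borg--Marchenko-type result that fits the lemma's label, whereas the paper's induction consumes the full matrix equality. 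One cosmetic remark: under the paper's convention $M_z(a_1\cdots a_n) = M_z(a_n)\cdots M_z(a_1)$, your $T_k$ should be written $M_z(a_1\cdots a_k)$ rather than $M_z(a_k\cdots a_1)$; since the recursion $T_k = M_z(a_k)T_{k-1}$ you actually use is the correct one, this is purely notational and not a gap.
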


\begin{proof}
If $M_z(u) \equiv M_z(v)$, it is clear that $|u| = |v|$ by degree considerations. We now proceed by induction on $\ell = |u|=|v|$. The claim is trivial when $\ell=1$ and follows from the observation
\[ \begin{bmatrix}
* & u_2 -z \\
z-u_1 & * \end{bmatrix}
=
M_z(u_1u_2) =
M_z(v_1v_2) = \begin{bmatrix}
* & v_2 -z \\
z-v_1 & * \end{bmatrix}
 \]
when $\ell = 2$. Now assume $\ell \geq 3$ and $M_z(u) \equiv M_z(v)$. Write $u'=u_1\ldots u_{\ell-1}$. Notice that for any $w$, the degree of $[M_z(w)]_{21}$ is $|w|-1$. Thus, we have the following (making use of $[M_z(u')]_{11} = [M_z(u)]_{21}$.
\begin{align*}
[M_z(u)]_{11} 
 = (z-u_\ell)[M_z(u')]_{11} -[M_z(u')]_{21} 
 = (z-u_\ell)[M_z(u)]_{21}  + O(z^{\ell-2}).
\end{align*}
Perform the analogous calculation for $[M_z(v)]_{11}$, use $[M_z(v)]_{21} \equiv [M_z(u)]_{21}$, and compare the $z^{\ell-1}$ terms to see that $u_{\ell} =v_{\ell}$. The result follows by induction.\end{proof}

\begin{proof}[Proof of Theorem~\ref{t:randper:posle}]
Since $L$ is positive away from the almost-sure spectrum, it suffices to show that $L$ can only vanish on a discrete set. Consider the regrouped alphabet $\hat{\mc A} = {\mc A}^p$ and $\hat\mu_0 = \mu_0^p$. For $a = (a_1,\ldots,a_p) \in \hat{\mc A}$ and $E \in \C$, define 
\begin{align*}
 \hat{A}_E(a)
 & = M_E(a_p,p) \cdots M_E(a_1,1) \\
& = \begin{bmatrix} E - g(a_p,p) & -1 \\ 1 & 0 \end{bmatrix}  \begin{bmatrix} E - g(a_{p-1}, p-1) & -1 \\ 1 & 0 \end{bmatrix}  \cdots \begin{bmatrix} E - g(a_1,1) & -1 \\ 1 & 0 \end{bmatrix} ,
\end{align*}
and consider the induced cocycle $(T, \hat{A}_E)$ on $\hat{A}^\Z$ with ergodic measure $\hat\mu = \hat\mu_0^\Z$ and its associated Lyapunov exponent
\begin{equation}
\hat{L}(E) = \lim_{n\to\infty} \frac1n \int_{\hat{\mc A}^\Z} \log\| \hat{A}_E^n(\hat\omega) \| \, d\hat\mu(\hat\omega).
\end{equation} 
For $a,b \in \hat{\mc A}$, Lemma~\ref{lem:poorMansBorgMarchenko} implies that the commutator $[M_E(a),M_E(b)]$ vanishes identically in $E \in \C$ if and only if $(g(a_1,1),\cdots g(a_p,p)) = (g(b_1,1),\ldots,g(b_p,p))$. Thus, by the assumption on $g$, there exist $a,b \in \hat{\mc A}$ and $E \in \C$ such that $M_E(a)$ and $M_E(b)$ do not commute, and hence $\hat{L}$ is positive away from a discrete set by the abstract Furstenberg criterion from \cite{BDFGVWZ2019JFA}. The result then follows by using interpolation to note that $\hat{L}(E) = pL(E)$.
\end{proof}

As one can see from the regrouping construction in the proof, the models discussed here are special cases of random word models, which are known to exhibit Anderson localization. Indeed, the result described in Theorem~\ref{t:randper:posle} was already known. We give the proof here, since it is much simpler than the argument from \cite{DamSimsStolz2004JFA}. However, the spectrum (as a set) is not explicitly identified as in Theorem~\ref{THM:2-periodic-spectrum} in complete generality.

\section{Periodic and One-Frequency Quasi-Periodic} \label{sec:qpper}

We now turn to the second main family of examples of product systems: products of circle rotations and translations on finite cyclic groups. To keep the length of the paper in check, we do not attempt an exhaustive survey of all possible results in this scenario. Rather, we look at a selection of results that we consider interesting. 

The motivating example is that of a quasi-periodic potential with a periodic background:
\[V(n) = V_x(n) + V_\per(n),\]
where $V_\per$ has period $p$ and $V_x(n) = f_1(n\alpha+x)$ for some $f_1 \in C(\T,\R)$, $x \in \T := \R/\Z$, and $\alpha \in \T$ irrational. One can clearly encode this via the product system $(\Omega,T)$ where
\begin{equation} \label{eq:qpper:TZpsystDef}\Omega = \T \times \Z_p, \quad T(x,k)= (x+\alpha,k+1), \quad x \in \T, \ k \in \Z_p.
\end{equation} One generates $V_x+V_\per$ as $V_{(x,0)}(n) = f(T^n(x,0))$ via the sampling function $f(x,k) = f_1(x)+f_2(k)$ where $f_2(k) = V_\per(\tilde k)$ for any representative $\tilde k$ of the residue class $k \in \Z_p$. We will be mainly interested in this case, but we can be a bit more general, considering for example trigonometric polynomials on the space $\Omega$ (see Def.\ \ref{def:trigpoly} below).

\subsection{Generalities}
Given $\alpha \in \T$ irrational and $p \in \Z$, we consider the associated product system as in \eqref{eq:qpper:TZpsystDef}. 
As a consequence of the general discussion in Appendix~\ref{app:StrictErgProd}, let us note the following basic facts about this particular product system.

\begin{prop} \label{PROP:torusXZpbasics}
Given $p \in \N$ and $\alpha \in \R \setminus \Q$, let $\Omega = \T \times \Z_p$ and $T(x,k)=(x+\alpha,k+1)$.
\begin{enumerate}
\item[{\rm(a)}] $\Omega$ is a compact abelian group. 
\item[{\rm(b)}] $(\Omega,T)$ is minimal. 
\item[{\rm(c)}] $(\Omega,T,\mu)$ is ergodic, where $\mu=\mu_1\times \mu_2$, $\mu$ denotes Lebesgue measure on $\T$, and $\mu_2$ denotes normalized counting measure on $\Z_p$.
\item[{\rm(d)}]$(\Omega,T)$ is uniquely ergodic with unique invariant measure $\mu$ as in part {\rm(c)}.
\end{enumerate}
\end{prop}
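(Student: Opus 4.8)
The plan is to recognize that $T$ is nothing but a translation on the compact abelian group $\Omega$, and then to reduce all of the dynamical assertions (b)--(d) to a single, easily checked arithmetic condition via the standard theory of group rotations. First I would dispatch (a): since $\T = \R/\Z$ is a compact abelian group and $\Z_p$ is a finite abelian group, their direct product $\Omega$ is again a compact abelian group. Its normalized Haar measure is the product of the Haar measures on the factors, i.e.\ Lebesgue measure $\mu_1$ on $\T$ times normalized counting measure $\mu_2$ on $\Z_p$; thus the measure $\mu = \mu_1 \times \mu_2$ appearing in (c) is precisely the Haar measure of $\Omega$, a fact I would record explicitly since it is what makes (c) and (d) refer to the \emph{same} measure.

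Next, for (b), (c), and (d), I would observe that $T$ is the translation $R_g \colon \omega \mapsto \omega + g$ by the fixed element $g = (\alpha,1) \in \Omega$. For a rotation on a compact metrizable abelian group, minimality, ergodicity with respect to Haar measure, and unique ergodicity (with Haar measure as the unique invariant measure) are all equivalent, and each is in turn equivalent to the character criterion that $\gamma(g) \neq 1$ for every nontrivial character $\gamma \in \widehat{\Omega}$; see, e.g., \cite[Chapter~5]{Walters}. So my strategy is to verify this one condition, after which (b), (c), and (d) follow simultaneously and with $\mu$ identified as the Haar/unique invariant measure.

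The remaining step is the character computation, which uses the irrationality of $\alpha$. The dual group is $\widehat{\Omega} = \widehat{\T} \times \widehat{\Z_p} \cong \Z \times \Z_p$, with characters
\[
\gamma_{m,j}(x,k) = \me^{2\pi \mi (m x + jk/p)}, \qquad m \in \Z, \ j \in \{0,1,\ldots,p-1\}.
\]
Evaluating at $g = (\alpha,1)$ gives $\gamma_{m,j}(g) = \me^{2\pi \mi (m\alpha + j/p)}$, which equals $1$ precisely when $m\alpha + j/p \in \Z$. If $m \neq 0$, then $m\alpha$ is irrational while $j/p$ is rational, so their sum is irrational and cannot be an integer, forcing $m = 0$; and then $j/p \in \Z$ forces $j = 0$. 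Hence the only character fixing $g$ is the trivial one, the character condition holds, and (b)--(d) follow.

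I do not expect a genuine obstacle here: the argument is routine once the group-rotation structure is identified. The only point requiring a little care is the appeal to the equivalence theorem for rotations on compact abelian groups, which is exactly what collapses the three separate dynamical claims into the single arithmetic check arising from $\alpha \notin \Q$; this is also the reason the statement is phrased as a consequence of the more general product-system discussion, of which the present case is the simplest instance.
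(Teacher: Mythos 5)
Your proof is correct, but it takes a different route from the paper's. The paper does not argue directly on $\Omega$; instead it derives the proposition from the joining/disjointness machinery of Appendix~\ref{app:StrictErgProd}: it views $(\Omega,T)$ as a product of the two minimal group rotations $(\T,x\mapsto x+\alpha)$ and $(\Z_p,k\mapsto k+1)$, and invokes Corollary~\ref{COR:strictly-ergodic-product}, which (via Lemma~\ref{LEM:disjoint-uniqueness}, the spectral disjointness criterion behind Corollary~\ref{COR:disjoint-eigenvalues}, and Fact~\ref{FACT:group-rotation-strict-ergodicity}) says that the product of two minimal group rotations is minimal and uniquely ergodic if and only if the factors share no eigenvalue other than $1$. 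The required check is then that $\me^{2\pi\mi m\alpha} = \me^{2\pi\mi j/p}$ forces $m=0$ and $j\equiv 0$, which is the same arithmetic you perform. You instead treat $T$ as a single rotation by $g=(\alpha,1)$ on the compact abelian group $\Omega$ and apply the classical equivalences for group rotations (minimal $\iff$ ergodic for Haar $\iff$ uniquely ergodic $\iff$ no nontrivial character fixes $g$), which is essentially Fact~\ref{FACT:group-rotation-strict-ergodicity} together with the standard character criterion; your citation to \cite[Chapter~5]{Walters} is slightly off target (that reference is used in the paper for eigenvalue generalities; the rotation equivalences are closer to \cite[Thm.~1.4.10]{PF02}), but this is a bookkeeping issue, not a gap. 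The trade-off: your argument is more elementary and self-contained, needing nothing beyond Pontryagin duality for $\widehat{\T\times\Z_p}\cong\Z\times\Z_p$, while the paper's route obtains the proposition as the simplest instance of a general framework (disjointness of products of almost-periodic systems, frequency modules as in Remark~\ref{REM:frequency-module}) that it wants available for broader use anyway.
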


Let $f \in C(\Omega,\R)$ be given. With the help of Proposition~\ref{PROP:torusXZpbasics}, we make a few observations. First, by minimality of $(\Omega,T)$ and continuity of $f$, there is a uniform set $\Sigma = \Sigma_{f,\alpha}$ with $\Sigma=\sigma(H_{f,\alpha,\omega})$ for all $\omega \in \Omega$. 

Recall the one-step cocycle map
\begin{equation}
A_z(\omega) = \begin{bmatrix}
z-f(T\omega) & -1 \\ 1 & 0
\end{bmatrix}
\end{equation}
and the associated  Lyapunov exponent
\[L(z) = L(z,f,\alpha) = \lim_{n\to\infty} \frac{1}{n} \int_\Omega \log\|A_z^n(\omega)\| \, d\mu(\omega). \]

One of the main ideas in the analysis of this family of product systems is to pass from $\Omega = \T \times \Z_p$ to $\T$ by regrouping. Concretely, define $B_z = B_{z,f,\alpha}:\T \to \SL(2,\C)$ by
\begin{align}
\nonumber
B_z(x) 
& = \prod_{j=p}^1 \begin{bmatrix}
z- f(T^j(x,0)) & -1 \\ 1 & 0
\end{bmatrix} \\
\label{eq:qpper:BzRegroupedDef}
&= A_z^p((x,0)), \quad x \in \T.
\end{align}
This map has iterates 
\begin{align}
\nonumber
B_z^n(x)& = B_z(x+(n-1)p\alpha) \cdots B_z(x+p\alpha)B_z(x) \\
\label{eq:qpper:BzRegroupedDef2}
& = A_z^{np}((x,0)).
\end{align}
Denote the corresponding Lyapunov exponent by $\widetilde{L}(z) = \widetilde{L}(z,f,\alpha)$:
\[
\widetilde{L}(z) = \lim_{n\to\infty} \frac{1}{n} \int_0^1 \log\|B_z^n(x)\| \, dx.
\]

\begin{prop} \label{prop:qpper:LvstildeL}
$\widetilde{L}=pL$. In particular, $L(z)>0 \iff \widetilde{L}(z)>0$.
\end{prop}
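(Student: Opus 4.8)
The plan is to realize both Lyapunov exponents as pointwise almost-everywhere limits and then exploit the cocycle identity $B_z^n(x) = A_z^{np}((x,0))$ recorded in \eqref{eq:qpper:BzRegroupedDef2}. Since $f$ is continuous, hence bounded, the functions $\log\|A_z\|$ and $\log\|B_z\|$ are bounded and therefore integrable. By Proposition~\ref{PROP:torusXZpbasics}(c), the system $(\Omega,T,\mu)$ is ergodic, so Kingman's subadditive ergodic theorem produces a $\mu$-full set on which $\frac1m\log\|A_z^m(\omega)\| \to L(z)$ as $m\to\infty$, the constancy of the limit coming from ergodicity and its value being the integral limit $L(z)$. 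Likewise, the base dynamics of $B_z$ is the rotation $x\mapsto x+p\alpha$ on $\T$; because $\alpha$ is irrational and $p\in\N$, the number $p\alpha$ is irrational, so this rotation is ergodic for Lebesgue measure, and Kingman again gives a Lebesgue-full set on which $\frac1n\log\|B_z^n(x)\| \to \widetilde{L}(z)$.

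First I would transfer the a.e.\ convergence for the $A_z$-cocycle from $\Omega$ onto the slice $\{k=0\}$. Writing $N\subseteq\Omega$ for the $\mu$-null exceptional set, Fubini together with $\mu = \mu_1\times\mu_2$ and $\mu_2(\{0\})>0$ shows that $\{x\in\T : (x,0)\in N\}$ is $\mu_1$-null. Hence for Lebesgue-a.e.\ $x$ one has $\frac1m\log\|A_z^m((x,0))\| \to L(z)$, and in particular, restricting to the subsequence $m=np$, $\frac{1}{np}\log\|A_z^{np}((x,0))\| \to L(z)$, equivalently $\frac1n\log\|A_z^{np}((x,0))\| \to pL(z)$.

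Next I would combine the two limits on a common Lebesgue-full set of $x$. By \eqref{eq:qpper:BzRegroupedDef2} we have $B_z^n(x) = A_z^{np}((x,0))$, so on this set
\[
\widetilde{L}(z) = \lim_{n\to\infty}\frac1n\log\|B_z^n(x)\| = \lim_{n\to\infty}\frac1n\log\|A_z^{np}((x,0))\| = pL(z).
\]
Since $p\ge 1$, this identity immediately yields the equivalence $L(z)>0 \iff \widetilde{L}(z)>0$, completing the proof.

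The only genuinely delicate point is the measure-theoretic bookkeeping in the middle step: the a.e.\ statement for the $A_z$-cocycle holds with respect to $\mu=\mu_1\times\mu_2$, whereas the $B_z$-cocycle lives over the first factor equipped with Lebesgue measure. The Fubini argument above resolves this, using that the slice $\{k=0\}$ carries positive $\mu_2$-mass so that the corresponding slice of the null set is itself $\mu_1$-null. An alternative, more hands-on route avoids pointwise limits entirely and works directly with the integrated definitions: one writes $A_z^{np}((x,k)) = A_z^k(T^{np}\omega)\,B_z^n(x-k\alpha)\,[A_z^k(\omega)]^{-1}$ with $\omega=(x-k\alpha,0)$, bounds the two boundary factors (each a product of at most $p-1$ one-step matrices, of norm controlled by $\|f\|_\infty$ and $z$, with inverses of equal norm since $\det=1$) uniformly in $n$ and $x$, and integrates. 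Translation invariance of Lebesgue measure then gives $\int_0^1\log\|A_z^{np}((x,k))\|\,dx = \int_0^1\log\|B_z^n(x)\|\,dx + O(1)$ for each $k$, and averaging over $k$ and dividing by $n$ recovers $\widetilde{L} = pL$ as $n\to\infty$.
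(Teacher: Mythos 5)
Your proof is correct and pivots on exactly the same fact as the paper's: the identity $B_z^n(x)=A_z^{np}((x,0))$ from \eqref{eq:qpper:BzRegroupedDef2}, the paper's entire proof being the single line that the proposition ``follows from \eqref{eq:qpper:BzRegroupedDef2}.'' Your Kingman--Fubini bookkeeping (and the alternative conjugation estimate relating the slice $\{k=0\}$ to the average over all of $\Omega$) merely supplies, correctly, the measure-theoretic details the paper leaves implicit.
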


\begin{proof}
This follows from \eqref{eq:qpper:BzRegroupedDef2}.
\end{proof}

From Proposition~\ref{prop:qpper:LvstildeL}, we define
\[\mathcal{Z} = \set{E : L(E)=0} = \set{E : \widetilde{L}(E)=0}.\]

\subsection{Consequences of Global Theory}

Let us briefly recall the terminology from Avila's global theory of one-frequency analytic cocycles \cite{Avila2015Acta}. Let $\alpha \in \R\setminus \Q$ be given, and suppose $B:\T \to \SL(2,\R)$ is real-analytic with analytic extension to a strip $\T_s = \{  z : |\Im(z)| <s\}$ for some $s>0$. For each $\varepsilon\in \R$ with $|\varepsilon|<s$, one may consider the cocycle $B_\varepsilon := B(\cdot + \mi\varepsilon)$ and the associated Lyapunov exponent
\begin{align}
L(B_\varepsilon,\alpha)
& =\lim_{n\to\infty} \frac{1}{n} \int_\T \log\|B_\varepsilon(x+(n-1)\alpha) \cdots B_\varepsilon(x+\alpha)B_\varepsilon(x)\|\, dx \\[1mm]
& =\lim_{n\to\infty} \frac{1}{n} \int_\T \log\|B_\varepsilon^n(x)\|\, dx.
\end{align}

\begin{theorem}[Avila (2015) \cite{Avila2015Acta}] \label{t:avilagt}
Given $\alpha \in \R \setminus \Q$ and $B:\T \to \SL(2,\R)$ with analytic extension to $\T_s$, the function $\Lambda:\varepsilon \mapsto L(B_\varepsilon,\alpha)$ enjoys the following properties.
\begin{enumerate}
\item[{\rm(a)}] $\Lambda$ is continuous, convex, and piecewise affine on $(-s,s)$.
\item[{\rm(b)}] {\rm(}quantization of acceleration{\rm)}. For all $|\varepsilon|<s$, the acceleration
\[ \omega(B,\varepsilon) := \lim_{t\downarrow 0} \frac{1}{2\pi t}(\Lambda(\varepsilon+t) - \Lambda(\varepsilon))\]
exists and must lie in $\Z$.
\end{enumerate}
\end{theorem}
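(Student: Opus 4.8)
The plan is to treat $\Lambda$ as the restriction to a vertical line of a subharmonic function on the strip $\T_s$, read off the soft part of (a) from general potential theory, and then reduce the piecewise-affine claim to the quantization statement (b), which is the true content of the theorem.

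First I would fix $B$ with analytic extension to $\T_s$ and set, for $z$ in the strip,
\[ u_n(z) = \frac{1}{n}\int_0^1 \log\|B^n(x+z)\|\,dx, \qquad B^n(w) = B(w+(n-1)\alpha)\cdots B(w). \]
Since $B$ is analytic and $\det B \equiv 1$, every entry of $B^n$ is holomorphic in $z$, so $z\mapsto \log\|B^n(x+z)\|$ is subharmonic on $\T_s$, and averaging over $x$ preserves subharmonicity. Submultiplicativity of the norm together with invariance of the $x$-integral under real translation shows that $n\mapsto n\,u_n$ is subadditive, so $\Lambda(\varepsilon)=\lim_n u_n(\mathrm{i}\varepsilon)=\inf_n u_n(\mathrm{i}\varepsilon)$. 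Each $u_n$ is invariant under $z\mapsto z+t$ for real $t$ (change variables $x\mapsto x+t$), hence depends only on $\varepsilon=\Im z$; a subharmonic function of one real variable is convex, so each $\varepsilon\mapsto u_n(\mathrm{i}\varepsilon)$ is convex, and the pointwise limit $\Lambda$ is convex as well. Finiteness plus convexity on the open interval $(-s,s)$ gives continuity. This proves everything in (a) except the piecewise-affine assertion.

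Next I would show that (b) implies that remaining part of (a). By convexity the right-derivative of $\Lambda$ exists everywhere and is nondecreasing, and by definition it equals $2\pi\,\omega(B,\varepsilon)$. If, as (b) asserts, $\omega(B,\varepsilon)\in\Z$ for every $\varepsilon$, then this nondecreasing function takes values in the discrete set $2\pi\Z$, so it is a step function with only finitely many jumps on each compact subinterval; hence $\Lambda$ is affine between consecutive jumps, i.e.\ piecewise affine. Thus the whole theorem collapses to the quantization of acceleration.

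The hard part is (b). The behavior at the edge is transparent: writing $B(x)=\sum_{|k|\le d}\hat B_k\,\me^{2\pi\mi k x}$ with $\hat B_{\pm d}\neq 0$, as $\varepsilon\to+\infty$ the harmonic $k=-d$ dominates $B(x+\mathrm{i}\varepsilon)$, and one computes $\Lambda(\varepsilon)=2\pi d\,\varepsilon+O(1)$, so $\omega(B,\varepsilon)=d\in\Z$ for large $\varepsilon$; symmetrically $\omega=-d$ for $\varepsilon\to-\infty$. The genuine obstacle is integrality at \emph{every} interior $\varepsilon$. Here the strategy is rational approximation: for $\alpha=a/q$ the matrix $B^q(x)$ has entries that are trigonometric polynomials, $q\Lambda(\varepsilon)$ equals the $x$-average of $\log$ of the spectral radius of $B^q(x+\mathrm{i}\varepsilon)$, and extending in the variable $\zeta=\me^{2\pi\mi z}$ lets one identify the slope with a winding number of $\det\!\big(B^q(\cdot)-\lambda\big)$, an integer by the argument principle. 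One then transports this to irrational $\alpha$ using joint continuity of the Lyapunov exponent under rational approximation, together with the fact that a limit of integer-valued accelerations that varies upper-semicontinuously must still be integer-valued. Making the degree bookkeeping precise and controlling the approximation is exactly Avila's central contribution; everything surrounding it is soft, so I would expect that limiting/argument-principle step to be where all the difficulty concentrates.
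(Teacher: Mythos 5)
The first thing to note is that the paper does not prove this statement at all: it is quoted verbatim as Avila's theorem with a citation to \cite{Avila2015Acta}, so there is no internal proof to compare your attempt against; your proposal must be judged against Avila's actual argument. Your soft steps are fine: subharmonicity of $z \mapsto \log\|B^n(x+z)\|$, translation invariance in the real direction, Fekete, and the fact that a subharmonic function depending only on $\Im z$ is convex in $\Im z$ correctly give convexity and continuity, and your reduction of the piecewise-affine claim to part (b) (a nondecreasing derivative with values in $2\pi\Z$ is a locally finite step function) is also correct.

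The genuine gap is in your proposed proof of (b), and it is not merely a matter of ``making the bookkeeping precise'': the statement you want to transport from rational frequencies is \emph{false} there. Take frequency $0$, $B(x) = \begin{pmatrix} -2\lambda\cos(2\pi x) & -1 \\ 1 & 0\end{pmatrix}$ with $0<\lambda<1$ (i.e.\ the Schr\"odinger cocycle at $E=0$, coupling $\lambda$). Then $\Lambda(\varepsilon) = \int_0^1 \log\rho\bigl(B(x+\mathrm{i}\varepsilon)\bigr)\,dx$, and the Riesz measure of the subharmonic function $\log\rho$ is not atomic: it is an equilibrium-type measure spread continuously along the two vertical segments where $\tr B \in [-2,2]$. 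Consequently $\Lambda$ is \emph{strictly} convex on a whole transition interval of $\varepsilon$, and the acceleration sweeps continuously from $0$ to $1$ --- no quantization, no piecewise affinity. Your argument-principle identification silently assumes the dominant eigenvalue is holomorphic and zero-free on the full circle $\Im z = \varepsilon$, which fails exactly where that circle meets the elliptic locus; this is where the winding-number reasoning breaks down. Moreover, even on the hyperbolic regions where the winding argument does apply, what it quantizes is the slope of $q\Lambda$ (the average of $\log\rho$ of the $q$-step matrix), so at best you get slopes of $\Lambda$ in $\tfrac{2\pi}{q}\Z$; for approximants $p_n/q_n \to \alpha$ with $q_n \to \infty$ this constraint is vacuous in the limit, so ``a limit of integer-valued accelerations'' is not what rational approximation hands you. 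Irrationality of $\alpha$ must enter in an essential, quantitative way. In Avila's proof it does so through the almost-invariance $\bigl|\log\|B^n(z+\alpha)\| - \log\|B^n(z)\|\bigr| \le C$ (uniformly in $n$ and $z$), which, combined with ergodicity of the irrational rotation on each horizontal circle, forces the zeros of the holomorphic approximants (entries/traces of $B^n$) to occur in families of roughly $n$ equidistributed points; only after this does the normalized Riesz mass per horizontal strip --- which is exactly the jump of the derivative of $\Lambda$ --- become an integer multiple of $2\pi$. Your proposal contains no ingredient playing this role, so the central claim of the theorem remains unproved.
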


In view of Theorem~\ref{t:avilagt}, one classifies cocycle maps as follows.

\begin{definition}
With $B$ and $\alpha$ as above, we say that the cocycle $(B,\alpha)$ is:
\begin{itemize}
\item \emph{subcritical} if for some $\delta>0$, $L(B_\varepsilon,\alpha)=0$ for all $|\varepsilon|<\delta$;
\item \emph{critical} if $L(B,\alpha)=0$, but $(B,\alpha)$ is not subcritical; and

\item \emph{supercritical} if $L(B)>0$ but $(B,\alpha)$ is not uniformly hyperbolic. 
\end{itemize}
\end{definition}

As discussed in the introduction, we will consider periodic decorations of quasi-periodic potentials generated by trigonometric polynomials, and this is most commonly accomplished with the addition of a periodic background. The arguments can handle a more general situation, which we now formulate precisely.

\begin{definition} \label{def:trigpoly}
Recall that a \emph{character} of a topological group $G$ is a continuous homomorphism $G \to \mathbb{S}^1 = \{z \in \C  : |z|=1\}$ and a \emph{trigonometric polynomial} is a linear combination of characters. We write $\trigpoly(G)$ for the set of trigonometric polynomials on $G$.
\end{definition}

The following well known characterization of trigonometric polynomials on $\T \times \Z_p$ will be helpful.

\begin{prop} \label{prop:qpper:trigpoly}  One has $f \in \trigpoly(\T \times \Z_p)$  if and only if $f(\cdot,k) \in \trigpoly(\T)$ for each $k \in \Z_p$.
\end{prop}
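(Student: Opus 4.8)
The plan is to prove both implications by reducing everything to the explicit description of the character group of $\T \times \Z_p$. The starting point is the standard fact that the characters of a direct product of topological abelian groups are exactly the products of characters of the factors; concretely, every character of $\T \times \Z_p$ has the form $(x,k) \mapsto \me^{2\pi \mi n x}\,\me^{2\pi \mi j k/p}$ for some $n \in \Z$ and $j \in \Z_p$, since the characters of $\T$ are $x \mapsto \me^{2\pi \mi n x}$ and those of $\Z_p$ are $k \mapsto \me^{2\pi \mi j k/p}$. A trigonometric polynomial on $\T \times \Z_p$ is therefore a finite sum $f(x,k) = \sum_{n,j} c_{n,j}\,\me^{2\pi \mi n x}\,\me^{2\pi \mi j k/p}$ with only finitely many nonzero coefficients.

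For the forward implication I would fix $k \in \Z_p$ and regroup: writing $f(x,k) = \sum_n \bigl(\sum_j c_{n,j}\me^{2\pi \mi j k/p}\bigr)\me^{2\pi \mi n x}$ exhibits $f(\cdot,k)$ as a finite linear combination of characters of $\T$, so $f(\cdot,k) \in \trigpoly(\T)$ immediately. This direction is essentially bookkeeping.

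The substance is the backward implication, and the key observation I would use is that \emph{every} function on the finite group $\Z_p$ is a trigonometric polynomial, via the discrete Fourier transform: any $h:\Z_p \to \C$ satisfies $h(k) = \sum_{j=0}^{p-1}\hat h(j)\,\me^{2\pi \mi j k/p}$ with $\hat h(j) = \tfrac1p\sum_k h(k)\me^{-2\pi \mi j k/p}$. Assuming $f(\cdot,k) \in \trigpoly(\T)$ for each $k$, I would write $f(x,k) = \sum_n a_{n,k}\,\me^{2\pi \mi n x}$; since there are only $p$ values of $k$ and each expansion is finite, the set of frequencies $n$ that occur for some $k$ is finite. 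For each such $n$ I would apply the discrete Fourier transform to the function $k \mapsto a_{n,k}$ on $\Z_p$ to obtain coefficients $b_{n,j}$ with $a_{n,k} = \sum_j b_{n,j}\me^{2\pi \mi j k/p}$, and then substitute to get $f(x,k) = \sum_{n,j} b_{n,j}\,\me^{2\pi \mi n x}\,\me^{2\pi \mi j k/p}$, a finite sum of characters of $\T \times \Z_p$.

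I do not expect a genuine obstacle here; the only point requiring a moment's care is the backward direction, where one must confirm that the total frequency set stays finite when ranging over all $k \in \Z_p$ (this is where finiteness of $\Z_p$ is used twice—once to make functions on $\Z_p$ automatically trigonometric polynomials, and once to keep the union of finite frequency sets finite). Everything else is a direct manipulation of the explicit character description.
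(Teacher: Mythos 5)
Your proof is correct and follows essentially the same route as the paper: both directions rest on the explicit character description of $\T \times \Z_p$, with the forward implication by regrouping and the backward implication by applying the discrete Fourier transform on $\Z_p$ to the $\T$-Fourier coefficients. Your extra remark that the union of the finite frequency sets over $k \in \Z_p$ remains finite is a point the paper leaves implicit, but it is the same argument.
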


\begin{proof}
This is well known and not hard to show using unitarity of the discrete Fourier transform. For the reader's convenience, we give the arguments. The characters of $\T\times \Z_p$ are of the form
\begin{equation}
 \chi_{m,\ell} :(x,k) \mapsto \me^{2\pi \mi (mx+k\ell/p)}, \quad m \in \Z, \ \ell \in \Z_p. \end{equation}
Thus, if $f \in \trigpoly(\T \times \Z_p)$, then
\begin{equation} \label{eq:qpper:trigPolyAsSumOfchars} f = \sum_{m \in \Z} \sum_{\ell \in \Z_p} c_{m,\ell} \chi_{m,\ell}  \end{equation}
for suitable coefficients $\{c_{m,\ell}\}$,
which certainly implies $f(\cdot, k) \in \trigpoly(\T)$ for each $k$.

Conversely, if $f(\cdot,k) \in \trigpoly(\T)$ for each $k$, write
\[f(x,k) = \sum_{m \in \Z} \hat{c}_{m,k} \me^{2\pi \mi m x} \]
for some coefficients $\{\hat{c}_{m,k}\}$. To write $f$ in the form \eqref{eq:qpper:trigPolyAsSumOfchars}, define
\[  c_{m,\ell} = \frac{1}{p} \sum_{k' \in \Z_p} \me^{-2\pi \mi k'\ell/p} \hat{c}_{m,k'}\]
for each $m \in \Z$, $\ell \in \Z_p$, and note that
\begin{align*}
\sum_{m \in \Z} \sum_{\ell \in \Z_p} c_{m,\ell} \chi_{m,\ell}(x,k)
& = \sum_{m \in \Z} \sum_{k' \in \Z_p} \sum_{\ell \in \Z_p} \frac{1}{p} \hat{c}_{m,k'} \me^{2\pi \mi mx} \me^{2\pi \mi \ell(k-k') /p} \\
& = \sum_{m \in \Z} \hat{c}_{m,k} \me^{2\pi \mi mx} \\
& = f(x,k),
\end{align*}
as desired. 
\end{proof}

In view of Proposition~\ref{prop:qpper:trigpoly}, we can identify trigonometric polynomials on $\T \times \Z_p$ with $p$-tuples of trigonometric polynomials on $\T$. If $f:\Omega \to \R$ is a trigonometric polynomial, we call $f^{[k]} = f(\cdot,k)$ a \emph{component} of $f$.

\begin{theorem} \label{t:qpper:supercritical}
Let $\alpha \in \R \setminus \Q$ and $p \in \N$ be given, and suppose $f$ is a real-valued trigonometric polynomial on $\Omega  = \T \times \Z_p$ such that no component of $f$ vanishes identically.
 If $|\lambda|$ is sufficiently large, then the cocycle $B_{E,\lambda f,\alpha}$ defined by \eqref{eq:qpper:BzRegroupedDef} is supercritical for all $E \in \Sigma$. Indeed, one has
\begin{equation} \label{eq:qp+per:lelb}
\widetilde{L}(E,\lambda f,\alpha) \geq \frac{1}{2}p \log |\lambda|
\end{equation}
for all $E \in \Sigma$ and all $|\lambda|$ sufficiently large.
\end{theorem}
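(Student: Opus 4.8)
The plan is to complexify the phase in the regrouped cocycle $B_{E,\lambda f,\alpha}$ and extract a lower bound on $\widetilde L$ from its large-$\varepsilon$ asymptotics together with convexity. Fix $E$ and set $\Lambda(\varepsilon) = L\big(B_{E,\lambda f,\alpha}(\cdot + \mi\varepsilon),\, p\alpha\big)$, the Lyapunov exponent of the cocycle with phase pushed into the strip. By Proposition~\ref{prop:qpper:trigpoly} and \eqref{eq:qpper:BzRegroupedDef}, this is a real-analytic $\SL(2,\C)$-cocycle over the irrational rotation by $p\alpha$, so Theorem~\ref{t:avilagt} applies and $\Lambda$ is convex. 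Consequently the difference quotient $\varepsilon^{-1}(\Lambda(\varepsilon)-\Lambda(0))$ is nondecreasing in $\varepsilon>0$, and if $\Lambda(\varepsilon) = 2\pi D\varepsilon + \beta + o(1)$ as $\varepsilon\to+\infty$ for constants $D,\beta$, then the asymptotic slope $2\pi D$ bounds every difference quotient, yielding $\widetilde L(E) = \Lambda(0) \ge \Lambda(\varepsilon) - 2\pi D\varepsilon \to \beta$. Everything thus reduces to computing the pair $(D,\beta)$.

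To compute the asymptotics, write the component degrees $d_k = \deg f^{[k]}$ and $D = \sum_{k\in\Z_p} d_k$, and factor the dominant exponential out of each of the $p$ transfer matrices building $B$. A factor with a non-constant component ($d_k\ge 1$) satisfies $A^{(j)}_\varepsilon(x) = e^{2\pi d_k\varepsilon}\big(R_j(x) + O(e^{-2\pi\varepsilon})\big)$ with $R_j(x) = -\lambda\,\hat f^{[k]}_{-d_k}\,e^{-2\pi\mi d_k(x+j\alpha)}\, e_1 e_1^\top$ rank one, while a factor from a constant component ($d_k=0$) is the fixed matrix $Q_j = \left[\begin{smallmatrix} E-\lambda\hat f^{[k]}_0 & -1\\ 1 & 0\end{smallmatrix}\right]$. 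Multiplying gives $B_\varepsilon(x) = e^{2\pi D\varepsilon}\big(\mathcal R(x) + o(1)\big)$, so the asymptotic slope is $2\pi D$. We may assume $D\ge 1$, i.e.\ some component is non-constant (if all were constant the potential would be $p$-periodic and $\widetilde L\equiv 0$ on $\Sigma$, so supercriticality is not at issue). Using Proposition~\ref{prop:qpper:LvstildeL} to cut the period at a rank-one factor, the relations $e_1 e_1^\top Q\, e_1 e_1^\top = Q_{11}\, e_1 e_1^\top$ collapse each maximal block of constant factors to the $(1,1)$-entry of the corresponding product of $Q$'s, so that $\mathcal R(x) = s(x)\, e_1 e_1^\top$ for an explicit scalar $s$. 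Since the surviving rank-one data are $u = v = e_1$, there is no inner-product correction and $\beta = \int_0^1 \log|s(x)|\,dx$.

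It remains to estimate $\beta$ in $\lambda$. Each of the $p$ factors contributes exactly one power of $\lambda$ to $s$: a rank-one factor contributes $-\lambda\hat f^{[k]}_{-d_k}e^{-2\pi\mi d_k(x+j\alpha)}$, while a run of $m$ constant factors contributes the $(1,1)$-entry of $Q_{j_1}\cdots Q_{j_m}$, a degree-$m$ polynomial in $\lambda$ with nonvanishing leading coefficient $(-\lambda)^m\prod_i\hat f^{[k_i]}_0$ (each constant component is nonzero by hypothesis). Hence $|s(x)| = |\lambda|^p\,(\mathrm{const})\,(1+O(|\lambda|^{-1}))$, where the unimodular phases drop out of $\int\log|s|$ and the constant and correction are uniform for $E$ in the bounded set $\Sigma$. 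Thus $\beta = p\log|\lambda| + O(1)$ uniformly in $E\in\Sigma$, giving $\widetilde L(E)\ge\beta\ge \tfrac12 p\log|\lambda|$ once $|\lambda|$ is large, which is \eqref{eq:qp+per:lelb}. Finally $\widetilde L(E)>0$ forces $L(E)>0$ by Proposition~\ref{prop:qpper:LvstildeL}, while for $E\in\Sigma$ Johnson's theorem shows the one-step cocycle, hence its $p$-th power $B$, is not uniformly hyperbolic; by definition $B$ is supercritical on $\Sigma$.

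The main obstacle is the second paragraph: justifying that the rescaled cocycles $e^{-2\pi D\varepsilon}B_\varepsilon$ converge to the singular rank-one limit $\mathcal R$ at the level of Lyapunov exponents, and carefully tracking the collapse of interleaved constant and non-constant factors so that exactly $p$ powers of $\lambda$ survive with a nonvanishing coefficient. The rank-one degeneracy of the limit prevents a naive appeal to joint continuity of the Lyapunov exponent, so one must either estimate the matrix products directly or lean on the quantized-acceleration structure of Theorem~\ref{t:avilagt} to pin the asymptotic slope to $2\pi D$ and read off $\beta$ from the leading coefficient of $s$.
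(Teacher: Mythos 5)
In the case where \emph{every} component of $f$ is non-constant, your argument is correct and is essentially the paper's own proof: complexify the phase of the regrouped cocycle, show that after rescaling each factor degenerates to a nonzero multiple of a rank-one matrix so that $\Lambda(\varepsilon)=2\pi d\varepsilon+p\log|\lambda|+O(1)$ as $\varepsilon\to+\infty$, pull this back to $\varepsilon=0$ by convexity (Theorem~\ref{t:avilagt}), and invoke Johnson's theorem to exclude uniform hyperbolicity on $\Sigma$. Your difference-quotient packaging of the convexity step is, if anything, a bit cleaner than the paper's, which uses quantization to upgrade the $o(1)$ asymptotics to an exact affine formula before concluding.

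The genuine problem is your third paragraph, i.e., exactly where you go beyond the paper and try to cover constant nonzero components. You claim that a run of $m$ constant factors contributes a continuant of modulus $|\lambda|^m(\mathrm{const})(1+O(|\lambda|^{-1}))$ ``uniformly for $E$ in the bounded set $\Sigma$.'' That uniformity is false: $\Sigma=\Sigma_{\lambda f,\alpha}$ is bounded for each fixed $\lambda$, but its diameter grows linearly in $|\lambda|$, so the $E$-dependent coefficients of the continuant are \emph{not} lower order in $\lambda$. Concretely, take $p=2$, $f^{[0]}$ non-constant, $f^{[1]}\equiv c\neq 0$, and $E=\lambda c$. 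By \eqref{eq:qpper:BzRegroupedDef},
\[
B_E(x)=\begin{bmatrix} E-\lambda f^{[0]}(x+2\alpha) & -1\\ 1 & 0\end{bmatrix}\begin{bmatrix} 0 & -1\\ 1 & 0\end{bmatrix}=\begin{bmatrix} -1 & \lambda f^{[0]}(x+2\alpha)-E\\ 0 & -1\end{bmatrix},
\]
an upper-triangular matrix with unimodular diagonal, so $\|B_E^n(x)\|$ grows at most linearly in $n$. Hence $\widetilde L(E,\lambda f,\alpha)=0$ and $(B_E,2\alpha)$ is not uniformly hyperbolic, so neither is the one-step cocycle, and $E\in\Sigma$ by Johnson's theorem. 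Thus \eqref{eq:qp+per:lelb} fails at this energy for \emph{every} $\lambda$, and no refinement of your estimate can close the gap: once constant components are allowed, the statement itself is false (this is the phenomenon of Remark~\ref{rem:qpper:super}(a), shifted from $E=0$ to $E=\lambda c$), and the hypothesis must be strengthened to ``no component of $f$ is constant.'' That stronger hypothesis is what your first two paragraphs, the paper's own proof (whose displayed factor asymptotics with $o(1)$ off-diagonal entries tacitly require $d_k\geq 1$ for every $k$), and all of the paper's applications (Theorems~\ref{t:qppermain} and \ref{thm:AMOper}) actually use; your dismissal of the all-constant case runs into the same issue, since a purely periodic potential also has $\widetilde L\equiv 0$ on its spectrum and so contradicts the literal statement.
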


\begin{proof}
Let $f$ be a real trigonometric polynomial on $\Omega$. By Proposition~\ref{prop:qpper:trigpoly}, $f^{[k]} := f(\cdot,k)$ is a trigonometric polynomial on $\T$, and hence we may define $d_k =\deg(f^{[k]})$ for $k \in \Z_p$  and
\[ d=\sum_{k\in\Z_p} d_k. \]

To simplify notation, we view  $f$, $\lambda$, and $\alpha$ as fixed and suppress them from the notation.
For $E,\varepsilon \in \R$, let $B_{E,\varepsilon}= B_E(\cdot + \mi\varepsilon)$ denote the $p$-step cocycle map $B_{E}$ with complexified phase.

Since the potential is real-valued, $L$ is even in $\varepsilon$. By considering $|\varepsilon|$ large, we see
\[
\me^{-2\pi d|\varepsilon|}B_{E,\varepsilon} =
\prod_{j=p-1}^0
\begin{bmatrix}
c_j\lambda + o(1) & o(1) \\ o(1) & 0
\end{bmatrix},
 \]
for suitable constants $c_k \neq 0$, and thus, denoting $\hat{c} = \log|c_0\cdots c_{p-1}|$, we have
\begin{equation} \label{eq:qpper:LElargeEps}
L(B_{E,\varepsilon},p\alpha) = p\log|\lambda| + \hat{c}+ 2\pi d|\varepsilon| \text{ for all } |\varepsilon| \text{ sufficiently large.}
\end{equation}
By quantization, evenness in $\varepsilon$, and convexity, \eqref{eq:qp+per:lelb} follows for large enough $|\lambda|$.  By Johnson's theorem \cite{DF1, Johnson1986JDE, ZhangZ2020JST}, the cocycle $(B_E,p\alpha)$ cannot be uniformly hyperbolic for $E \in \Sigma$, which concludes the argument.
\end{proof}
 
\begin{remark} \label{rem:qpper:super} Let us make a few comments about Theorem~\ref{t:qpper:supercritical}.
\begin{enumerate}
\item [(a)] The assumption that no component of $f$ vanishes is essential. Indeed, consider the case $p=2$ and $f \in \trigpoly(\T\times \Z_2)$ for which $f^{[1]} \equiv 0$ and $f^{[0]}$ is some nonconstant real-valued trigonometric polynomial on $\T$. For energy $E = 0$, one sees immediately
\[ B_0(x) = \begin{bmatrix} 0 & -1 \\ 1 & 0  \end{bmatrix}  \begin{bmatrix} -f^{[0]}(x) & -1 \\ 1 & 0  \end{bmatrix}
=\begin{bmatrix} -1 & 0 \\ -f^{[0]}(x) & -1  \end{bmatrix},
\]
leading to
\begin{equation}
B_0^n(x) = (-1)^n \begin{bmatrix} 1 & 0 \\ \sum_{j=0}^{n-1} f^{[0]}(x+2j\alpha) & 1 \end{bmatrix}
\end{equation}
This suffices to show that $0$ is a generalized eigenvalue of $H_{f,\alpha,\omega}$ for every $x \in \T$ and that $\widetilde{L}(0) = L(0) = 0$, so the associated $2$-step cocycle is not supercritical, regardless of the size of $f$.

\item[(b)] On the other hand, in the case in which one considers the product system associated with the sum of a quasi-periodic potential generated by $f_0 \in \trigpoly(\T)$ and a periodic background $V_\per$, the associated sampling function is of the form $f(x,k) = f_0(x)+f_1(k)$. In particular, the obstruction noted in (a) cannot occur in this setting.
\end{enumerate}
\end{remark}

One can also prove subcriticality at small coupling under suitable assumptions on $f$.

\begin{theorem} \label{t:qpper:subcritical}
Let $\alpha \in \R\setminus \Q$ and $p \in \N$ be given, and suppose $f$ is a real-valued trigonometric polynomial on $\Omega = \T \times \Z_p$ such that every component of $f$ has the same degree, $d_0$.
 If $|\lambda|$ is sufficiently small, then the cocycle $B_{E,\lambda f,\alpha}$ defined by \eqref{eq:qpper:BzRegroupedDef} is subcritical for all $E \in \Sigma$. In particular, there is a constant $c = c(f)>0$ such that  $B_{E,\lambda f,\alpha}$ is subcritical on the spectrum whenever $|\lambda| \leq c$.
\end{theorem}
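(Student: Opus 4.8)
The plan is to reduce the statement to a single assertion about Avila's acceleration and then to establish that assertion by a semicontinuity-plus-quantization argument at small coupling. Throughout, fix $\alpha$ and $f$, write $C_E = \begin{bmatrix} E & -1 \\ 1 & 0\end{bmatrix}$ for the free one-step matrix, and for $E,\varepsilon \in \R$ let $\Lambda_\lambda(\varepsilon) = L(B_{E,\lambda f,\varepsilon}, p\alpha)$ be the Lyapunov exponent of the complexified $p$-step cocycle from \eqref{eq:qpper:BzRegroupedDef}. Since $f$ is a trigonometric polynomial, $B_{E,\lambda f}$ extends to an entire $\SL(2,\C)$-valued function of the phase, so Theorem~\ref{t:avilagt} applies on every strip: $\Lambda_\lambda$ is convex, even (the potential is real), and piecewise affine, and its acceleration $\omega_\lambda(E,\varepsilon)$ is integer-valued. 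The key reduction is as follows. By Johnson's theorem \cite{DF1, Johnson1986JDE}, every $E \in \Sigma_{\lambda f,\alpha}$ is a non-uniformly-hyperbolic energy for the $p$-step cocycle. A further result of Avila's global theory \cite{Avila2015Acta} shows that a cocycle with $L>0$ and vanishing acceleration at $\varepsilon=0$ is uniformly hyperbolic; contrapositively, for $E \in \Sigma$ the condition $\omega_\lambda(E,0)=0$ forces $\Lambda_\lambda(0) = 0$. Since in addition $\omega_\lambda(E,0)=0$ and $\Lambda_\lambda$ is convex, even, and nonnegative, it then follows that $\Lambda_\lambda \equiv 0$ on a neighborhood of $0$, i.e.\ the cocycle is subcritical. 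Thus the whole theorem reduces to producing $c=c(f)>0$ such that $\omega_\lambda(E,0)=0$ for every $E \in \Sigma_{\lambda f,\alpha}$ whenever $|\lambda| \le c$.

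To prove the vanishing of the acceleration, fix once and for all some $\varepsilon_0 > 0$ and evaluate $\Lambda_\lambda$ there. As $\lambda \to 0$, the matrices $B_{E,\lambda f,\varepsilon_0}(x)$ converge, uniformly in $x \in \T$ and in $E$ over a fixed bounded set, to the constant cocycle $C_E^p$; here the hypothesis that every component of $f$ has the same degree $d_0$ is used to keep the size of the perturbation $\lambda f^{[k]}(x+\mi\varepsilon_0)$ comparable across all $p$ factors. Upper semicontinuity of the Lyapunov exponent (it is an infimum over $n$ of the continuous functionals $B \mapsto \tfrac1n\int \log\|B^n\|$) yields $\limsup_{\lambda \to 0}\, \sup_E \Lambda_\lambda(\varepsilon_0) \le \sup_E L(C_E^p)$, where the supremum runs over $E \in \Sigma_{\lambda f,\alpha} \subseteq [-2 - c_1|\lambda|,\, 2 + c_1|\lambda|]$ with $c_1 = \|f\|_\infty$. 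Now $L(C_E^p)=0$ for $E \in [-2,2]$ and $L(C_E^p) = p\,\mathrm{arccosh}(|E|/2) = O(\sqrt{|\lambda|})$ for $2 < |E| \le 2 + c_1|\lambda|$, so this supremum tends to $0$. Consequently there is $c=c(f)>0$ with $\sup_{E \in \Sigma}\Lambda_\lambda(\varepsilon_0) < 2\pi\varepsilon_0$ whenever $|\lambda| \le c$.

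The quantization now does the rest. If $\omega_\lambda(E,0) \ge 1$ for some $E \in \Sigma$ with $|\lambda|\le c$, then by convexity the slope of $\Lambda_\lambda$ is at least $2\pi$ on all of $(0,\varepsilon_0]$, so $\Lambda_\lambda(\varepsilon_0) \ge \Lambda_\lambda(0) + 2\pi\varepsilon_0 \ge 2\pi\varepsilon_0$, contradicting the bound just obtained. Hence $\omega_\lambda(E,0)=0$ for all $E \in \Sigma_{\lambda f,\alpha}$ and all $|\lambda|\le c$, and the reduction from the first paragraph finishes the proof.

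The main obstacle is the passage from an approximate statement to an exact one: upper semicontinuity only shows that $\Lambda_\lambda(\varepsilon_0)$ is small, never that it is exactly zero, so by itself it cannot produce the flat interval required by subcriticality. The decisive mechanism is that Avila's \emph{integer} quantization of the acceleration upgrades ``small slope on $(0,\varepsilon_0]$'' into ``zero slope at $0$,'' after which the non-hyperbolicity of $E\in\Sigma$ (Johnson) forces $\Lambda_\lambda(0)=0$ exactly. A secondary technical point is the uniformity over $E \in \Sigma$, in particular at the spectral edges $E \approx \pm(2+c_1|\lambda|)$ where $L(C_E^p)$ is positive but only $O(\sqrt{|\lambda|})$; one must check this does not spoil the single threshold $c(f)$. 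One could instead avoid semicontinuity entirely and, mirroring the computation in the proof of Theorem~\ref{t:qpper:supercritical}, compute $\Lambda_\lambda(\varepsilon)$ directly at the intermediate scale $\varepsilon \lesssim \tfrac{1}{2\pi d_0}\log(1/|\lambda|)$, where the equal-degree hypothesis guarantees that all $p$ factors are simultaneously dominated by the free part; this route yields an explicit subcritical strip at the cost of more bookkeeping.
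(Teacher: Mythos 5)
Your proof is correct, but it takes a genuinely different route from the paper's. The paper adapts the quantitative argument of \cite{MSW2018JST}: it introduces the Herman radius $\varepsilon_H(E,\lambda f)$, notes that the exact affine formula $L(B_{E,\varepsilon},p\alpha) = p\log|\lambda| + \hat{c} + 2\pi d|\varepsilon|$ holds already for all $\varepsilon \ge \varepsilon_H$ (not merely for $\varepsilon$ very large), and then, assuming some $E \in \Sigma$ has acceleration at least one at $\varepsilon = 0$, interpolates down from $\varepsilon_H$ by convexity to obtain $0 \le p\log|\lambda| + \hat{c} + 2\pi(d-1)\varepsilon_H$; the equal-degree hypothesis enters exactly there, through the Herman-radius bound and the identity $pd_0 = d$, which makes the coefficient of $\log|\lambda|$ equal to $1/d_0 > 0$ and hence forces a lower bound on $|\lambda|$. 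You replace this entire quantitative mechanism by a soft one: at a fixed height $\varepsilon_0$, upper semicontinuity of the Lyapunov exponent shows $\sup_{E\in\Sigma}\Lambda_\lambda(\varepsilon_0)\to 0$ as $\lambda \to 0$, and quantization plus convexity then kills any positive acceleration at $\varepsilon = 0$; the endgame (acceleration zero, Johnson's theorem, and Avila's theorem that a regular cocycle with $L>0$ is uniformly hyperbolic) is the same in both proofs, and the paper uses it just as implicitly as you do. Two remarks on your version. First, since $\Sigma_{\lambda f,\alpha}$ moves with $\lambda$ and upper semicontinuity is a pointwise statement, the uniformity in $E$ should be spelled out via a short compactness argument (any sequence $E_n \in \Sigma_{\lambda_n f,\alpha}$ with $\lambda_n \to 0$ accumulates in $[-2,2]$, where the free $p$-step exponent vanishes); this is routine, not a gap. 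Second, and more interestingly, your argument never actually uses the equal-degree hypothesis, contrary to your own attribution: at fixed $\varepsilon_0$ one has $|\lambda f^{[k]}(x+\mi\varepsilon_0)| \le |\lambda|\,\|f^{[k]}\|_\infty \me^{2\pi d_k \varepsilon_0} \to 0$ regardless of whether the degrees $d_k$ agree. So your proof establishes subcriticality at small coupling for arbitrary real trigonometric polynomials on $\T\times\Z_p$, a strictly more general statement than the theorem, which the paper's bookkeeping (requiring $pd_0 = d$) cannot reach. What the paper's route buys in exchange is an explicit, effective threshold $c(f)$ via the Herman radius (as emphasized in the remark following the theorem), whereas your compactness argument yields only existence of $c(f)$; your closing suggestion of computing $\Lambda_\lambda$ directly at the scale $\varepsilon \lesssim \tfrac{1}{2\pi d_0}\log(1/|\lambda|)$ is, in essence, the paper's proof. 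One terminological slip: Johnson's theorem gives that energies in $\Sigma$ are \emph{not uniformly hyperbolic}, which is weaker than the paper's defined term ``non-uniformly hyperbolic'' ($L>0$ and not uniformly hyperbolic); your reduction needs only the former.
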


%Remark: The proof below gives $c(p,K) = (2K+7)^{-(p-1)} $.

\begin{remark}
(a) The constant $c(f)$ can be made explicit for specific examples. The dependence on $f$ mainly enters through the Herman radius (see \eqref{eq:qp+per:hermanRadDef}), which can be computed or estimated for specific choices of trigonometric polynomials.

(b) Notice that this result applies in the case in which one considers periodic perturbations of a quasi-periodic operator whose potential is generated by a trigonometric polynomial on $\T$, since every component of $f$ is then a shift of a single fixed trigonometric polynomial, and hence has the same degree; compare the discussion in Remark~\ref{rem:qpper:super}.
\end{remark}

\begin{proof}[Proof of Theorem~\ref{t:qpper:subcritical}]
This is an adapdation of the main argument of \cite{MSW2018JST}. Normalize $f$ by $\|f\|_\infty = 1$ and assume $|\lambda| $ is small; in particular, we assume \textit{a priori} that  $|\lambda|\leq 1$.

Define the Herman radius, $\varepsilon_H = \varepsilon_H(E,\lambda f)$, by
\begin{equation} \label{eq:qp+per:hermanRadDef}
\varepsilon_H(E,\lambda f) = \sup\set{ \varepsilon\geq 0 : \min_{(x,k) \in \Omega}  |\lambda f(x+\mi\varepsilon,k)  - E| \leq 2 }.
\end{equation}

By the assumption $\|f\|_\infty = 1$, and $|\lambda | \leq 1$, one has $\sigma(H_{\lambda f, \alpha, \omega}) \subseteq [-3,3]$ for $\omega \in \Omega$. For $E \in [-3,3]$, one observes the following bound (for large $|\varepsilon|$)
\begin{align*}
|\lambda f(x + \mi\varepsilon,k) - E| 
& \geq |\lambda||f(x+\mi\varepsilon,k)| - |E| \\
& \geq |\lambda||f(x+\mi\varepsilon,k)| - 3 \\
& \geq \delta |\lambda| \me^{2\pi d_0 |\varepsilon|}-3,
\end{align*}
where $d_0$ denotes the degree of $f^{[k]}$ and $\delta > 0$ is a constant that depends on $f$. Consequently, one arrives at
\begin{equation}
|\lambda f(x + \mi\varepsilon,k) - E|  >2 \text{ when }  |\varepsilon| \geq \frac{1}{2\pi d_0} \log\left( \frac{5}{\delta|\lambda|} \right).
\end{equation}
This shows that
\begin{equation}
\varepsilon_H \leq \frac{1}{2\pi d_0} \log\left( \frac{5}{\delta|\lambda|} \right),
\end{equation}
On the other hand, we have already seen in \eqref{eq:qpper:LElargeEps}
that:\footnote{Recall that $d$ denotes the sum of degrees of components of $f$ in that proof.}
\[
L(B_{E,\varepsilon},p\alpha) = p\log|\lambda| + \hat{c} + 2\pi d|\varepsilon| \text{ for all } |\varepsilon| \geq \varepsilon_0,
\]
which holds with $\varepsilon_0 = \varepsilon_H$ by the same argument\footnote{Namely, the definition of the Herman radius ensures that $(B_{E,\varepsilon},p\alpha)$ is uniformly hyperbolic for $\varepsilon >\varepsilon_H$. Since $\Lambda : \varepsilon\mapsto L(B_{E,\varepsilon},p\alpha)$ is affine in a neighborhood of $\varepsilon$ whenever $(B_{E,\varepsilon},p\alpha)$ is uniformly hyperbolic, the slope of $\Lambda$ cannot change above $\varepsilon=\varepsilon_H$.} as in the proof of \cite[Theorem~1.1]{MSW2018JST}. Now, assume some $E$ in the spectrum is not subcritical. Then the acceleration at $\varepsilon=0$ must be at least one, leading to the following estimate via convexity and \eqref{eq:qpper:LElargeEps}
\begin{align*}
L(B_{E,\varepsilon},p\alpha) 
\leq p\log|\lambda| +\hat{c}+2\pi d \varepsilon_H + 2 \pi (\varepsilon-\varepsilon_H), \quad 0 \le \varepsilon\le \varepsilon_H,
\end{align*}
which yields the following by taking $\varepsilon=0$:
\begin{align*}
0 & \leq  p\log|\lambda|+\hat{c} +2\pi (d-1) \varepsilon_H \\
& \leq  p \log|\lambda| +\hat{c} + \frac{d-1}{d_0}[\log(5/\delta) - \log |\lambda|] \\
& = \frac{pd_0 - d + 1}{d_0} \log |\lambda| + \hat{c} +\frac{d-1}{d_0}\log(5/\delta) \\
& = \frac{1}{d_0} \log |\lambda| + \hat{c} +\frac{d-1}{d_0}\log(5/\delta),
\end{align*}
where we used that every component has the same degree to obtain $pd_0 = d$ in the last line.
This in turn implies a lower bound on $|\lambda|$ in terms of $\hat{c}$, $p$, $d$, and $\delta$. Turning this around, if $|\lambda|$ is sufficiently small, every energy in the spectrum must be subcritical, as promised.
\end{proof}

\begin{proof}[Proof of Theorem~\ref{t:qppermain}]
Suppose $f_0$ is a nonconstant trigonometric polynomial on $\T$, and define $f_1(k) = V_\per(\tilde k)$ for any representative $\tilde k$ of $k \in \Z_p$. The desired result then follows by applying Theorems~\ref{t:qpper:supercritical} and \ref{t:qpper:subcritical} with the sampling function $f(x,k) = f_0(x)+f_1(k)$ which is clearly a trigonometric polynomial for which every component is a nonconstant polynomial of the same degree.\end{proof}

\begin{proof}[Proof of Theorem~\ref{thm:AMOper}]
This follows immediately from Theorem~\ref{t:qppermain}.
\end{proof}

%%%%%%%%%%%%%%%%%CHANGE THEOREM NUMBERING IN APPENDIX
\makeatletter
\renewcommand{\thetheorem}{\thesection.\arabic{theorem}}% Update counter printing
\@addtoreset{theorem}{section}% Reset theorem counter with every new subsection
\makeatother

\begin{appendix}

\section{Ergodic Measures on Accelerated Systems}
\setcounter{subsection}{1}

Given a uniquely ergodic topological dynamical system $(\X,S)$, the setting of the present paper naturally motivates one to understand the structure of the space of $S^m$-invariant (and $S^m$-ergodic) measures on $\X$. We collect some basic results to that effect here. The results in this section are measure-theoretic analogs of statements from Section~\ref{sec:m&uesys} in the topological setting. Throughout this part of the appendix, assume that $(\X,S)$ is uniquely ergodic with unique invariant measure $\mu$.

\begin{lemma}
\label{LEM:S^n-ergodic-structure}
For every $m \in \N$, there is a measurable subset $A \subseteq \mathbb{X}$ and a number $q \in \N$ dividing $m$ with the following properties. $A = S^q(A)$, and the set of ergodic probability measures on $(\mathbb{X},S^m)$ is given by
\[
\{ q \,\mu |_A \circ S^{-j} : 0 \leq j \leq q-1 \}.
\]
Further, $\mu|_A \circ S^{-q} = \mu|_A$, $\mu(A) = 1/q$ and the set $\cup_{j = 0}^{q-1} S^j (A)$ has full $\mu$-measure.
\end{lemma}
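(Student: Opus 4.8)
The plan is to analyze the ergodic decomposition of $\mu$ under the power map $S^m$ through the fixed space of the Koopman operator, and to read off all of the stated structure from it. First I would record the crucial consequence of unique ergodicity: if $\nu$ is any $S^m$-invariant Borel probability measure, then the Ces\`aro average $\bar\nu = \frac1m\sum_{i=0}^{m-1}\nu\circ S^{-i}$ is $S$-invariant (the boundary term cancels because $\nu\circ S^{-m}=\nu$), hence $\bar\nu=\mu$ by unique ergodicity. Consequently $\nu\le m\mu$, and in particular $\nu\ll\mu$. Thus every $S^m$-ergodic measure is absolutely continuous with respect to $\mu$, so it suffices to locate the ergodic components of $\mu$ itself.

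Next I would compute the dimension of the space $V=\{g\in L^2(\X,\mu): g\circ S^m=g\}$ of $S^m$-invariant functions. Writing $U\colon g\mapsto g\circ S$ for the (unitary) Koopman operator, one has $V=\bigoplus_{\xi^m=1}\ker(U-\xi)$. Since $(\X,S,\mu)$ is ergodic, every eigenvalue of $U$ is simple and the set $H$ of eigenvalues is a subgroup of the circle; hence $G:=H\cap\Gamma_m$, with $\Gamma_m$ the group of $m$-th roots of unity, is a subgroup of the cyclic group $\Gamma_m$, so $G=\Gamma_q$ for a unique $q\mid m$, and $\dim V=|G|=q$. Because $V=L^2(\X,\mathcal I,\mu)$, where $\mathcal I$ is the $\sigma$-algebra of $S^m$-invariant sets, finiteness of $\dim V$ forces the measure algebra of $\mathcal I$ to be purely atomic with exactly $q$ atoms $A_0,\dots,A_{q-1}$. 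Each atom carries a unique $S^m$-ergodic measure $\nu_i=\mu(\,\cdot\mid A_i)$, and by the previous paragraph these exhaust the $S^m$-ergodic measures, so $\MSf(m)=q$.

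It remains to exhibit the cyclic structure. Since $S$ commutes with $S^m$ and preserves $\mu$, it maps $\mathcal I$ to itself and permutes the atoms while preserving their measures; if this permutation were not a single $q$-cycle, a proper union of atoms would be an $S$-invariant set of measure in $(0,1)$, contradicting ergodicity of $\mu$. Relabelling, I may assume $S(A_j)=A_{\{j+1 \bmod q\}}$, whence $\mu(A_0)=\dots=\mu(A_{q-1})=1/q$ (equal measures summing to one), $A:=A_0$ satisfies $S^q(A)=A$, and $\bigcup_{j=0}^{q-1}S^j(A)=\bigcup_j A_j$ has full measure. A direct computation using $S$-invariance of $\mu$ together with $S^jA=A_j$ gives $\mu|_A\circ S^{-j}=\mu|_{A_j}$ and $\mu|_A\circ S^{-q}=\mu|_A$; in particular $\nu_j=q\,\mu|_{A_j}=q\,\mu|_A\circ S^{-j}$, which is exactly the asserted list of ergodic measures. (The identity $A=S^q(A)$ is first obtained modulo null sets; passing to the exactly invariant representative $\bigcap_{k\ge0}\bigcup_{n\ge k}S^{-qn}A$ makes it hold on the nose.)

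The main obstacle is the bookkeeping in the second paragraph: one must justify, first, that finiteness of the fixed space $V$ really forces an atomic invariant $\sigma$-algebra with precisely $q$ atoms (the standard identification $V=L^2(\mathcal I)$ together with the fact that $\dim L^2$ of a measure algebra equals its number of atoms), and second, that the arithmetic quantity $q=|H\cap\Gamma_m|$ divides $m$ --- both of which hinge on ergodicity forcing eigenvalue simplicity and on the subgroup property of $H$. Once these are in place, everything else is routine averaging and change of variables.
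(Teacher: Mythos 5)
Your proof is correct, but it takes a genuinely different route from the paper's. The paper argues entirely at the level of measures: starting from an arbitrary $S^m$-ergodic measure $\varrho$, it observes that the translates $\varrho\circ S^{-j}$ are again $S^m$-ergodic (hence pairwise equal or mutually singular), takes $q$ to be the minimal period of $\varrho$ under translation by $S$ (so $q \mid m$ by $S^m$-invariance), and identifies the average $\frac1q\sum_{j=0}^{q-1}\varrho\circ S^{-j}$ with $\mu$ by unique ergodicity; the set $A$ is then manufactured from a set of full $\varrho$-measure that is null for the other translates, and exhaustion of the ergodic measures is settled by a singularity contradiction. You use the same averaging trick, but only to obtain $\nu\ll\mu$ for every $S^m$-invariant $\nu$; the real work is then transferred to the Koopman operator, where $q$ appears as $\dim\ker(U^m-I)=\lvert H\cap\Gamma_m\rvert$ and the atoms of the $S^m$-invariant $\sigma$-algebra replace the paper's translated copies of $A$, with the cyclic structure recovered from ergodicity of $\mu$ under $S$. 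What your route buys: the identity $\MSf(m)=\lvert H\cap\Gamma_m\rvert$ is precisely the eigenvalue characterization that the paper proves separately afterwards (Lemmas~\ref{LEM:n-eigenvalues} and \ref{LEM:eigenvalue-char}), so your argument delivers those essentially for free and unifies the appendix. What it costs: you invoke the spectral theorem for unitaries (to write $\ker(U^m-I)=\bigoplus_{\xi^m=1}\ker(U-\xi)$), simplicity and the group structure of eigenvalues of ergodic systems, and the identification of the fixed space with $L^2$ of the invariant $\sigma$-algebra --- all standard, but the paper's argument avoids functional analysis entirely. One step you compress, ``these exhaust the $S^m$-ergodic measures,'' deserves one explicit sentence: distinct ergodic measures for the same map are mutually singular, so an ergodic $\nu\notin\{\nu_i\}$ would be singular to every $\nu_i$ and hence to $\mu=\sum_i\mu(A_i)\,\nu_i$, contradicting $\nu\ll\mu$; this is routine and does not affect correctness.
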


\begin{proof}
Let $\varrho$ be an $S^m$-ergodic probability measure on $\mathbb{X}$. First note that $\varrho \circ S^{-j}$ is $S^m$ ergodic for all $j \in \N$. Indeed, if $B$ is $S^m$-invariant, then so is $S^{-j}B$ and hence $\varrho \circ S^{-j}(B) = \varrho(S^{-j}B) \in \{0,1\}$. Hence, either $\varrho \circ S^{-j} = \varrho$ or $\varrho \circ S^{-j} \perp \varrho$. Let $q \in \N$ be minimal with the property that $\varrho = \varrho \circ S^{-q}$. Since $\varrho$ is $S^m$-invariant, $q$ needs to divide $m$. The measure
\[
\bar{\varrho} = \frac{1}{q} \sum_{j=0}^{q-1} \varrho \circ S^{-j}
\]
is an $S$-invariant probability measure on $\mathbb{X}$ by construction. Due to the unique ergodicity of $(\mathbb{X},S,\mu)$ it follows that $\bar{\varrho} = \mu$. Let $A'$ be a measurable set such that $\varrho(A') = 1$ and $\varrho \circ S^{-j} (A') = 0$ for all $1 \leq j \leq q-1$. The same holds for the set $A = \cap_{j \in \Z} S^{jq}(A')$, which in addition satisfies $S^q(A) = A$. For an arbitrary subset $C \subseteq \mathbb{X}$ we obtain
\[
\mu |_A (C) = \mu(A\cap C) = \bar{\varrho}(A \cap C)
= \frac{1}{q} \varrho(C),
\]
implying that $\varrho = q \, \mu |_A$. In particular, $\mu(A) = \mu|_A(A) = \varrho(A)/q = 1/q$. Further, we find
\[
\mu(\cup_{j=0}^{q-1} S^j(A)) = \frac{1}{q} \sum_{k=0}^{q-1} \varrho (S^{-k}(\cup_{j=0}^{q - 1} S^j(A))) = \frac{1}{q} \sum_{k=0}^{q-1} \varrho(A) = 1.
\]
Suppose there is another $S^m$-ergodic probability measure $\nu$ on $\mathbb{X}$, which is then singular to each of the $\varrho \circ S^{-j}$. The same holds for each of the measures $\nu \circ S^{-j}$ with $j \in \N$. By the same argument as above, there exists $q' \in \N$ such that $\mu = \bar{\nu} := 1/q' \sum_{j=0}^{q'-1} \nu \circ S^{-j}$. On the other hand, $\bar{\nu} \perp \bar{\varrho}$ leading to a contradiction.
\end{proof}

\begin{lemma}
\label{LEM:n-eigenvalues}
For every $m \in \N$, $\me^{2 \pi \mi/m}$ is an eigenvalue of $(\X,S, \mu)$ if and only if $(\mathbb{X},S^m)$ has precisely $m$ ergodic probability measures.
\end{lemma}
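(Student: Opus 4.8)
The plan is to prove both implications by matching an $L^2$-eigenfunction for $\me^{2\pi\mi/m}$ with the cyclic structure of the $S^m$-ergodic measures furnished by Lemma~\ref{LEM:S^n-ergodic-structure}. Throughout I would use freely that $\MSf(m)$ divides $m$, which is recorded in that lemma, so that the statement ``$(\X,S^m)$ has precisely $m$ ergodic measures'' is just the assertion $\MSf(m)=m$.

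For the direction $\MSf(m)=m \Rightarrow \me^{2\pi\mi/m}$ is an eigenvalue, I would invoke Lemma~\ref{LEM:S^n-ergodic-structure}: the number of $S^m$-ergodic measures is the divisor $q$ of $m$ appearing there, and it equals $m$ by hypothesis, so $q=m$. Hence there is a measurable $A$ with $S^m(A)=A$, $\mu(A)=1/m$, and $\bigcup_{j=0}^{m-1}S^j(A)$ of full measure. First I would note that the sets $S^j(A)$, $0\le j\le m-1$, are essentially pairwise disjoint: each has measure $1/m$ by $S$-invariance of $\mu$, and their union has measure $1$, so any positive-measure overlap is impossible. Then I would set $f=\sum_{j=0}^{m-1}\me^{2\pi\mi j/m}\chi_{S^j(A)}$ and verify directly that $f\circ S=\me^{2\pi\mi/m}f$ $\mu$-a.e., exactly as in the topological construction in the proof of Lemma~\ref{LEM:top-eigenvalue-criterion} (the wrap-around at $j=m-1$ uses $S^m(A)=A$). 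Since $f$ is a nonzero element of $L^2(\X,\mu)$, this exhibits $\me^{2\pi\mi/m}$ as an eigenvalue.

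For the converse, suppose $f\in L^2(\X,\mu)$, $f\not\equiv 0$, satisfies $f\circ S=\me^{2\pi\mi/m}f$; the idea is to promote $f$ to an indicator-type eigenfunction. Since $|f|\circ S=|f|$, ergodicity of $(\X,S,\mu)$ forces $|f|$ to be a.e.\ constant, and I normalize $|f|\equiv 1$. Then $f^m\circ S=\me^{2\pi\mi}f^m=f^m$, so $f^m$ is $S$-invariant and hence a.e.\ constant by ergodicity; after multiplying $f$ by a suitable unimodular scalar I may assume $f^m\equiv 1$, i.e.\ $f$ takes values in the $m$-th roots of unity. Writing $A_j=\{x:f(x)=\me^{2\pi\mi j/m}\}$, these sets partition $\X$ up to null sets, the relation $f\circ S=\me^{2\pi\mi/m}f$ gives $S(A_j)=A_{(j+1)\,\mathrm{mod}\,m}$ a.e., and $S$-invariance of $\mu$ yields $\mu(A_j)=1/m$ for every $j$.

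The final step, which I expect to be the only point requiring genuine care, is to convert the $m$ sets $A_j$ into the bound $\MSf(m)\ge m$. Each $\rho_j:=m\,\mu|_{A_j}$ is an $S^m$-invariant probability measure (as $S^m(A_j)=A_j$), and the $\rho_j$ are pairwise mutually singular because the $A_j$ are essentially disjoint. I would then argue that a system with exactly $q$ ergodic measures admits at most $q$ pairwise mutually singular invariant probability measures: by the ergodic decomposition each invariant probability measure is a convex combination $\sum_i c_i\nu_i$ of the $q$ ergodic measures $\nu_i$, and mutual singularity of two such measures forces the index sets $\{i:c_i>0\}$ to be disjoint, so at most $q$ of them can be pairwise singular. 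Applying this to $\rho_0,\dots,\rho_{m-1}$ gives $m\le\MSf(m)$, while $\MSf(m)\mid m$ gives the reverse inequality, whence $\MSf(m)=m$. The delicate part is justifying that mutually singular invariant measures must use disjoint ergodic components; this follows from the mutual singularity of distinct ergodic measures together with a short argument that, on a set separating $\rho_j$ from $\rho_k$, the domination $\rho_j\ge c_{ji}\nu_i$ forces any shared ergodic component $\nu_i$ to be simultaneously concentrated on and off that set, a contradiction.
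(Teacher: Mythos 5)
Your proof is correct and follows essentially the same route as the paper's: your backward direction uses the identical eigenfunction $f=\sum_{j=0}^{m-1}\me^{2\pi\mi j/m}\chi_{S^j(A)}$ built from the set $A$ of Lemma~\ref{LEM:S^n-ergodic-structure}, and your forward direction likewise produces $m$ essentially disjoint, cyclically permuted sets of measure $1/m$ whose normalized restrictions give $m$ mutually singular $S^m$-invariant probability measures, which together with the divisibility bound from Lemma~\ref{LEM:S^n-ergodic-structure} forces $\MSf(m)=m$. The only cosmetic differences are that you normalize the eigenfunction to take values in the $m$-th roots of unity (via ergodicity applied to $|f|$ and $f^m$) where the paper instead pulls back arcs of the circle, and that you spell out explicitly the pigeonhole step---mutually singular invariant measures must use disjoint ergodic components---which the paper leaves implicit.
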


\begin{proof}
Suppose $\me^{2 \pi \mi /m}$ is an eigenvalue of $(\mathbb{X},S, \mu)$ with (almost-surely) normalized eigenfunction $f$. For $0 \leq j \leq m-1$, let
\[
A_j = f^{-1}(\{ \me^{2 \pi \mi \alpha} : j/m \leq \alpha < (j+1)/m \}).
\]
By construction, all the $A_j$ are disjoint and we have for $0 \leq j \leq m-2$ that $S(A_j) = A_{j+1}$ as well as $S(A_{m-1}) = A_0$ and hence $\mu(A_j)$ does not depend on $j$. Since $|f| =1$ almost surely, the union $A_0 \cup \ldots \cup A_{m-1}$ has full measure, implying that $\mu(A_j) = 1/m$ for all $0\leq j \leq m-1$. The probability measures $\mu_j = m \, \mu |_{A_j}$ are $S^m$-invariant and pairwise singular to each other for all $0 \leq j \leq m-1$. By Lemma~\ref{LEM:S^n-ergodic-structure}, there are at most $m$ ergodic measures for $(\mathbb{X},S^m)$, so we obtain that the measures $\mu_j$ are in fact ergodic and that there are precisely $m$ ergodic probability measures on $(\mathbb{X},S^m)$.

Conversely, suppose that there are precisely $m$ ergodic probability measures on $(\mathbb{X},S^m)$. Let $A$ be as in Lemma~\ref{LEM:S^n-ergodic-structure}. The function
\[
f(x) = \sum_{j=0}^{m-1} \me^{2 \pi \mi j/m} \chi^{}_{S^j(A)}(x)
\]
almost surely satisfies $f(Sx) = \me^{2 \pi \mi /m} f(x)$ and is hence a measurable eigenfunction with eigenvalue $\me^{2 \pi \mi/m}$.
\end{proof}

Recall from Definition~\ref{DEF:MSfmdef} that $\MSf(m)$ denotes the number of $S^m$-ergodic Borel probability measures on $\mathbb{X}$.

\begin{lemma}
\label{LEM:eigenvalue-char}
Let $m \in \N$ and let $k$ be the largest divisor of $m$ such that $\me^{2 \pi \mi/k}$ is an eigenvalue. Then, $\MSf(m) =k$.
\end{lemma}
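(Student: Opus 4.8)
The plan is to mirror the proof of Proposition~\ref{PROP:s(n)-eigenvalue}, replacing the topological input about $S^m$-minimal components with the measure-theoretic structure supplied by Lemma~\ref{LEM:S^n-ergodic-structure}. Write $q = \MSf(m)$. By Lemma~\ref{LEM:S^n-ergodic-structure}, $q$ divides $m$ and there is a measurable set $A \subseteq \X$ with $S^q(A) = A$, $\mu(A) = 1/q$, and $\bigcup_{j=0}^{q-1} S^j(A)$ of full $\mu$-measure. I would establish the two inequalities $q \le k$ and $q \ge k$ separately.

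For $q \le k$, I would first note that the translates $S^j(A)$, $0 \le j \le q-1$, are pairwise disjoint modulo $\mu$-null sets: each has measure $1/q$ by $S$-invariance of $\mu$, and their union has full measure, so $q$ sets of measure $1/q$ must be essentially disjoint. Then, setting $f = \sum_{j=0}^{q-1} \me^{2\pi\mi j/q} \chi^{}_{S^j(A)}$, a direct check using $S^q(A) = A$ (exactly as in the converse direction of Lemma~\ref{LEM:n-eigenvalues}) shows $f \circ S = \me^{2\pi\mi/q} f$ holds $\mu$-a.e. Hence $\me^{2\pi\mi/q}$ is an eigenvalue of $(\X,S,\mu)$ with $q \mid m$, so maximality of $k$ forces $q \le k$.

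For $q \ge k$, I would start from the fact that $\me^{2\pi\mi/k}$ is an eigenvalue and choose a normalized eigenfunction $f$ with $|f| = 1$ a.e.\ and $f \circ S = \me^{2\pi\mi/k} f$. As in Lemma~\ref{LEM:n-eigenvalues}, the level sets $A_j = f^{-1}(\{\me^{2\pi\mi\alpha} : j/k \le \alpha < (j+1)/k\})$, for $0 \le j \le k-1$, are disjoint, satisfy $S(A_j) = A_{(j+1) \bmod k}$, and each has measure $1/k$. The crucial observation is that $k \mid m$ forces $S^m(A_j) = A_{(j+m)\bmod k} = A_j$, so each $A_j$ is $S^m$-invariant and $k\,\mu|_{A_j}$ is an $S^m$-invariant probability measure. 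Extracting an $S^m$-ergodic component of each $k\,\mu|_{A_j}$ produces $k$ pairwise singular (disjointly supported) $S^m$-ergodic measures, whence $\MSf(m) \ge k$.

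Combining the two bounds gives $\MSf(m) = k$. The one genuinely delicate point is the second inequality: one must not merely produce the $S^m$-invariant measures $k\,\mu|_{A_j}$, but pass to \emph{distinct ergodic} measures, and it is precisely the disjointness of the supports $A_j$ — hence the mutual singularity of their ergodic components — that guarantees the resulting $k$ ergodic measures are distinct. Everything else is a faithful transcription of the topological argument into the measure-theoretic setting, using only Lemma~\ref{LEM:S^n-ergodic-structure} and Lemma~\ref{LEM:n-eigenvalues}.
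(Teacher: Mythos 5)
Your proof is correct, but it is organized quite differently from the paper's. The paper's own proof never constructs an eigenfunction or a measure: it works entirely at the level of the function $\MSf$, combining (i) the monotonicity $\MSf(m_1) \le \MSf(m_2)$ whenever $m_1 \mid m_2$, (ii) the identity $\MSf(q) = q$ for $q = \MSf(m)$ (deduced from Lemma~\ref{LEM:S^n-ergodic-structure} together with the observation that an $S^m$-ergodic, $S^q$-invariant measure is $S^q$-ergodic), and (iii) two black-box invocations of Lemma~\ref{LEM:n-eigenvalues} --- one to convert $\MSf(q)=q$ into the eigenvalue $\me^{2\pi\mi/q}$, and one, inside a contradiction argument, to convert a hypothetical larger divisor $\ell$ carrying the eigenvalue $\me^{2\pi\mi/\ell}$ into $\MSf(\ell)=\ell$. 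You instead inline and generalize the two directions of Lemma~\ref{LEM:n-eigenvalues}: for $q \le k$ you build the eigenfunction $\sum_j \me^{2\pi\mi j/q}\chi_{S^j(A)}$ directly from the set $A$ of Lemma~\ref{LEM:S^n-ergodic-structure} applied to $m$ (skipping the paper's detour through $\MSf(q)=q$), and for $q \ge k$ you manufacture $k$ mutually singular $S^m$-invariant measures from the level sets of the eigenfunction for $\me^{2\pi\mi/k}$ (where the paper would get $\MSf(k)=k$ from Lemma~\ref{LEM:n-eigenvalues} and then apply monotonicity). The trade-off: the paper's version is shorter given the lemmas already proved and isolates a reusable monotonicity fact, while yours is self-contained and in effect establishes the sharper statement that both directions of Lemma~\ref{LEM:n-eigenvalues} persist when $m$ is replaced by an arbitrary divisor. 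On the point you flag as delicate --- passing from the mutually singular invariant measures $k\,\mu|_{A_j}$ to $k$ \emph{distinct} ergodic measures --- you are right that this needs justification, but note that the paper's monotonicity step conceals exactly the same issue; it is unproblematic here because Lemma~\ref{LEM:S^n-ergodic-structure} guarantees there are only finitely many $S^m$-ergodic measures, so each $k\,\mu|_{A_j}$ is a finite convex combination of them, and mutual singularity forces these combinations to involve pairwise disjoint, nonempty sets of ergodic measures, giving $\MSf(m) \ge k$ without any appeal to a general ergodic decomposition theorem.
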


\begin{proof}
If $m_1$ divides $m_2$, then $\MSf(m_2) \geq \MSf(m_1)$. This is because an $S^{m_1}$-invariant measure is also $S^{m_2}$-invariant. Hence, there are at least $\MSf(m_1)$ mutually singular measures that are $S^{m_2}$-invariant.
Given $j,q \in \N$, an $S^{jq}$-ergodic measure $\mu'$ that is $S^q$-invariant is also $S^q$-ergodic. Therefore, if $\MSf(m) = q$, then $\MSf(q) = q$ due to Lemma~\ref{LEM:S^n-ergodic-structure}. By Lemma~\ref{LEM:n-eigenvalues}, $\me^{2 \pi \mi/q}$ is an eigenvalue. It remains to show that $q$ is maximal with this property. Suppose $\ell > q$ is also a divisor of $m$ such that $\me^{2 \pi \mi/\ell}$ is an eigenvalue, implying $\MSf(\ell) = \ell$. Using our first observation in this proof, we obtain $\MSf(m) \geq \MSf(\ell) = \ell > q = \MSf(m)$, a contradiction.
\end{proof}

\begin{lemma}
If $m_1$ and $m_2$ are relatively prime, then $\MSf(m_1m_2) = \MSf(m_1) \MSf(m_2)$. Further, for each prime $p$ there exists a number $\ell_p \in \N_0 \cup \{\infty \}$ such that $\MSf(p^{\ell}) = \min \{ p^{\ell}, p^{\ell_p} \}$ for all $\ell \in \N_0$.
\end{lemma}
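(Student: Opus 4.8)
The plan is to deduce both statements from Lemma~\ref{LEM:eigenvalue-char}, which identifies $\MSf(m)$ with the largest divisor $k$ of $m$ for which $\me^{2\pi\mi/k}$ is an eigenvalue of $(\X,S,\mu)$, together with one structural input: the set $G$ of eigenvalues of $(\X,S,\mu)$ is a subgroup of $\partial\D$. The latter is the standard fact that, by ergodicity, every eigenfunction may be normalized to be unimodular almost everywhere, so that products, conjugates, and hence integer powers of eigenfunctions are again eigenfunctions; I would record it (or cite it from, e.g., \cite{Walters}) at the outset. Writing $\zeta_n = \me^{2\pi\mi/n}$ for brevity, the content of Lemma~\ref{LEM:eigenvalue-char} is $\MSf(m) = \max\{k : k \mid m,\ \zeta_k \in G\}$, and I isolate the single elementary consequence used throughout: if $\zeta_k \in G$ and $d \mid k$, then $\zeta_d = \zeta_k^{k/d} \in G$, so membership of a root of unity in $G$ is closed under passing to divisors of the order.

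For multiplicativity, assume $\gcd(m_1,m_2)=1$ and set $k_i = \MSf(m_i)$, so that $k_i \mid m_i$; hence $\gcd(k_1,k_2)=1$ and $k_1 k_2 \mid m_1 m_2$. For the lower bound I would observe that $\zeta_{k_1},\zeta_{k_2} \in G$ generate the cyclic group of $k_1 k_2$-th roots of unity (since $\mathrm{lcm}(k_1,k_2)=k_1 k_2$ by coprimality), which is itself generated by $\zeta_{k_1 k_2}$; thus $\zeta_{k_1 k_2} \in G$ and Lemma~\ref{LEM:eigenvalue-char} gives $\MSf(m_1 m_2) \ge k_1 k_2$. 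For the reverse inequality, let $k = \MSf(m_1 m_2)$ and use $\gcd(m_1,m_2)=1$ to factor $k = d_1 d_2$ uniquely with $d_1 \mid m_1$ and $d_2 \mid m_2$; since $\zeta_{d_1}$ and $\zeta_{d_2}$ are powers of $\zeta_k \in G$, they lie in $G$, and maximality gives $d_1 \le k_1$, $d_2 \le k_2$, whence $k = d_1 d_2 \le k_1 k_2$. Combining the two bounds yields $\MSf(m_1 m_2) = \MSf(m_1)\MSf(m_2)$.

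For the prime-power statement, I would set $\ell_p = \sup\{\ell \in \N_0 : \zeta_{p^\ell} \in G\} \in \N_0 \cup \{\infty\}$. By the divisor-closure remark, the set of such $\ell$ is an initial segment of $\N_0$, namely all $\ell \le \ell_p$. Since the divisors of $p^\ell$ are exactly $1, p, \dots, p^\ell$, the largest $p^j$ with $j \le \ell$ and $\zeta_{p^j} \in G$ is $p^{\min\{\ell,\ell_p\}}$, so Lemma~\ref{LEM:eigenvalue-char} gives $\MSf(p^\ell) = p^{\min\{\ell,\ell_p\}} = \min\{p^\ell, p^{\ell_p}\}$, with the convention $p^\infty = \infty$ covering the case $\ell_p = \infty$.

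The computations here are entirely elementary, so the only genuine content is the subgroup property of $G$; once that is in hand, everything reduces to bookkeeping with divisors via Lemma~\ref{LEM:eigenvalue-char}. The two points I would state carefully are the possibility $\ell_p = \infty$ (which occurs precisely when $\MSf(p^\ell) = p^\ell$ for every $\ell$) and the uniqueness of the factorization $k = d_1 d_2$ in the reverse inequality, both of which are immediate from coprimality and unique factorization.
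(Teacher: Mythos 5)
Your proof is correct and follows essentially the same route as the paper's: both arguments reduce everything to Lemma~\ref{LEM:eigenvalue-char} together with the fact that the eigenvalues of $(\X,S,\mu)$ form a group, and then carry out the same divisor bookkeeping (coprime factorization of $\MSf(m_1m_2)$ for the first claim, an initial-segment argument for the prime-power claim). The only cosmetic difference is that you establish multiplicativity via a two-sided inequality where the paper argues by contradiction on the maximality of the factors, but the underlying ideas are identical.
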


\begin{proof}
This follows essentially via the characterization in terms of eigenvalues.
Let $\MSf(m_1m_2) = k$. Since $m_1,m_2$ are coprime, $k$ can be written uniquely as $k = k_1 k_2$ such that $k_1|m_1$ and $k_2|m_2$. Since $\me^{2 \pi \mi/k}$ is an eigenvalue and the eigenvalues build a group, also $\me^{2 \pi \mi/k_1}$ and $\me^{2 \pi \mi/k_2}$ are eigenvalues. If there was an $\ell > k_1$ with $\ell|m_1$ and $\me^{2 \pi \mi /\ell}$ an eigenvalue, then also $\me^{2 \pi \mi/(\ell k_2)}$ would be an eigenvalue. This would imply $\MSf(m_1m_2) \geq \ell k_2 > k$, a contradiction. Hence, $k_1$ is maximal with that property and $\MSf(m_1) = k_1$. Analogously, we find that $\MSf(m_2) = k_2$. This shows the first claim.

Given a prime $p$, let $\ell_p$ be the largest power such that $\me^{2 \pi \mi/p^{\ell_p}}$ is an eigenvalue of $(\mathbb{X}, S, \mu)$. If $\me^{2 \pi \mi / p^{\ell}}$ is an eigenvalue for all $\ell \in \N_0$, set $\ell_p = \infty$. Since the eigenvalues form a group, $\me^{2 \pi \mi / p^{\ell}}$ is also an eigenvalue and hence $\MSf(p^{\ell}) = p^{\ell}$ for all $0 \leq \ell \leq \ell_p$. If $\ell_p\neq \infty$ and $\ell > \ell_p$, the statement $\MSf(p^{\ell}) = p^{\ell_p}$ follows immediately from Lemma~\ref{LEM:eigenvalue-char}.
\end{proof}

\section{Strict Ergodicity of Product Systems} \label{app:StrictErgProd}
\setcounter{subsection}{1}
Here, we pursue the question under which condition the product of two uniquely ergodic/minimal systems is again uniquely ergodic/minimal. 
This is closely related to the joining theory of dynamical systems, pioneered by Furstenberg in \cite{Fu67}. We give an overview of some elementary results for the reader's convenience.

Here, a (topological) \emph{dynamical system} $(X,T)$ consists of a compact metric space $X$ and a homeomorphism $T$ on $X$.

\begin{definition}
A (topological) joining of two dynamical systems $(X_1,T_1)$ and $(X_2,T_2)$ is a non-empty and closed, $T_1 \times T_2$-invariant subset $Z \subseteq X_1 \times X_2$ such that $\pi_1(Z) = X_1$ and $\pi_2(Z) = X_2$, where $\pi_1,\pi_2$ denote the projections to the first and second coordinate, respectively.
\end{definition}

We call $(X_1,T_1)$ and $(X_2,T_2)$ (topologically) \emph{disjoint} if $(X_1 \times X_2, T_1 \times T_2)$ is their only joining.
For the following, compare \cite{Gl03}.

\begin{fact}
Two minimal dynamical systems $(X_1,T_1)$ and $(X_2,T_2)$ are (topologically) disjoint if and only if their product system is minimal.
\end{fact}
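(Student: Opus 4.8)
The plan is to reduce the equivalence to a single observation: when $(X_1,T_1)$ and $(X_2,T_2)$ are minimal, the joinings of the two systems are \emph{exactly} the nonempty closed $T_1 \times T_2$-invariant subsets of $X_1 \times X_2$. Once this is established, the claimed equivalence becomes a matter of unwinding definitions, so I would organize the whole argument around proving it.

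First I would verify this key observation. Let $Z \subseteq X_1 \times X_2$ be any nonempty closed $T_1 \times T_2$-invariant set. The projection $\pi_1(Z)$ is nonempty, and it is closed as the continuous image of a compact set. Moreover, since $\pi_1 \circ (T_1 \times T_2) = T_1 \circ \pi_1$, the set $\pi_1(Z)$ is $T_1$-invariant. By minimality of $(X_1,T_1)$, this forces $\pi_1(Z) = X_1$, and the identical argument gives $\pi_2(Z) = X_2$. Hence the surjectivity conditions in the definition of a joining are automatically satisfied, so $Z$ is a joining; conversely, every joining is by definition a nonempty closed invariant set. Thus the two families coincide.

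With this in hand I would dispatch both directions simultaneously. By definition, $(X_1,T_1)$ and $(X_2,T_2)$ are disjoint precisely when the only joining is $X_1 \times X_2$ itself. By the observation above, this is equivalent to the statement that the only nonempty closed $T_1 \times T_2$-invariant subset of $X_1 \times X_2$ is the whole space, which is exactly minimality of the product system $(X_1 \times X_2, T_1 \times T_2)$.

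There is no serious obstacle here; the proof is essentially immediate once the projection observation is made. The one point requiring care is checking that $\pi_i(Z)$ is genuinely $T_i$-invariant (not merely forward-invariant), which follows from the equivariance identity $\pi_i \circ (T_1 \times T_2) = T_i \circ \pi_i$ and the fact that $T_i$ is a homeomorphism, so that minimality of the factor can be applied. I would also remark explicitly that $X_1 \times X_2$ is always itself a joining, so that disjointness is correctly read as the nonexistence of any \emph{proper} joining, matching the reduction to minimality above.
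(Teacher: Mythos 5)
Your proof is correct. Note that the paper itself gives no argument for this statement: it is recorded as a Fact with a pointer to Glasner's book \cite{Gl03}, so there is no in-paper proof to compare against; your write-up supplies the standard argument that the citation stands in for. The key observation is exactly the right one: under minimality of both factors, the surjectivity requirements $\pi_i(Z)=X_i$ in the definition of a joining are automatic for any nonempty closed $T_1\times T_2$-invariant set $Z$ (projections of compact sets are closed, and equivariance plus invariance of $Z$ gives $T_i(\pi_i(Z))=\pi_i(Z)$), so joinings coincide with nonempty closed invariant subsets of the product, and disjointness is then literally the statement that the product has no proper such subset, i.e.\ is minimal. The subtlety you flag about full versus forward invariance is handled correctly; one can also note that for a homeomorphism of a compact space the two notions of minimality agree, since any nonempty closed forward-invariant set contains the nonempty closed fully invariant set $\bigcap_{n\ge 0} T^n(K)$.
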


There is a natural analogue of this observation in measure-theoretic terms. If $(X,T)$ is a topological dynamical system and $\mu$ a $T$-invariant Borel probability measure on $X$, we call $(X,T,\mu)$ a (measure-preserving) dynamical system.

\begin{definition}
The joining of two measure-preserving dynamical systems $(X_1,T_1,\mu_1)$ and $(X_2,T_2,\mu_2)$ is a $T_1\times T_2$-invariant Borel probability measure $\mu$ on $X_1\times X_2$ such that $\mu_i = \mu \circ \pi_i^{-1}$ for $i \in \{1,2\}$.
\end{definition}

The dynamical systems $(X_1,T_1,\mu_1)$ and $(X_2,T_2,\mu_2)$ are called \emph{disjoint} if $\mu_1 \times \mu_2$ is their only joining.

\begin{lemma}
\label{LEM:disjoint-uniqueness}
Two uniquely ergodic dynamical systems $(X_1,T_1,\mu_1)$ and $(X_2,T_2,\mu_2)$ are disjoint if and only if their product system is uniquely ergodic.
\end{lemma}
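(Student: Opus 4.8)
\textbf{Proof plan for Lemma~\ref{LEM:disjoint-uniqueness}.}

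The plan is to translate ``unique ergodicity of the product'' into the statement that the product system admits exactly one invariant Borel probability measure, and to match this against the defining property of disjointness, namely that $\mu_1 \times \mu_2$ is the \emph{only} joining. The key observation tying the two notions together is that every joining of $(X_1,T_1,\mu_1)$ and $(X_2,T_2,\mu_2)$ is in particular a $T_1\times T_2$-invariant Borel probability measure on $X_1\times X_2$, and conversely, because $X_1$ and $X_2$ are individually uniquely ergodic, the marginals of \emph{any} $T_1\times T_2$-invariant measure are forced to be $\mu_1$ and $\mu_2$, so that every invariant measure on the product is automatically a joining. This reduces the lemma to a clean correspondence between the two sets of measures.

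First I would prove the forward direction. Assume the two systems are disjoint, so $\mu_1\times\mu_2$ is the only joining. Let $\nu$ be any $T_1\times T_2$-invariant Borel probability measure on $X_1\times X_2$. Its pushforward $\nu\circ\pi_1^{-1}$ under the first projection is a $T_1$-invariant Borel probability measure on $X_1$; by unique ergodicity of $(X_1,T_1,\mu_1)$ it equals $\mu_1$, and likewise $\nu\circ\pi_2^{-1}=\mu_2$. Hence $\nu$ is a joining, and by disjointness $\nu=\mu_1\times\mu_2$. Since $\nu$ was an arbitrary invariant measure, the product system has a unique invariant measure, i.e.\ it is uniquely ergodic.

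For the converse, suppose the product system $(X_1\times X_2, T_1\times T_2)$ is uniquely ergodic. Any joining $\nu$ of the two systems is by definition a $T_1\times T_2$-invariant Borel probability measure on $X_1\times X_2$, and unique ergodicity forces $\nu$ to be the single invariant measure. Since $\mu_1\times\mu_2$ is itself a $T_1\times T_2$-invariant measure with the correct marginals (each factor is $T_i$-invariant, so the product is $T_1\times T_2$-invariant, and its projections are $\mu_1,\mu_2$), it \emph{is} a joining, hence equals that unique invariant measure. Therefore $\mu_1\times\mu_2$ is the only joining, i.e.\ the systems are disjoint. I do not anticipate a serious obstacle here: the argument is essentially a bookkeeping exercise once one notes that unique ergodicity of the factors pins down the marginals of any product-invariant measure. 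The one point requiring a word of care is confirming that $\mu_1\times\mu_2$ is genuinely invariant and has the stated marginals, which is immediate from Fubini, so the proof is short.
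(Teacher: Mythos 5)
Your proposal is correct and follows essentially the same route as the paper's proof: both directions rest on the observations that unique ergodicity of the factors forces the marginals of any product-invariant measure (so every invariant measure is a joining) and that $\mu_1\times\mu_2$ is always a joining. The only cosmetic difference is that the paper argues the direction ``uniquely ergodic $\Rightarrow$ disjoint'' by contrapositive, while you argue it directly.
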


\begin{proof}
First, assume that $(X_1,T_1,\mu_1)$ and $(X_2,T_2,\mu_2)$ are not disjoint. Then, there exist at least two different joinings $\mu$ and $\nu$ on $(X_1 \times X_2, T_1 \times T_2)$. By assumption, $\mu$ and $\nu$ are both $T_1 \times T_2$-invariant and hence, the product system is not uniquely ergodic.

Conversely, assume that $(X_1,T_1,\mu_1)$ and $(X_2,T_2,\mu_2)$ are disjoint and let $\mu$ be an arbitrary $T_1 \times T_2$-invariant measure on $X_1 \times X_2$. Since $\mu \circ \pi_i^{-1}$ is $T_i$-invariant for each $i \in \{1,2\}$, the unique ergodicity of $(X_i,T_i, \mu_i)$ implies that $\mu \circ \pi_i^{-1} = \mu_i$. That is, $\mu$ is a joining and as such unique.
\end{proof}

At this point, we have reformulated the original problem in terms of the question under which conditions two minimal/uniquely ergodic dynamical systems are disjoint. 

There is an abundance of useful characterizations and criteria for disjointness; compare for example \cite{Fu67,Gl03,Th95}.
In order to exclude disjointness, it suffices to find a non-trivial common factor of both dynamical systems. More precisely, we call $(Y,S)$ a (topological) factor of $(X,T)$ if there is a continuous surjective map $\pi \colon X \to Y$ such that $\pi \circ T = S \circ \pi$. The factor is called trivial if it coincides with the identity map on a singleton. For the following, see \cite[Prop.~II.2]{Fu67}.

\begin{fact}
\label{FACT:top-factor-not-disjoint}
If $(X_1,T_1)$ and $(X_2,T_2)$ have a non-trivial common topological factor, they are not disjoint.
\end{fact}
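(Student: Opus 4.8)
The plan is to prove the contrapositive in constructive form: from a non-trivial common factor I will exhibit a topological joining that is a \emph{proper} closed subset of $X_1 \times X_2$, which immediately shows that the full product is not the unique joining, i.e.\ that the systems are not disjoint. Let $(Y,S)$ be the non-trivial common factor and let $\varphi_1 \colon X_1 \to Y$ and $\varphi_2 \colon X_2 \to Y$ be the associated factor maps, each continuous and surjective with $\varphi_i \circ T_i = S \circ \varphi_i$ (and recall that $S$ is then automatically a homeomorphism). The natural candidate is the \emph{fiber product} of the two systems over $Y$,
\[
Z = \{ (x_1,x_2) \in X_1 \times X_2 : \varphi_1(x_1) = \varphi_2(x_2) \}.
\]

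First I would check that $Z$ is a topological joining in the sense of the definition above. Since $\varphi_1$ and $\varphi_2$ are continuous and $Y$ is metric (hence Hausdorff), the condition $\varphi_1(x_1) = \varphi_2(x_2)$ cuts out a closed subset, so $Z$ is closed. Surjectivity of the factor maps shows that $Z$ is non-empty and that both coordinate projections are onto: given $x_1 \in X_1$, surjectivity of $\varphi_2$ furnishes $x_2$ with $\varphi_2(x_2) = \varphi_1(x_1)$, whence $(x_1,x_2) \in Z$ and $\pi_1(Z) = X_1$, and symmetrically $\pi_2(Z) = X_2$. Invariance under $T_1 \times T_2$ is exactly the intertwining relation: if $\varphi_1(x_1) = \varphi_2(x_2)$, then $\varphi_1(T_1 x_1) = S\varphi_1(x_1) = S\varphi_2(x_2) = \varphi_2(T_2 x_2)$, so $(T_1 x_1, T_2 x_2) \in Z$; running the same computation with $S^{-1}$ gives invariance under the inverse map, and together these yield $(T_1\times T_2)(Z) = Z$.

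It then remains to see that $Z \neq X_1 \times X_2$, and this is the only step where the hypothesis genuinely enters. Since the factor $(Y,S)$ is non-trivial, $Y$ contains two distinct points $y \neq y'$; choosing $x_1 \in \varphi_1^{-1}(y)$ and $x_2 \in \varphi_2^{-1}(y')$ (both non-empty by surjectivity) produces a pair with $\varphi_1(x_1) = y \neq y' = \varphi_2(x_2)$, so $(x_1,x_2) \notin Z$. Thus $Z$ is a joining strictly contained in $X_1 \times X_2$, the product therefore admits more than one joining, and the two systems are not disjoint, as claimed.

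I do not expect a genuine obstacle here: the argument is the standard fiber-product construction and each verification is routine. The only points requiring care are bookkeeping — keeping the coordinate projections $\pi_1,\pi_2$ appearing in the definition of a joining notationally distinct from the factor maps $\varphi_1,\varphi_2$ onto $Y$ — and isolating the precise role of non-triviality, namely that having at least two points in $Y$ is exactly what forces the fiber product to be a proper subset of the full product rather than all of it.
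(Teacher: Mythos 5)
Your proof is correct. The paper gives no argument for this fact at all---it is quoted directly from Furstenberg \cite[Prop.~II.2]{Fu67}---and your fiber product $Z=\{(x_1,x_2)\in X_1\times X_2 : \varphi_1(x_1)=\varphi_2(x_2)\}$ is exactly the standard construction behind that citation, with all verifications (closedness from continuity of the $\varphi_i$ and Hausdorffness of $Y$, fullness of the projections from surjectivity, two-sided invariance from the intertwining relations, and properness from the existence of two distinct points in $Y$) carried out soundly. One cosmetic remark: $S$ being invertible is not ``automatic'' from the factor relation (one-sided shifts are factors of two-sided ones); rather, it is built into the paper's definition of a dynamical system, and that is what licenses your use of $S^{-1}$.
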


There is a natural analogue of this criterion in the measure-theoretic regime. Here, $(Y,S,\nu)$ is called a \emph{factor} of $(X,T,\mu)$ if there is a measurable map $\pi \colon X \to Y$ such that $\pi \circ T = S \circ \pi$ up to null sets, and $\nu = \mu \circ \pi^{-1}$. Such a factor is called trivial it is isomorphic to the identity map on a singleton. The following analogue of Fact~\ref{FACT:top-factor-not-disjoint} can be found in \cite[Prop.~I.2]{Fu67}.

\begin{fact}
\label{FACT:factor-not-disjoint}
If $(X_1,T_1,\mu_1)$ and $(X_2,T_2,\mu_2)$ have a non-trivial common factor, they are not disjoint.
\end{fact}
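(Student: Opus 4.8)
The plan is to produce a second joining of $(X_1,T_1,\mu_1)$ and $(X_2,T_2,\mu_2)$ distinct from the product measure $\mu_1\times\mu_2$, namely the \emph{relatively independent joining over the common factor}. So let $(Y,S,\nu)$ be a non-trivial common factor, with measurable, measure-preserving, equivariant factor maps $\psi_i\colon X_i \to Y$ satisfying $\psi_i\circ T_i = S\circ\psi_i$ and $\nu = \mu_i\circ\psi_i^{-1}$ for $i\in\{1,2\}$. First I would invoke Rokhlin's disintegration theorem (valid since all spaces are compact metric, hence standard Borel) to write
\[ \mu_i = \int_Y \mu_i^y \, d\nu(y), \quad i\in\{1,2\}, \]
where for $\nu$-a.e.\ $y$ the fiber measure $\mu_i^y$ is a probability measure supported on $\psi_i^{-1}(\{y\})$. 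Then I would define the candidate joining
\[ \lambda = \int_Y \mu_1^y \times \mu_2^y \, d\nu(y) \]
as a Borel probability measure on $X_1\times X_2$.

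Next, I would verify that $\lambda$ is genuinely a joining. The marginals are immediate: pushing forward under the coordinate projection $\pi_i$ gives $\lambda\circ\pi_i^{-1} = \int_Y \mu_i^y\,d\nu(y) = \mu_i$. For $T_1\times T_2$-invariance, the key input is that disintegrations transform equivariantly: since $T_i$ preserves $\mu_i$, $S$ preserves $\nu$, and $\psi_i\circ T_i = S\circ\psi_i$, the almost-everywhere uniqueness of the disintegration forces $(T_i)_*\mu_i^y = \mu_i^{Sy}$ for $\nu$-a.e.\ $y$. Substituting this and using $S_*\nu=\nu$, one computes
\[ (T_1\times T_2)_*\lambda = \int_Y \mu_1^{Sy}\times\mu_2^{Sy}\,d\nu(y) = \int_Y \mu_1^{y}\times\mu_2^{y}\,d\nu(y) = \lambda, \]
so $\lambda$ is indeed a joining.

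Finally, I would show $\lambda\neq\mu_1\times\mu_2$, which exhibits two distinct joinings and hence non-disjointness. Since the factor is non-trivial, $\nu$ is not a point mass, so there is a Borel set $B\subseteq Y$ with $0<\nu(B)<1$. Put $A_i = \psi_i^{-1}(B)$, so that $\mu_i(A_i)=\nu(B)=:\beta\in(0,1)$. Because $\mu_i^y$ lives on the fiber over $y$, one has $\mu_i^y(A_i)=\mathbf{1}_B(y)$ for $\nu$-a.e.\ $y$, whence $\lambda(A_1\times A_2) = \int_Y \mathbf{1}_B(y)\,d\nu(y) = \beta$, whereas $(\mu_1\times\mu_2)(A_1\times A_2)=\beta^2$. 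As $\beta\neq\beta^2$ the two measures differ. The main obstacle I anticipate is the measure-theoretic bookkeeping underlying the second step: specifically, justifying the existence of the disintegration on these spaces and establishing the equivariance relation $(T_i)_*\mu_i^y=\mu_i^{Sy}$ rigorously from the uniqueness clause. Once those are in hand, the invariance computation, the marginal identity, and the separation from the product measure are all routine.
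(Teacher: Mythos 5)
Your proof is correct. Note that the paper does not actually prove this statement: it records it as a Fact and cites \cite[Prop.~I.2]{Fu67} for it, so there is no in-paper argument to compare against; your construction of the relatively independent joining $\lambda = \int_Y \mu_1^y \times \mu_2^y \, d\nu(y)$ over the common factor --- verified to be a joining via the a.e.\ equivariance $(T_i)_*\mu_i^y = \mu_i^{Sy}$ of the Rokhlin disintegration, and separated from $\mu_1\times\mu_2$ by evaluating on $\psi_1^{-1}(B)\times\psi_2^{-1}(B)$ with $0<\nu(B)<1$ --- is precisely the standard proof of this fact, i.e., the argument the paper delegates to the literature.
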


\begin{remark}
The converse is not true in general \cite{Ru79}. However, the more general statement in \cite[Thm.~8.4]{Gl03} provides a characterization of disjointness in terms of factors and a more general concept, termed \emph{quasifactors}.
\end{remark}

There is a sufficient criterion for disjointness that relies on spectral properties of the dynamical systems \cite[Thm.~6.28]{Gl03}.

\begin{fact}
\label{FACT:spectral-disjointness}
Two dynamical systems $(X_1,T_1,\mu_1)$ and $(X_2,T_2,\mu_2)$ are disjoint if their reduced maximal spectral measures are mutually singular.
\end{fact}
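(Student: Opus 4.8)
The plan is to take an arbitrary joining $\lambda$ of $(X_1,T_1,\mu_1)$ and $(X_2,T_2,\mu_2)$---that is, a $T_1\times T_2$-invariant Borel probability measure on $X_1\times X_2$ with marginals $\mu_1$ and $\mu_2$---and to show $\lambda=\mu_1\times\mu_2$. Since products $f(x_1)g(x_2)$ with $f\in C(X_1)$, $g\in C(X_2)$ span a dense subalgebra of $C(X_1\times X_2)$ by Stone--Weierstrass, it suffices to prove $\int f(x_1)g(x_2)\,d\lambda=\int f\,d\mu_1\int g\,d\mu_2$ for all such $f,g$, which we may take real-valued. Subtracting means, we reduce to the case $\int f\,d\mu_1=\int g\,d\mu_2=0$, where the claim becomes $\int f(x_1)g(x_2)\,d\lambda=0$. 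This reduction is exactly where the word \emph{reduced} enters: passing to mean-zero functions corresponds to restricting each Koopman operator to $L^2_0=L^2\ominus\C\cdot\mathbf 1$, whose maximal spectral type is the reduced maximal spectral measure; without this restriction the constant eigenfunction would force a common atom at $z=1$ and no two systems would ever qualify.

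Next I would lift $f$ and $g$ to the functions $F(x_1,x_2)=f(x_1)$ and $G(x_1,x_2)=g(x_2)$, viewed as elements of $L^2_0(\lambda)$ (mean zero because of the marginal condition). Writing $U$ for the Koopman operator of $T_1\times T_2$ on $L^2(\lambda)$, the marginal condition gives $\langle U^nF,F\rangle_\lambda=\int f(T_1^n x_1)\overline{f(x_1)}\,d\mu_1=\langle U_{T_1}^n f,f\rangle_{\mu_1}$ for every $n\in\Z$, so the spectral measure of $F$ with respect to $U$ coincides with the spectral measure of $f$ with respect to $U_{T_1}$; in particular it is absolutely continuous with respect to the reduced maximal spectral measure of $(X_1,T_1,\mu_1)$. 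The identical computation shows the spectral measure of $G$ is absolutely continuous with respect to that of $(X_2,T_2,\mu_2)$. By the hypothesis of mutual singularity, the spectral measures of $F$ and $G$ are therefore mutually singular.

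The analytic heart is then a standard spectral lemma: if two vectors in a Hilbert space carrying a unitary $U$ have mutually singular spectral measures, then the cyclic subspaces they generate are orthogonal, and in particular $\langle F,G\rangle=0$. I would prove this using the projection-valued measure $E$ of $U$: choose a Borel set $A\subseteq\T$ carrying the spectral measure of $G$ and missing that of $F$, so that $E(A)F=0$ and $E(A)G=G$; since $E$ commutes with $U$, one has $U^nG=E(A)U^nG$ for all $n$, whence $\langle U^mF,U^nG\rangle=\langle U^mF,E(A)U^nG\rangle=\langle E(A)U^mF,U^nG\rangle=0$. Applying this with $m=n=0$ yields $\int f(x_1)g(x_2)\,d\lambda=\langle F,G\rangle_\lambda=0$, which is the desired identity. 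Assembling the pieces proves $\lambda=\mu_1\times\mu_2$, so the only joining is the product measure and the systems are disjoint. I expect the only genuinely delicate point to be the bookkeeping around constants and complex conjugation, which is handled cleanly by restricting to real mean-zero functions; the spectral lemma itself is routine once the projection-valued measure is in hand.
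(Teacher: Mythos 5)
Your proof is correct. Note that the paper itself offers no proof of this Fact---it is quoted directly from \cite[Thm.~6.28]{Gl03}---and your argument (lifting mean-zero $f,g$ to $F,G \in L^2(\lambda)$ for an arbitrary joining $\lambda$, observing via the Fourier coefficients $\langle U^nF,F\rangle_\lambda = \langle U_{T_1}^nf,f\rangle_{\mu_1}$ that their spectral measures coincide with those computed in $L^2(\mu_1)$ and $L^2(\mu_2)$ and hence are absolutely continuous with respect to the mutually singular reduced maximal spectral types, concluding $\langle F,G\rangle_\lambda=0$ by the projection-valued-measure argument, and finishing with Stone--Weierstrass) is essentially the standard proof of that cited theorem, with every step sound.
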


\begin{remark}
Again, the converse of this result is not true. In fact, there are ergodic dynamical systems $(X,T,\mu)$ that are disjoint from their inverse $(X,T^{-1},\mu)$; see for example \cite{BFd,delJ}. On the other hand, the reduced maximal spectral measures of $(X,T,\mu)$ and $(X,T^{-1},\mu)$ are always equivalent; compare \cite{Lem}.
\end{remark}

We are mostly concerned with ergodic dynamical systems. If we further restrict to the class of systems with pure point dynamical spectrum, disjointness has a simple spectral characterization.

\begin{coro}
\label{COR:disjoint-eigenvalues}
Two ergodic systems $(X_1,T_1,\mu_1)$ and $(X_2,T_2,\mu_2)$ with pure point dynamical spectrum are disjoint if and only if they do not have a common eigenvalue except $1$.
\end{coro}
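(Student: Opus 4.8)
The plan is to prove the two implications separately, invoking the factor criterion (Fact~\ref{FACT:factor-not-disjoint}) for one direction and the spectral criterion (Fact~\ref{FACT:spectral-disjointness}) for the other. Throughout I would view eigenvalues as points of $\mathbb{S}^1 = \{z \in \C : |z|=1\}$ and exploit that ergodicity forces every eigenvalue to be simple.

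First I would establish the contrapositive of the ``only if'' direction: if $(X_1,T_1,\mu_1)$ and $(X_2,T_2,\mu_2)$ share a common eigenvalue $\lambda \neq 1$, then they are not disjoint. Let $f_i \in L^2(X_i,\mu_i)$ be a corresponding eigenfunction, so that $f_i \circ T_i = \lambda f_i$ holds $\mu_i$-a.e.; by ergodicity $|f_i|$ is a.e.\ constant, so after normalizing we may assume $|f_i| \equiv 1$. The pushforward $(f_i)_* \mu_i$ is then a rotation-invariant probability measure on $\mathbb{S}^1$ under multiplication by $\lambda$, hence, again by ergodicity, the normalized Haar measure $\nu_\lambda$ on the closed subgroup $G_\lambda = \overline{\langle \lambda \rangle} \le \mathbb{S}^1$. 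Thus the rotation $(G_\lambda, \cdot\,\lambda, \nu_\lambda)$ is a common factor of both systems, and it is nontrivial precisely because $\lambda \neq 1$. By Fact~\ref{FACT:factor-not-disjoint}, the systems are not disjoint, which is exactly the contrapositive we wanted.

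Next I would prove the ``if'' direction through the spectral criterion. Because each system has pure point dynamical spectrum, the Koopman operator on the reduced space $L^2_0(X_i,\mu_i) = L^2(X_i,\mu_i) \ominus \C$ admits an orthonormal basis of eigenfunctions; since the systems are ergodic, each eigenvalue is simple and the eigenvalue $1$ corresponds only to constants, which have been removed in passing to $L^2_0$. Consequently the reduced maximal spectral measure $\sigma_i$ is purely atomic, and its set of atoms is exactly the set $E_i$ of nontrivial eigenvalues of $(X_i,T_i,\mu_i)$. By hypothesis $E_1 \cap E_2 = \varnothing$, so $\sigma_1$ and $\sigma_2$ are carried by disjoint subsets of $\mathbb{S}^1$ and are therefore mutually singular. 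Fact~\ref{FACT:spectral-disjointness} then yields disjointness.

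The main obstacle—really the only point demanding care—is the structural identification used in the spectral-criterion direction: one must verify that pure point spectrum together with ergodicity forces the reduced maximal spectral type to be the purely atomic measure supported exactly on the nontrivial eigenvalues, with no continuous component that could secretly overlap. Once this is in hand, together with the bookkeeping that restricting to $L^2_0$ excises precisely the trivial eigenvalue $1$, the mutual singularity of $\sigma_1$ and $\sigma_2$ is immediate from the disjointness of the atom sets. The factor-criterion direction is comparatively routine; its only subtlety is confirming that a single shared eigenvalue already manufactures a \emph{genuine} (nontrivial) common factor, which is what the computation of $(f_i)_*\mu_i$ as Haar measure on $G_\lambda$ secures.
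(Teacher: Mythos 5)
Your proof follows the paper's own route: Fact~\ref{FACT:spectral-disjointness} for the ``if'' direction (atomic reduced maximal spectral measures supported on disjoint sets of nontrivial eigenvalues are mutually singular) and Fact~\ref{FACT:factor-not-disjoint} for the contrapositive of the ``only if'' direction. Your construction of the common factor as the rotation on $G_\lambda = \overline{\langle\lambda\rangle}$ simply unifies the paper's case split between $\alpha$ irrational (torus rotation, $G_\lambda = \mathbb{S}^1$) and $\alpha$ rational (finite cyclic group), which the paper handles by citing standard lemmas; this is a nice streamlining but not a different argument.

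One inaccuracy in the factor direction should be repaired: when $\lambda$ is a root of unity of order $r$, ergodicity does \emph{not} force $(f_i)_*\mu_i$ to be Haar measure on $G_\lambda$ itself. Since $f_i^r$ is $T_i$-invariant, it is a.e.\ equal to a constant $c$ with $|c|=1$, so the pushforward is the uniform measure on the set of $r$-th roots of $c$, i.e.\ Haar measure on a coset $z_i G_\lambda$ (take $X_i = \Z_2$, $T_i = +1$, $f_i(0)=\mi$, $f_i(1)=-\mi$, $\lambda=-1$ for a concrete counterexample to your claim as stated). This is harmless: replacing $f_i$ by $z_i^{-1} f_i$, which is still an eigenfunction for $\lambda$, moves the pushforward to Haar measure on $G_\lambda$ (equivalently, rotation by $\lambda$ on the coset is isomorphic to rotation on $G_\lambda$), and the rest of your argument goes through verbatim. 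When $\lambda$ is not a root of unity your claim is correct as written, by unique ergodicity of the irrational rotation.
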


\begin{proof}
First, assume that $1$ is the only shared eigenvalue. Because we assumed that both systems have pure point dynamical spectrum, this implies that the reduced maximal spectral measures are mutually singular, implying disjointness by Fact~\ref{FACT:spectral-disjointness}.

Conversely, assume that both systems share an eigenvalue of the form $\lambda = \me^{2 \pi \im \alpha}$, with $\alpha \in (0,1)$. For a moment, assume that $\alpha$ is irrational. Then, ergodicity implies that the torus translation $R_{\alpha} \colon \T \to \T, x \mapsto x + \alpha$, equipped with the normalized Haar measure is a factor of both systems by standard arguments; compare for example \cite[Lemna~1.6.2]{PF02}. 
If $\alpha \in \Q$, we may assume that $\alpha = 1/r$ for some $r \in \N$ without loss of generality. In this case, the cyclic group $(\Z_r, +1, \nu)$, where $\nu$ is the normalized counting measure is a factor of both systems; see \cite[Lemma~1.6.4]{PF02}. In both cases, the systems cannot by disjoint, due to Fact~\ref{FACT:factor-not-disjoint}.
\end{proof}

Almost-periodic potentials have attracted particular attention in the spectral study of Schr\"odinger operators. Recall that almost-periodic sequences are precisely those that can be obtained from a continuous sampling function $f$ on some minimal group rotation $(\Omega, R)$, where $\Omega$ is a compact metrizable group. 
%In particular, $\Omega$ is abelian. 
In fact, the subshift generated by an almost-periodic sequence has itself the structure of such a group rotation.
There is a useful characterization of the strict ergodicity of rotations on a compact metric group \cite[Thm.~1.4.10]{PF02}.
\begin{fact}
\label{FACT:group-rotation-strict-ergodicity}
Let $R \colon \Omega \to \Omega, x \to ax$ be a rotation on a compact metric group $\Omega$. The following are equivalent.
\begin{enumerate}
\item $(\Omega,R)$ is minimal.
\item $(\Omega, R)$ is uniquely ergodic.
\item $\{a^n : n \in \N \}$ is dense in $\Omega$.
\end{enumerate}
In this case, the group $\Omega$ is abelian.
\end{fact}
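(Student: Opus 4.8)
The plan is to exploit the homogeneity of a group rotation. The decisive structural observation is that every right translation $\rho_g\colon x\mapsto xg$ commutes with $R$, since $R(xg)=a(xg)=(ax)g=\rho_g(Rx)$; thus $\{\rho_g : g\in\Omega\}$ is a transitively acting group of symmetries of $(\Omega,R)$, and the forward orbit of the neutral element $e$ is precisely $\{a^n : n\in\N\}$. I would organize the argument as $(3)\Rightarrow(1)\Rightarrow(3)$, together with $(1)\Leftrightarrow(2)$, and extract abelianness along the way.

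For $(3)\Rightarrow(1)$: if $\{a^n:n\ge 0\}$ is dense, then for every $x$ the forward orbit satisfies $\overline{\{a^n x : n\ge 0\}}=\rho_x\bigl(\overline{\{a^n:n\ge0\}}\bigr)=\rho_x(\Omega)=\Omega$ because $\rho_x$ is a homeomorphism. Hence every orbit closure is all of $\Omega$, so any nonempty closed invariant set equals $\Omega$, i.e.\ $(\Omega,R)$ is minimal. For the converse $(1)\Rightarrow(3)$, minimality gives $\overline{\{a^n:n\in\Z\}}=\Omega$, and it remains to upgrade two-sided density to forward density. Here I would use compactness: the set $S=\overline{\{a^n:n\ge0\}}$ is a closed subsemigroup, and extracting a convergent subsequence $a^{n_k}\to b$ with $n_k$ strictly increasing yields $a^{n_{k+1}-n_k}\to bb^{-1}=e$ with positive exponents, so $e\in S$; choosing $m_j\ge1$ with $a^{m_j}\to e$ gives $a^{m_j-1}\to a^{-1}$ with $m_j-1\ge 0$, whence $a^{-1}\in S$. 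Since $S$ is a semigroup it then contains all integer powers of $a$, so $S=\Omega$.

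The equivalence with unique ergodicity rests on Haar measure $m$, which is $R$-invariant (left translations preserve left Haar measure) and has full support. For $(2)\Rightarrow(1)$: unique ergodicity forces the unique invariant measure to be $m$; a proper nonempty closed invariant set would carry an invariant probability measure whose support is contained in it, contradicting that $m$ has full support, so the system is minimal. To prepare the reverse implication I would first record abelianness: under density of $\{a^n\}$ (available from $(1)$ via $(1)\Rightarrow(3)$), each $x$ commutes with every power $a^n$, and since $\{z:zx=xz\}$ is closed it contains $\overline{\{a^n\}}=\Omega$; as $x$ was arbitrary, $\Omega$ is abelian, which is also the final assertion.

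The hard part is $(1)\Rightarrow(2)$, and this is where abelianness pays off. Since $\Omega$ is compact abelian, its characters separate points, so finite linear combinations of characters are dense in $C(\Omega)$ by Stone--Weierstrass. For a character $\chi$ one has $\chi\circ R=\chi(a)\,\chi$. If $\chi(a)=1$ then $\chi$ is $R$-invariant and continuous, hence constant on the dense orbits, hence identically $1$; thus every nontrivial $\chi$ has $\chi(a)\neq1$, and then the Birkhoff averages $\frac1N\sum_{n=0}^{N-1}\chi(R^n x)=\chi(x)\,\frac1N\sum_{n=0}^{N-1}\chi(a)^n$ tend to $0=\int_\Omega\chi\,dm$ uniformly in $x$, because $\bigl|\sum_{n=0}^{N-1}\chi(a)^n\bigr|$ is bounded. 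By density these uniform ergodic-average limits extend to all $f\in C(\Omega)$, giving $\frac1N\sum_{n=0}^{N-1}f(R^n x)\to\int_\Omega f\,dm$ uniformly; this is exactly the statement that $(\Omega,R)$ is uniquely ergodic with unique measure $m$. I expect the only delicate bookkeeping to be the uniform (in $x$) approximation argument passing from characters to general continuous functions, which is routine once the character computation is in place.
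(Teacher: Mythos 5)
The paper offers no proof of this statement at all: it is labeled a Fact and quoted from the literature with the citation \cite[Thm.~1.4.10]{PF02}, so there is no internal argument to compare against. Your proposal is a correct, self-contained proof by the classical route: right translations $\rho_g$ commute with the rotation $R$ and act transitively, so density of $\{a^n : n \in \N\}$ at the identity transports to every point, giving (3)~$\Rightarrow$~(1); the closed-subsemigroup/compactness trick (limits $a^{n_{k+1}-n_k} \to e$ with positive exponents, then $a^{m_j - 1} \to a^{-1}$) correctly upgrades two-sided density to forward density, giving (1)~$\Rightarrow$~(3); Haar measure together with Krylov--Bogolyubov applied to a putative proper nonempty closed invariant set gives (2)~$\Rightarrow$~(1); and the Weyl-type character argument (Stone--Weierstrass plus the geometric-series bound $\bigl|\sum_{n=0}^{N-1}\chi(a)^n\bigr| \le 2/|1-\chi(a)|$ for nontrivial $\chi$) gives (1)~$\Rightarrow$~(2), with the unique invariant measure identified as Haar measure. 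The overall architecture ((1)~$\Leftrightarrow$~(3), (1)~$\Leftrightarrow$~(2), abelianness extracted along the way and then used in (1)~$\Rightarrow$~(2)) closes all required implications, and each analytic step is sound.

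One step is written in a logically inverted, borderline circular way: in the abelianness paragraph you assert that ``each $x$ commutes with every power $a^n$'' and then invoke closedness of the centralizer $C(x)=\{z : zx = xz\}$ plus density to conclude $C(x) = \Omega$. But the assertion you start from is precisely the claim that each $a^n$ is central, which is what needs proof; as written, you assume it and then rederive it. The repair is to run your own argument once more, first with $x$ a power of $a$: the centralizer $C(a)$ is closed and contains every $a^m$ (powers of $a$ commute with one another, which \emph{is} automatic), hence $C(a) \supseteq \overline{\{a^m : m \in \N\}} = \Omega$, so $a$ --- and therefore each $a^n$ --- is central; only then is your sentence about arbitrary $x$ justified. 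Equivalently, and more quickly: the commutator map $(u,v) \mapsto uvu^{-1}v^{-1}$ is continuous and vanishes on the dense subset $\{a^m\} \times \{a^n\}$ of $\Omega \times \Omega$, hence vanishes identically. With this reordering, your proof is complete.
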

It is further known that the rotation $R$ on a compact abelian group $\Omega$ has pure point dynamical spectrum \cite[Ch.~1]{PF02}. 

\begin{coro}
\label{COR:strictly-ergodic-product}
Let $(\Omega_1,R_1)$ and $(\Omega,R_2)$ be each a minimal group rotation on a compact metrizable group. Then, the following are equivalent 
\begin{enumerate}
\item $(\Omega_1,R_1)$ and $(\Omega,R_2)$ have no non-trivial shared eigenvalues.
\item $(\Omega_1\times \Omega_2, R_1 \times R_2)$ is uniquely ergodic.
\item $(\Omega_1\times \Omega_2, R_1 \times R_2)$ is minimal.
\end{enumerate}
\end{coro}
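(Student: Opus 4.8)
The plan is to exploit two structural facts about minimal group rotations that are recorded just above: such a system is automatically uniquely ergodic \emph{and} has pure point dynamical spectrum (with abelian acting group), and the product of two group rotations is itself a group rotation. Granting these, the three conditions can be linked by chaining together the disjointness criteria already established in this appendix, so the proof is essentially a bookkeeping exercise rather than a new argument.

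\medskip

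First I would set up the structure. By Fact~\ref{FACT:group-rotation-strict-ergodicity}, each $(\Omega_i,R_i)$ is minimal precisely when it is uniquely ergodic, and in that case the group $\Omega_i$ is abelian; moreover, as noted just before the statement, the rotation $R_i$ on the compact abelian group $\Omega_i$ has pure point dynamical spectrum. In particular, with respect to its unique invariant (Haar) measure $\mu_i$, the system $(\Omega_i,R_i,\mu_i)$ is ergodic with pure point spectrum, so that Corollary~\ref{COR:disjoint-eigenvalues} will apply. I would also record that for minimal group rotations the topological and measure-theoretic eigenvalues coincide (the eigenfunctions being continuous characters), so the word ``eigenvalue'' in condition~(1) is unambiguous.

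\medskip

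Next I would prove $(2)\iff(3)$ directly. Writing $R_i$ as rotation by $a_i\in\Omega_i$, the product map $R_1\times R_2$ is rotation by $(a_1,a_2)$ on the compact metrizable abelian group $\Omega_1\times\Omega_2$. Hence $(\Omega_1\times\Omega_2,R_1\times R_2)$ is again a group rotation, and applying Fact~\ref{FACT:group-rotation-strict-ergodicity} to \emph{this} system yields at once that its minimality is equivalent to its unique ergodicity. Then I would prove $(1)\iff(2)$: by Lemma~\ref{LEM:disjoint-uniqueness}, unique ergodicity of the product is equivalent to the disjointness of the two uniquely ergodic systems $(\Omega_1,R_1,\mu_1)$ and $(\Omega_2,R_2,\mu_2)$; and since both systems are ergodic with pure point dynamical spectrum by Step~1, Corollary~\ref{COR:disjoint-eigenvalues} identifies this disjointness with the condition that they share no eigenvalue other than $1$, which is exactly~(1). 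Combining the two equivalences closes the cycle.

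\medskip

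I expect the only genuinely delicate point—rather than an obstacle—to be making sure the hypotheses of the cited results are literally met: namely verifying that the product is a group rotation on a \emph{compact metrizable} group (so Fact~\ref{FACT:group-rotation-strict-ergodicity} is in force) and that pure point spectrum holds for each factor (so Corollary~\ref{COR:disjoint-eigenvalues} is applicable rather than the weaker one-directional Fact~\ref{FACT:spectral-disjointness}). Both are immediate from the preceding material, so once they are stated the equivalences follow without further computation.
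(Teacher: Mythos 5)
Your proposal is correct and follows essentially the same route as the paper: the equivalence $(1)\iff(2)$ via Lemma~\ref{LEM:disjoint-uniqueness} combined with Corollary~\ref{COR:disjoint-eigenvalues}, and $(2)\iff(3)$ by observing that the product is again a group rotation on a compact metrizable group and invoking Fact~\ref{FACT:group-rotation-strict-ergodicity}. Your additional remarks on verifying the hypotheses (pure point spectrum of each factor, coincidence of topological and measurable eigenvalues) are sound bookkeeping that the paper leaves implicit, but they do not constitute a different argument.
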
 

\begin{proof}
The equivalence of (1) and (2) follows by combining Lemma~\ref{LEM:disjoint-uniqueness} with Corollary~\ref{COR:disjoint-eigenvalues}. Since $(\Omega_1\times \Omega_2, R_1 \times R_2)$ is again a group rotation on a compact metrizable group, the equivalence of (2) and (3) follows from Fact~\ref{FACT:group-rotation-strict-ergodicity}.
\end{proof}

\begin{remark}
\label{REM:frequency-module}
Let $R \colon \Omega \to \Omega, x \mapsto ax$ be a minimal group rotation on a compact metrizable group $\Omega$. A \emph{character} on $\Omega$ is a continuous group homomorphism $\chi \colon \Omega \to \mathbb{S}^1 = \{ z \in \C : |z| = 1\}$. We denote by $\widehat{\Omega}$ the dual group of all characters on $\Omega$. In this setup, the characters form a basis of eigenfuctions in $L^2(\Omega,\mu)$, where $\mu$ is the Haar measure on $G$, each with eigenvalue $\chi(a)$. Hence, the group of (topological) eigenvalues of $(\Omega,R)$ is precisely 
\[
G = \{\chi(a) : \chi \in \widehat{\Omega} \}.
\]
The pullback of this group under the projection $\phi \colon \R \to \mathbb{S}^1, t \mapsto \me^{2\pi\im t}$ is often called the \emph{frequency module} of $(\Omega,R)$, denoted by $\mathfrak M$. This object is central in the gap labeling theorem; compare \cite{DFGap, DelSou1983CMP, JohnMos1982CMP}. By construction, the frequency module automatically contains the set $\Z$. We can easily adapt Corollary~\ref{COR:strictly-ergodic-product} to the statement that the product of $(\Omega_1,R_1)$ and $(\Omega_2,R_2)$ is strictly ergodic if and only if their frequency modules have a trivial intersection, given by $\Z$.
\end{remark}

\end{appendix}

\bibliographystyle{abbrvArXiv}

\bibliography{CDFGbib}

\end{document}